\theoremstyle{plain}
\newtheorem{theorem}{\bf Theorem}[section]
\newtheorem{lemma}[theorem]{\bf Lemma}
\newtheorem{proposition}[theorem]{\bf Proposition}
\newtheorem{corollary}[theorem]{\bf Corollary}
\newtheorem{conjecture}[theorem]{\bf Conjecture}
\newtheorem{question}[theorem]{\bf Question}
\theoremstyle{definition}
\newtheorem{definition}[theorem]{Definition}
\newtheorem{example}[theorem]{Example}
\theoremstyle{remark}
\newtheorem{remark}[theorem]{Remark}
\numberwithin{equation}{section}
\newcommand{\ZZ}{\mathbb{Z}}
\newcommand{\NN}{\mathbb{N}}
\newcommand{\PP}{\mathbb{P}}
\newcommand{\QQ}{\mathbb{Q}}
\newcommand{\kk}{{\bf k}}
\newcommand{\E}{\mathcal{E}}
\newcommand{\Q}{\mathcal{Q}}
\newcommand{\T}{\mathcal{T}}
\newcommand{\BT}{\mathcal{BT}}
\newcommand{\redu}{\mathfrak{red}}
\newcommand{\vr}{v^{\mathfrak{r}}}
\newcommand{\init}{\mathfrak{init}}
\newcommand{\MM}{\mathcal{M}}
\newcommand{\sym}{\mathfrak{S}}
\newcommand{\lie}{\mathcal{L}ie}
\newcommand{\com}{\mathcal{C}om}
\newcommand{\clr}{\mathbf{color}}
\newcommand{\xx}{\mathbf{x}}
\newcommand{\yy}{\mathbf{y}}
\newcommand{\comb}{\textsf{Comb}}
\newcommand{\lyn}{\textsf{Lyn}}
\newcommand{\nor}{\textsf{Nor}}
\newcommand{\Par}{\textsf{Par}}
\newcommand{\tlambda}{\tilde \lambda}
\newcommand{\lyndonlambda}{\lambda^{\textsf{Lyn}}}
\newcommand{\comblambda}{\lambda^{\textsf{Comb}}}
\newcommand{\aalambda}{\lambda^{\textsf{AA}}}
\newcommand{\tnlambda}{\lambda^{\textsf{TN}}}
\def\newop#1{\expandafter\def\csname #1\endcsname{\mathop{\rm #1}\nolimits}}
\title[Multibracketed Lie algebras]{On the free Lie algebra
with multiple brackets}
\author[R. S. Gonz\'alez D'Le\'on]{Rafael S. Gonz\'alez D'Le\'on}
\address{Department of Mathematics, University of Kentucky, Lexington, KY 40506}
\email{rafaeldleon@uky.edu}
\thanks{Supported by NSF Grant  DMS 1202755}
\begin{document}
\allowdisplaybreaks

\begin{abstract}
It is a classical result that the multilinear component of the 
free Lie algebra is isomorphic (as a representation of 
the symmetric group) to the top (co)homology of the proper part of the poset of partitions $\Pi_n$ 
tensored with the sign representation. We generalize this result in order to study the multilinear 
component of the free Lie algebra with multiple compatible Lie brackets.  
We introduce a new poset of weighted 
partitions $\Pi_n^k$ that allows us to generalize the result. The new poset is a generalization 
of $\Pi_n$ and of the poset of weighted partitions $\Pi_n^w$ introduced by Dotsenko and Khoroshkin 
and studied by the author and Wachs for the case of two compatible brackets. We prove that the 
poset $\Pi_n^k$ with a top element added is EL-shellable and hence Cohen-Macaulay. 
This and other properties of $\Pi_n^k$ enable us to answer 
questions posed by Liu on free multibracketed Lie algebras. In 
particular, we obtain various dimension formulas and multicolored generalizations of 
the classical Lyndon and comb bases for the multilinear component of the free Lie 
algebra. 
We also obtain a plethystic formula for the 
Frobenius characteristic of the representation of the symmetric group on the multilinear component 
of the free multibracketed Lie algebra.
\end{abstract}

\maketitle

\tableofcontents
\newpage
\section{Introduction}\label{section:introduction}
Recall that a \emph{Lie bracket} on a  vector
space $V$ is a bilinear binary product  
$[\cdot,\cdot]:V\times V \rightarrow V$ such that for all $x,y,z \in V$,
\begin{align}
 [x,y] + [y,x]&=0\quad\quad& \text{(Antisymmetry),}\label{relation:lb1}\\
[x,[y,z]]+[z,[x,y]]+[y,[z,x]]&=0 \quad\quad&\text{(Jacobi Identity).}\label{relation:lb2}
\end{align}
Throughout this paper let $\kk$ denote an arbitrary field. 
The \emph{free Lie algebra on $[n]:=\{1,2,\dots,n\}$} (over the field ${\bf k}$) is the  ${\bf 
k}$-vector space
generated by the elements
of $[n]$ and all the possible bracketings involving these elements subject only to the relations
(\ref{relation:lb1}) and (\ref{relation:lb2}).
Let $\lie(n)$ denote the \emph{multilinear} component of the free Lie algebra on $[n]$, i.e., 
the subspace generated by bracketings that contain each
element of $[n]$ exactly once.  We call these bracketings \emph{bracketed permutations}. For
example
$[[2,3],1]$ is a bracketed permutation in $\lie(3)$, while $[[2,3],2]$ is not. For any set $S$, 
the symmetric group $\sym_S$ is the group of permutations of $S$.  In particular we denote by
$\sym_n:=\sym_{[n]}$ the group of permutations of the set $[n]$. The symmetric
group
$\sym_n$ acts naturally on $\lie(n)$ making it into an $\sym_n$-module.  A permutation 
$\tau \in \sym_n$ acts on the bracketed permutations by replacing each letter $i$ by
$\tau(i)$. For example $(1,2)\,\,[[[ 3,5 ], [2,4] ],1]= [[[ 3,5 ], [1,4] ],2]$.  Since this action 
respects the relations (\ref{relation:lb1}) and (\ref{relation:lb2}), it induces a 
representation of $\sym_n$ on $\lie(n)$. It is a classical result that
\begin{align*}
 \dim \lie(n) = (n-1)!.
\end{align*}

Although the $\sym_n$-module $\lie(n)$ is an algebraic object it turns out that the information 
needed to 
completely describe 
this object is of combinatorial nature. Let $P$ denote a \emph{partially ordered set} (or 
\emph{poset} for short). To every poset $P$ one can associate a simplicial complex 
$\Delta(P)$ (called the \emph{order complex}) whose faces are the chains (totally ordered subsets) 
of $P$.
Consider now the poset $\Pi_n$ of set partitions of $[n]$ ordered by refinement. The
symmetric group $\sym_n$ acts naturally on $\Pi_n$ and this action
induces isomorphic representations of $\sym_n$ on the unique nonvanishing reduced simplicial
homology $\widetilde H_{n-3}(\overline{\Pi}_n)$ and cohomology 
$\widetilde H^{n-3}(\overline{\Pi}_n)$  of  the order complex $\Delta(\overline{\Pi}_n)$ of
the
proper part $\overline{\Pi}_n:=\Pi_n \setminus \{\hat{0},\hat{1}\}$ of $\Pi_n$.
It is a classical result that

\begin{align} 
\label{equation:introlie}  \lie(n)\simeq_{\sym_n} \widetilde H_{n-3}(\overline{\Pi}_n)  \otimes 
\sgn_n,
\end{align}  
where  $\sgn_n$ is the sign representation of $\sym_n$.

Equation (\ref{equation:introlie}) was observed by Joyal \cite{Joyal1986}
by comparing a computation of the character of $\widetilde H_{n-3}(\overline{\Pi}_n)$ by 
Hanlon and Stanley (see \cite{Stanley1982}), to an earlier formula of  Brandt \cite{Brandt1944} 
for the character of  $\lie(n)$.
 Joyal \cite{Joyal1986} gave a proof of the isomorphism using his theory of species.
The first purely combinatorial proof was obtained by Barcelo \cite{Barcelo1990} who provided  a
bijection between known bases for the two $\sym_n$-modules  (Bj\"orner's NBC basis for  $\widetilde
H_{n-3}(\overline{\Pi}_n)$ and the Lyndon basis for $\lie(n)$).  Later Wachs \cite{Wachs1998} gave a 
more
general combinatorial proof by providing a natural bijection between generating sets of $\widetilde
H^{n-3}(\overline{\Pi}_n)$ and $\lie(n)$, which revealed the strong connection between the two
$\sym_n$-modules. Connections between Lie type structures and  various types of partition posets 
have  been studied 
in 
other places in the literature, see for example \cite{BarceloBergeron1990}, \cite{Bergeron1991}, 
 \cite{HanlonWachs1995}, \cite{GottliebWachs2000}, \cite{Fresse2004}, \cite{Vallette2007}, 
 \cite{ChapotonVallette2006},  \cite{LodayVallette2012}.

The moral of equation (\ref{equation:introlie}) is that we can describe $\lie(n)$ and understand 
its 
algebraic properties by studying and applying poset theoretic techniques to the 
combinatorial object $\Pi_n$. This observation will play a central role throughout this paper.

\subsection{Doubly bracketed Lie algebra}

Two Lie brackets  ${\color{blue}[}\bullet,\bullet{\color{blue}]_{1}}$ and
${\color{red}[}\bullet,\bullet{\color{red}]_{2}}$ on a vector space $V$ are said to be 
\emph{compatible} if any linear
combination of the brackets is also a Lie bracket on $V$, that is, satisfies relations 
(\ref{relation:lb1}) 
and (\ref{relation:lb2}). 
As pointed out in \cite{DotsenkoKhoroshkin2007,Liu2010}, this kind of compatibility is
equivalent to the \emph{mixed Jacobi} condition: for all $x,y,z \in V,$
\begin{align}
{\color{blue}[}x,{\color{red}[} y, z
{\color{red}]_2}{\color{blue}]_1}+{\color{blue}[}z,{\color{red}[} x,y
{\color{red}]_2}{\color{blue}]_1}+{\color{blue}[}y,{\color{red}[} z,
x{\color{red}]_2}{\color{blue}]_1}&+\quad\quad&\text{(Mixed Jacobi)}\label{relation:lb3}\\
{\color{red}[} x,{\color{blue}[}y,z{\color{blue}]_1}
{\color{red}]_2} + 
{\color{red}[} z,{\color{blue}[}x,y{\color{blue}]_1} {\color{red}]_2}+{\color{red}[}
y,{\color{blue}[}z,x{\color{blue}]_1}{\color{red}]_2}&=0.&\nonumber
\end{align}
Let $\lie_2(n)$ be the multilinear component of the free Lie algebra on $[n]$ with two compatible
brackets,
that is, the multilinear component of the  ${\bf k}$-vector space generated by (mixed) bracketings
of elements of $[n]$  
subject only to the five  relations given by (\ref{relation:lb1}) and (\ref{relation:lb2}), for 
each bracket,
and 
(\ref{relation:lb3}). For each $i$, let $\lie_2(n,i)$ be the subspace of $\lie_2(n)$ generated by 
bracketed
permutations with exactly $i$ brackets of the first type and $n-1-i$ brackets of the second type.
The symmetric group $\sym_n$ acts naturally on $\lie_2(n)$ and since this
action preserves the number of brackets of each type, we have the following decomposition into
$\sym_n$-submodules: 
\begin{align*}
 \lie_2(n)=\bigoplus_{i=0}^{n-1} \lie_2(n,i).
\end{align*}

Note that interchanging the roles of the two brackets makes evident the $\sym_n$-module isomorphism

\begin{align*}
 \lie_2(n,i) \simeq_{\sym_n}\lie_2(n,n-1-i)
\end{align*}
for every $i$. Also note that in particular $\lie(n)$ is isomorphic to the submodules $\lie_2(n,i)$ 
when $i=0$ or $i=n-1$.

It was conjectured by Feigin and proved independently by Dotsenko-Khoroshkin
\cite{DotsenkoKhoroshkin2007} and Liu \cite{Liu2010} that
\begin{align}\label{equation:dimensionlie2}
 \dim \lie_2(n) = n^{n-1}.
\end{align}

In \cite{Liu2010}  Liu proves the conjecture by constructing a combinatorial basis for $\lie_2(n)$ 
indexed by rooted trees giving as a byproduct the refinement
\begin{align} \label{equation:dimensionlie2i}
 \dim \lie_2(n,i) = |\T_{n,i}|,
\end{align} 
where $\T_{n,i}$ is the set of rooted trees on vertex set $[n]$ with $i$ descending edges 
(a parent with a
greater label than its child).

The Dotsenko-Khoroshkin proof \cite{DotsenkoKhoroshkin2007,DotsenkoKhoroshkin2010} of Feigin's 
conjecture  
 was operad-theoretic; they used a pair of 
functional equations that apply to Koszul operads to compute the $\mathit{SL}_2  \times 
\sym_n$-character 
of $\lie_2(n)$.
They also proved that the dimension generating polynomial has a nice factorization:
\begin{align} \label{equation:introprod}
\sum_{i=0}^{n-1}\dim \lie_2(n,i)t^i=\prod_{j=1}^{n-1}((n-j)+jt).
\end{align}

Since, as was proved by Drake \cite{Drake2008},  the  right hand side of (\ref{equation:introprod})
is equal to the generating function for rooted trees on node set $[n]$ according to the number of 
descents of the tree, it follows that for each $i$,
the dimension of $\lie_2(n,i)$ equals  the number of rooted trees on node set
$[n]$ with $i$ descents.  (Drake's result is a refinement of the well-known result that the number
of trees on node set $[n]$ is $n^{n-1}$.) 

Although Dotsenko and Khoroshkin \cite{DotsenkoKhoroshkin2007} did not use poset theoretic
techniques in their ultimate proof of (\ref{equation:dimensionlie2}), they introduced the poset of 
weighted partitions $\Pi_n^w$ as a possible approach to establishing Koszulness of the operad
associated with $\lie_2(n)$, a key step in their proof. In \cite{DleonWachs2013a}
Wachs and the author applied poset theoretic techniques to the poset of weighted
partitions to give an alternative proof of (\ref{equation:dimensionlie2}) and 
(\ref{equation:dimensionlie2i}) and to obtain
further results on $\lie_2(n)$. 

The poset $\Pi_n^w$ has a minimum element $\hat 0:= \{\{1\}^0,\{2\}^0,\dots, \{n\}^0\}$ and  $n$
maximal elements $\{[n]^0\}, \, \{[n]^1\}, \dots,  \{[n]^{n-1}\}$. For each $i$, the maximal 
intervals $[\hat 0, [n]^i]$ and $[\hat 0, [n]^{n-1-i}]$ are isomorphic to each other, and the two
maximal 
intervals $[\hat 0, [n]^0]$ and $[\hat 0, [n]^{n-1}]$ are isomorphic to $\Pi_n$. 

In \cite{DleonWachs2013a} the authors found a nice EL-labeling of $\Pi_n^w \cup \{\hat{1}\}$ that 
generalized a 
classical 
EL-labeling of $\Pi_n$ due to Bj\"orner and Stanley (see \cite{Bjorner1980}).
An \emph{EL-labeling} of a poset (defined in Section~\ref{section:ellabeling}) is a labeling of the 
edges 
of the Hasse 
diagram of the poset that
satisfies certain requirements.  Such a  labeling has important topological and algebraic
consequences, such as the determination of the homotopy type of each open interval of the poset.  
The so
called ascent-free maximal chains give a basis for cohomology of the open intervals. A poset that 
admits an
EL-labeling is said to be \emph{EL-shellable}.  See \cite{Bjorner1980}, \cite{BjornerWachs1983} and 
\cite{Wachs2007} for 
further
information.

\begin{theorem}[Theorem 3.2, Corollary 3.5 and Theorem 3.6
\cite{DleonWachs2013a}]\label{theorem:el2}
 The poset $\widehat{\Pi_n^w}:=\Pi_n^w \cup \{\hat{1}\}$ is
EL-shellable and hence Cohen-Macaulay. Consequently, for each $i=0,\dots,n-1$,
the order complex $\Delta((\hat{0},[n]^i))$ has the homotopy type of a wedge of
$ |\T_{n,i}|$ spheres.
\end{theorem}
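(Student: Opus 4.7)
The plan is to construct an explicit edge labeling of $\widehat{\Pi_n^w}$ that extends the classical Björner--Stanley EL-labeling of $\Pi_n$, and then translate the homotopy statement into a chain-counting problem. A covering relation in $\Pi_n^w$ merges two weighted blocks $B_1^{w_1}$ and $B_2^{w_2}$ into $(B_1\cup B_2)^{w_1+w_2+\epsilon}$ with $\epsilon\in\{0,1\}$, so a covering edge is naturally labeled by a triple built from the two block minima $a<b$ and the marker $\epsilon$; the covers into $\hat 1$ can be labeled with special ``top'' symbols. I would totally order these labels so that, on the bottom intervals $[\hat 0,[n]^0]$ and $[\hat 0,[n]^{n-1}]$, the labeling recovers the Björner--Stanley labeling of $\Pi_n$.

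Next I would verify the EL-axioms. On each closed interval $[x,y]$ of $\widehat{\Pi_n^w}$ there should be a unique weakly increasing maximal chain: at every step the merge involving the smallest available block minimum is forced, and the $\epsilon$-choice at each step is determined by the need to match the total weight of $y$ (or of the appropriate maximal element, when $y=\hat 1$). A lexicographic comparison then shows that this increasing chain is also lex-smallest, establishing EL-shellability. Cohen-Macaulayness follows by standard implications, and the Björner--Wachs theory identifies the homotopy type of $(\hat 0, [n]^i)$ with a wedge of $(n-3)$-spheres indexed by the \emph{ascent-free} (strictly decreasing) maximal chains of $[\hat 0, [n]^i]$.

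The main obstacle is the combinatorial identity that the number of ascent-free maximal chains in $[\hat 0, [n]^i]$ equals $|\T_{n,i}|$. I would set up a bijection as follows: from an ascent-free chain read the merges in order, and record, for each merge of blocks with minima $a<b$ and marker $\epsilon$, an edge between $a$ and $b$ in a rooted tree on $[n]$, with $\epsilon=1$ marking a descending edge. The ascent-free property should translate exactly into the conditions that the children of each node appear in a unique canonical order and that the total number of $\epsilon=1$ markers is $i$. Checking well-definedness, invertibility, and preservation of the descent statistic under this map is where the real work lies; as a sanity check, summing over $i$ must recover $n^{n-1}$, in agreement with Cayley's formula and Feigin's conjecture. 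Once this bijection is in place, the sphere-count in the theorem follows directly from the EL-shellability.
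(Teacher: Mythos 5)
Your construction runs aground at the very first step: you propose to \emph{totally order} the edge labels, but no total order on the labels $(\min A,\min B)^{\epsilon}$ can yield an EL-labeling of $\widehat{\Pi_n^w}$. The label set must be given a genuine \emph{partial} order, and this partiality does real work.

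Here is a concrete obstruction. Look at the interval $[\hat 0,[3]^{(1,1)}]$ in $\Pi_3^w$ (i.e.\ the maximal interval over $[3]^1$, with weight $1$). All six atoms of $\Pi_3^w$ lie below $[3]^{(1,1)}$, so there are exactly six maximal chains, with label sequences
\[
A:(1,2)^1,(1,3)^2;\quad B:(1,2)^2,(1,3)^1;\quad C:(1,3)^1,(1,2)^2;\quad D:(1,3)^2,(1,2)^1;
\]
together with $E:(2,3)^1,(1,2)^2$ and $F:(2,3)^2,(1,2)^1$. The chains $A$ and $D$ are label-reversals of one another, as are $B$ and $C$. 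Under \emph{any} total order on the six labels occurring here, exactly one of each reversed pair is strictly increasing, so there are at least two strictly increasing maximal chains in $[\hat 0,[3]^{(1,1)}]$. Hence the uniqueness clause in the definition of an EL-labeling fails, no matter how you totally order the labels. (Equivalently: the M\"obius computation gives $|\bar\mu(\hat 0,[3]^{(1,1)})|=5$, so the number of ascent-free chains must be $5$; with a total order you always get the wrong count.) What the paper does instead is order the labels by a poset $\Lambda_n^k$ built as an ordinal sum of direct products of two chains (Definition~\ref{definition:posetoflabels}): within each $\Gamma_a$ the labels $(a,b)^u$ and $(a,c)^v$ are comparable only when $b\le c$ \emph{and} $u\le v$. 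In the example above, $(1,2)^2$ and $(1,3)^1$ are incomparable, which eliminates chain $B$ as ``increasing'' while still counting it as ascent-free; this is precisely what makes the $\bar\mu$-count come out to $|\T_{n,i}|$. The poset-valued labeling also means that ``ascent-free'' is \emph{not} the same as ``strictly decreasing'' (your parenthetical gloss), since consecutive incomparable labels also yield no ascent.

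The second half of your sketch (a bijection from ascent-free chains to rooted trees with $i$ descents) is a plausible direction, but it is stated at a level of generality where all the content is hidden; in the source paper \cite{DleonWachs2013a} the ascent-free chains of the poset-valued labeling are identified with bicolored Lyndon trees, and the equality with $|\T_{n,i}|$ is obtained by an independent route (Drake's generating-function identity and the isomorphism with $\lie_2(n,i)$). Your proposed ``record $(a,b)$ as a tree edge, mark descents by $\epsilon$'' map would have to contend with the fact that several ascent-free chains can merge the same pairs $(a,b)$ in different orders, and also that ``ascent-free'' in the partial order allows incomparable consecutive labels, so there is no fixed canonical order of children to fall back on. Before that detail can even be addressed, however, the total order assumption needs to be replaced by the ordinal-sum poset of labels; as it stands, the proposal does not produce an EL-labeling at all.
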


The 
 symmetric group acts naturally on each $\lie_2(n,i)$ and on each open interval $(\hat 0, [n]^i)$.  
In \cite{DleonWachs2013a} Gonz\'alez D'Le\'on and Wachs give an explicit $\sym_n$-module 
isomorphism which establishes
\begin{align} \label{equation:introlie2}  \lie_2(n,i) 
\simeq_{\sym_n} \widetilde H^{n-3}((\hat 0,
[n]^i)) \otimes \sgn_n .\end{align}
For $i=0$ or $i=n-1$, equation (\ref{equation:introlie2}) reduces to equation 
(\ref{equation:introlie}) and the isomorphism reduces to the one in \cite{Wachs1998}.
 They construct bases for $\widetilde 
H^{n-3}((\hat 
0,
[n]^i))$ and for $\lie_2(n,i)$ that generalize the classical Lyndon tree basis and 
the comb basis for
$\widetilde H^{n-3}(\overline{\Pi}_n)$ and $\lie(n)$. In particular, the general Lyndon basis is
obtained from the ascent-free maximal chains of the EL-labeling of Theorem \ref{theorem:el2}.
They also define a basis for $\widetilde H_{n-3}((\hat 0,[n]^i))$ in terms of labeled rooted 
trees 
that
generalizes the Bj\"orner NBC basis for homology of $\overline{\Pi}_n$ (see
\cite[Proposition~2.2]{Bjorner1982}).   
\subsection{Multibracketed Lie algebras}
Liu posed the following natural question.

\begin{question}[Liu \cite{Liu2010}, Question 11.7 ] \label{question:liu}
Is it possible to define $\lie_k(n)$ for any $k \ge 1$ so
that it has nice dimension formulas like those for $\lie(n)$ and $\lie_2(n)$? What are the right
combinatorial objects for $\lie_k(n)$, if it can be defined?
\end{question}
The results developed in this paper provide an  answer to this question.

Let $\NN$ denote the set of nonnegative integers and $\PP$ the set of positive
integers. We say that a set $B$ of Lie brackets on a vector space is \emph{compatible} if any 
linear combination of 
the brackets in $B$ is a Lie bracket. We now consider compatible Lie brackets $[\cdot,\cdot]_j$ 
indexed by positive integers $j 
\in \PP$ and define $\lie_{\PP}(n)$ to be the multilinear
component of the free multibracketed Lie algebra on $[n]$;
that is, the ${\bf k}$-vector space generated by (mixed) bracketed
permutations of $[n]$ subject only to the relations given by (\ref{relation:lb1}) and 
(\ref{relation:lb2}), for each bracket,
and the compatibility relations for any set of brackets. For example, 
$[[[2,5]_{2},3]_{1},[1,4]_{1}]_{3}$ is a generator of $\lie_{\PP}(5)$

A \emph{weak composition} $\mu$ of $n$  
is a sequence of nonnegative
integers $(\mu(1),\mu(2),\dots)$ such that $|\mu|:=\sum_{i\ge 1}\mu(i)=n$. 
Let $\wcomp$ be the set of weak compositions and $\wcomp_n$ the set of weak compositions of $n$. 
For $\mu \in \wcomp_{n-1}$, define $\lie(\mu)$ to be the subspace of $\lie_{\PP}(n)$
generated by bracketed permutations of $[n]$ with $\mu(j)$ 
brackets of type $j$ for each $j$. For example $\lie(0,1,2,0,1)$ is generated by bracketed
permutations of $[5]$ that
contain one bracket of type $2$, two brackets of type $3$, one bracket of type $5$ and no brackets
of any other type. 

As before, $\sym_n$ acts naturally on $\lie(\mu)$ by replacing the letters of a bracketed 
permutation. Interchanging the roles of the brackets reveals that for every $\nu, \mu \in 
\wcomp_{n-1}$, such 
that $\nu$ is a rearrangement of $\mu$, we have that $\lie(\nu)\simeq_{\sym_n}\lie(\mu)$.
In particular, if $\mu$ has a single nonzero component, $\lie(\mu)$ is isomorphic to $\lie(n)$. 
If $\mu$ has at most two nonzero components then $\lie(\mu)$ is isomorphic to $\lie(n,i)$ for some 
$0 \le i \le n-1$.

For $\mu \in \wcomp_n$ define its \emph{support} $\supp(\mu)=\{j\in \PP \,\mid\,
\mu(j) \ne 0\}$ and for a subset $S \subseteq \PP$ let
\begin{align*}
 \lie_S(n):=\bigoplus_{\substack{\mu \in \wcomp_{n-1}\\ \supp(\mu) \subseteq S}}
\lie(\mu).
\end{align*}
Note that $\lie_k(n):=\lie_{[k]}(n)$ generalizes $\lie(n)=\lie_1(n)$ and
$\lie_2(n)$.

The isomorphisms (\ref{equation:introlie}) and (\ref{equation:introlie2}) provide a way to study the
algebraic objects $\lie(n)$ and $\lie_2(n)$ by applying poset topology techniques to $\Pi_n$ and
$\Pi_n^w$. In particular the dimensions of the modules can be read from the structure of the
posets and the bases for the cohomology of the posets can be directly translated into bases of
$\lie(n)$ and $\lie_2(n)$.
It is then natural to look for a poset whose cohomology allows us to analyze 
$\lie_k(n)$.

\subsection{The poset of weighted partitions}
We introduce a more general poset of weighted partitions $\Pi_n^k$ where
the weights are given by weak compositions supported in $[k]$. A \emph{(composition)-weighted 
partition}
of
$[n]$ is a set $\{B_1^{\mu_1},B_2^{\mu_2},...,B_t^{\mu_t}\}$ where $\{B_1,B_2,...,B_t\}$ is a set
partition of $[n]$ and $\mu_i \in \wcomp_{|B_i|-1}$ with $\supp(\mu_i) \subseteq [k]$. 
For $\nu,\mu
\in \wcomp$, we say that $\mu\le \nu$ if $\mu(i)\le \nu(i)$
for every $i$. Since weak compositions are infinite vectors we can use component-wise addition 
and subtraction, for instance, we denote by $\nu + \mu$, the weak composition defined by 
$(\nu + \mu)(i):=\nu(i) + \mu(i)$.

 The \emph{poset of weighted partitions} $\Pi_n^{k}$  is the set of weighted partitions of
$[n]$ with  order relation given by 
$\{A_1^{\mu_1},A_2^{\mu_2},...,A_s^{\mu_s}\}\le\{B_1^{\nu_1}, B_2^{\nu_2},...,B_t^{\nu_t}\}$ if the 
following
conditions hold:

\begin{itemize}
 \item $\{A_1,A_2,...,A_s\} \le \{B_1,B_2,...,B_t\}$ in $\Pi_n$ and,
 \item If $B_j=A_{i_1}\cup A_{i_2}\cup ... \cup A_{i_l} $ then 
 $\nu_j \ge (\mu_{i_1} + \mu_{i_2} + ... + \mu_{i_l})$ and $|\nu_j-(\mu_{i_1} + \mu_{i_2} + ... +
\mu_{i_l})|=l-1$
\end{itemize}

Equivalently, we can define the covering relation  
$\{A_1^{\mu_1},A_2^{\mu_2},...,A_s^{\mu_s}\} \lessdot \{B_1^{\nu_1}, 
B_2^{\nu_2},...,B_t^{\nu_t}\}$ by:

\begin{itemize}
 \item $\{A_1,A_2,\dots,A_s\} \lessdot \{B_1,B_2,\dots,B_t\}$ in $\Pi_n$
 \item if $B_j=A_{i_1}\cup A_{i_2}$ then $\nu_j- (\mu_{i_1} + \mu_{i_2})= \bf{e}_r$ for some $r \in 
[k]$, where
$\bf{e}_r$ is
the weak composition
 with a $1$ in the $r$-th component and $0$ in all other entries.
 \item  if $B_k = A_i$ then $\nu_k = \mu_i$.

 \end{itemize}
 
In Figure~\ref{fign3k3}  below the set brackets and commas have been omitted.

\begin{figure}[ht]

\begin{center} 
\begin{tikzpicture}[line join=bevel,scale=0.8]

\tikzstyle{every node}=[inner sep=0pt, scale=0.6, minimum width=4pt]
 \node (n102030) at (0,0)  {$1^{(0,0,0)}| 2^{(0,0,0)}| 3^{(0,0,0)}$};

  \node (n12i30) at (-8,2) {$12^{(1,0,0)}| 3^{(0,0,0)}$};
  \node (n13i20) at (-6,2) {$13^{ (1,0,0)}| 2^{ (0,0,0)}$};
  \node (n1023i) at (-4,2)  {$1^{(0,0,0)}| 23^{(1,0,0)}$};

 \node (n12j30) at (-2,2)  {$12^{(0,1,0)}| 3^{(0,0,0)}$};
 \node (n13j20) at (0,2)  {$13^{(0,1,0)}| 2^{(0,0,0)}$};
  \node (n1023j) at (2,2) {$1^{(0,0,0)}| 23^{(0,1,0)}$};

\node (n12k30) at (4,2)  {$12^{ (0,0,1)}| 3^{(0,0,0)}$};
 \node (n13k20) at (6,2)  {$13^{(0,0,1)}| 2^{(0,0,0)}$};
  \node (n1023k) at (8,2) {$1^{(0,0,0)}| 23^{(0,0,1)}$};

 \node (n123ii) at (-6,4){$123^ {(2,0,0)}$};
 \node (n123ij) at (-3,4){$123^ {(1,1,0)}$};
 \node (n123ik) at (-1,4){$123^ {(1,0,1)}$};
 \node (n123jj) at (1,4){$123^ {(0,2,0)}$};
 \node (n123jk) at (3,4){$123^ {(0,1,1)}$};
 \node (n123kk) at (6,4){$123^ {(0,0,2)}$};

 \draw (n123ii) -- (n1023i) ;
 \draw (n123ii)-- (n13i20);
  \draw (n123ii) -- (n12i30);  

\draw (n123jj) -- (n1023j) ;
 \draw (n123jj)-- (n13j20);
  \draw (n123jj) -- (n12j30);  

\draw (n123kk) -- (n1023k) ;
 \draw (n123kk)-- (n13k20);
  \draw (n123kk) -- (n12k30); 

 \draw (n123ik) -- (n1023i) ;
 \draw (n123ik)-- (n13i20);
  \draw (n123ik) -- (n12i30);  
\draw (n123ik) -- (n1023k) ;
 \draw (n123ik)-- (n13k20);
  \draw (n123ik) -- (n12k30);

 \draw (n123ij) -- (n1023i) ;
 \draw (n123ij)-- (n13i20);
  \draw (n123ij) -- (n12i30);  
\draw (n123ij) -- (n1023j) ;
 \draw (n123ij)-- (n13j20);
  \draw (n123ij) -- (n12j30);
	
 \draw (n123jk) -- (n1023j) ;
 \draw (n123jk)-- (n13j20);
  \draw (n123jk) -- (n12j30);  
\draw (n123jk) -- (n1023k) ;
 \draw (n123jk)-- (n13k20);
  \draw (n123jk) -- (n12k30);

  \draw [] (n13i20) -- (n102030);
  \draw [] (n12i30)-- (n102030);
  \draw [] (n1023i)--(n102030);
 \draw [] (n1023j)  --  (n102030);
  \draw [] (n12j30) --  (n102030);
  \draw [] (n13j20) --(n102030);
 \draw [] (n1023k)  --  (n102030);
  \draw [] (n12k30) --  (n102030);
  \draw [] (n13k20) --(n102030);
\end{tikzpicture}
\end{center}
\caption[]{Weighted partition poset for $n=3$ and $k=3$}\label{fign3k3}
\end{figure}
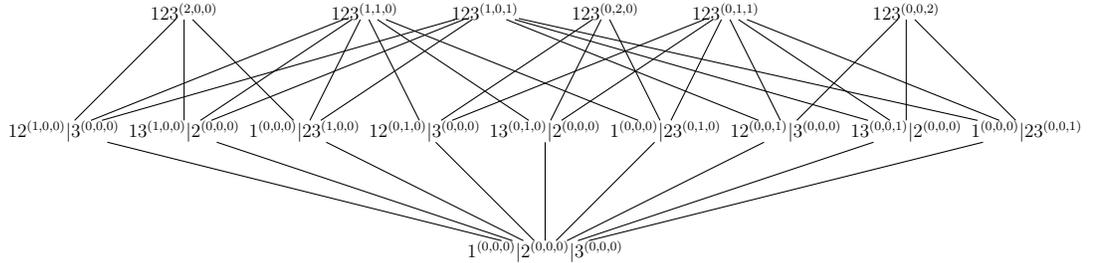

The poset $\Pi_n^k$ has a minimum element
\begin{align*}
\hat 0:=
\{\{1\}^{(0,\dots,0)},\{2\}^{(0,\dots,0)},\dots, \{n\}^{(0,\dots,0)}\} 
\end{align*}
and $\genfrac{(}{)}{0pt}{1}{k+n-2}{n-1}$
maximal elements 
\begin{align*}
\{[n]^\mu\}\text{ for } \mu \in \wcomp_{n-1} \text{ and } \supp(\mu)\subseteq [k].
\end{align*}
We write each maximal element $\{[n]^\mu\}$ as
$[n]^\mu$ for simplicity. Note that for every $\nu, 
\mu \in \wcomp_{n-1}$ with $\supp(\nu),\supp(\mu) \subseteq [k]$, such 
that $\nu$ is a rearrangement of $\mu$,
the maximal intervals 
$[\hat 0, [n]^{\nu}]$  and $[\hat 0, [n]^{\mu}]$ are isomorphic to each other.
In particular, if $\mu$ has a single nonzero component, these intervals are isomorphic to $\Pi_n$. 
If $\supp(\mu) 
\subseteq [2]$ then these intervals are isomorphic to maximal intervals of $\Pi_n^w$. 
Indeed, we can think of a composition $(i,n-1-i)$ as being the weight $i$ in the poset $\Pi_n^w$ in 
\cite{DleonWachs2013a}. Hence $\Pi_n^1 \simeq \Pi_n$ and $\Pi_n^2 \simeq 
\Pi_n^w$.

\subsection{Main results}
The symmetric group acts naturally on each open interval $(\hat 0, [n]^{\mu})$.
Using Wachs' technique in Section~\ref{section:isomorphism} we give an explicit isomorphism 
that proves the following theorem.
\begin{theorem}\label{theorem:liekiso}For $\mu \in \wcomp_{n-1}$,
 \begin{align*}  
 \lie(\mu) \simeq_{\sym_n}  \widetilde H^{n-3}((\hat 0, [n]^{\mu}))  \otimes \sgn_n . 
\end{align*}
\end{theorem}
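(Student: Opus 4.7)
The plan is to adapt the approach of Wachs \cite{Wachs1998}, as refined for $k=2$ in \cite{DleonWachs2013a}, to the multibracketed setting. The interval $[\hat 0, [n]^\mu]$ has length $n-1$, so the order complex of the open interval $(\hat 0, [n]^\mu)$ has dimension $n-3$, and $\widetilde H^{n-3}((\hat 0, [n]^\mu))$ is the top cohomology. A maximal chain of $(\hat 0, [n]^\mu)$ records $n-1$ successive binary merges of blocks in which the $j$-th cover step attaches a bracket of some type $r_j\in [k]$ to the pair being merged. Hence a maximal chain is precisely a rooted binary tree $T$ on leaf set $[n]$ whose internal nodes carry bracket-type labels from $[k]$ realizing the composition $\mu$, together with a linear extension of the internal-node tree order.

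I will define a map $\alpha\colon C^{n-3}((\hat 0, [n]^\mu)) \to \lie(\mu) \otimes \sgn_n$ on the basis of maximal chains by sending a chain to the bracketed permutation read off from its underlying labeled binary tree, multiplied by the sign of the linear extension of internal nodes relative to a fixed reference order. Any two linear extensions of the same tree differ by swaps of independent merges, and the antisymmetry of each bracket combined with the sign twist by $\sgn_n$ ensures that distinct linear extensions of the same tree produce the same element in the target. In particular, maximal chains sharing the same underlying labeled binary tree all have the same image under $\alpha$.

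The heart of the proof is to verify that $\alpha$ kills coboundaries, so that it descends to a well-defined map on $\widetilde H^{n-3}((\hat 0, [n]^\mu))\otimes\sgn_n$. Each coboundary relation is indexed by a maximal chain of $(\hat 0, [n]^\mu)$ with one interior element removed, the deletion creating a rank-$2$ open subinterval $(x,y)$; the relation demands that the signed sum of $\alpha$ over all ways of refilling the gap vanish in $\lie(\mu)\otimes\sgn_n$. When the two atomic merges in $(x,y)$ involve disjoint pairs of blocks, the subinterval is Boolean of rank $2$ and the relation is immediate by bilinearity. Otherwise three blocks $A, B, C$ are merged in two steps carrying bracket labels $r$ and $s$: when $r=s$ the three maximal chains through $(x,y)$ reproduce exactly the Jacobi identity \eqref{relation:lb2} for the bracket of type $r$, and when $r\neq s$ the six resulting chains, suitably grouped, yield the mixed Jacobi relation \eqref{relation:lb3} for the pair $(r,s)$. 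This generalizes the analysis of \cite{DleonWachs2013a} to arbitrary $k$.

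The map $\alpha$ is $\sym_n$-equivariant by construction and, after factoring through the coboundaries, is surjective onto $\lie(\mu)\otimes\sgn_n$ since every generator of $\lie(\mu)$ lifts to at least one maximal chain. To conclude that the induced map is an isomorphism I match dimensions: the EL-shellability of $\widehat{\Pi_n^k}$, established elsewhere in the paper, identifies $\dim \widetilde H^{n-3}((\hat 0, [n]^\mu))$ with the number of ascent-free maximal chains of the open interval, and this count will coincide with the cardinality of the multicolored Lyndon basis of $\lie(\mu)$ constructed in parallel. The main obstacle will be the book-keeping in the coboundary analysis: with $k$ compatible brackets there are many cases for the pair of types appearing in a rank-$2$ interval, and one must check that signs and groupings reproduce both the standard and the mixed Jacobi identities uniformly across all bracket-type pairs.
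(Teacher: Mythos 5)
Your overall architecture is the right one and matches the paper in spirit: maximal chains of $(\hat 0, [n]^\mu)$ are parametrized by labeled colored binary trees with linear extensions, independent-merge swaps are absorbed by the sign twist (this is the content of Lemma~\ref{lemma:52}), and the coboundary relations coming from a rank-$2$ open subinterval with a triple merge yield exactly antisymmetry, the single-bracket Jacobi identity, and the mixed Jacobi identity, depending on whether the two cover steps carry the same or different colors. That coboundary analysis is essentially what the paper does in passing from the relations of Theorem~\ref{theorem:binarybasishomology} to those of Proposition~\ref{proposition:binarybasislie}.

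The gap is in your last step. You propose to get injectivity of the induced map by ``matching dimensions,'' equating $\dim\widetilde H^{n-3}((\hat 0,[n]^\mu))$ (which is $|\lyn_\mu|$ by the EL-labeling, Corollary~\ref{corollary:muintervalslyn}) with ``the cardinality of the multicolored Lyndon basis of $\lie(\mu)$.'' But the fact that the colored Lyndon trees form a \emph{basis} of $\lie(\mu)$ is not available independently; in the paper it is Theorem~\ref{theorem:lyndonbasis}, whose proof \emph{uses} Theorem~\ref{theorem:liehomisomorphism}. A straightening argument with the Lie relations alone shows only that the colored Lyndon trees \emph{span} $\lie(\mu)$, i.e.\ $\dim\lie(\mu)\le|\lyn_\mu|$, which combined with surjectivity of your $\alpha$ gives the same one-sided inequality twice and does not close the argument. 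The paper avoids this circularity by arguing at the level of presentations rather than dimensions: $\lie(\mu)$ is presented on $\BT_\mu$ with relations \eqref{relation:1}, \eqref{relation:3}, \eqref{relation:5} essentially by definition (Proposition~\ref{proposition:binarybasislie}, after reducing general compatibility to pairwise compatibility via Proposition~\ref{proposition:kcompatible2compatible}), and $\widetilde H^{n-3}((\hat 0,[n]^\mu))$ is presented on $\BT_\mu$ with relations \eqref{relation:1h}, \eqref{relation:3h}, \eqref{relation:5h} (Theorem~\ref{theorem:binarybasishomology}). The ``only these relations'' direction on the cohomology side is what requires real work; it is Proposition~\ref{proposition:lyndononlyrelations}, proved by a straightening algorithm that reduces any chain to a linear combination of Lyndon chains using only the listed relations, together with the fact (from the EL-labeling, a purely cohomology-side input) that the Lyndon chains are linearly independent. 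Once both presentations are known to be complete, the sign-twisted map $\varphi([T,\sigma])=\sgn(\sigma)\sgn(T)\bar\c(T,\sigma)$ carries generators to generators and relations to relations bijectively, hence is an isomorphism. To repair your proof you should replace the dimension count with this presentation-matching step, or equivalently prove, using only the coboundary relations, that the kernel of your $\alpha$ on top cochains is exactly the coboundary space.
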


Theorem \ref{theorem:liekiso} is a generalization of equations (\ref{equation:introlie}) and 
(\ref{equation:introlie2}). It reduces to equation (\ref{equation:introlie}) when 
$\supp(\mu)\subseteq [1]$ and to equation (\ref{equation:introlie2}) when $\supp(\mu)\subseteq [2]$.
We use Theorem \ref{theorem:liekiso} to give information about $\lie(\mu)$ by studying the 
algebraic and combinatorial properties of the poset $\Pi_n^k$.

In \cite{DotsenkoKhoroshkin2010} Dotsenko and Khoroshkin prove using operad-theoretic 
techniques that the operad related to $\lie_k(n)$ is Koszul. This implies using Vallette's theory  
of operadic partition posets \cite{Vallette2007} that the maximal intervals $[\hat 0, [n]^{\mu}]$ 
of $\Pi_n^k$ are Cohen-Macaulay. In Section~\ref{section:ellabeling} we prove a stronger 
property.

\begin{theorem}\label{theorem:elk}
 The poset $\widehat{\Pi_n^k}:= \Pi_n^k \cup \{\hat{1}\}$ is EL-shellable and hence Cohen-Macaulay. 
Consequently, for each
$\mu \in \wcomp_{n-1}$,
the order complex $\Delta((\hat{0},[n]^{\mu}))$ has the homotopy type of a wedge of
$(n-3)$-spheres.

\end{theorem}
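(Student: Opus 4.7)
The plan is to construct an explicit EL-labeling on the Hasse diagram of $\widehat{\Pi_n^k}$ that simultaneously extends the classical Bj\"orner--Stanley EL-labeling of $\Pi_n \cup \{\hat 1\}$ and the EL-labeling of $\Pi_n^w \cup \{\hat 1\}$ from \cite{DleonWachs2013a} recalled in Theorem \ref{theorem:el2}. Each edge of the Hasse diagram of $\Pi_n^k$ corresponds to a cover in which two blocks $A_{i_1}, A_{i_2}$ (with $\min A_{i_1} < \min A_{i_2}$) merge with weight increment $\mathbf{e}_r$ for some $r \in [k]$. I would label such an edge by a pair encoding both the merge datum $\min A_{i_2}$ and the color index $r$, with a total order on labels defined lexicographically (the exact coordinate ordering to be chosen so that rising chains behave canonically). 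Edges $[n]^\mu \lessdot \hat 1$ would receive labels strictly greater than all labels in the lower part of the poset, distinguished by $\mu$ so that each top cover gets its own label.

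Next I would verify the EL-shellability condition: every closed interval $[x,y]$ of $\widehat{\Pi_n^k}$ admits a unique strictly increasing maximal chain, which moreover is lexicographically first among the maximal chains of $[x,y]$. The block-merging coordinate of the labeling inherits its behavior from the Bj\"orner--Stanley labeling of $\Pi_n$: projecting a rising chain of $\Pi_n^k$ onto its underlying set-partition chain yields a rising chain in $\Pi_n$, and such chains are already unique. The color coordinates along the chain must then follow a pattern that is both forced by the weight-conservation constraint $|\nu_j-\sum_\ell \mu_{i_\ell}|=l-1$ and realized in exactly one way. A swap argument on consecutive cover relations, comparing the two possible orderings of a ``merge-with-color'' pair, shows that any deviation from this canonical order introduces a lexicographic descent; uniqueness and minimality follow simultaneously. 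Intervals of the form $[x,\hat 1]$ reduce to intervals $[x,[n]^\mu]$ via the chosen labels of the top covers.

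The main obstacle is handling the variety of intermediate compositions that appear in an interval when $k\geq 3$. In the $k=2$ case of \cite{DleonWachs2013a} the two color classes are essentially symmetric and can be separated cleanly; for general $k$ one must manage multiple bracket types appearing in an interleaved fashion, and verify that swapping a cover of color $r$ with an adjacent cover of color $r'\neq r$ never creates an additional rising chain. This is where the lexicographic order on labels must be tuned so that greedy minimization at each step uniquely determines both the merge and the color, while still being consistent with the subposet structure of every interval (not just the maximal intervals $[\hat 0,[n]^\mu]$). Once the EL-labeling is in place, the standard consequences (see \cite{Bjorner1980,BjornerWachs1983,Wachs2007}) yield that $\widehat{\Pi_n^k}$ is Cohen--Macaulay and that each open interval $(\hat 0,[n]^\mu)$ has the homotopy type of a wedge of $(n-3)$-spheres, one for each ascent-free maximal chain in $[\hat 0,[n]^\mu]$.
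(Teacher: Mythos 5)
Your proposal commits to a \emph{total} lexicographic order on the edge labels (encoding merge datum and color) and treats the choice of coordinate ordering as a tunable detail. This is where the argument breaks down, and the failure is structural, not a deferred technicality. Consider the maximal interval $[\hat 0, [3]^{(1,1)}]$ in $\Pi_3^2$. The chain $\hat 0 \lessdot 12^{(1,0)}|3 \lessdot [3]^{(1,1)}$ has label word $(1,2)^1,\,(1,3)^2$, and the chain $\hat 0 \lessdot 13^{(0,1)}|2 \lessdot [3]^{(1,1)}$ carries the reversed word $(1,3)^2,\,(1,2)^1$; similarly the chains through $12^{(0,1)}|3$ and $13^{(1,0)}|2$ carry the word $(1,2)^2,\,(1,3)^1$ and its reversal. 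Under \emph{any} total order on labels, exactly one word from each reversal pair is strictly increasing, so this single rank-two interval already has at least two increasing maximal chains. No choice of coordinate priority, and no ``tuning'' of a linear label order, can repair this; the swap argument you invoke would only show that swapping a consecutive pair changes which of the two reversed words is rising, not that only one of them is.

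What the paper does instead (Definition~\ref{definition:posetoflabels} and the labeling that follows) is to make the label set $\Lambda_n^k = \Gamma_1 \oplus \Gamma_2 \oplus \cdots \oplus \Gamma_n$ a genuine \emph{partial} order: each $\Gamma_a$ is the direct product of the chain $a+1 < \cdots < n+1$ (second merge coordinate) with the chain $1 < \cdots < k$ (color), so that $(a,b)^u$ and $(a,c)^v$ are comparable only when $b \le c$ \emph{and} $u \le v$ hold simultaneously (or both reversed). In the example above, $(1,2)^2$ and $(1,3)^1$ are incomparable in $\Gamma_1$, which kills both offending chains and leaves the unique increasing chain with word $(1,2)^1,\,(1,3)^2$. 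This incomparability is exactly what a total order cannot supply. Note also that the $k=2$ labeling of $\Pi_n^w$ in \cite{DleonWachs2013a}, which you cite as your base case, is already of this form; it is not a lexicographic total order, so your proposal would not even recover the known $k=2$ result. Once you replace the total order by the poset $\Lambda_n^k$, the rest of your outline (projecting rising chains to $\Pi_n$, handling $[x,\hat 1]$ via the top-cover labels, and invoking Theorem~\ref{theorem:elth} and Corollary~\ref{corollary:elth} for the topological conclusions) tracks the paper's actual route (Theorem~\ref{theorem:ellabelingposet}), but as written the central construction does not produce an EL-labeling.
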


Using Vallette's theory, Theorem \ref{theorem:elk} gives a 
new proof of the fact that the operads $\lie_k$ and ${}^k\com$ considered in 
\cite{DotsenkoKhoroshkin2010} are 
Koszul.

The set of ascent-free maximal chains of this EL-labeling provides a basis for $\widetilde 
H^{n-3}((\hat{0},[n]^{\mu}))$ and hence, by the isomorphism of Theorem \ref{theorem:liekiso}, 
also a basis for $\lie(\mu)$. This basis is a multicolored generalization of the classical Lyndon 
basis for $\lie(n)$. 
We also construct a multicolored generalization of the classical comb basis for $Lie(n)$ and use 
our multicolored Lyndon basis to show that our construction does indeed yield a basis for 
$\lie(\mu)$.

We consider the generating function
\begin{align}\label{definition:stirlingeuleriansymmetric}
L_{n}(\xx):=\sum_{\mu \in \wcomp_{n}}\dim \lie(\mu)\,\xx^{\mu},
\end{align}
where $\xx^{\mu}=x_1^{\mu(1)}x_2^{\mu(2)}\cdots$. Since for any rearrangement $\nu$ of $\mu$ it 
happens that $\lie(\nu)\simeq_{\sym_n}\lie(\mu)$ it follows that 
(\ref{definition:stirlingeuleriansymmetric}) belongs to the ring of 
symmetric functions $\Lambda_{\ZZ}$. The following theorem gives a 
characterization of this 
symmetric function.

\begin{theorem}\label{theorem:compositionalinverse}
We have
  \begin{align*}
   \sum_{n\ge1}\sum_{\mu \in \wcomp_{n-1}}\dim 
\lie(\mu)\,\xx^{\mu}\frac{y^n}{n!} =\left [ \sum_{n\ge1}(-1)^{n-1} 
h_{n-1}(\xx)\frac{y^n}{n!} \right ] ^{<-1>},
  \end{align*}
  where $h_n$  is the complete homogeneous symmetric function and $(\cdot)^{<-1>}$ denotes the 
compositional inverse of a formal power series.
 \end{theorem}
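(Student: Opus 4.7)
The plan is to prove the equivalent functional identity $C(\xx,L(\xx,y))=y$, where
\begin{align*}
L(\xx,y) &:= \sum_{n\ge1}\sum_{\mu\in\wcomp_{n-1}}\dim\lie(\mu)\,\xx^\mu\,\frac{y^n}{n!},\\
C(\xx,y) &:= \sum_{n\ge1}(-1)^{n-1}h_{n-1}(\xx)\,\frac{y^n}{n!}.
\end{align*}
Both series have vanishing constant term and coefficient $1$ at $y^1$, so their compositional inverses exist. The bridge to the poset $\Pi_n^k$ is Theorem~\ref{theorem:liekiso} combined with the Cohen--Macaulayness provided by Theorem~\ref{theorem:elk}: for any $k$ with $\supp(\mu)\subseteq[k]$, Philip Hall's theorem together with Cohen--Macaulayness gives $\mu_{\Pi_n^k}(\hat0,[n]^\mu)=(-1)^{n-1}\dim\widetilde H^{n-3}((\hat0,[n]^\mu))=(-1)^{n-1}\dim\lie(\mu)$, reducing the theorem to a Möbius-function identity on the weighted partition posets.

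I would then expand
\begin{align*}
C(\xx,L(\xx,y))=\sum_{t\ge1}\frac{(-1)^{t-1}h_{t-1}(\xx)L(\xx,y)^t}{t!}
\end{align*}
using $h_{t-1}(\xx)=\sum_{\alpha\in\wcomp_{t-1}}\xx^\alpha$ together with the standard interpretation of $L^t/t!$ as the exponential generating function for unordered partitions of $[m]$ with $\lie$-weighted blocks. The coefficient of $\xx^\nu y^m/m!$ becomes
\begin{align*}
\sum_{t\ge1}(-1)^{t-1}\sum_{\substack{\{B_1,\ldots,B_t\}\vdash[m]\\ \mu_i\in\wcomp_{|B_i|-1},\,\alpha\in\wcomp_{t-1}\\ \alpha+\mu_1+\cdots+\mu_t=\nu}}\prod_{i=1}^t\dim\lie(\mu_i).
\end{align*}
The key observation is that the triples $(\{B_i\},(\mu_i),\alpha)$ in the inner sum are in natural bijection with the elements $\pi=\{B_1^{\mu_1},\ldots,B_t^{\mu_t}\}$ of the lower interval $[\hat0,[m]^\nu]$ in $\Pi_m^k$: the composition $\alpha=\nu-\sum_i\mu_i$ is precisely the new weight gained when $\pi$ is collapsed to $[m]^\nu$, and the constraint $|\alpha|=t-1$ is automatic from $|\nu|=m-1$ and $|\mu_i|=|B_i|-1$.

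Since $[\hat0,\pi]\cong\prod_i[\hat0,[|B_i|]^{\mu_i}]$, the Möbius function is multiplicative, so $\mu(\hat0,\pi)=\prod_i\mu(\hat0,[|B_i|]^{\mu_i})$. Substituting the formula from the first paragraph gives $\prod_i\dim\lie(\mu_i)=(-1)^{m-t}\mu(\hat0,\pi)$, and combining with the prefactor $(-1)^{t-1}$ collapses the coefficient to $(-1)^{m-1}\sum_{\pi\le[m]^\nu}\mu(\hat0,\pi)$, which by the defining identity $\sum_{\pi\le q}\mu(\hat0,\pi)=\delta_{\hat0,q}$ equals $(-1)^{m-1}\delta_{\hat0,[m]^\nu}$. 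This vanishes for $m\ge 2$ and equals $1$ for $m=1$ (forcing $\nu=(0,0,\ldots)$), which is exactly the coefficient of $y^1$ on the right-hand side. The main obstacle I anticipate is the careful verification of the bijection between these triples and elements of the lower interval together with the sign bookkeeping; once these are in place, the identity follows directly from the multiplicativity of $\mu$ on products of intervals in $\Pi_n^k$ and the basic Möbius recursion.
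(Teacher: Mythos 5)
Your proposal is correct, and it takes a genuinely different (dual) route from the paper. The paper first proves a M\"obius-function version (Theorem~\ref{theorem:compositionalinversemu}): it applies the recursion $\sum_{\alpha\le[n]^\mu}\bar\mu(\alpha,[n]^\mu)=\delta_{n,1}$, uses the isomorphism $[\alpha,[n]^\nu]\cong[\hat0,[|\alpha|]^{\nu-\mu(\alpha)}]$ (Proposition~\ref{proposition:upperlowerideals}(2)) to collapse upper intervals to smaller maximal intervals, and then invokes the compositional formula from Stanley as a black box to conclude that $\sum_n\sum_\mu\bar\mu(\hat0,[n]^\mu)\xx^\mu\frac{y^n}{n!}$ and $\sum_n h_{n-1}(\xx)\frac{y^n}{n!}$ are compositional inverses; the signs appear only at the end via $\bar\mu(\hat0,[n]^\mu)=(-1)^{n-1}\dim\lie(\mu)$. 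You instead expand $C\circ L$ explicitly, identify the coefficient of $\xx^\nu y^m/m!$ with a sum over elements $\pi\le[m]^\nu$ of $\Pi_m^k$, and use the product structure of lower intervals $[\hat0,\pi]\cong\prod_i[\hat0,[|B_i|]^{\mu_i}]$ (Proposition~\ref{proposition:upperlowerideals}(3)) together with multiplicativity of $\bar\mu$ to collapse to $(-1)^{m-1}\sum_{\pi\le[m]^\nu}\bar\mu(\hat0,\pi)=(-1)^{m-1}\delta_{\hat0,[m]^\nu}$. So the paper uses the ``M\"obius recursion from above'' and upper-interval isomorphisms, while you use the ``M\"obius recursion from below'' and lower-interval factorization; the two are exchanged under the fact that, for formal power series with zero constant term and unit linear coefficient, left and right compositional inverses agree. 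Your version carries the signs through cleanly and makes the appeal to the compositional formula explicit rather than citing it, which some readers may find more self-contained; the paper's version is shorter because the compositional formula is quoted. Both proofs rest on the same three pillars: Theorem~\ref{theorem:liekiso}, the Cohen--Macaulayness of $\widehat{\Pi_n^k}$, and Proposition~\ref{proposition:upperlowerideals}.
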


It follows from our construction of the multicolored Lyndon basis for $\lie(\mu)$ that the 
symmetric function $L_n(x)$ is $e$-positive; i.e., the coefficients of the expansion of $L_n(x)$ in 
the basis of elementary symmetric functions are all nonnegative.  We give various combinatorial 
interpretations of these coefficients in this paper. Two of the interpretations involve binary 
trees 
and two involve the Stirling permutations introduced by Gessel and Stanley in 
\cite{StanleyGessel1978}. We will now give one of the binary tree interpretations (Theorem 
\ref{theorem:dimensionscombtype}).  The others are given in Theorems 
\ref{theorem:dimensionslyndontype} and \ref{theorem:dimensionsofliekstirling}.

We say that a planar labeled binary 
tree with label set $[n]$ is 
\emph{normalized} 
if  the leftmost leaf of each subtree has the smallest label in the subtree. See Figure 
\ref{fig:normalized} for an example of a normalized tree and Section \ref{section:ascentfreechains} 
for the proper definitions. We denote the set of 
normalized binary trees with label set $[n]$ by $\nor_n$. 
\begin{figure}[ht]
  \begin{tikzpicture}[scale=0.5]
\tikzstyle{every node}=[draw,inner sep=1mm,scale=0.5]
\node[fill=blue!20,blue!20,draw,rectangle,rounded corners,rotate=-45, minimum width=280pt,minimum height=30pt,scale=0.8] at  (5.5,0.5) {};
\node[fill=blue!20,blue!20,draw,rectangle,rounded corners,rotate=-45, minimum width=130pt,minimum height=30pt,scale=0.8] at  (3,1) {};
\node[fill=blue!20,blue!20,draw,rectangle,rounded corners,rotate=-45, minimum width=30pt,minimum height=30pt,scale=0.8] at  (1,1) {};
\node[fill=blue!20,blue!20,draw,rectangle,rounded corners,rotate=-45, minimum width=30pt,minimum height=30pt,scale=0.8] at  (3,-1) {};

    \draw [circle] (1,1)  node (i1){$$};
    \draw [circle] (2,2)  node (i2){$$};
	   \draw [circle] (3,3)  node (i3){$$};
   \draw [circle] (7,-1)  node (i4){$$};
   \draw [circle] (8,-2)  node (i5){$$};
   \draw [circle] (4,0)  node (i6){$$};
   \draw [circle] (3,-1)  node (i7){$$};

\tikzstyle{every node}=[inner sep=1pt, minimum width=14pt,scale=0.7]

    \draw (0,0)  node (m){$1$};
    \draw (2,0)  node (l1){$3$};
    \draw (2,-2)  node (l2){$4$};
 	\draw (4,-2)  node (l3){$8$};
    \draw (5,-1)  node (l4){$5$};
  \draw (6,-2)  node (l5){$2$};
 	\draw (7,-3)  node (l6){$6$};
    \draw (9,-3)  node (l7){$7$};
  
    \draw (m) --  (i1) ;
    \draw (i1) --  (i2) ;
    \draw (i2) --  (i6) ;
    \draw (i6) --  (i7) ;
    \draw (i3) --  (i2) ;
	\draw (i3) --  (i4) ;
	\draw (i4) --  (i5) ;
    \draw (i1) --  (l1) ;
    \draw (i7) --  (l2) ;
	\draw (i7) --  (l3) ;
    \draw (i6) --  (l4) ;
	\draw (i4) --  (l5) ;
	\draw (i5) --  (l6) ;
	\draw (i5) --  (l7) ;

\end{tikzpicture}
 \caption{Example of a normalized tree}
 \label{fig:normalized}
\end{figure}
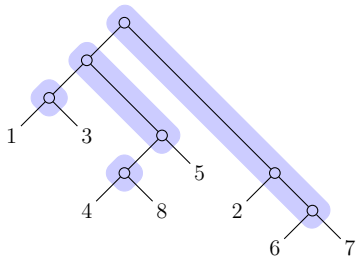

We associate a \emph{type} (or integer partition) to  each $\Upsilon \in \nor_n$ in the following 
way: Let $\pi^{\comb}(\Upsilon)$ be the finest (set) partition of the set of internal nodes of 
$\Upsilon$ satisfying
\begin{itemize}
\item for every pair of internal nodes $x$ and $y$ such that $y$ is a right child of $x$, 
$x$ 
and $y$ belong to the same block of $\pi^{\comb}(\Upsilon)$.
\end{itemize}
We define the \emph{comb type} $\comblambda(\Upsilon)$ of $\Upsilon$ to be the (integer) 
partition whose parts are the sizes of the blocks of $\pi^{\comb}(\Upsilon)$. In 
Figure \ref{fig:normalized} the associated partition is $\comblambda(\Upsilon)=(3,2,1,1)$.
The following theorem gives a direct method, alternative to Theorem 
\ref{theorem:compositionalinverse}, for computing the dimensions of $\lie(\mu)$.

\begin{theorem} \label{theorem:dimensionscombtype}
For all $n$,
\begin{align*}
  \sum_{\mu \in \wcomp_{n-1}}\dim \lie(\mu)\,\xx^{\mu}= \sum_{\Upsilon \in \nor_n} 
e_{\comblambda(\Upsilon)}(\xx),
 \end{align*} 
 where $e_{\lambda}$ is the elementary symmetric function associated with the partition 
$\lambda$.
 \end{theorem}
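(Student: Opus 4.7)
The plan is to invoke the multicolored comb basis for $\lie(\mu)$ constructed earlier in the paper and then perform the counting. In the single-bracket case the comb basis is indexed by normalized binary trees $\Upsilon \in \nor_n$; in the multicolored generalization each basis element is a pair $(\Upsilon,c)$ where $c$ assigns to each internal node of $\Upsilon$ a color in $\PP$ subject to a strict-increase condition along each right-leaning chain forming a block of $\pi^{\comb}(\Upsilon)$. Under this description the pair $(\Upsilon,c)$ belongs to the basis of $\lie(\mu)$ precisely when the multiset of colors used by $c$ equals $\mu$.

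Granting this, I would fix $n$ and sum $\xx^\mu$ over all basis elements across all $\mu \in \wcomp_{n-1}$. Swapping the order of summation,
\[
\sum_{\mu \in \wcomp_{n-1}} \dim \lie(\mu)\, \xx^\mu \;=\; \sum_{\Upsilon \in \nor_n} \sum_{c} \xx^{\mathrm{wt}(c)},
\]
where the inner sum ranges over all valid colorings $c$ of $\Upsilon$ with arbitrary weight and $\mathrm{wt}(c) \in \wcomp$ records the color multiset. For a fixed $\Upsilon$ the coloring constraint is local to each block of $\pi^{\comb}(\Upsilon)$, so the inner sum factors as a product over blocks. A block $B$, being a linearly ordered chain of $|B|$ internal nodes along which the colors must be strictly increasing, contributes exactly one term $x_{i_1} x_{i_2} \cdots x_{i_{|B|}}$ for each $|B|$-subset $\{i_1 < \cdots < i_{|B|}\}$ of $\PP$; these sum to $e_{|B|}(\xx)$. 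Multiplying across the blocks of $\pi^{\comb}(\Upsilon)$ gives $\prod_B e_{|B|}(\xx) = e_{\comblambda(\Upsilon)}(\xx)$ by definition of the comb type, and summing over $\Upsilon$ yields the theorem.

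The main obstacle is not the counting, which is elementary once the correct parametrization is in hand, but rather the structural description of the multicolored comb basis itself. Justifying that the basis consists of exactly the pairs $(\Upsilon,c)$ described, and that it respects the weight grading, requires the construction carried out earlier in the paper: one first produces the multicolored Lyndon basis from the ascent-free maximal chains of the EL-labeling of $\widehat{\Pi_n^k}$ (Theorem \ref{theorem:elk}), transfers it to $\lie(\mu)$ via the isomorphism of Theorem \ref{theorem:liekiso}, and then passes to the multicolored comb basis by a triangular change of basis. Once this is established, the identity of the theorem follows by the straightforward block-by-block factorization above.
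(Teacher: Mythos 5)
Your factorization step is correct and mirrors the paper's own bookkeeping: once you know that the colored combs with underlying normalized tree $\Upsilon$ are parametrized by choices of $|B|$ distinct colors per block $B$ of $\pi^{\comb}(\Upsilon)$ (one admissible assignment per choice, since colors must strictly decrease along each right-leaning chain), the generating function for a fixed $\Upsilon$ factors as $\prod_B e_{|B|}(\xx) = e_{\comblambda(\Upsilon)}(\xx)$, and summing over $\Upsilon \in \nor_n$ gives the right-hand side.

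The problem is the foundation you build on. You invoke ``the multicolored comb basis constructed earlier in the paper,'' but in the paper the logical order is reversed: Theorem~\ref{theorem:dimensionscombtype} is proved \emph{before} the comb basis (Theorem~\ref{theorem:combsarebasis}), and the proof that colored combs form a basis actually \emph{uses} the cardinality statement $\dim\widetilde H^{n-3}((\hat 0,[n]^\mu)) = |\comb_\mu|$ (Corollary~\ref{corollary:muintervalscomb}), which is equivalent to the theorem you are trying to prove. That corollary is obtained from the Lyndon count $\dim\lie(\mu)=|\lyn_\mu|$ together with the bijection $\lyn_\mu\cong\comb_\mu$ of Theorem~\ref{theorem:combbasiscardinality}, whose content is exactly Corollary~\ref{corollary:bijectiontypes}: the composite $\gamma^{-1}\xi\gamma$ (built from the $\nor_n\leftrightarrow\Q_{n-1}$ map and the AA$\leftrightarrow$TN bijection $\xi$ on Stirling permutations) is a bijection on $\nor_n$ sending $\lyndonlambda$ to $\comblambda$. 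Your proposed shortcut — that the passage from the Lyndon basis to the comb basis is a ``triangular change of basis'' — is not established anywhere in the paper. What the paper does prove is that colored combs \emph{span}, via a straightening argument on an inversion statistic; linear independence then comes from matching cardinalities, not from triangularity. Without a genuine proof of the triangularity claim, your argument either has a gap or silently re-uses the same Stirling-permutation bijection the paper uses.

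The paper's actual route is: (1) prove $\sum_\mu \dim\lie(\mu)\,\xx^\mu = \sum_{\Upsilon\in\nor_n} e_{\lyndonlambda(\Upsilon)}(\xx)$ directly from the colored Lyndon basis, which comes out of the ascent-free maximal chains of the EL-labeling (Theorem~\ref{theorem:dimensionslyndontype}); (2) transport $\lyndonlambda$ to $\comblambda$ via the normalized-tree/Stirling-permutation bijections of Propositions~\ref{proposition:propxi} and~\ref{proposition:propgamma}. If you want to make your argument independent of this, you would have to establish the comb basis without invoking the cardinality equality, which the paper does not do.
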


To prove Theorem \ref{theorem:dimensionscombtype} we use another normalized tree type 
$\lyndonlambda$, related to our colored Lyndon basis for $\lie(\mu)$, which come from the 
EL-labeling of $[\hat 0, [n]^\mu]$.  We use the colored Lyndon basis to show that Theorem 
\ref{theorem:dimensionscombtype} holds with $\comblambda$ replaced by $\lyndonlambda$.  We then 
construct a bijection on $\nor_n$ which takes $\lyndonlambda$ to $\comblambda$.   This bijection 
makes use of Stirling permutations and leads to two versions of Theorem 
\ref{theorem:dimensionscombtype}  involving Stirling permutations.

In terms of these combinatorial objects, the dimension of $\lie_k$ has a simple description as an 
evaluation of the symmetric 
function (\ref{definition:stirlingeuleriansymmetric}). 

\begin{corollary}
For all $n$ and $k$,
 \begin{align*}
  \dim \lie_k(n) = \sum_{\Upsilon \in \nor_{n}} e_{\comblambda(\Upsilon)}(\stackrel{k \text{ 
times}}{\overbrace{1,\dots,1}},0,0,\dots).
 \end{align*}
\end{corollary}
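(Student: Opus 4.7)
The plan is to deduce this corollary directly from Theorem~\ref{theorem:dimensionscombtype} by specializing the symmetric function identity at the point where $x_1=x_2=\cdots=x_k=1$ and $x_i=0$ for $i>k$. The corollary is essentially a specialization statement, so the work is almost entirely bookkeeping.

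First I would recall the definition
\begin{align*}
\lie_k(n) \;=\; \lie_{[k]}(n) \;=\; \bigoplus_{\substack{\mu \in \wcomp_{n-1}\\ \supp(\mu)\subseteq [k]}} \lie(\mu),
\end{align*}
so that taking dimensions gives
\begin{align*}
\dim\lie_k(n) \;=\; \sum_{\substack{\mu \in \wcomp_{n-1}\\ \supp(\mu)\subseteq [k]}} \dim\lie(\mu).
\end{align*}

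Next I would evaluate the left-hand side of the identity in Theorem~\ref{theorem:dimensionscombtype} at the chosen specialization, using the convention $0^0=1$. For a weak composition $\mu\in\wcomp_{n-1}$, the monomial $\xx^{\mu}=x_1^{\mu(1)}x_2^{\mu(2)}\cdots$ evaluates to $1$ if $\supp(\mu)\subseteq[k]$ and to $0$ otherwise, because any $\mu$ with $\mu(i)>0$ for some $i>k$ contributes a factor of $0$. Hence the left-hand side collapses to exactly $\dim\lie_k(n)$ by the displayed formula above.

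The right-hand side, under the same specialization, becomes $\sum_{\Upsilon\in\nor_n} e_{\comblambda(\Upsilon)}(1,\dots,1,0,0,\dots)$ with $k$ ones, which is the desired identity. There is no real obstacle: the content lies in Theorem~\ref{theorem:dimensionscombtype}, and the present corollary follows immediately from the observation that the decomposition $\lie_k(n)=\bigoplus_{\supp(\mu)\subseteq[k]}\lie(\mu)$ together with the fact that each $\lie(\mu)$ is, as an $\sym_n$-module, an isomorphism invariant of the underlying rearrangement class of $\mu$, translates the restriction $\supp(\mu)\subseteq[k]$ precisely into the principal specialization in $k$ variables.
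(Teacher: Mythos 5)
Your proof is correct and matches the paper's (implicit) argument: the corollary is obtained by specializing Theorem~\ref{theorem:dimensionscombtype} at $x_1=\cdots=x_k=1$, $x_i=0$ for $i>k$, so that the left-hand side collapses to $\sum_{\supp(\mu)\subseteq[k]}\dim\lie(\mu)=\dim\lie_k(n)$ via the direct sum decomposition $\lie_k(n)=\bigoplus_{\supp(\mu)\subseteq[k]}\lie(\mu)$. (The closing remark about $\lie(\mu)$ being a rearrangement-class invariant is not actually needed for the specialization; it is what makes $L_n(\xx)$ symmetric, but the evaluation argument goes through regardless.)
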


From equation (\ref{equation:introprod}), it follows that the polynomial $\sum_{i=0}^{n-1}\dim
\lie_2(n,i)\,t^i$ has only negative real roots and hence it has a property known as 
\emph{$\gamma$-positivity}, i.e, when written in the basis
$t^i(1+t)^{n-1-2i}$ it has positive coefficients. Note that 
this polynomial is actually $L_{n-1}(t,1,0,0,\dots)$.
The property of $\gamma$-positivity of this polynomial is a consequence of the $e$-positivity of 
$L_n(\xx)$.

A more general question is to understand the representation of $\sym_n$ on $\lie(\mu)$. The 
characters of the representation of $\sym_n$ on $\lie(n)$ and $\lie_2(n)$ were computed in
(\cite{Brandt1944, Stanley1982} and \cite{DotsenkoKhoroshkin2007}). Here we consider
\begin{align}\label{equation:generatingfunctionofcharacters}
 \sum_{\mu \in \wcomp_{n-1}}\ch \lie(\mu)\,\xx^{\mu},
\end{align}
where $\ch \lie(\mu)$ denotes the Frobenius characteristic in variables $\yy=(y_1,y_2,\dots)$ of
the representation $\lie(\mu)$.  The generating function of  
(\ref{equation:generatingfunctionofcharacters}) belongs to the ring  $\Lambda_{R}$ of 
symmetric functions in $\yy$ with coefficients in the ring of symmetric functions $R=\Lambda_{\QQ}$ 
in $\xx$. 
The following result generalizes Theorem \ref{theorem:compositionalinverse}.

\begin{theorem}\label{theorem:lierepresentation}
 We have that 
\begin{align*}
 \sum_{n\ge 1} \sum_{\mu \in \wcomp_{n-1}}\ch \lie(\mu)\,\xx^{\mu}=-\Bigl 
(-\sum_{n\ge
1}h_{n-1}(\xx)h_{n}(\yy)
\Bigr)^{[-1]},
\end{align*}
where $(\cdot)^{[-1]}$ denotes the plethystic inverse in 
 the ring of symmetric power series in $\yy$ 
with coefficients in the ring $\Lambda_{\QQ}$ of symmetric functions in $\xx$.
\end{theorem}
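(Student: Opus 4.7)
The plan is to deduce Theorem \ref{theorem:lierepresentation} from the poset topology of $\Pi_n^k$, by combining the $\sym_n$-module isomorphism of Theorem \ref{theorem:liekiso} with the Cohen-Macaulay property established in Theorem \ref{theorem:elk}. First I would apply Theorem \ref{theorem:liekiso} to rewrite $\ch \lie(\mu)$ as $\ch(\widetilde H^{n-3}((\hat 0, [n]^\mu)) \otimes \sgn_n)$. Since the maximal intervals of $\Pi_n^k$ are Cohen-Macaulay, only the top reduced cohomology of each open interval $(\hat 0, [n]^\mu)$ is nonzero, so the Hopf trace formula converts $\ch \widetilde H^{n-3}$ into an equivariant reduced Euler characteristic, which the Whitney-cohomology formalism expresses as a signed equivariant sum over chains in the full poset.

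The second step is to exploit the product structure of lower intervals: for a weighted partition $x = \{B_1^{\mu_1}, \ldots, B_t^{\mu_t}\}$ in $\Pi_n^k$, the interval $[\hat 0, x]$ factors as $\prod_{i=1}^t [\hat 0_{B_i}, B_i^{\mu_i}]$, where each factor is isomorphic, after relabeling the block $B_i$ by $[|B_i|]$, to a maximal interval in $\Pi_{|B_i|}^k$. Under the natural $\sym_n$-action by relabeling, this product decomposition translates into plethystic multiplication in the $\yy$-variables, with the $\xx$-weights added block by block. This is the species/operadic reformulation of the principle that ``the poset of $\mathcal{P}$-partitions encodes the operad $\mathcal{P}$.''

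Third, the maximal elements $[n]^\mu$ for $\mu \in \wcomp_{n-1}$ each contribute the trivial $\sym_n$-character weighted by $\xx^\mu$, so their total contribution is $\sum_{n \ge 1} h_{n-1}(\xx)\, h_n(\yy)$. Sundaram's equivariant formula for the top cohomology of a Cohen-Macaulay poset then collapses the signed sum over chains into the plethystic inverse relation claimed, in direct generalization of the argument in \cite{DleonWachs2013a} for $\Pi_n^w$, with ${}^k\com$ (the multibracketed commutative operad of \cite{DotsenkoKhoroshkin2010}) playing the role of $\doscom$.

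The main obstacle is the sign and convention bookkeeping: the $\sgn_n$-twist from Theorem \ref{theorem:liekiso}, the alternating signs from the Hopf trace formula, and the $-(-\,\cdot\,)^{[-1]}$ convention in the plethystic inverse must all be aligned, and one must verify that the plethysm is taken only in the $\yy$-variables while the $\xx$-weights behave as formal parameters. As a consistency check, the exponential specialization $h_n(\yy) \mapsto y^n/n!$ should recover Theorem \ref{theorem:compositionalinverse}, while further specializing $\xx = (1,0,0,\ldots)$ should recover the classical inversion formula between the Frobenius characteristics of $\lie$ and $\com$.
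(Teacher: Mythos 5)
Your proposal follows the same route as the paper: it combines the isomorphism of Theorem \ref{theorem:liekiso}, the Cohen-Macaulayness from Theorem \ref{theorem:elk}, Sundaram's Whitney-cohomology recursion applied to the (dual) maximal intervals, the factorization of lower intervals into products of smaller maximal intervals, and a wreath-product/plethysm identity \`a la Wachs to assemble everything into the plethystic inverse of $\sum_{n\ge 1} h_{n-1}(\xx)h_n(\yy)$, with the $\sgn_n$-twist handled via $\ch(V\otimes\sgn_n)=\omega(\ch V)$ and $f[-g]=(-1)^{\deg f}\omega(f)[g]$. This is exactly the paper's argument (Lemma \ref{lemma:sundaramsum}, Lemmas \ref{lemma:equivariantrecursivity} and \ref{lemma:isomodules}, Theorem \ref{theorem:wachsplethysm}, and then Theorem \ref{theorem:cohomologyrepresentation} $\Rightarrow$ Theorem \ref{theorem:lierepresentation}), so the proposal is correct and takes essentially the same approach.
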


To prove Theorem \ref{theorem:lierepresentation} we use Theorem \ref{theorem:liekiso} and the  
Whitney (co)homology technique developed by Sundaram in \cite{Sundaram1994}, and further developed 
by Wachs in \cite{Wachs1999}.

The paper is organized as follows: 
In Section \ref{section:isomorphism} we describe generating sets of $\lie(\mu)$ and 
$\widetilde{H}^{n-3}((\hat{0},[n]^{\mu}))$  in terms of labeled binary 
trees with colored internal nodes. The description makes 
transparent the isomorphism of Theorem \ref{theorem:liekiso}, which we prove using Wachs' 
technique, as in \cite{Wachs1998} and \cite{DleonWachs2013a}.
In Section \ref{section:homotopytype} we use the 
recursive definition of the M\"obius invariant of $\Pi_n^k$ to prove an analogue of Theorem 
\ref{theorem:compositionalinverse} for the poset $\Pi_n^k$, that together with the results of 
Section \ref{section:isomorphism} imply Theorem \ref{theorem:compositionalinverse}. We also
prove Theorem \ref{theorem:elk} and we give a description of the 
ascent-free maximal chains of the EL-labeling.
Theorems \ref{theorem:compositionalinverse} and the 
version of Theorem 
\ref{theorem:dimensionscombtype} in which
$\comblambda$ is replaced by $\lyndonlambda$, are presented in Section \ref{section:binarystirling} 
as corollaries of results in the previous chapters.
We prove 
Theorem \ref{theorem:dimensionscombtype} and we use the language of Stirling permutations to give
two additional combinatorial descriptions of the dimension of $\lie(\mu)$.
In Section \ref{section:combinatorialbases} we summarize some of the results of 
Sections \ref{section:homotopytype} and \ref{section:binarystirling} on the colored Lyndon basis 
and we present the colored comb basis for $\lie(\mu)$ and 
$\widetilde{H}^{n-3}((\hat{0},[n]^{\mu}))$. We also discuss bases for 
$\widetilde{H}^{n-2}(\Pi_n^k\setminus\{\hat{0}\})$ in terms of the two families of colored 
binary trees.  We 
present in Section \ref{section:whitneynumbers} results 
on Whitney numbers of the first and second kind and on Whitney cohomology. In section 
\ref{section:frobeniuscharacteristic} we prove 
Theorem \ref{theorem:lierepresentation}.

Some of the results in this work are generalizations of results in
\cite{DleonWachs2013a} and we refer the reader to that article for the context and some 
of the proofs.

\section{The isomorphism $\lie(\mu) \simeq_{\sym_n}\widetilde H^{n-3}((\hat 0, [n]^{\mu}))\otimes 
\sgn_n$}\label{section:isomorphism}
In this section we establish the isomorphism of Theorem \ref{theorem:liekiso}. We will
use this isomorphism to study $\lie(\mu)$ by understanding the algebraic and combinatorial
properties of the maximal intervals $[\hat{0},[n]^{\mu}]$ of $\Pi_n^k$. In \cite{DleonWachs2013a} 
Wachs and the author  gave a proof of the
isomorphism for the case $k=2$, analogous to a proof for the case $k=1$ in \cite{Wachs1998}. For 
the sake of completeness in the discussion, we reproduce some of the steps but
omit the proofs that are similar to the ones in \cite{DleonWachs2013a} and
\cite{Wachs1998}.

\subsection{A combinatorial description of $\lie(\mu)$}\label{section:genliek}
We give a description of the generators and relations of $\lie(\mu)$. A \emph{tree} is a simple
connected graph that is free of cycles. A tree is said to be \emph{rooted} if it has a distinguished
node or \emph{root}. For an edge $\{x,y\}$ in a tree $T$ we say that $x$ is the \emph{parent} of 
$y$, or $y$ is the \emph{child} of $x$, if $x$ is in the unique path from $y$ to the root. A node
that has children is said to be \emph{internal}, otherwise we call a node without children a 
\emph{leaf}. A rooted tree is said to be \emph{planar} if for every internal node its set of 
children has been totally ordered. In the following we will be only considering trees that are 
rooted and planar and so when using the word tree we mean a planar rooted tree.

A \emph{binary tree} is a tree for which every internal node has a left and a
right child. A \emph{colored binary tree} is a binary tree
for which each internal node $x$ has been assigned an element $\clr(x) \in \PP$. 
For a colored binary tree $T$ with $n$ leaves and $\sigma \in \sym_n$, we define the 
\emph{labeled colored binary tree} $(T,\sigma)$ to be the colored tree $T$ whose $j$th leaf from 
left to right has been labeled $\sigma(j)$. For $\mu \in \wcomp_{n-1}$  we denote by $\BT_{\mu}$ 
the set of labeled colored binary trees with $n$ leaves and $\mu(j)$ internal nodes with color $j$ 
for each $j$. We call these trees \emph{$\mu$-colored binary trees}. We will often denote a 
labeled binary tree by $\Upsilon=(T,\sigma)$. If $\Upsilon$ is a colored labeled binary tree, we 
use $\widetilde \Upsilon$ to denote its underlying uncolored 
labeled binary tree.
It will also be convenient to consider trees whose label 
set is more general than $[n]$.  For a finite subset $A$ of positive integers with $|A|=|\mu|+1$, 
let $\BT_{A,\mu}$ be the 
set of
$\mu$-colored binary trees whose leaves are labeled by a permutation of $A$.
If $(S,\alpha) \in  \BT_{A,\mu}$ and $(T,\beta) \in 
\BT_{B,\nu}$, where $A$ and $B$ are disjoint finite sets, and $j \in \PP$ then
$(S,\alpha) \substack{j \\ \wedge} (T,\beta)$ denotes the tree in $\BT_{A\cup B,\mu+\nu+\bf{e_j}}$ 
whose \emph{left subtree} is $(S,\alpha)$, \emph{right subtree} is $(T,\beta)$, and the color of 
the root is $j$.

We can represent the bracketed permutations that generate $\lie(\mu)$ with labeled colored
binary trees.  More precisely, let $(T_1,\sigma_1)$ and $(T_2,\sigma_2)$ be the left and right
labeled subtrees of the root $r$ of $(T,\sigma)\in \BT_{\mu}$.  Then define recursively

\begin{align*}
[T,\sigma]= \left\{ 
  \begin{array}{l l}
   \biggl [[T_1,\sigma_1],[T_2,\sigma_2] \biggr ]_j & \quad \text{if $\clr(r)=j$ and $n>1$}\\
    \sigma & \quad \text{if $n=1$.}\\
  \end{array} \right .
\end{align*}
Clearly $[T,\sigma]$ is a bracketed permutation of $\lie(\mu)$. 
See Figure \ref{fig:coloredbinarytree}.

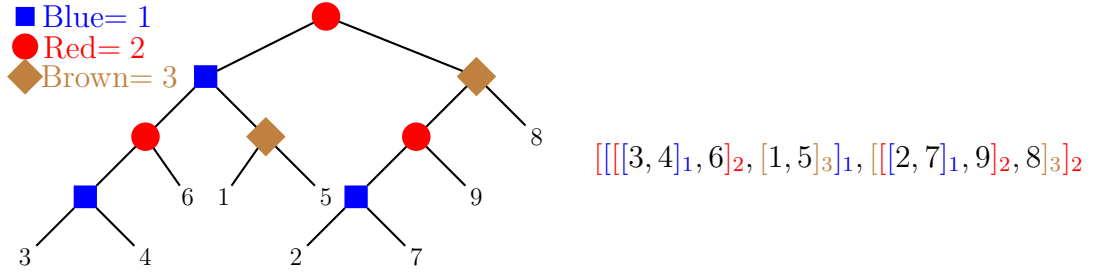
\begin{figure}
        \centering
        \begin{subfigure}[b]{0.6\textwidth}
                \centering
                \usetikzlibrary{shapes,snakes}

\begin{tikzpicture}[thick,scale=0.8]

 \draw [color=blue] (2.2,4)  node (blue){ Blue$=1$};
\draw [circle,color=red] (2.15,3.5)  node (red){ Red$=2$};
 \draw [color=brown] (2.4,3)  node (brown){ Brown$=3$};

\tikzstyle{every node}=[fill, draw,inner sep=4pt, minimum width=1pt,scale=0.8]
    
    \draw [color=blue] (1,4)  node (b){};
\draw [circle,color=red] (1,3.5)  node (r){};
    \draw [diamond,color=brown] (1,3)  node (g){};

\tikzstyle{every node}=[fill,draw,inner sep=1pt, minimum width=10pt,scale=0.8]

    \draw [circle,color=red] (6,4)  node (i1){8};
    \draw [diamond, color=brown] (8.5,3)  node (i2){7};
    \draw [circle,color=red] (7.5,2)  node (i3){6};
    \draw [color=blue] (6.5,1)  node (i4){5};
    \draw [color=blue] (4,3)  node (i5){4};
    \draw [diamond, color=brown] (5,2)  node (i6){3};
    \draw [circle,color=red] (3,2)  node (i7){2};
    \draw [color=blue] (2,1)  node (i8){1};
\tikzstyle{every node}=[inner sep=1pt, minimum width=14pt,scale=0.8]

    \draw (4.3,1)  node (l1){1};
    \draw (5.5,0)  node (l2){2};
    \draw (1,0)  node (l3){3};
    \draw (3,0)  node (l4){4};
    \draw (6,1)  node (l5){5};
    \draw (3.7,1)  node (l6){6};
    \draw (7.5,0)  node (l7){7};
    \draw (9.5,2)  node (l8){8};
    \draw (8.5,1)  node (l9){9};

    \draw (i1) --  (i2) ;
    \draw (i1) --  (i5) ;
    \draw (i2) --  (i3) ;
    \draw (i2) --  (l8) ;
    \draw (i3) --  (i4) ;
    \draw (i3) --  (l9) ;
    \draw (i4) --  (l7) ;
    \draw (i4) --  (l2) ;
    \draw (i5) --  (i6) ;
    
    \draw (i5) --  (i7) ;
    \draw (i6) --  (l5) ;
    \draw (i6) --  (l1) ;
    \draw (i7) --  (l6) ;
    \draw (i7) --  (i8) ;
    \draw (i8) --  (l3) ;
    \draw (i8) --  (l4) ;
\end{tikzpicture}
        \end{subfigure}~ 
        \begin{subfigure}[b]{0.55\textwidth}
                \centering
                 \begin{tikzpicture}[thick,scale=0.8,baseline=50]

\tikzstyle{every node}=[inner sep=1pt, minimum width=14pt,scale=1]

    \draw (0,4)  node
(v){${\color{red}[}{\color{blue}[}{\color{red}[}{\color{blue}[}3,4{\color{blue}]_1},6{
\color{red}]_2}, 
{\color{brown}[}1,5{\color{brown}]_3}{\color{blue} ]_1},{\color{brown}[}
{\color{red}[}{\color{blue}[}2,7{\color{blue}]_1},9{\color{red}]_2},8{\color{brown}]_3}{\color{red}
]_2}$};

\end{tikzpicture}
        \end{subfigure}
  \caption{Example of a labeled colored binary tree $(T,346152798) \in \BT_{(3,3,2)}$\,\, and \,\,
$[T,346152798]\in \lie(3,3,2)$}
  \label{fig:coloredbinarytree}

  \end{figure}

Recall that we call a set $B$ of Lie brackets on a vector space \emph{compatible} if any 
linear combination of 
the brackets in $B$ is a Lie bracket.
As it turns out the description of the relations in $\lie(\mu)$ are simplified by the following 
proposition.

\begin{proposition}\label{proposition:kcompatible2compatible}
 A set of Lie brackets is compatible if and only if the brackets in the set are pairwise compatible.
\end{proposition}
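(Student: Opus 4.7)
The forward direction is immediate from the definition: if every linear combination $\sum_{j}c_j[\cdot,\cdot]_j$ is a Lie bracket, then choosing all $c_j = 0$ except for two indices shows that each pair of brackets is compatible.

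For the nontrivial direction, suppose the brackets in $B = \{[\cdot,\cdot]_j\}_{j \in J}$ are pairwise compatible, and fix scalars $(c_j)_{j \in J}$ (all but finitely many zero). Set $[\cdot,\cdot] := \sum_j c_j [\cdot,\cdot]_j$. Antisymmetry is linear in the bracket, so it follows at once from antisymmetry of each $[\cdot,\cdot]_j$. The content is Jacobi. My plan is to expand
\begin{equation*}
J(x,y,z) := [x,[y,z]] + [z,[x,y]] + [y,[z,x]]
\end{equation*}
bilinearly as
\begin{equation*}
J(x,y,z) = \sum_{i,j} c_i c_j \,M_{i,j}(x,y,z),
\end{equation*}
where $M_{i,j}(x,y,z) := [x,[y,z]_j]_i + [z,[x,y]_j]_i + [y,[z,x]_j]_i$. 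The diagonal terms $M_{i,i}$ vanish by the ordinary Jacobi identity for $[\cdot,\cdot]_i$. The goal is then to show that $M_{i,j} + M_{j,i} = 0$ for $i \neq j$, i.e.\ the mixed Jacobi identity \eqref{relation:lb3} holds between any two brackets in $B$.

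To extract this from pairwise compatibility, I would apply the same expansion to the bracket $[\cdot,\cdot]_i + [\cdot,\cdot]_j$, which is a Lie bracket by hypothesis. Its Jacobi identity reads
\begin{equation*}
0 = M_{i,i}(x,y,z) + M_{j,j}(x,y,z) + M_{i,j}(x,y,z) + M_{j,i}(x,y,z),
\end{equation*}
and since $M_{i,i} = M_{j,j} = 0$, we conclude $M_{i,j} + M_{j,i} = 0$, which is exactly the mixed Jacobi relation. Substituting back, the off-diagonal contributions to $J(x,y,z)$ pair up and cancel, so $J(x,y,z) = 0$ and $[\cdot,\cdot]$ is a Lie bracket.

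The only subtlety is the bookkeeping in the bilinear expansion of $[x,[y,z]]$ when both the inner and outer bracket are linear combinations, but since each $[\cdot,\cdot]_j$ is bilinear this is routine. No obstacle of substance arises; the argument is essentially the observation that Jacobi for a sum of bilinear operations decomposes into a sum of self-Jacobi terms and mixed-Jacobi terms, and pairwise compatibility kills exactly the latter.
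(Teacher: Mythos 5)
Your proof is correct and follows essentially the same route as the paper's: expand the Jacobi expression of $\sum_j c_j[\cdot,\cdot]_j$ bilinearly into the $M_{i,j}$ summands, kill the diagonal terms by Jacobi for each bracket, and kill the off-diagonal pairs $M_{i,j}+M_{j,i}$ by the mixed Jacobi identity. The only cosmetic difference is that you re-derive the mixed Jacobi identity from pairwise compatibility by applying Jacobi to $[\cdot,\cdot]_i + [\cdot,\cdot]_j$, whereas the paper invokes that equivalence (recorded as relation~(\ref{relation:lb3}), citing Dotsenko--Khoroshkin and Liu) directly.
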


\begin{proof}
 Assume that the brackets $\{[\cdot,\cdot]_j \mid j \in S\}$ are pairwise compatible. 
Hence for  any $i,j \in S$ we have that the relation (\ref{relation:lb3}) holds.
Now for scalars $\alpha_j \in \kk$ and a finite subset $\{i_1,\dots,i_k\}\subseteq S$ define 
\[
 {\color{black}\langle} \cdot, \cdot {\color{black}\rangle}=\sum_{j=1}^k \alpha_j 
[\cdot,\cdot]_{i_j}.
\]

By relations (\ref{relation:lb2}) and (\ref{relation:lb3}) and bilinearity of the brackets , we have

\begin{align*}
0&= \sum_{j=1}^k \alpha_{j}^2 ([x,[ y, z ]_{j}]_{j}+[z,[ x,y ]_{j}]_{j}+[y,[ z, 
x]_{j}]_{j})\\
&+\sum_{l<j} \alpha_{l}\alpha_{j} ([x,[ y, z ]_{i_j}]_{i_l}+[z,[ x,y ]_{i_j}]_{i_l}+[y,[ z, 
x]_{i_j}]_{i_l}\\
&\qquad \qquad+[ x,[y,z]_{i_l} ]_{i_j} + [ z,[x,y]_{i_l} ]_{i_j} +[y,[z,x]_{i_l}]_{i_j})\\
&= \sum_{l,j=1}^k \alpha_{l}\alpha_{j} [ x, [ y,z ]_{i_l} ]_{i_j} +  \alpha_{l}\alpha_{j}[ 
z,[ x,y]_{i_l}
]_{i_j}+ \alpha_{l}\alpha_{j} [ y,[z,x]_{i_l} ]_{i_j}\\
&=\langle x, \langle y,z \rangle \rangle + \langle z,\langle x,y\rangle \rangle+\langle
y,\langle z,x\rangle \rangle .
\end{align*}
This implies that ${\color{black}\langle} \cdot, \cdot {\color{black}\rangle}$ satisfies relation
(\ref{relation:lb2}). It follows from the definition that ${\color{black}\langle} \cdot,
\cdot {\color{black}\rangle}$ also satisfies the relation (\ref{relation:lb1}) and hence it is a Lie
bracket. 

For the converse note, from the definition of compatibility, that all the brackets in a 
compatible set of Lie brackets are pairwise compatible.\end{proof}

Thus we see that $\lie(\mu)$ is subject only to the relations (\ref{relation:lb1}) and
(\ref{relation:lb2}), for each bracket $j$, and (\ref{relation:lb3}) for any pair of brackets 
$i \neq j \in [k]$. If the characteristic of $\kk$ is not $2$  we can even say 
that $\lie(\mu)$ is subject only to relations (\ref{relation:lb1}) and (\ref{relation:lb3}) 
for 
any pair of brackets $i,j \in [k]$ (including $i=j$). 

We denote by $\Upsilon_1\substack{j \\ \wedge}\Upsilon_2$, the labeled colored binary
tree whose left subtree is $\Upsilon_1$, right subtree is $\Upsilon_2$ and root color  is  $j$,
with $j\in \PP$. If $\Upsilon$ is a labeled colored binary tree then  
$\alpha(\Upsilon)\beta$ denotes a labeled colored binary tree with $\Upsilon$ as a subtree. 
The following result is an easy consequence of relations (\ref{relation:lb1}) and  
(\ref{relation:lb2}) for each $j$, and (\ref{relation:lb3}) for each pair $i\ne j$.

\begin{proposition}\label{proposition:binarybasislie}
The set $\{ [T,\sigma] \mid (T,\sigma) \in \BT_{\mu}\}$ is a generating set for $\lie(\mu)$,
subject only to the relations for $i \ne j \in \supp(\mu)$ 

  \begin{align}[\alpha(\Upsilon_1\substack{j \\ \wedge \\ \,}\Upsilon_2)\beta]+
[\alpha(\Upsilon_2\substack{j \\ \wedge \\ \,}\Upsilon_1)\beta]=0 
\label{relation:1}\end{align}

\begin{align}\label{relation:3}  [\alpha(\Upsilon_1\substack{j \\ \wedge \\
\,}(\Upsilon_2\substack{j \\ \wedge \\ \,\\ }\Upsilon_3))\beta]
&- [\alpha((\Upsilon_1\substack{j \\ \wedge \\ \,}\Upsilon_2)\substack{j \\ \wedge \\
\,}\Upsilon_3)\beta]\\ 
&-  [\alpha(\Upsilon_2\substack{j \\ \wedge \\ \,}(\Upsilon_1\substack{j \\ \wedge \\
\,}\Upsilon_3))\beta]
\nonumber  \\ &= 0\nonumber
\end{align}

\begin{align} \label{relation:5} 
[\alpha(\Upsilon_1\substack{j \\ \wedge}(\Upsilon_2\substack{i \\ \wedge}\Upsilon_3))\beta]
 &+ [\alpha(\Upsilon_1\substack{i \\ \wedge}(\Upsilon_2\substack{j \\
\wedge}\Upsilon_3))\beta]\\ \nonumber
- \,\, \,\, [\alpha((\Upsilon_1\substack{j \\ \wedge}\Upsilon_2)\substack{i \\
\wedge}\Upsilon_3)\beta]
&-  [\alpha((\Upsilon_1\substack{i \\ \wedge}\Upsilon_2)\substack{j \\
\wedge}\Upsilon_3)\beta] \\ \nonumber
-\,\,\,\, [\alpha(\Upsilon_2\substack{j \\ \wedge}(\Upsilon_1\substack{i \\
\wedge}\Upsilon_3))\beta]
&- [\alpha(\Upsilon_2\substack{i \\ \wedge}(\Upsilon_1\substack{j \\
\wedge}\Upsilon_3))\beta]\\ \nonumber
&= 0.
\end{align}

\end{proposition}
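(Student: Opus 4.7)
The plan is to exhibit the claimed set as the image of a natural surjection from the free $\kk$-vector space $\kk\BT_\mu$ onto $\lie(\mu)$, and then match up kernels. First I would define this map by the recursive rule $(T,\sigma)\mapsto [T,\sigma]$ from Section~\ref{section:genliek}. Since every bracketed permutation of type $\mu$ is obtained from some $(T,\sigma)\in\BT_\mu$ by this recursion (just read off the bracket structure as a binary tree and record the bracket indices as colors on internal nodes), the map is surjective, proving the generating set claim. The correspondence is actually a bijection between $\BT_\mu$ and the set of all $\mu$-type bracketed permutations of $[n]$, so the map is in fact a linear isomorphism from $\kk\BT_\mu$ onto the free multibracketed magma before imposing relations.

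Next I would verify that relations \eqref{relation:1}, \eqref{relation:3} and \eqref{relation:5} lie in the kernel. Each of these relations has the form $[\alpha(\cdot)\beta]$ where $\alpha(\cdot)\beta$ indicates plugging a subtree into a fixed tree context. By bilinearity of every bracket $[\cdot,\cdot]_j$ (applied repeatedly along the path from the distinguished subtree up to the root), the relation $[\alpha(\Xi)\beta]=0$ in $\lie(\mu)$ is equivalent to $\Xi=0$ in the relevant smaller $\lie(\mu')$ where $\Xi$ is the alternating sum of subtrees under consideration. But taking $\Xi$ equal to the antisymmetry alternating sum recovers \eqref{relation:lb1} for bracket $j$ applied to the elements $[\Upsilon_1]$ and $[\Upsilon_2]$, yielding \eqref{relation:1}; taking $\Xi$ as a Jacobi alternating sum recovers \eqref{relation:lb2} for bracket $j$, yielding \eqref{relation:3}; and the mixed Jacobi \eqref{relation:lb3} for the pair $i\neq j$ applied to $[\Upsilon_1],[\Upsilon_2],[\Upsilon_3]$ gives exactly \eqref{relation:5}. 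So all three tree relations hold in $\lie(\mu)$.

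Finally, for the converse, I would argue that these tree relations already generate all relations. By Proposition~\ref{proposition:kcompatible2compatible}, $\lie(\mu)$ is the quotient of the free multibracketed magma on $[n]$ by the two-sided ideal (with respect to all the bracket operations) generated by the instances of \eqref{relation:lb1}, \eqref{relation:lb2} for each $j\in\supp(\mu)$, and \eqref{relation:lb3} for each pair $i\neq j\in\supp(\mu)$, where the three variables $x,y,z$ range over arbitrary elements of the free magma. Writing arbitrary magma elements as $[\Upsilon_1],[\Upsilon_2],[\Upsilon_3]$ for labeled colored binary trees, and closing under the operation of embedding into a larger tree as a subtree rooted at an internal position (which is the role of $\alpha(\cdot)\beta$), we see that the generators of this ideal are precisely the displayed relations \eqref{relation:1}, \eqref{relation:3}, \eqref{relation:5}. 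Hence the quotient of $\kk\BT_\mu$ by \eqref{relation:1}, \eqref{relation:3}, \eqref{relation:5} is isomorphic to $\lie(\mu)$, as claimed.

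The main obstacle is purely bookkeeping: making precise the identification between a tree-with-context $\alpha(\Upsilon)\beta$ and an iterated bracket expression, so that the bilinearity-plus-locality argument in the middle paragraph is watertight. Once that identification is in place, each of \eqref{relation:1}, \eqref{relation:3}, \eqref{relation:5} is essentially the same statement as the corresponding original Lie relation, phrased in tree language.
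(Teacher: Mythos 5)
Your argument is correct and is exactly the unpacking of the paper's one-line justification (the paper states the proposition is ``an easy consequence'' of \eqref{relation:lb1}, \eqref{relation:lb2} for each $j$ and \eqref{relation:lb3} for each pair $i\neq j$, after Proposition~\ref{proposition:kcompatible2compatible} has reduced compatibility to pairwise compatibility, and gives no further proof). Two small imprecisions worth noting, neither fatal: the claim that $[\alpha(\Xi)\beta]=0$ is \emph{equivalent} to $\Xi=0$ overstates what is true and what is needed --- only the implication $\Xi=0\Rightarrow[\alpha(\Xi)\beta]=0$ is used; and \eqref{relation:3}, \eqref{relation:5} are not literally \eqref{relation:lb2}, \eqref{relation:lb3} but their rewritings modulo antisymmetry \eqref{relation:lb1}, so in the final step one should say explicitly that the ideal generated by $\{\eqref{relation:1},\eqref{relation:3},\eqref{relation:5}\}$ coincides with the one generated by the contextual instances of $\{\eqref{relation:lb1},\eqref{relation:lb2},\eqref{relation:lb3}\}$ precisely because \eqref{relation:1} is present in both and allows passage between the two normal forms.
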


\subsection{A generating set for $\widetilde H^{n-3}((\hat 0, [n]^{\mu}))$}\label{section:genhom}

The top dimensional cohomology of a pure poset $P$, say of length $\ell$, has a particularly simple
description.  Let $\MM(P)$ denote the set of maximal chains of $P$ and let $\MM^\prime(P)$ denote
the set of chains of length $\ell-1$.  We view the coboundary map $\delta$ as a map from the chain
space of  $P$ to itself, which takes chains of length $d$ to chains of length $d+1$ for all $d$. 
Since the image of $\delta$ on the top chain space (i.e. the space spanned by $\MM(P)$) is $0$, the
kernel is the entire top chain space. Hence top cohomology is the quotient of the space spanned by
$\MM(P)$ by the image of the space
spanned by $\MM^\prime(P)$.  The image of $\MM^\prime(P)$ is what we call the coboundary relations. 
We thus have the following presentation of the top cohomology  $$\widetilde H^{\ell}(P) = \langle 
\MM(P)
| \mbox{ coboundary relations} \rangle.$$ 

Recall that the \emph{postorder listing} of the internal nodes of a binary tree $T$ is defined
recursively as follows:  first list the internal nodes of the left subtree  in postorder, then list
the internal nodes of the right subtree in postorder, and finally list the root. The postorder
listing of the internal nodes of the binary tree of Figure~\ref{fig:coloredbinarytree}
is illustrated in Figure \ref{fig:postorder}.

Given $s$ blocks $A_1^{\mu_1},A_2^{\mu_2}, \dots, A_s^{\mu_s}$ in a weighted partition $\alpha$ and 
$\nu \in \wcomp_{s-1}$, by $\nu$-\emph{merge} these blocks we mean remove them from 
$\alpha$ and
replace them by the block $(\bigcup A_i )^{\sum \mu_i + \nu}$.  For  $(T,\sigma) \in \BT_{A,\mu}$, 
let $\pi(T,\sigma)=A^{\mu}$.   

\begin{definition}\label{definition:treechain} 
For $(T,\sigma) \in \BT_{\mu}$ and $t \in [n-1]$, let $T_{t}=L_{t}\substack{j_t \\ \wedge} R_{t}$
be the subtree of $(T,\sigma)$ rooted at 
the $t$th node listed in postorder.  The chain $\c(T,\sigma)\in \MM([\hat{0},[n]^{\mu}])$  is the 
one
whose rank $t$ weighted partition is obtained from the rank $t-1$ weighted partition by
$\bf{e_{j_t}}$-merging the blocks $\pi(L_{t})$ and
$\pi(R_{t})$. 
See Figure~\ref{fig:binarytreechain}.
\end{definition}

\begin{figure}[ht]
        \centering
        \begin{subfigure}[b]{0.5\textwidth}
                \centering
                   \usetikzlibrary{shapes,snakes}

\begin{tikzpicture}[thick,scale=0.8]

\tikzstyle{every node}=[draw,inner sep=1pt, minimum width=10pt,scale=0.8]

    \draw [circle,color=red] (6,4)  node (i1){8};
    \draw [diamond, color=brown] (8.5,3)  node (i2){7};
    \draw [circle,color=red] (7.5,2)  node (i3){6};
    \draw [color=blue] (6.5,1)  node (i4){5};
    \draw [color=blue] (4,3)  node (i5){4};
    \draw [diamond, color=brown] (5,2)  node (i6){3};
    \draw [circle,color=red] (3,2)  node (i7){2};
    \draw [color=blue] (2,1)  node (i8){1};
\tikzstyle{every node}=[inner sep=1pt, minimum width=14pt,scale=0.8]

    \draw (4.3,1)  node (l1){1};
    \draw (5.5,0)  node (l2){2};
    \draw (1,0)  node (l3){3};
    \draw (3,0)  node (l4){4};
    \draw (6,1)  node (l5){5};
    \draw (3.7,1)  node (l6){6};
    \draw (7.5,0)  node (l7){7};
    \draw (9.5,2)  node (l8){8};
    \draw (8.5,1)  node (l9){9};

    \draw (i1) --  (i2) ;
    \draw (i1) --  (i5) ;
    \draw (i2) --  (i3) ;
    \draw (i2) --  (l8) ;
    \draw (i3) --  (i4) ;
    \draw (i3) --  (l9) ;
    \draw (i4) --  (l7) ;
    \draw (i4) --  (l2) ;
    \draw (i5) --  (i6) ;
    
    \draw (i5) --  (i7) ;
    \draw (i6) --  (l5) ;
    \draw (i6) --  (l1) ;
    \draw (i7) --  (l6) ;
    \draw (i7) --  (i8) ;
    \draw (i8) --  (l3) ;
    \draw (i8) --  (l4) ;
\end{tikzpicture}
                \caption{$(T,\sigma) \in \BT_{(3,3,2)}$}
                \label{fig:postorder}
        \end{subfigure}%
        ~ 
        \begin{subfigure}[b]{0.5\textwidth}
                \centering
                 \begin{tikzpicture}[thick,scale=0.6]

 \tikzstyle{every node}=[inner sep=1pt, minimum width=10pt,scale=0.60]

    \draw (0,0)  node (c0){$1^{(0,0,0)}|2^{(0,0,0)}|3^{(0,0,0)}|4^{(0,0,0)}|5^{(0,0,0)}|6^{(0,0,0)}|7^{(0,0,0)}|8^{(0,0,0)}|9^{(0,0,0)}$};

    \draw (0,1)  node (c1){$1^{(0,0,0)}|2^{(0,0,0)}|34^{(1,0,0)}|5^{(0,0,0)}|6^{(0,0,0)}|7^{(0,0,0)}|8^{(0,0,0)}|9^{(0,0,0)}$};
    \draw [color=blue] (c0) -- (c1);

    \draw (0,2)  node (c2){$1^{(0,0,0)}|2^{(0,0,0)}|346^{(1,1,0)}|5^{(0,0,0)}|7^{(0,0,0)}|8^{(0,0,0)}|9^{(0,0,0)}$};
    \draw [color=red](c1) -- (c2);

    \draw (0,3)  node 
(c3){$15^{(0,0,1)}|2^{(0,0,0)}|346^{(1,1,0)}|7^{(0,0,0)}|8^{(0,0,0)}|9^{(0,0,0)}$};
    \draw [color=brown] (c2) -- (c3) ;

    \draw (0,4)  node (c4){$13456^{(2,1,1)}|2^{(0,0,0)}|7^{(0,0,0)}|8^{(0,0,0)}|9^{(0,0,0)}$};
    \draw [color=blue] (c3) -- (c4);

    \draw (0,5)  node (c5){$13456^{(2,1,1)}|27^{(1,0,0)}|8^{(0,0,0)}|9^{(0,0,0)}$};
    \draw [color=blue](c4) -- (c5);

    \draw (0,6)  node (c6){$13456^{(2,1,1)}|279^{(1,1,0)}|8^{(0,0,0)}$};
    \draw [color=red](c5) -- (c6);

    \draw (0,7)  node (c7){$13456^{(2,1,1)}|2789^{(1,1,1)}$};
    \draw [color=brown] (c6) -- (c7);

    \draw (0,8)  node (c8){$123456789^{(3,3,2)}$};
    \draw [color=red](c7) -- (c8);

\end{tikzpicture}
 
                \caption{$\c(T,\sigma)$}
                 \label{fig:binarytreechain}
        \end{subfigure}
 \caption{Example of postorder (internal nodes) of the binary tree of Figure
\ref{fig:coloredbinarytree} and the chain $\c(T,\sigma)$}
    \label{fig:binarytreepostorderandchain}

  \end{figure}
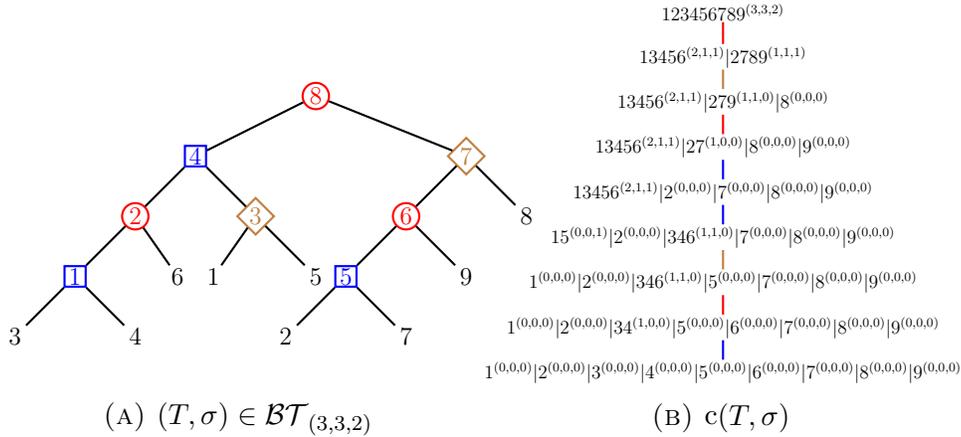

Not all maximal chains in $\MM([\hat{0},[n]^{\mu}])$ can be described as $\c(T,\sigma)$. For some 
maximal chains postordering of the internal nodes is not enough to describe the process of merging 
the blocks.  We need a more flexible construction in terms of linear
extensions (cf. \cite{Wachs1998}).  Let $v_1,\dots,v_n$ be the postorder listing of the internal
nodes of $T$.  A listing $v_{\tau(1)},v_{\tau(2)},...,v_{\tau(n-1)}$ of the internal nodes  such
that each node precedes its parent is said to be a \emph{linear extension} of $T$.  We will say that
the permutation $\tau$ induces the linear extension.   In particular, the identity permutation
$\varepsilon$ induces postorder which is a linear extension. Denote by $e(T)$  the set of 
permutations   that induce linear extensions of the internal nodes of  $T$.
We extend the construction of $\c(T,\sigma)$ by letting $\c(T,\sigma,\tau)$  be the chain in
$\MM([\hat{0},[n]^{\mu}])$ 
whose rank $t$ weighted partition is obtained from the rank $t-1$ weighted partition by
$e_{j_{\tau(t)}}$-merging the blocks
$\pi(L_{\tau(t)})$ and
$\pi(R_{\tau(t)})$, where $L_{i}\substack{j_i \\ \wedge}
R_{i}$ 
is the subtree rooted at $v_{i}$. 
In particular,
$\c(T,\sigma)=\c(T,\sigma,\varepsilon)$. From each maximal chain we can easily construct a binary
tree and a linear extension that encodes the merging
instructions along the chain. Thus, any maximal chain can be obtained in this form.
  
For any colored labeled binary tree $(T,\sigma)$, the chains obtained with any two different linear
extensions are cohomologous in the sense of Lemma \ref{lemma:52} below.

The number of inversions of a permutation  $\tau \in \sym_n$ is defined by $\inv(\tau) := |\{ (i,j) 
\mid
1 \le i < j \le n, \,\, \tau(i) > \tau(j) \}|$ and the sign of $\tau$ is defined by
$\sgn(\tau) := (-1)^{\inv(\tau)}$.   For $T \in \BT_{n,\mu}$, $\sigma \in \sym_n$, and $\tau \in 
e(T)$, 
write $\bar c(T,\sigma,\tau)$ for  $\overline{c(T,\sigma,\tau)}:= c(T,\sigma,\tau) \setminus \{\hat 
0, [n]^{\mu}\}$ and  $\bar c(T,\sigma)$ for  $\overline{c(T,\sigma)}:= c(T,\sigma) \setminus \{\hat 
0, 
[n]^{\mu}\}$.

\begin{lemma}[{cf. \cite[Lemma 5.2]{Wachs1998} }]\label{lemma:52}
 Let $(T,\sigma) \in \BT_{\mu}$, $\tau \in e(T)$. Then in
$\widetilde H^{n-3}((\hat{0},[n]^{\mu}))$
\[
\bar{\c}(T,\sigma,\tau)=\sgn(\tau)\bar{\c}(T,\sigma).
\]
\end{lemma}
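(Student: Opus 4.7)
The plan is to proceed by induction on $\inv(\tau)$. The base case $\inv(\tau)=0$ forces $\tau=\varepsilon$, and then $c(T,\sigma,\tau)=c(T,\sigma)$ by definition while $\sgn(\varepsilon)=1$, so the claim is tautological.

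For the inductive step, we first prove a local swap lemma: whenever both $\tau$ and $\tau s_i$ lie in $e(T)$ (where $s_i$ is the adjacent transposition swapping positions $i$ and $i+1$), we have
\[
\bar c(T,\sigma,\tau s_i) \;=\; -\,\bar c(T,\sigma,\tau) \qquad \text{in } \widetilde H^{n-3}((\hat 0,[n]^{\mu})).
\]
Granted this, the global statement follows from two standard facts about linear extensions: first, that any descent $\tau(i)>\tau(i+1)$ in a linear extension corresponds to two internal nodes $v_{\tau(i)},v_{\tau(i+1)}$ that are incomparable in the parent order of $T$ (otherwise the postorder positions of the ancestor and descendant would be inconsistent with $\tau$ being a linear extension), so $\tau s_i\in e(T)$; and second, that such a swap decreases $\inv$ by exactly $1$ and flips $\sgn$. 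Induction then propagates the identity from $\varepsilon$ to arbitrary $\tau$.

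The substance of the argument is the local swap lemma, which I prove using the coboundary presentation of top cohomology recalled in Section~\ref{section:genhom}. The chains $c(T,\sigma,\tau)$ and $c(T,\sigma,\tau s_i)$ agree at every rank except rank $i$: at that rank they differ only in whether the merge encoded by $v_{\tau(i)}$ or the merge encoded by $v_{\tau(i+1)}$ is performed first. Let $c'$ denote the common chain of length $n-3$ obtained by deleting the rank-$i$ element, and let $\bar c'$ be its restriction to the open interval. Since covers in $\Pi_n^{k}$ are precisely single $\mathbf{e}_j$-merges of two blocks, and since the two merges in question involve four pairwise distinct blocks of the rank-$(i-1)$ partition, the open interval between the rank-$(i-1)$ and rank-$(i+1)$ weighted partitions of $c'$ contains exactly two elements — the two orders in which the two independent merges can be performed. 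Consequently the coboundary of $\bar c'$ in the chain complex of $(\hat 0,[n]^{\mu})$ consists of exactly the two maximal chains $\bar c(T,\sigma,\tau)$ and $\bar c(T,\sigma,\tau s_i)$, appearing with opposite signs (the usual alternating sign at the missing rank). Since this coboundary vanishes in cohomology, the desired relation follows.

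The main obstacle is establishing that the rank-$i$ interval has \emph{exactly} two elements, rather than more. This is a direct unpacking of the covering relation of $\Pi_n^k$ given at the start of the paper: each cover contributes exactly one unit $\mathbf{e}_r$ to the weight of a single merged block and touches exactly two blocks. Because $v_{\tau(i)}$ and $v_{\tau(i+1)}$ are incomparable in $T$, the subtrees $L_{\tau(i)},R_{\tau(i)},L_{\tau(i+1)},R_{\tau(i+1)}$ yield four disjoint blocks already present at rank $i-1$, so the two merges commute and no other rank-$i$ refinement sits between the rank-$(i-1)$ and rank-$(i+1)$ partitions. Everything else in the proof is formal, as in \cite[Lemma~5.2]{Wachs1998} and the analogous argument in \cite{DleonWachs2013a}.
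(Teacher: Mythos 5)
Your overall strategy — induction on $\inv(\tau)$, reduced to a local swap lemma proved via the coboundary presentation of top cohomology — is exactly the argument of \cite[Lemma~5.2]{Wachs1998} to which the paper defers, so the approach is the intended one. Your analysis that the interval $(x_{i-1},x_{i+1})$ contains exactly two elements (because the merges for $v_{\tau(i)}$ and $v_{\tau(i+1)}$ involve four pairwise disjoint blocks and hence commute, and because the weights in the covering relation are forced), and your justification that a descent of a linear extension corresponds to a pair of $T$-incomparable internal nodes, are both correct.

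There is, however, a sign error in the key step which, taken literally, produces the wrong conclusion. Both $\bar c(T,\sigma,\tau)$ and $\bar c(T,\sigma,\tau s_i)$ arise from $\bar c'$ by inserting the missing rank-$i$ element into the \emph{same} gap; the $(-1)^{j}$ sign in the coboundary is governed by the position of the inserted vertex, which is identical for the two insertions, so the two chains appear in $\delta(\bar c')$ with the \emph{same} sign, not opposite signs. Thus $\delta(\bar c') = \pm\bigl(\bar c(T,\sigma,\tau) + \bar c(T,\sigma,\tau s_i)\bigr)$, and setting this to zero in $\widetilde H^{n-3}$ yields the correct local relation $\bar c(T,\sigma,\tau s_i) = -\bar c(T,\sigma,\tau)$. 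Had the two chains genuinely appeared with opposite signs, the coboundary relation would instead force $\bar c(T,\sigma,\tau s_i) = \bar c(T,\sigma,\tau)$, contradicting your stated lemma. (Alternation is across the \emph{choice of which rank is omitted}, not across the choices of what to insert at a fixed gap.) Also a small bookkeeping slip: the chain $\bar c'$ has length $n-4$, not $n-3$, since the order complex of $(\hat 0,[n]^{\mu})$ has dimension $n-3$.
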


The proof of Lemma \ref{lemma:52} is essentially the same as \cite[Lemma 5.2]{Wachs1998}.

For $\Upsilon=(T,\sigma)$, let $I(\Upsilon)$ denote the set of internal nodes of $\Upsilon$.
The following result generalizes  \cite[Theorem 4.4]{DleonWachs2013a}.

\begin{theorem}\label{theorem:binarybasishomology}
 The set $\{ \bar{\c}(T,\sigma) \mid  (T,\sigma) \in \BT_{\mu}\}$ is a generating set for 
$\widetilde
H^{n-3}((\hat 0, [n]^{\mu}))$,
subject only to the relations for $i \ne j \in \supp(\mu)$

  \begin{align}\bar{\c}(\alpha(\Upsilon_1\substack{j \\ \wedge \\
\,}\Upsilon_2)\beta)-(-1)^{|I(\Upsilon_1)||I(\Upsilon_2)|}\bar{\c}(\alpha(\Upsilon_2\substack{j
\\ \wedge \\ \,}\Upsilon_1)\beta)=0\label{relation:1h}\end{align}

\begin{align}\label{relation:3h}\qquad \bar{\c}(\alpha(\Upsilon_1\substack{j \\ \wedge
\\ \,}(\Upsilon_2\substack{j \\ \wedge \\ \,\\ }\Upsilon_3))\beta)
&+ (-1)^{|I(\Upsilon_3)|}\bar{\c}(\alpha((\Upsilon_1\substack{j \\ \wedge \\
\,}\Upsilon_2)\substack{j \\ \wedge \\ \,}\Upsilon_3)\beta)\\ 
  &+ (-1)^{|I(\Upsilon_1)||I(\Upsilon_2)|}\bar{\c}(\alpha(\Upsilon_2\substack{j \\ \wedge \\
\,}(\Upsilon_1\substack{j \\ \wedge \\ \,}\Upsilon_3))\beta)
\nonumber  \\ &= 0\nonumber
\end{align}

\begin{align} \label{relation:5h} 
 \bar{\c}(\alpha(\Upsilon_1\substack{j \\ \wedge}(\Upsilon_2\substack{i \\
\wedge}\Upsilon_3))\beta)
 &+ \bar{\c}(\alpha(\Upsilon_1\substack{i \\ \wedge}(\Upsilon_2\substack{j \\
\wedge}\Upsilon_3))\beta)\\ \nonumber
+ \,\, \,\, (-1)^{|I(\Upsilon_3)|} {\Big {(}} \bar{\c}(\alpha((\Upsilon_1\substack{j \\
\wedge}\Upsilon_2)\substack{i \\ \wedge}\Upsilon_3)\beta)
&+  \bar{\c}(\alpha((\Upsilon_1\substack{i \\ \wedge}\Upsilon_2)\substack{j \\
\wedge}\Upsilon_3)\beta) \Big) \\ \nonumber
+\,\,\,\, (-1)^{|I(\Upsilon_1)||I(\Upsilon_2)|}\Big(\bar{\c}(\alpha(\Upsilon_2\substack{j \\
\wedge}(\Upsilon_1\substack{i \\ \wedge}\Upsilon_3))\beta)
&+ \bar{\c}(\alpha(\Upsilon_2\substack{i \\ \wedge}(\Upsilon_1\substack{j \\
\wedge}\Upsilon_3))\beta)\Big) \\ \nonumber
&= 0.
\end{align}

\end{theorem}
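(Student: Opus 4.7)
The plan is to follow the approach of \cite[Theorem 4.4]{DleonWachs2013a}, extending it to accommodate the new mixed-Jacobi relation (\ref{relation:5h}). As recalled in the discussion preceding Lemma \ref{lemma:52}, for a pure poset of length $n-1$ top cohomology is presented as the span of maximal chains modulo the image of the coboundary on codimension-one chains. First I would establish that $\{\bar{\c}(T,\sigma) \mid (T,\sigma) \in \BT_\mu\}$ spans $\widetilde H^{n-3}((\hat 0,[n]^\mu))$: every maximal chain in the open interval arises as $\c(T,\sigma,\tau)$ for some $(T,\sigma) \in \BT_\mu$ and some $\tau \in e(T)$, as observed in the discussion following Definition \ref{definition:treechain}, and Lemma \ref{lemma:52} reduces such a chain to $\pm\bar{\c}(T,\sigma)$.

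Next I would verify that each of the three relations holds in cohomology. Relation (\ref{relation:1h}) is not strictly a coboundary relation; it is a direct consequence of Lemma \ref{lemma:52}, since the trees $\alpha(\Upsilon_1 \substack{j \\ \wedge \\ \,}\Upsilon_2)\beta$ and $\alpha(\Upsilon_2 \substack{j \\ \wedge \\ \,}\Upsilon_1)\beta$ determine the same sequence of weighted partitions but with postorder linear extensions differing by the block transposition exchanging the $|I(\Upsilon_1)|$ internal nodes contributed by $\Upsilon_1$ with the $|I(\Upsilon_2)|$ internal nodes contributed by $\Upsilon_2$, a permutation of sign $(-1)^{|I(\Upsilon_1)||I(\Upsilon_2)|}$. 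Relations (\ref{relation:3h}) and (\ref{relation:5h}) arise as coboundaries of specific codimension-one chains: in a maximal chain corresponding to the tree in the first summand of (\ref{relation:3h}), remove the rank at which the inner merge of $\pi(\Upsilon_2)$ and $\pi(\Upsilon_3)$ takes place; the three ways of completing this codimension-one chain correspond to the three choices of which pair of $\pi(\Upsilon_1),\pi(\Upsilon_2),\pi(\Upsilon_3)$ to $e_j$-merge first, and after normalizing each completion into a postorder chain via Lemma \ref{lemma:52} one recovers the three terms of (\ref{relation:3h}). Relation (\ref{relation:5h}) is obtained analogously from a codimension-one chain whose two flanking merges act on the same three blocks using two distinct colors $i \ne j$, producing six completions.

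The main obstacle will be establishing that these three relations are exhaustive. I would argue this by classifying every codimension-one chain in $(\hat 0,[n]^\mu)$ according to the structure of its two flanking merges: either (i) the merges act on four distinct blocks forming two disjoint pairs, (ii) the merges involve three distinct blocks and use the same color $j$, or (iii) the merges involve three distinct blocks and use two distinct colors $i \ne j$. In case (i), the two possible completions give the same labeled colored binary tree with linear extensions that differ by an adjacent transposition, so by Lemma \ref{lemma:52} the coboundary relation becomes $\bar{\c}+(-\bar{\c})=0$ and contributes nothing new. Case (ii) produces the three-term relation (\ref{relation:3h}) and case (iii) produces the six-term relation (\ref{relation:5h}). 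The most delicate step will be the sign bookkeeping in case (iii), which has no analogue in \cite{DleonWachs2013a}: the parities $(-1)^{|I(\Upsilon_3)|}$ and $(-1)^{|I(\Upsilon_1)||I(\Upsilon_2)|}$ appearing in (\ref{relation:5h}) emerge from applying Lemma \ref{lemma:52} to transform each of the six completions into its canonical postorder form before taking the signed sum dictated by the coboundary operator.
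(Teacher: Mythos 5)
Your plan for the spanning part and for checking that relations (\ref{relation:1h}), (\ref{relation:3h}), (\ref{relation:5h}) hold in cohomology matches the paper's (both defer to Lemma~\ref{lemma:52} and the coboundary picture, by analogy with \cite{Wachs1998} and \cite{DleonWachs2013a}). Where you genuinely diverge is in the ``subject only to'' part: you propose to classify all codimension-one chains of $(\hat 0,[n]^\mu)$ by the shape of the two flanking merges (four blocks in two disjoint pairs; three blocks with a repeated color; three blocks with two distinct colors) and argue that after normalizing completions to postorder via Lemma~\ref{lemma:52} one recovers exactly (\ref{relation:1h})--(\ref{relation:5h}). This is the classical Wachs-style argument. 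The paper instead takes a shortcut: it cites Proposition~\ref{proposition:lyndononlyrelations}, whose proof is a straightening algorithm on the inversion pair $(\valinv,\colinv)$ showing that relations (\ref{relation:1h})--(\ref{relation:5h}) alone suffice to rewrite any $\bar c(\Upsilon)$ in terms of $\{\bar c(\Upsilon')\mid\Upsilon'\in\lyn_\mu\}$, and then appeals to Theorem~\ref{theorem:lyndonbasis} (a consequence of the EL-labeling and Theorem~\ref{theorem:elth}) to conclude that this set is a basis of the right size. Thus the paper avoids classifying coboundaries entirely and sidesteps the case (iii) sign bookkeeping you flag as delicate; the price is that it leans on the EL-shellability machinery built in Section~\ref{section:ellabeling}, whereas your route is more self-contained.

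Two caveats on your sketch. First, invoking Lemma~\ref{lemma:52} to declare case (i) vacuous is slightly circular as stated: those disjoint-merge coboundary relations are precisely what prove Lemma~\ref{lemma:52}, so you should say instead that the case (i) coboundaries exactly encode the linear-extension identifications, which is what licenses the change of generators from arbitrary maximal chains to postorder chains $\bar c(T,\sigma)$, and that (\ref{relation:1h}) is then the residual part of this identification that survives on the indexing set $\BT_\mu$. Second, to actually conclude that (\ref{relation:1h})--(\ref{relation:5h}) generate the full kernel of $G=\bigoplus_{(T,\sigma)\in\BT_\mu}\kk\to\widetilde H^{n-3}((\hat 0,[n]^\mu))$, you must show not merely that each coboundary relation falls into one of the three cases, but that the translation of each such relation into a relation among postorder generators is a consequence of (\ref{relation:1h})--(\ref{relation:5h}); this requires tracking how the normalizations from Lemma~\ref{lemma:52} compose with the three-term and six-term coboundaries, which is where the sign verification lives. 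Both points are manageable but are exactly the bookkeeping the paper's dimension-count route was designed to avoid.
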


\begin{proof}
The proof is analogous to \cite[Theorem 4.3]{DleonWachs2013a}. The ``only" part 
follows from Proposition \ref{proposition:lyndononlyrelations}.
\end{proof}

 \subsection{The isomorphism}

 The symmetric group $\sym_n$ acts  naturally  on $\Pi_n^k$.  Indeed,  let 
$\sigma\in \sym_n$ act on
the weighted blocks of $\Pi_n^k$ by replacing each element $x$ of each weighted block of
$\pi$ with $\sigma(x)$.  Since the maximal elements  of $\Pi_n^k$ are fixed by each $\sigma\in
\sym_n$ and the order is preserved, each open interval $(\hat 0, [n]^{\mu})$ is an $\sym_n$-poset. 
Hence (see \cite{Wachs2007}) we have that $\tilde H^{n-3}((\hat 
0, [n]^{\mu}))$ is an $\sym_n$-module.
The symmetric group $\sym_n$ also acts naturally on $\lie(\mu)$.  Indeed, let $\sigma \in \sym_n$
act by replacing letter $x$ of a bracketed permutation with $\sigma(x)$.  Since this action
preserves the number of brackets of each type, $\lie(\mu)$ is an $\sym_n$-module for each $\mu \in 
\wcomp_{n-1}$.
In this section we obtain an explicit sign-twisted isomorphism between the $\sym_n$-modules 
$\tilde
H^{n-3}((\hat 0, [n]^{\mu}))$ and $\lie(\mu)$.

Define the \emph{sign} of a binary tree $T$ recursively by 
$$\sgn(T) = \begin{cases} 1 &\mbox{ if } I(T) = \emptyset \\ (-1)^{|I(T_2)|}\sgn(T_1)\sgn(T_2) &
\mbox{ if } T = T_1 \land T_2, \end{cases}
$$
where $I(T)$ is the set of internal nodes of the binary tree $T$.
The sign of a colored (labeled or unlabeled) binary tree is defined to be the sign of the binary 
tree obtained by
removing the colors and leaf labels.

\begin{theorem}\label{theorem:liehomisomorphism}
 For each $\mu \in \wcomp_{n-1}$,  there is an $\sym_n$-module isomorphism
$\varphi:\lie(\mu)\rightarrow \widetilde H^{n-3}((\hat 0, [n]^{\mu}))\otimes \sgn_{n}$ determined by
\[
\varphi([T,\sigma])=\sgn(\sigma)\sgn(T)\bar{\c}(T,\sigma),\]
for all $ (T,\sigma) \in \BT_{\mu}$.
\end{theorem}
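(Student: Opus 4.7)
The plan is to build $\varphi$ in two stages: first define it on the free $\kk$-module with basis $\BT_\mu$ by the given formula, then descend it to the quotient $\lie(\mu)$ and check that the image lies in the subspace of $\widetilde H^{n-3}((\hat 0,[n]^\mu))\otimes\sgn_n$ spanned by the classes $\bar c(T,\sigma)$. Propositions \ref{proposition:binarybasislie} and Theorem \ref{theorem:binarybasishomology} already describe both sides as quotients of the same free module modulo sets of relations, so once $\varphi$ is shown to send the Lie relations \eqref{relation:1}, \eqref{relation:3}, \eqref{relation:5} into the corresponding cohomology relations \eqref{relation:1h}, \eqref{relation:3h}, \eqref{relation:5h}, the induced map will automatically be a well-defined surjection. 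Crucially, the word "only" in Theorem \ref{theorem:binarybasishomology} says the cohomology relations are not stronger than the Lie relations after twisting by $\sgn(T)$, so the same matching, read in reverse, delivers injectivity.

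The heart of the proof is therefore a bookkeeping verification: for each relation type, I plug the generators $\sgn(\sigma)\sgn(T)\bar c(T,\sigma)$ into the Lie relation, and show that the resulting linear combination of $\bar c$'s equals a scalar multiple of the corresponding cohomology relation. The sign function $\sgn(T)$ is engineered precisely for this purpose. For instance, passing from a subtree $\Upsilon_1\substack{j\\ \wedge}\Upsilon_2$ to $\Upsilon_2\substack{j\\ \wedge}\Upsilon_1$ changes $\sgn(T)$ by a factor of $(-1)^{|I(\Upsilon_1)|}(-1)^{|I(\Upsilon_2)|}/(-1)^{|I(\Upsilon_1)|+|I(\Upsilon_2)|+\dots}$; a careful expansion shows the ratio is exactly $(-1)^{|I(\Upsilon_1)||I(\Upsilon_2)|}$, converting the antisymmetry relation \eqref{relation:1} into \eqref{relation:1h}. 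A similar, slightly longer, computation handles relations \eqref{relation:3} and \eqref{relation:5}, where the associator-type rearrangements in the trees produce the signs $(-1)^{|I(\Upsilon_3)|}$ and $(-1)^{|I(\Upsilon_1)||I(\Upsilon_2)|}$ on the nose. This sign-matching is the only step that requires genuine calculation; all three relations are strictly local (supported at a single internal node of $T$), so checking them for a generic $(T,\sigma)$ reduces to the subcase in which $T$ itself is the bracketing appearing in the relation.

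Having verified that $\varphi$ descends to a well-defined linear map $\lie(\mu)\to \widetilde H^{n-3}((\hat 0,[n]^\mu))\otimes\sgn_n$, surjectivity is immediate because each $\bar c(T,\sigma)$ with $(T,\sigma)\in \BT_\mu$ is hit (up to the invertible scalar $\sgn(\sigma)\sgn(T)$), and these span the target by Theorem \ref{theorem:binarybasishomology}. For injectivity, I use the same sign-matching computation read backwards: if $\sum_i a_i [T_i,\sigma_i]$ maps to $0$, then $\sum_i a_i\sgn(\sigma_i)\sgn(T_i)\bar c(T_i,\sigma_i)=0$ in $\widetilde H^{n-3}$, so by Theorem \ref{theorem:binarybasishomology} it is an integer combination of the cohomology relations \eqref{relation:1h}, \eqref{relation:3h}, \eqref{relation:5h}; pulling this back through $\varphi^{-1}$ (i.e., reabsorbing the sign $\sgn(T)$) expresses the original sum as a combination of Lie relations, so it was already zero in $\lie(\mu)$. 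Finally, $\sym_n$-equivariance is a one-line check: the action $\tau\cdot[T,\sigma]=[T,\tau\sigma]$ on the Lie side and the action on $\bar c(T,\sigma)\otimes \sgn_n$ both multiply by $\sgn(\tau)$ after accounting for the sign twist, since $\sgn(\tau\sigma)=\sgn(\tau)\sgn(\sigma)$.

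The main obstacle, and really the only nontrivial piece, is the sign bookkeeping in step one — particularly for the mixed Jacobi relation \eqref{relation:5}, which involves six terms and two distinct colors $i\ne j$. One needs to keep track of how the internal-node count of each $\Upsilon_t$ affects the recursive definition of $\sgn$ when $\Upsilon_t$ is swapped or reassociated, and confirm that after dividing through by a common factor $\sgn(\sigma)\sgn(T_0)$ for the ambient tree, the six coefficients are exactly those appearing in \eqref{relation:5h}. Once this is done cleanly the rest of the argument is formal.
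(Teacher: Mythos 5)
Your proposal is correct and takes essentially the same approach as the paper: the paper defines $\varphi$ on the free module spanned by $\BT_\mu$ and observes that it carries the generators and relations of Proposition \ref{proposition:binarybasislie} bijectively to those of Theorem \ref{theorem:binarybasishomology}, which yields a well-defined $\sym_n$-equivariant isomorphism of the two presented modules. The paper defers the sign verifications to \cite{DleonWachs2013a}, just as you defer them to ``a careful expansion,'' and correctly identifies the presentation-matching plus equivariance check as the whole argument.
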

 
\begin{proof}
The proof of this result is almost identical to the one in \cite{DleonWachs2013a}, so we omit
the details. The map $\varphi$ maps the generators and relations of Proposition
\ref{proposition:binarybasislie} onto the generators and relations of Proposition
\ref{theorem:binarybasishomology} and clearly respects the $\sym_n$ action.
\end{proof}

\section{Homotopy type of the intervals of $\Pi_n^k$}\label{section:homotopytype}
We assume familiarity  with basic terminology and results in poset topology. The reader is
referred to \cite[Section $3$ and Appendix]{DleonWachs2013a} for a  
review of poset
(co)homology.
For further poset topology terminology  not defined here the reader could also visit 
\cite{Stanley2012} and 
\cite{Wachs2007}. 

For $u  \le
v$ in
a poset $P$,  the open interval $\{w \in P \mid u < w < v\}$ is denoted  by $(u,v)$ and the closed
interval $\{w \in P \mid u \le w \le v\}$ by $[u,v]$. A poset is said to be \emph{bounded} if it 
has a
minimum element $\hat 0$ and a maximum element $\hat 1$.  For a bounded poset $P$, we define the
\emph{proper part} of $P$ as $\overline {P}:= P\setminus\{\hat 0,\hat 1\}$.  A poset is said to 
be \emph{pure} (or ranked)  if all its maximal chains have the same length, where the length of a 
chain $s_0<s_1 < \dots < s_{\ell}$ is $\ell$.  If $P$ is pure and has a minimal element 
$\hat{0}$, we can define a rank function $\rho$ by requiring that $\rho(\hat{0})=0$ and 
$\rho(\beta)=\rho(\alpha)+1$ whenever $\alpha \lessdot \beta$ in $P$. For example $\Pi_n^k$ is a 
pure poset with rank function given by $\rho(\alpha)=n-|\alpha|$ for every $\alpha \in \Pi_n^k$. 
The 
\emph{length} $\ell(P)$ of a poset $P$ is the 
length of 
its longest chain. For a bounded poset $P$, let  $\overline \mu_P$ denote its M\"obius function. 
The reason for the nonstardard notation $\overline \mu_P$ is that we have 
been using the symbol $\mu$ to denote a weak composition.

\subsection{M\"obius invariant}\label{subsection:mobius}
For $\alpha = \{A_1^{\mu_1},\dots,A_s^{\mu_s}\}  \in \Pi_n^k$, let $\mu(\alpha)=\sum_{i=1}^s \mu_i$.
 The following proposition about the structure of $\Pi_n^k$ will be used in the computations below.
\begin{proposition}\label{proposition:upperlowerideals}
 For all  $\alpha = \{A_1^{\mu_1},\dots,A_s^{\mu_s}\} \in \widehat{\Pi_n^{k}}$, $\alpha \neq 
\hat{1}$, and
 $\nu \in \wcomp_{n-1}$ such that $\nu-\mu(\alpha) \in \wcomp_{|\alpha|-1}$,
 \begin{enumerate}
 \item $[\alpha,\hat 1]$ and $\widehat{\Pi_s^{k}}$ are isomorphic posets,
 \item $[\alpha, [n]^\nu]$ and $[\hat 0, [|\alpha|]^{\nu-\mu(\alpha)}]$ are isomorphic posets,
 \item $[\hat{0},\alpha]$ and  $[\hat{0},[|A_1|]^{\mu_1}] \times \cdots \times [\hat 0,
[|A_s|]^{\mu_s}]$ 
are isomorphic posets.
\end{enumerate}
\end{proposition}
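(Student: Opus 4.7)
The plan is to prove all three parts by exhibiting explicit order-preserving bijections. The common feature is that the defining weight condition in the order relation of $\Pi_n^k$, namely that the excess $\nu_j - \sum_{i\in I_j} \mu_i$ be a weak composition of $|I_j|-1$ with support in $[k]$, is additive over merging of blocks. This is what makes every bijection below transport weights correctly between the two sides.

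For part (1), I would define $\Phi\colon [\alpha,\hat{1}] \to \widehat{\Pi_s^k}$ by $\Phi(\hat{1})=\hat{1}$ and, for $\alpha \le \beta = \{B_1^{\nu_1},\dots,B_t^{\nu_t}\}$ in $\Pi_n^k$, by
\[
\Phi(\beta) \;=\; \{I_1^{\,\nu_1 - \sum_{i\in I_1}\mu_i},\dots,I_t^{\,\nu_t - \sum_{i\in I_t}\mu_i}\},
\]
where $I_j = \{i \in [s] \mid A_i \subseteq B_j\}$. The hypothesis $\alpha \le \beta$ guarantees that the sets $I_j$ partition $[s]$ and that each weight excess is a weak composition of $|I_j|-1$ supported in $[k]$, so $\Phi(\beta) \in \Pi_s^k$; the inverse map re-inflates by replacing $I_j$ with $\bigcup_{i\in I_j} A_i$ and shifting weights back up by $\sum_{i\in I_j}\mu_i$. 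Both $\Phi$ and $\Phi^{-1}$ preserve refinement of the underlying set partitions and the weight condition, so this is an order isomorphism; note that $\Phi(\alpha)=\hat{0}$ and $\Phi([n]^\nu) = [s]^{\nu-\mu(\alpha)}$ whenever $\alpha \le [n]^\nu$.

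Part (2) then comes for free: it is just the restriction of $\Phi$ to the order ideal below the maximal element $[n]^\nu$, and the hypothesis $\nu - \mu(\alpha) \in \wcomp_{|\alpha|-1}$ is exactly what guarantees that $[s]^{\nu-\mu(\alpha)}$ is a bona fide maximal element of $\Pi_s^k$. For part (3), I would define $\Psi\colon [\hat{0},\alpha] \to \prod_{i=1}^s [\hat{0},[|A_i|]^{\mu_i}]$ by $\beta \mapsto (\beta|_{A_1},\dots,\beta|_{A_s})$, where $\beta|_{A_i}$ consists of those weighted blocks of $\beta$ whose support is contained in $A_i$ (with $A_i$ identified with $[|A_i|]$ via an order-preserving bijection). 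The relation $\beta \le \alpha$ breaks up as the conjunction of the local conditions $\beta|_{A_i} \le \{A_i^{\mu_i}\}$ in $\Pi_{A_i}^k$, so $\Psi$ is a bijection; since the order on both sides is imposed blockwise, $\Psi$ is an order isomorphism.

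I do not anticipate any serious obstacle. The entire argument is bookkeeping: it amounts to verifying that the additive behaviour of the weight excess $\nu_j - \sum_{i\in I_j}\mu_i$ under merging (and splitting) of blocks is precisely what the covering relation demands, at which point the maps $\Phi$ and $\Psi$ and their inverses are seen directly to preserve order in both directions.
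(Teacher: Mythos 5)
Your proposal is correct, and it takes a genuinely different route from what the paper actually writes: the paper does not supply an explicit proof, instead remarking only that the proposition is an instance of a general fact about partition posets associated to basic set operads (citing Vallette and M\'endez--Yang). Your argument unpacks this into concrete poset isomorphisms $\Phi$ (quotienting a coarsening by $\alpha$ and subtracting off the base weights $\mu_i$) and $\Psi$ (splitting a refinement of $\alpha$ into its restrictions to the blocks $A_i$), and verifies directly that the covering/weight condition is additive over merging of blocks, so the maps are order isomorphisms in both directions. The key observations are all right: the excess $\nu_j-\sum_{i\in I_j}\mu_i$ being a weak composition of $|I_j|-1$ is exactly the order relation in $\Pi_s^k$ after relabelling $I_j\subseteq[s]$; $\Phi(\alpha)=\hat0$ and $\Phi$ carries $[n]^\nu$ to $[|\alpha|]^{\nu-\mu(\alpha)}$ so part (2) is the restriction of part (1) to a principal order ideal; and $\beta\le\alpha$ is equivalent to the conjunction of the blockwise conditions $\beta|_{A_i}\le\{A_i^{\mu_i}\}$. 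What the operadic citation buys the paper is brevity and the automatic transfer of this structural fact to any operad in its class; what your explicit bijections buy is a self-contained, hands-on verification that does not require the reader to be familiar with Vallette's framework, which is arguably the better choice for an expository proof of this specific proposition.
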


Proposition \ref{proposition:upperlowerideals} is a general statement that is satisfied by any
partition poset associated to a basic set operad (see \cite{Vallette2007}) replacing the 
composition 
$\mu$ by an element of the given operad (see also \cite{MendezYang1991}). 

Recall that $\mathbf{x}^{\mu}=x_1^{\mu(1)}\cdots x_k^{\mu(k)}$ and $(\cdot)^{<-1>}$ denotes 
compositional inverse. We use the recursive definition of the M\"obius function $\bar \mu_P$ and 
the compositional 
formula to derive the following theorem.

\begin{theorem}\label{theorem:compositionalinversemu}
  We have that
  \begin{align*}
   \sum_{n\ge1}\sum_{\substack{\mu \in \wcomp_{n-1}\\\supp(\mu)\subseteq [k]}}
		\bar \mu_{\Pi_{n}^{k}}(\hat{0},[n]^{\mu})\mathbf{x}^{\mu}\frac{y^n}{n!} =\left [ 
\sum_{n\ge1}h_{n-1}(x_1,\dots,x_k)\frac{y^n}{n!} \right ] ^{<-1>},
  \end{align*}
  where $h_n$  is the complete homogeneous symmetric polynomial.

 \end{theorem}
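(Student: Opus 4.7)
The plan is to set $F(\mathbf{x},y) := \sum_{n\ge 1}\sum_{\mu} m_n^\mu \mathbf{x}^\mu y^n/n!$, where $m_n^\mu := \bar\mu_{\Pi_n^k}(\hat 0,[n]^\mu)$, and $G(\mathbf{x},y) := \sum_{n\ge 1}h_{n-1}(x_1,\dots,x_k)y^n/n!$, and to prove $F = G^{<-1>}$ in $y$ by verifying $G(\mathbf{x},F(\mathbf{x},y)) = y$. Since the coefficient of $y$ in $G$ equals $h_0 = 1$, the compositional inverse of $G$ exists uniquely in the ring of power series in $y$ with coefficients in $\QQ[[x_1,\dots,x_k]]$, and the theorem reduces to this single identity.

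The main input will be Proposition \ref{proposition:upperlowerideals}(3), which gives the multiplicative factorization $\bar\mu_{\Pi_n^k}(\hat 0,\alpha) = \prod_{i=1}^s m_{|A_i|}^{\mu_i}$ for every element $\alpha = \{A_1^{\mu_1},\dots,A_s^{\mu_s}\}\in\Pi_n^k$. Applying the defining Möbius recursion $\sum_{\alpha\le[n]^\mu}\bar\mu(\hat 0,\alpha) = 0$ (for $n\ge 2$) and parametrizing each such $\alpha$ by the triple $(\pi,(\mu_B)_{B\in\pi},\nu)$, consisting of its underlying set partition $\pi\in\Pi_n$, the block weights $\mu_B\in\wcomp_{|B|-1}$, and the residual $\nu := \mu-\sum_B\mu_B\in\wcomp_{|\pi|-1}$, then multiplying by $\mathbf{x}^\mu$ and summing over $\mu\in\wcomp_{n-1}$, produces the identity
\[
\sum_{\pi\in\Pi_n} h_{|\pi|-1}(\mathbf{x})\,\prod_{B\in\pi} a_{|B|}(\mathbf{x}) \;=\; 0 \qquad (n\ge 2),
\]
where $a_n(\mathbf{x}) := \sum_{\mu}m_n^\mu \mathbf{x}^\mu$; here the key simplification is $\sum_\nu \mathbf{x}^\nu = h_{s-1}(x_1,\dots,x_k)$, as $\nu$ ranges over $\wcomp_{s-1}$ supported in $[k]$.

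The compositional formula for exponential generating functions identifies the left-hand side of the displayed equation with $n!\cdot [y^n]\,G(\mathbf{x},F(\mathbf{x},y))$, and for $n=1$ the analogous computation gives $h_0\cdot a_1 = 1$ (since $m_1^{\vec{0}}=1$), matching the coefficient of $y$ in $y$. This yields $G(\mathbf{x},F(\mathbf{x},y)) = y$, hence $F = G^{<-1>}$, as claimed. The main obstacle will be the bookkeeping step of checking that the parametrization of $\alpha\in[\hat 0,[n]^\mu]$ by $(\pi,(\mu_B),\nu)$ is bijective and that, once $(\pi,(\mu_B))$ are fixed, the residual $\nu$ ranges freely over weak compositions of $|\pi|-1$ supported in $[k]$; this follows from the covering-relation description of $\Pi_n^k$ together with Proposition \ref{proposition:upperlowerideals}(2), which ensures that $[\alpha,[n]^\mu]$ has rank $|\pi|-1$.
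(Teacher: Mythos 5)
Your argument is correct, and it takes a route genuinely dual to the one in the paper. The paper runs the M\"obius recursion in the form $\sum_{\hat 0 \le \alpha \le [n]^\mu}\bar\mu_{\Pi_n^k}(\alpha,[n]^\mu)=\delta_{n,1}$, invokes Proposition~\ref{proposition:upperlowerideals}(2) to identify the upper intervals $[\alpha,[n]^\mu]$ with $[\hat 0,[|\alpha|]^{\nu}]$, and so obtains $\delta_{n,1}=\sum_{\pi\in\Pi_n}\bigl(\prod_{B\in\pi}h_{|B|-1}(\xx)\bigr)\,a_{|\pi|}(\xx)$, which the compositional formula recognizes as the coefficient of $y^n$ in $F(G(y))$, with $F$ the M\"obius series and $G$ the $h$-series. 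You instead run the dual recursion $\sum_{\hat 0 \le \alpha \le [n]^\mu}\bar\mu_{\Pi_n^k}(\hat 0,\alpha)=\delta_{n,1}$, use the factorization of the lower intervals $[\hat 0,\alpha]$ from Proposition~\ref{proposition:upperlowerideals}(3), and reorganize the sum over all $\mu$ by the triple $(\pi,(\mu_B),\nu)$ --- the bijectivity of which does follow, as you say, from the covering relation: $\nu = \mu-\sum_B\mu_B$ is forced to lie in $\wcomp_{|\pi|-1}$ with support in $[k]$, and conversely any such $\nu$ gives a valid $\mu$. This yields $\delta_{n,1}=\sum_{\pi\in\Pi_n}h_{|\pi|-1}(\xx)\,\prod_{B\in\pi}a_{|B|}(\xx)$, i.e.\ $G(F(y))=y$. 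Since $[y^1]G=h_0=1$ is invertible, either one-sided identity forces $F=G^{<-1>}$, so the conclusion is the same. The trade-off is minor: the paper's version keeps the residual weight bookkeeping inside a single appeal to part (2), while yours requires the explicit parametrization of $(\alpha,\mu)$-pairs but uses the perhaps more transparent multiplicativity of $\bar\mu$ on products of lower intervals. Both are valid; yours is a legitimate alternative proof of the same statement.
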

\begin{proof}
 By the recursive definition of the 
M\"obius function we have that
\begin{align*}
 \delta_{n,1}&=\sum_{\substack{\mu \in \wcomp_{n-1}\\\supp(\mu)\subseteq [k]}}\mathbf{x}^{\mu}
\sum_{\hat{0}\le\alpha\le[n]^{\mu}}\bar \mu_{\Pi_{n}^{k}}(\alpha,[n]^{\mu})\\
	    &= 	\sum_{\alpha\in \Pi_{n}^{k}}\mathbf{x}^{\mu(\alpha)}
		\sum_{\substack{\mu \in \wcomp_{n-1}\\ \mu \ge \mu(\alpha)\\\supp(\mu)\subseteq 
[k]}}
		\bar\mu_{\Pi_{n}^{k}}(\alpha,[n]^{\mu})\mathbf{x}^{\mu-\mu(\alpha)}.
\end{align*}
Now using Proposition \ref{proposition:upperlowerideals}
\begin{align*}
	   \delta_{n,1} &= 	\sum_{\alpha \in \Pi_{n}^{k}}
		\mathbf{x}^{\mu(\alpha)}
		\sum_{\substack{\nu \in \wcomp_{|\alpha|-1}\\\supp(\nu)\subseteq [k]}}
		\bar \mu_{\Pi_{|\alpha|}^{k}}(\hat{0},[|\alpha|]^{\nu})\mathbf{x}^{\nu}\\
	    &= 	\sum_{\alpha \in \Pi_{n}}
		\prod_{i=1}^{|\alpha|}h_{|\alpha_i|-1}(x_1,...,x_k)
		\sum_{\substack{\nu \in \wcomp_{|\alpha|-1}\\\supp(\nu)\subseteq [k]}}
		\bar \mu_{\Pi_{|\alpha|}^{k}}(\hat{0},[|\alpha|]^{\nu})\mathbf{x}^{\nu}.
 \end{align*}
The last statement implies using the compositional formula see (\cite[Theorem~5.1.4]{Stanley1999}) 
that
the two power series are compositional inverses.
\end{proof}

\subsection{EL-labeling}\label{section:ellabeling}

Let $P$ be a bounded poset. An \emph{edge labeling} is a map $\bar \lambda: \mathcal E(P) \to 
\Lambda$,  where $\mathcal E(P)$ 
is the set of edges of the Hasse diagram of a poset $P$ and $\Lambda$ is a fixed poset. We denote by

\begin{align*}
\bar \lambda(c) = \bar \lambda(x_0, x_1) \bar \lambda(x_1, x_2) \cdots \bar \lambda(x_{t-1}, x_{t}),
\end{align*} 
the word of labels corresponding to a maximal chain $c = (\hat 0 = x_0
\lessdot x_1 \lessdot \cdots \lessdot x_{t-1} \lessdot x_t= \hat 1)$. 
We say that  $c$ is  \emph{increasing} if its word of labels $\bar \lambda(c)$ is
\emph{strictly}  increasing, that is, $c$ is  increasing if 
\begin{align*}
 \bar \lambda(x_0, x_1) < \bar \lambda(x_1, x_2)<  \cdots < \bar \lambda(x_{t-1}, 
x_t). 
\end{align*}
  We say 
that  $c
$ is  \emph{ascent-free} (or decreasing, or falling) if its word of labels $\bar \lambda(c)$ has no 
ascents,
i.e. $\bar \lambda(x_i, x_{i+1}) \not<  \bar \lambda(x_{i+1}, x_{i+2}) $, for all $i=0,\dots,t-2$.
\emph{ An edge-lexicographical
labeling} (EL-labeling, for short)  of
$P$ is an edge labeling such that in each closed
interval $[x,y]$ of $P$, there is a unique  increasing maximal chain, and this chain
lexicographically precedes all other maximal chains of $[x,y]$.

A classical EL-labeling for the partition lattice $\Pi_n$ is obtained as follows.  Let $\Lambda =
\{(i,j)\in [n-1] \times [n] \mid i <j\}$ with lexicographic order as the order relation on 
$\Lambda$. 
If $x\lessdot y $ in $\Pi_n$  then $y$ is obtained from $x$ by merging two blocks $A$ and $B$, 
where $\min A < \min B$.
 Let $\bar \lambda(x,y) = (\min A, \min B)$.  This defines a map $\bar \lambda:\mathcal E(\Pi_n)
\to \Lambda$ (Note that $\bar \lambda$ in this section is an edge labeling and not an integer 
partition).
By viewing $\Lambda$ as the set of atoms of $\Pi_n$, one sees that this labeling is a special case
of an  edge labeling for geometric lattices, which first appeared in Stanley \cite{Stanley1974} and 
was
one of  Bj\"orner's \cite{Bjorner1980} initial examples of an EL-labeling. A generalization of the
Bj\"orner-Stanley EL-labeling was given in \cite{DleonWachs2013a} for the poset $\Pi_n^w$. We
generalize further this labeling by providing one for $\Pi_n^k$ that reduces to the one in
\cite{DleonWachs2013a} for $k=2$ and to the Bj\"orner-Stanley EL-labeling when $k=1$.

\begin{definition}[Poset of labels]\label{definition:posetoflabels}
For each $a \in [n]$, let $\Gamma_a:= \{(a,b)^u \mid   a<b \le n+1, \,\, u \in [k] \}$. 
We
partially order $\Gamma_a$  by letting $(a,b)^u \le  (a,c)^v$ if $b\le  c$ and $u \le v$.   
Note that $\Gamma_a$ is isomorphic to the direct product of the chain $a+1< a+2 <\cdots < n+1 $ and
the chain $1 < 2 < \cdots < k$.  Now define $\Lambda^k_n$ to be the 
ordinal sum
$\Lambda^k_n := \Gamma_1 \oplus  \Gamma_2  \oplus \cdots \oplus \Gamma_{n}$ (see Figure
\ref{fig:lambdaposet}).
\end{definition}

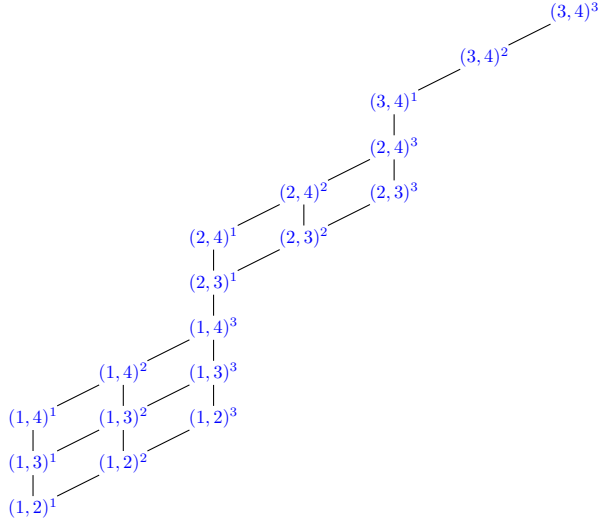
\begin{figure}[ht]
  \centering
  \begin{tikzpicture}[scale=0.6]
 \tikzstyle{every node}=[inner sep=1pt, minimum width=14pt,scale=0.7, font=\footnotesize]
\draw (0,0) node (n121) {\color{blue}$(1,2)^1$};
\draw (0,1) node (n131) {\color{blue}$(1,3)^1$};
\draw (0,2) node (n141) {\color{blue}$(1,4)^1$};
\draw (2,1) node (n122) {\color{blue}$(1,2)^2$};
\draw (2,2) node (n132) {\color{blue}$(1,3)^2$};
\draw (2,3) node (n142) {\color{blue}$(1,4)^2$};
\draw (4,2) node (n123) {\color{blue}$(1,2)^3$};
\draw (4,3) node (n133) {\color{blue}$(1,3)^3$};
\draw (4,4) node (n143) {\color{blue}$(1,4)^3$};

\draw (n143) -- (n142) -- (n141) ;
\draw (n133) -- (n132) -- (n131) ;
\draw (n123) -- (n122) -- (n121) ;
\draw (n141)-- (n131) -- (n121) ;
\draw (n142)-- (n132) -- (n122) ;

\draw (4,5) node (n231) {\color{blue}$(2,3)^1$};
\draw (4,6) node (n241) {\color{blue}$(2,4)^1$};
\draw (6,6) node (n232) {\color{blue}$(2,3)^2$};
\draw (6,7) node (n242) {\color{blue}$(2,4)^2$};
\draw (8,7) node (n233) {\color{blue}$(2,3)^3$};
\draw (8,8) node (n243) {\color{blue}$(2,4)^3$};

\draw (8,9) node (n341) {\color{blue}$(3,4)^1$};
\draw (10,10) node (n342) {\color{blue}$(3,4)^2$};
\draw (12,11) node (n343) {\color{blue}$(3,4)^3$};
\draw (n243) -- (n242) -- (n241) ;
\draw (n233) -- (n232) -- (n231) ;

\draw (n241)-- (n231)  -- (n143) -- (n133) -- (n123);

\draw (n343) -- (n342) -- (n341) ;

\draw (n341) -- (n243) -- (n233);
\draw (n242) -- (n232);

\end{tikzpicture}
  \caption{$\Lambda^3_3$}
  \label{fig:lambdaposet}
\end{figure}

\begin{definition}[EL-labeling]
  If $x\lessdot y $ in $\Pi^k_n$   then $y$ is obtained from $x$ by $\bf{e}_r$-merging two blocks 
$A$ 
and $B$ for some $r \in [k]$,  where $\min A < \min B$.

Let $$\bar \lambda(x \lessdot y) = (\min A, \min B)^r.$$  This defines a map
$\bar \lambda:\mathcal E(\Pi^k_n) \to \Lambda^k_n$.  We extend this map to $\bar \lambda:\mathcal
E(\widehat{\Pi^k_n})\to \Lambda_n$ by letting ${\bar \lambda}([n]^{\mu} \lessdot 
\hat{1})=(1,n+1)^1$, for all
$\mu \in \wcomp_{n-1}$ with $\supp(\mu) \subseteq [k]$ (See Figure~\ref{fig:ellabelingposet}).

\begin{remark}
 Recall that when $\mu$ has a single nonzero entry (equal to $n-1$), the interval $[\hat 0, 
[n]^{\mu}]$ is isomorphic to $\Pi_n$. Note that  $\bar \lambda$ reduces to the 
Bj\"orner-Stanley EL-labeling on those intervals.
\end{remark}

\end{definition}

\begin{figure}[ht]
                \centering
               \begin{tikzpicture}[line join=bevel,scale=0.8]

\tikzstyle{every node}=[inner sep=0pt, scale=0.6, minimum width=4pt]
\node (hat1) at (0,6) {$\hat{1}$}; 

\node (n102030) at (0,0)  {$1^{(0,0,0)}| 2^{(0,0,0)}| 3^{(0,0,0)}$};

  \node (n12i30) at (-8,2) {$12^{(1,0,0)}| 3^{(0,0,0)}$};
  \node (n13i20) at (-6,2) {$13^{ (1,0,0)}| 2^{ (0,0,0)}$};
  \node (n1023i) at (-4,2)  {$1^{(0,0,0)}| 23^{(1,0,0)}$};

 \node (n12j30) at (-2,2)  {$12^{(0,1,0)}| 3^{(0,0,0)}$};
 \node (n13j20) at (0,2)  {$13^{(0,1,0)}| 2^{(0,0,0)}$};
  \node (n1023j) at (2,2) {$1^{(0,0,0)}| 23^{(0,1,0)}$};

\node (n12k30) at (4,2)  {$12^{ (0,0,1)}| 3^{(0,0,0)}$};
 \node (n13k20) at (6,2)  {$13^{(0,0,1)}| 2^{(0,0,0)}$};
  \node (n1023k) at (8,2) {$1^{(0,0,0)}| 23^{(0,0,1)}$};

 \node (n123ii) at (-6,4){$123^ {(2,0,0)}$};
 \node (n123ij) at (-3,4){$123^ {(1,1,0)}$};
 \node (n123ik) at (-1,4){$123^ {(1,0,1)}$};
 \node (n123jj) at (1,4){$123^ {(0,2,0)}$};
 \node (n123jk) at (3,4){$123^ {(0,1,1)}$};
 \node (n123kk) at (6,4){$123^ {(0,0,2)}$};

 \draw (n123ii) -- (n1023i) ;
 \draw (n123ii)-- (n13i20);
  \draw (n123ii) -- (n12i30);  

\draw (n123jj) -- (n1023j) ;
 \draw (n123jj)-- (n13j20);
  \draw (n123jj) -- (n12j30);  

\draw (n123kk) -- (n1023k) ;
 \draw (n123kk)-- (n13k20);
  \draw (n123kk) -- (n12k30); 

 \draw (n123ik) -- (n1023i) ;
 \draw (n123ik)-- (n13i20);
  \draw (n123ik) -- (n12i30);  
\draw (n123ik) -- (n1023k) ;
 \draw (n123ik)-- (n13k20);
  \draw (n123ik) -- (n12k30);

 \draw (n123ij) -- (n1023i) ;
 \draw (n123ij)-- (n13i20);
  \draw (n123ij) -- (n12i30);  
\draw (n123ij) -- (n1023j) ;
 \draw (n123ij)-- (n13j20);
  \draw (n123ij) -- (n12j30);
	
 \draw (n123jk) -- (n1023j) ;
 \draw (n123jk)-- (n13j20);
  \draw (n123jk) -- (n12j30);  
\draw (n123jk) -- (n1023k) ;
 \draw (n123jk)-- (n13k20);
  \draw (n123jk) -- (n12k30);

  \draw [] (n13i20) -- (n102030);
  \draw [] (n12i30)-- (n102030);
  \draw [] (n1023i)--(n102030);
 \draw [] (n1023j)  --  (n102030);
  \draw [] (n12j30) --  (n102030);
  \draw [] (n13j20) --(n102030);
 \draw [] (n1023k)  --  (n102030);
  \draw [] (n12k30) --  (n102030);
  \draw [] (n13k20) --(n102030);

\draw (hat1) -- (n123ii);
\draw (hat1) -- (n123jj);
\draw (hat1) -- (n123kk);
\draw (hat1) -- (n123ij);
\draw (hat1) -- (n123jk);
\draw (hat1) -- (n123ik);

\tikzstyle{every node}= [scale=0.45]

\node  at (-6.5,1.4) {\color{blue}$(1,2)^1$};
\node  at (-4.9,1.4) {\color{blue}$(1,3)^1$};
\node  at (-3.4,1.4) {\color{blue}$(2,3)^1$};
\node  at (-1.9,1.4) {\color{blue}$(1,2)^2$};
\node  at (-0.4,1.4) {\color{blue}$(1,3)^2$};
\node  at (1,1.4) {\color{blue}$(2,3)^2$};
\node  at (2.4,1.4) {\color{blue}$(1,2)^3$};
\node  at (3.5,1.4) {\color{blue}$(1,3)^3$};
\node  at (5,1.4) {\color{blue}$(2,3)^3$};

\node  at (-4,5) {\color{blue}$(1,4)^1$};
\node  at (-2,5) {\color{blue}$(1,4)^1$};
\node  at (-0.9,5) {\color{blue}$(1,4)^1$};
\node  at (0.1,5) {\color{blue}$(1,4)^1$};
\node  at (1,5) {\color{blue}$(1,4)^1$};
\node  at (2.2,5) {\color{blue}$(1,4)^1$};

\node  at (-7,3.5) {\color{blue}$(1,3)^1$};
\node  at (-6.3,3.3) {\color{blue}$(1,2)^1$};
\node  at (-5.7,3.3) {\color{blue}$(1,2)^1$};

\node  at (-4.2,3.7) {\color{blue}$(1,3)^2$};
\node  at (-4.15,3.4) {\color{blue}$(1,2)^2$};
\node  at (-3.8,3.1) {\color{blue}$(1,2)^2$};
\node  at (-2.9,3) {\color{blue}$(1,3)^1$};
\node  at (-2,3) {\color{blue}$(1,2)^1$};
\node  at (-1.3,3.1) {\color{blue}$(1,2)^1$};

\node  at (-5.4,2.9) {\color{blue}$(1,3)^3$};
\node  at (-4.3,2.8) {\color{blue}$(1,2)^3$};
\node  at (-3.2,2.7) {\color{blue}$(1,2)^3$};
\node  at (2.9,2.6) {\color{blue}$(1,3)^1$};
\node  at (4.1,2.65) {\color{blue}$(1,2)^1$};
\node  at (5.5,2.7) {\color{blue}$(1,2)^1$};

\node  at (-0.3,3.3) {\color{blue}$(1,3)^2$};
\node  at (0.3,3.2) {\color{blue}$(1,2)^2$};
\node  at (1.5,3.7) {\color{blue}$(1,2)^2$};

\node  at (2.1,3.8) {\color{blue}$(1,3)^3$};
\node  at (1,2.9) {\color{blue}$(1,2)^3$};
\node  at (2.5,3.5) {\color{blue}$(1,2)^3$};
\node  at (3,3.3) {\color{blue}$(1,3)^2$};
\node  at (3.8,3.2) {\color{blue}$(1,2)^2$};
\node  at (4,3.8) {\color{blue}$(1,2)^2$};

\node  at (5.3,3.6) {\color{blue}$(1,3)^3$};
\node  at (5.7,3.3) {\color{blue}$(1,2)^3$};
\node  at (6.8,3.5) {\color{blue}$(1,2)^3$};

\end{tikzpicture}
                \caption{Labeling $\bar \lambda$ on $\widehat{\Pi_3^3}$}
                \label{fig:ellabelingposet}
\end{figure}

The proof of the following theorem follows the same ideas of \cite[Theorem 3.2]{DleonWachs2013a}.
 \begin{theorem}\label{theorem:ellabelingposet}
 The labeling $\bar \lambda:\E(\widehat{\Pi_{n}^k})\rightarrow \Lambda _n$ defined above is an
EL-labeling of $\widehat{\Pi_{n}^k}$.  
\end{theorem}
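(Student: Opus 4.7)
The plan is to establish the EL-property interval by interval in $\widehat{\Pi_n^k}$ by exhibiting the unique increasing maximal chain and checking that it is lexicographically first. The key structural observation is that $\Lambda_n^k = \Gamma_1 \oplus \cdots \oplus \Gamma_n$ is an ordinal sum: labels in $\Gamma_a$ are strictly below labels in $\Gamma_{a'}$ for $a < a'$. Within a single $\Gamma_a$, strict increase in the product order forces strict increase of the second coordinate together with a weakly increasing color, and distinctness of second coordinates among labels with first coordinate $a$ is automatic because each merger with the block of minimum $a$ consumes a distinct other-block minimum.

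First I would handle the base case $[\hat{0}, [n]^{\mu}]$. An increasing maximal chain must start with a label in $\Gamma_1$, i.e.\ a merger involving the block containing $1$. For the chain to reach $[n]^{\mu}$ while remaining increasing, every merger must in fact grow the block containing $1$, so that all labels live in $\Gamma_1$; this forces the singletons $\{2\}, \{3\}, \ldots, \{n\}$ to be absorbed in that order. The color sequence must be weakly increasing and must realize the multiplicities $\mu(1), \mu(2), \ldots$, so it is uniquely determined as $\mu(1)$ copies of $1$, then $\mu(2)$ copies of $2$, and so on. Lex minimality follows because any deviating chain either performs a merger of two blocks both with minimum $\geq 2$ (forcing its label into some $\Gamma_a$ with $a \geq 2$, strictly larger than anything in $\Gamma_1$), or uses a larger color at an earlier position within $\Gamma_1$.

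For a general interval $[\alpha, \beta]$ with $\beta \neq \hat{1}$, I would decompose it via Proposition \ref{proposition:upperlowerideals} into a product $\prod_j [\hat 0, [|\alpha_j|]^{\nu_j - \mu(\alpha_j)}]$ indexed by the blocks $B_j$ of $\beta$, where $\alpha_j$ collects the blocks of $\alpha$ contained in $B_j$. The labeling $\bar\lambda$ restricts to the analogous labeling on each factor, with second coordinates relabeled by the block minima inside $B_j$ (an order-preserving relabeling). Labels coming from different factors have distinct first coordinates, since the smallest block minimum in each $B_j$ is distinct, and by the ordinal sum structure they are totally ordered across factors. Hence any increasing chain of $[\alpha, \beta]$ must complete each factor's sublabels consecutively in the canonical order dictated by the smallest block minima, and within each factor the unique increasing chain is the one from the previous paragraph; lex minimality is inherited from the factors.

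Finally, for intervals $[\alpha, \hat{1}]$, every maximal chain ends with the edge $[n]^{\nu} \lessdot \hat{1}$, labeled $(1, n+1)^1 \in \Gamma_1$. For the preceding labels to be strictly smaller in the product order on $\Gamma_1$, they must all have color $1$. This pins down a unique target maximal element $[n]^{\nu^\star}$, where $\nu^\star$ agrees with $\mu(\alpha)$ on colors $\geq 2$ and absorbs all remaining mergers into color $1$, and reduces the problem to the previously handled interval $[\alpha, [n]^{\nu^\star}]$ followed by the final cover to $\hat{1}$. The main technical obstacle in executing this plan lies in the third paragraph: carefully verifying that the product decomposition is compatible with the edge labeling, that the relabeling of second coordinates inside each factor preserves all order comparisons needed, and that the ordinal sum structure of $\Lambda_n^k$ genuinely forces factor-by-factor completion so that no interleaving of factor sub-chains can produce a lex-smaller word.
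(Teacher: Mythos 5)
Your plan — exhibiting the unique increasing maximal chain in each closed interval, checking it is lexicographically first, using the ordinal-sum structure of $\Lambda_n^k$, the product decomposition of intervals $[\alpha,\beta]$ with $\beta\neq\hat{1}$, and the observation that the terminal label $(1,n+1)^1$ forces every earlier label of an increasing chain in $[\alpha,\hat{1}]$ into $\Gamma_1$ with color $1$ — is the paper's own approach, which explicitly defers to the $k=2$ argument of \cite[Theorem 3.2]{DleonWachs2013a}. Two details to tighten in execution: for $[\hat{0},[n]^\mu]$ your lex-minimality dichotomy omits the case of a deviating label in $\Gamma_1$ with the same color but larger second coordinate, and more importantly the argument must record that the deviating chain's color at the first divergence is at least the increasing chain's color (forced because both chains have consumed the same color multiset so far and the remainder must realize $\mu$), since otherwise the two labels could be incomparable in $\Lambda_n^k$ and the lex comparison would break; and for $[\alpha,\hat{1}]$ lex-minimality must be verified against chains passing through arbitrary maximal elements $[n]^\nu$, not merely reduced to the interval $[\alpha,[n]^{\nu^\star}]$.
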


Theorem \ref{theorem:elk} is then a corollary of Theorem \ref{theorem:ellabelingposet} and the 
following theorem linking lexicographic shellability and topology.

\begin{theorem}[Bj\"orner and Wachs \cite{BjornerWachs1996}] \label{theorem:elth}
Let $\bar \lambda$ be an EL-labeling  of a bounded   poset $P$. 
Then for all $x<y$ in $P$, 
\begin{enumerate}
\item the open interval $(x,y)$ is homotopy equivalent to a wedge of  spheres, where for each $r \in
\NN$ the number of spheres  of dimension $r$ is the number of ascent-free maximal chains of the
closed interval $[x,y]$ of length $r+2$. 
\item the set
$$\{\bar c \mid c \mbox{ is an ascent-free maximal chain of $[x,y]$ of length } r+2 \}$$
forms a basis for cohomology $\widetilde H^{r}((x,y))$, for all $r$.
\end{enumerate}
 \end{theorem}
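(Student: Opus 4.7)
The plan is to prove the theorem by exhibiting the lexicographic order on maximal chains of $[x,y]$ as a shelling of the order complex $\Delta((x,y))$, and then deducing both the homotopy type and the cohomology basis from standard facts about shellable complexes. First I would restrict: by the definition of an EL-labeling, the restriction of $\bar\lambda$ to $[x,y]$ is again an EL-labeling of that closed interval, so it suffices to work inside $[x,y]$ and show that the corresponding order complex of the open interval is shellable.

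Given the lexicographic linear order $c_1 \prec c_2 \prec \cdots \prec c_t$ on maximal chains of $[x,y]$ by their label words, the main technical step is to verify that this is a (pure) shelling of $\Delta((x,y))$, i.e.\ that for each $i \ge 2$, the subcomplex $\overline{c_i} \cap \bigl(\bigcup_{j<i} \overline{c_j}\bigr)$ is a union of codimension-one faces of $\overline{c_i}$. Working face by face, a codimension-one face of $\overline{c_i}$ is obtained by deleting a single element of $c_i$ at some internal rank $k$, which corresponds to a length-two subinterval $[u,v]$ of $[x,y]$. Using the EL-property on $[u,v]$, there is a unique increasing maximal chain in $[u,v]$, so whether a lex-earlier chain passes through the same codimension-one face is completely controlled by whether the two labels of $c_i$ crossing rank $k$ form an ascent or a descent. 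This is exactly the mechanism that lets one identify the shelling restriction $\mathcal{R}(c_i)$ with the set of elements of $c_i$ at ascent positions; in particular, $\mathcal{R}(c_i)$ equals the whole facet $c_i$ (viewed in the open interval) if and only if $c_i$ is ascent-free.

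Once shellability is established, part (1) follows immediately from the general fact that a pure shellable simplicial complex of dimension $d$ has the homotopy type of a wedge of $d$-spheres, with one $d$-sphere contributed by each spanning facet, i.e.\ each facet $F_i$ with $\mathcal{R}(F_i) = F_i$. Since a maximal chain of $[x,y]$ of length $r+2$ yields a facet of dimension $r$ in $\Delta((x,y))$, the wedge summands of dimension $r$ are in bijection with the ascent-free maximal chains of length $r+2$. For part (2), I would invoke the companion fact for cohomology of shellable complexes: the cochains dual to the spanning facets, extended by zero outside, are cocycles, and they descend to a basis of the top cohomology of each pure-dimensional piece. Since in our setting a spanning facet is precisely an ascent-free maximal chain $c$, the classes $\bar c$ form a basis for $\widetilde H^r((x,y))$ in each dimension $r$.

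The principal obstacle is the shelling verification in the middle paragraph: one has to argue carefully that among the maximal chains of $[u,v]$ sharing the deleted codimension-one face with $c_i$, the unique increasing one is strictly lex-earlier than every other, and then lift this local fact to a global comparison of full chains in $[x,y]$. This is exactly where the ``lexicographically precedes'' clause in the EL-labeling axiom, rather than mere existence of a unique increasing chain, is used; the rest of the argument is a mechanical translation between combinatorial shelling data and topological consequences via the Björner–Wachs restriction framework.
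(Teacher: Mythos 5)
The paper does not actually prove this statement: it is quoted from Bj\"orner and Wachs \cite{BjornerWachs1996} and used as a black box, so there is no internal proof to compare against. Your proposal follows the standard route to that theorem (the lexicographic order on maximal chains of $[x,y]$ is a shelling of $\Delta((x,y))$, and the spanning facets are the falling chains), so the overall strategy is sound; but as written two points would fail or fall short of the stated generality. First, your identification of the shelling restriction is backwards: for the lex shelling induced by an EL-labeling, $\mathcal{R}(c)$ is the set of interior elements of $c$ at \emph{non-ascent} (descent) positions, not ascent positions. Indeed, deleting $x_k$ from $c$ can be completed to a lex-earlier maximal chain precisely when $c$ restricted to the length-two interval $[x_{k-1},x_{k+1}]$ is \emph{not} its unique increasing (hence lex-first) chain. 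Taken literally, your identification would make the increasing chains the spanning facets, contradicting the very conclusion you then state; the conclusion ($\mathcal{R}(c)$ equals the whole facet iff $c$ is ascent-free) is the correct one, so this is a sign error that must be fixed for the argument to cohere.

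Second, the theorem as stated is the nonpure version: $[x,y]$ need not be graded, maximal chains may have different lengths, and the wedge may involve spheres of several dimensions $r$. You invoke the homotopy and cohomology facts for \emph{pure} shellable complexes, which only yields the statement when all maximal chains of $[x,y]$ have the same length; in general one needs the nonpure shellability framework of Bj\"orner--Wachs, where the shelling condition and the ``wedge of spheres indexed by spanning facets of each dimension'' conclusion are formulated for facets of varying dimension. Relatedly, in the nonpure case part (2) does not follow from ``duals of spanning facets are cocycles'': below the top dimension the dual cochain of a facet need not be a cocycle, so the basis statement requires the finer analysis of \cite{BjornerWachs1996}. (For this paper's applications the relevant intervals are pure, so the pure version you sketch would suffice there, but not for the theorem as quoted.) Finally, note that verifying the shelling needs more than deciding which codimension-one faces of $c_i$ lie in earlier facets: one must show every face of $c_i$ shared with an earlier facet $c_j$ is contained in such a codimension-one face, which is done by locating a gap $[u,v]$, with $u,v$ consecutive in $(c_i\cap c_j)\cup\{x,y\}$ and no common elements strictly between, on which $c_i$ is not increasing, and replacing an interior element of $c_i$ there. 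You correctly locate this as where the lexicographic-precedence clause enters, but it is the heart of the proof and remains unexecuted in your sketch.
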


Since the M\"obius invariant of a  bounded poset $P$ equals the reduced Euler characteristic of the
order complex of $\overline{P}$ (see \cite{Stanley2012}), Theorem \ref{theorem:elth} and the 
Euler-Poincar\'e formula imply the following corollary.
\begin{corollary}\label{corollary:elth}Let $P$ be a pure  EL-shellable poset of length $n$.  Then
\begin{enumerate}
\item $\overline{P}$ has the homotopy type of a wedge of spheres all of dimension $n-2$, where the 
number
of spheres is  $|\mu_P(\hat 0, \hat 1)|$. 
\item $P$ is Cohen-Macaulay, which means that $\widetilde H_i((x,y)) = 0$ for all $x <y$ in $P$ and 
$i <
l([x,y]) -2$. \end{enumerate}
\end{corollary}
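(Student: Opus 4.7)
The plan is to derive both parts directly from Theorem~\ref{theorem:elth} together with the Euler--Poincar\'e formula and the classical identification of the M\"obius invariant with the reduced Euler characteristic.

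For part~(1), I would apply Theorem~\ref{theorem:elth}(1) to the closed interval $[\hat 0,\hat 1]=P$, giving that $\overline P=(\hat 0,\hat 1)$ is homotopy equivalent to a wedge of spheres, with exactly one sphere of dimension $r$ for every ascent-free maximal chain of $P$ of length $r+2$. Since $P$ is pure of length $n$, every maximal chain, and in particular every ascent-free maximal chain, has length $n$; hence the only spheres that appear have dimension $n-2$, and the wedge has exactly as many spheres as there are ascent-free maximal chains of $P$. To identify this count with $|\bar\mu_P(\hat 0,\hat 1)|$, I would invoke the standard identity $\bar\mu_P(\hat 0,\hat 1)=\tilde\chi(\overline P)$, combined with the Euler--Poincar\'e formula
\begin{align*}
\tilde\chi(\overline P)=\sum_{i\ge -1}(-1)^i\dim\widetilde H_i(\overline P)=(-1)^{n-2}\dim\widetilde H_{n-2}(\overline P),
\end{align*}
since all reduced homology is concentrated in dimension $n-2$. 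Taking absolute values yields $|\bar\mu_P(\hat 0,\hat 1)|=\dim\widetilde H_{n-2}(\overline P)$, which is precisely the number of spheres in the wedge.

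For part~(2), I would fix $x<y$ in $P$ and pass to the closed interval $[x,y]$. Purity of $P$ forces $[x,y]$ to be pure, of length $\ell([x,y])$, and the restriction of the EL-labeling $\bar\lambda$ of $P$ to the edges of $[x,y]$ is itself an EL-labeling (this is immediate from the definition, since the unique increasing chain in any subinterval is inherited). Applying part~(1) of the corollary to the pure EL-shellable poset $[x,y]$ then gives that $(x,y)$ is homotopy equivalent to a wedge of spheres all of dimension $\ell([x,y])-2$. Consequently $\widetilde H_i((x,y))=0$ whenever $i\ne\ell([x,y])-2$, and in particular whenever $i<\ell([x,y])-2$, which is the Cohen--Macaulay condition as stated.

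There is no real obstacle here; the argument is a routine unwinding of the Björner--Wachs theorem together with a standard Möbius-function identity. The only small point worth flagging is the verification that EL-shellability descends to closed subintervals, but this is essentially by inspection of the definition and does not require any genuinely new work.
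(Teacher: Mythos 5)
Your proof is correct and follows exactly the route the paper indicates: it derives part (1) from Theorem~\ref{theorem:elth}(1) applied to $[\hat 0,\hat 1]$, purity of $P$, the identity $\bar\mu_P(\hat 0,\hat 1)=\tilde\chi(\overline P)$, and the Euler--Poincar\'e formula, and derives part (2) by localizing to $[x,y]$. (A very minor shortcut: in part (2) you can invoke Theorem~\ref{theorem:elth}(1) directly for the interval $[x,y]$, which is already stated for all $x<y$ in $P$, rather than first observing that the labeling restricts to an EL-labeling of $[x,y]$ and then applying part (1) of the corollary; both arguments are valid.)
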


\begin{theorem}[Theorem \ref{theorem:elk}]
 The poset $\widehat{\Pi_n^k}:= \Pi_n^k \cup \{\hat{1}\}$ is EL-shellable and hence Cohen-Macaulay. 
Consequently, for each
$\mu \in \wcomp_{n-1}$,
the order complex $\Delta((\hat{0},[n]^{\mu}))$ has the homotopy type of a wedge of
$(n-3)$-spheres.
\end{theorem}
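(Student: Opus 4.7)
The plan is to assemble the theorem from three ingredients already laid out in the excerpt. First, EL-shellability of $\widehat{\Pi_n^k}$ is exactly the content of Theorem \ref{theorem:ellabelingposet}: the edge labeling $\bar\lambda:\mathcal{E}(\widehat{\Pi_n^k})\to\Lambda_n^k$ built from the pair of block-minima of the merge and its color is already declared there to be an EL-labeling, so I would simply invoke that result. Cohen-Macaulayness is then immediate from Corollary \ref{corollary:elth}(2), which records the standard fact that every EL-shellable poset is Cohen-Macaulay.

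For the homotopy type of each open interval $(\hat 0, [n]^{\mu})$, I would first pin down the dimension. The poset $\Pi_n^k$ is pure with rank function $\rho(\alpha)=n-|\alpha|$, so every maximal element $[n]^{\mu}$ has rank $n-1$, the closed interval $[\hat 0,[n]^{\mu}]$ has length $n-1$, and the order complex $\Delta((\hat 0,[n]^{\mu}))$ is $(n-3)$-dimensional. Applying Theorem \ref{theorem:elth}(1) to the restriction of $\bar\lambda$ to $[\hat 0,[n]^{\mu}]$ then identifies this open interval with a wedge of spheres, all forced to be of top dimension $n-3$ since the complex is itself $(n-3)$-dimensional, with one sphere for each ascent-free maximal chain of $[\hat 0,[n]^{\mu}]$.

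The only part of the story with real combinatorial content is Theorem \ref{theorem:ellabelingposet} itself, whose proof the author defers to the method of \cite[Theorem 3.2]{DleonWachs2013a}; this is the main obstacle. To carry it out I would fix a closed interval $[x,y]$ of $\widehat{\Pi_n^k}$ and construct its increasing maximal chain greedily, exploiting the ordinal-sum structure $\Lambda_n^k=\Gamma_1\oplus\cdots\oplus\Gamma_n$. At each step the lex-smallest available label sits in the lowest $\Gamma_a$ possible, which forces the first coordinate $a$ to be the smaller minimum of the next pair of blocks merged; once $a$ is fixed, the product order on $\Gamma_a$ determines both the second coordinate and the color of the merge uniquely. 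The key verification is that any deviation from this greedy sequence produces a descent, hence a lex-larger word—a coordinate-by-coordinate induction modeled on the Bj\"orner--Stanley EL-labeling of $\Pi_n$, now with an extra layer of bookkeeping in the color coordinate. The top edges $[n]^{\mu}\lessdot\hat 1$, all sharing the label $(1,n+1)^1\in\Gamma_1$, fit into this framework because any maximal chain of an interval $[x,\hat 1]$ that is increasing must have its penultimate label in $\Gamma_1$ with color $1$, which then agrees with the forced greedy structure on $[x,[n]^{\mu}]$ for exactly one choice of $\mu$.
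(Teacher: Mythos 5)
Your proposal matches the paper's argument exactly: Theorem~\ref{theorem:elk} is deduced as a corollary of the EL-labeling of Theorem~\ref{theorem:ellabelingposet} together with Theorem~\ref{theorem:elth} and Corollary~\ref{corollary:elth}, and the paper likewise defers the verification that $\bar\lambda$ is an EL-labeling to the method of \cite{DleonWachs2013a}, which is what your greedy sketch (Bj\"orner--Stanley order on merges, weakly increasing colors within a $\Gamma_a$ run, the color-$1$ constraint forced below the top label $(1,n+1)^1$) reproduces. One small clarification of wording: the spheres are concentrated in dimension $n-3$ because the interval $[\hat 0,[n]^\mu]$ is pure of length $n-1$, so every ascent-free maximal chain has length $(n-3)+2$ in Theorem~\ref{theorem:elth}(1), rather than merely because the order complex happens to be $(n-3)$-dimensional.
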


In \cite{DotsenkoKhoroshkin2010} Dotsenko and Khoroshkin use operad theory to prove that the 
operad related to $\lie_k(n)$ is Koszul, which implies using Vallette's theory (\cite{Vallette2007}) 
that all
intervals of $\Pi_n^k$ are Cohen-Macaulay.  
Theorem \ref{theorem:elk} is an extension of their result.

\subsection{The ascent-free maximal chains from the EL-labeling}\label{section:ascentfreechains}

We will describe the ascent-free maximal chains of the maximal intervals $[\hat 
0,[n]^{\mu}]$ given by the EL-labeling
of Theorem~\ref{theorem:ellabelingposet}. A \emph{Lyndon tree}
is a labeled binary tree $(T,\sigma)$ such that for each internal node $x$ of $T$, the smallest leaf
label of the subtree $T_x$ rooted at $x$ is in the left subtree of $T_x$ and the second smallest
label is in the right subtree of $T_x$.  An alternative characterization of a   Lyndon tree is 
given in Proposition \ref{prop:valen} below.

For each internal node $x$ of a labeled binary tree, let $L(x)$ denote the left child of $x$ and 
$R(x)$
denote its right child.  For each node $x$ of a labeled binary tree $(T,\sigma)$ define
its  \emph{valency} $v(x)$ to be the smallest leaf label of the subtree rooted at $x$.  A Lyndon
tree is depicted in Figure \ref{fig:lyndonandvalency} illustrating the valencies of the
internal nodes. 

We say that a labeled  binary tree  is \emph{normalized} if  the leftmost leaf of each subtree has
the smallest label in the subtree. This is equivalent to requiring that for every internal node $x$,
\begin{align*}
 v(x)=v(L(x)).
\end{align*}

Note that a normalized tree can be thought of simply as a labeled nonplanar binary tree (or a 
phylogenetic 
tree)
that has been drawn in the plane following the convention above. We denote the set of normalized
labeled binary trees on label set $[n]$ by $\nor_n$ and the set of normalized binary trees on 
some arbitrary finite subset $A$ of $\PP$ by $\nor_A$. It is well-known that there are  
$(2n-3)!!:=1\cdot 3 \cdots (2n-3)$  phylogenetic trees 
on $[n]$ and so 
$|\nor_n|=(2n-3)!!$.

\begin{proposition}[{\cite[Proposition 5.6]{DleonWachs2013a}}] \label{prop:valen} Let $(T,\sigma)$ 
be a  labeled binary tree.  Then
$(T,\sigma)$ is a Lyndon tree if  and only if it is normalized and for every internal node $x$ of
$T$ we have 
\begin{align} \label{equation:lynnode} v(R(L(x))> v(R(x)).\end{align}
\end{proposition}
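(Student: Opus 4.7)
The plan is to prove both implications by induction on the number of internal nodes of $T$, exploiting the fact that both the Lyndon property and the combination of normalization plus the valency condition are hereditary: they hold for $(T,\sigma)$ if and only if they hold at the root of every subtree of $T$.

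For the forward direction, assume $(T,\sigma)$ is a Lyndon tree and let $x$ be an internal node. The definition of Lyndon tree says the smallest leaf label of $T_x$ lies in the left subtree $T_{L(x)}$; this is precisely the statement $v(x)=v(L(x))$, so $(T,\sigma)$ is normalized. To get the valency inequality, observe that the definition also tells us the second smallest leaf label of $T_x$ lies in $T_{R(x)}$, so this second smallest label is $v(R(x))$. If $L(x)$ is also internal, then $v(R(L(x)))$ is a leaf label of $T_{L(x)}\subseteq T_x$ which is different from the minimum $v(x)$ and different from $v(R(x))$ (since $T_{L(x)}$ and $T_{R(x)}$ have disjoint leaf sets). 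Therefore $v(R(L(x)))$ is among the leaf labels of $T_x$ strictly greater than the second smallest, giving $v(R(L(x)))>v(R(x))$.

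For the reverse direction, assume $(T,\sigma)$ is normalized and that $v(R(L(x)))>v(R(x))$ at every internal node $x$ whose left child is internal. Proceed by induction on $|I(T)|$. The base case $|I(T)|=1$ is immediate: $T$ has two leaves, the smaller is $v(L(x))=v(x)$ by normalization, and the larger is $v(R(x))$. For the inductive step, fix the root $x$. Normalization gives $v(x)=v(L(x))$, so the minimum leaf of $T_x$ sits in $T_{L(x)}$, which is the first requirement of being Lyndon. The second smallest leaf label of $T_x$ equals $\min(m_L, v(R(x)))$, where $m_L$ is the second smallest label of $T_{L(x)}$ when $L(x)$ is internal, and $m_L=+\infty$ when $L(x)$ is a leaf. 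In the leaf case the second smallest of $T_x$ is $v(R(x))$ and we are done. In the internal case, the inductive hypothesis applied to the subtree rooted at $L(x)$ (which inherits normalization and the valency condition) says that subtree is Lyndon, so $m_L=v(R(L(x)))$; the hypothesis $v(R(L(x)))>v(R(x))$ then gives $\min(m_L,v(R(x)))=v(R(x))$, placing the second smallest leaf label of $T_x$ in $T_{R(x)}$.

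I do not expect any real obstacle here: once one observes that both properties are local and hereditary and that the second smallest leaf of a binary tree must come either from the second smallest of the left subtree or from the minimum of the right subtree, the argument is essentially bookkeeping. The only subtlety worth being careful about is ensuring the inequality $v(R(L(x)))>v(R(x))$ is \emph{strict}, which comes for free from distinctness of leaf labels combined with the fact that $v(R(L(x)))$ and $v(R(x))$ label leaves in disjoint subtrees.
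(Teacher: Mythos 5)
The paper does not prove this proposition; it cites it as Proposition~5.6 of \cite{DleonWachs2013a}, so there is no in-paper argument to compare against. Your proof is correct, and the structural observation doing the work---that the second smallest leaf label of $T_x$ is $\min\bigl(m_L,\,v(R(x))\bigr)$ where $m_L$ is the second smallest leaf label of $T_{L(x)}$---is exactly the right one. Two small points worth tightening when you write it out. In the forward direction you assert that $v(R(L(x)))\neq v(x)$ without justification: this holds because $v(x)=v(L(x))$, and normalization at the node $L(x)$ (which you have already deduced) places this minimum in the left subtree of $L(x)$, whereas $v(R(L(x)))$ lies in the disjoint right subtree of $L(x)$. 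In the reverse direction, your inductive step verifies the Lyndon condition at the root and invokes the inductive hypothesis on $T_{L(x)}$, but to conclude that $T$ is Lyndon you should also invoke the inductive hypothesis on $T_{R(x)}$ (which likewise inherits normalization and the valency condition), so that the Lyndon condition is established at \emph{every} internal node; your opening remark about heredity signals this is intended, but the inductive step should say it explicitly.
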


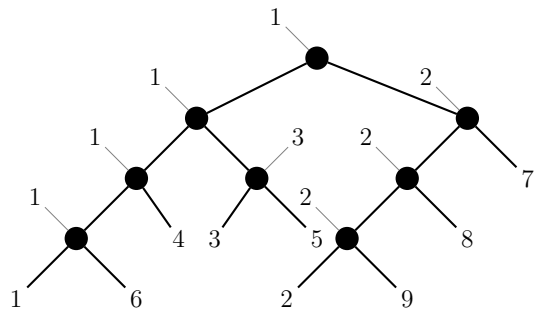
\begin{figure}[ht]
        \centering
        \begin{tikzpicture}[thick,scale=0.8]

\tikzstyle{every node}=[circle,fill,draw,inner sep=1pt, minimum width=10pt,scale=0.8]

    \draw [circle,color=black] (6,4)  node (i1)[pin=above left:\color{black}1]{};
    \draw [circle,color=black] (8.5,3)  node (i2)[pin=above left:\color{black}2]{};
    \draw [circle,color=black] (7.5,2)  node (i3)[pin=above left:\color{black}2]{};
    \draw [color=black] (6.5,1)  node (i4)[pin=above left:\color{black}2]{};
    \draw [color=black] (4,3)  node (i5)[pin=above left:\color{black}1]{};
    \draw [color=black] (5,2)  node (i6)[pin=above right:\color{black}3]{};
    \draw [circle,color=black] (3,2)  node (i7)[pin=above left:\color{black}1]{};
    \draw [color=black] (2,1)  node (i8)[pin=above left:\color{black}1]{};
\tikzstyle{every node}=[inner sep=1pt, minimum width=14pt,scale=0.8]

    \draw (4.3,1)  node (l1){3};
    \draw (5.5,0)  node (l2){2};
    \draw (1,0)  node (l3){1};
    \draw (3,0)  node (l4){6};
    \draw (6,1)  node (l5){5};
    \draw (3.7,1)  node (l6){4};
    \draw (7.5,0)  node (l7){9};
    \draw (9.5,2)  node (l8){7};
    \draw (8.5,1)  node (l9){8};

    \draw (i1) --  (i2) ;
    \draw (i1) --  (i5) ;
    \draw (i2) --  (i3) ;
    \draw (i2) --  (l8) ;
    \draw (i3) --  (i4) ;
    \draw (i3) --  (l9) ;
    \draw (i4) --  (l7) ;
    \draw (i4) --  (l2) ;
    \draw (i5) --  (i6) ;
    
    \draw (i5) --  (i7) ;
    \draw (i6) --  (l5) ;
    \draw (i6) --  (l1) ;
    \draw (i7) --  (l6) ;
    \draw (i7) --  (i8) ;
    \draw (i8) --  (l3) ;
    \draw (i8) --  (l4) ;
\end{tikzpicture}
 \caption{Example of a Lyndon tree. The numbers above the  lines correspond to the valencies of the
internal nodes}
\label{fig:lyndonandvalency}
  \end{figure}

 We will say that an internal node $x$ of  a labeled binary tree $(T,\sigma)$ is a \emph{Lyndon
node} if  (\ref{equation:lynnode}) holds.  
Hence Proposition~\ref{prop:valen} says that $(T,\sigma)$ is a Lyndon tree if and only if it
is normalized and all its internal nodes are Lyndon nodes.

A \emph{colored Lyndon tree} is a normalized binary tree such that for any node $x$ that is not
a Lyndon node it must happen that
\begin{align}\label{equation:lyndoncondition}
 \clr(L(x))>\clr(x).
\end{align}
For $\mu \in \wcomp_{n-1}$, let $\lyn_{\mu}$ be the set of colored Lyndon trees in $\BT_{\mu}$ and
$\lyn_{n}=\cup_{\mu \in \wcomp_{n-1}} \lyn_{\mu}$. 
Note that equation (\ref{equation:lyndoncondition}) implies that the monochromatic
Lyndon trees are just the classical Lyndon trees.

The set of bicolored Lyndon trees for $n=3$ is depicted in Figure \ref{fig:bicoloredlyndons}.

 \begin{figure}[ht]
        \centering
         \begin{tikzpicture}[thick,scale=0.6]
 \draw [circle,color=red] (0.5,2.5)  node (red){\small 2};
    \draw [color=blue] (0.5,1.7)  node (blue){ \small 1};
\tikzstyle{every node}=[fill, draw,inner sep=4pt, minimum width=1pt,scale=0.5]
    \draw [circle,color=red] (0,2.5)  node (r){};
    \draw [color=blue] (0,1.7)  node (b){};
\begin{scope}[xshift=-1cm,yshift=-1cm]

\tikzstyle{every node}=[fill, draw,inner sep=2pt,scale=0.8]
    \draw [color=blue] (3,1)  node (i1){};
    \draw [color=blue] (2,2)  node (i2){};

\tikzstyle{every node}=[inner sep=1pt, minimum width=14pt,scale=0.7]

    \draw (2,0)  node (m){$2$};
    \draw (4,0)  node (l1){$3$};
    \draw (1,1)  node (l2){$1$};
    
    \draw (m) --  (i1) ;
    \draw (i1) --  (l1) ;
    \draw (i1) --  (i2) ;
    \draw (i2) --  (l2) ;
\end{scope}

\begin{scope}[xshift=2.5cm,yshift=0]

\tikzstyle{every node}=[fill, draw,inner sep=2pt,scale=0.8]
    \draw [circle,color=red] (3,1)  node (i1){};
    \draw [color=blue] (2,2)  node (i2){};

\tikzstyle{every node}=[inner sep=1pt, minimum width=14pt,scale=0.7]

    \draw (2,0)  node (m){$2$};
    \draw (4,0)  node (l1){$3$};
    \draw (1,1)  node (l2){$1$};
    
    \draw (m) --  (i1) ;
    \draw (i1) --  (l1) ;
    \draw (i1) --  (i2) ;
    \draw (i2) --  (l2) ;
\end{scope}

\begin{scope}[xshift=6cm,yshift=0]

\tikzstyle{every node}=[fill, draw,inner sep=2pt,scale=0.8]
    \draw [color=blue] (3,1)  node (i1){};
    \draw [circle,color=red] (2,2)  node (i2){};

\tikzstyle{every node}=[inner sep=1pt, minimum width=14pt,scale=0.7]

    \draw (2,0)  node (m){$2$};
    \draw (4,0)  node (l1){$3$};
    \draw (1,1)  node (l2){$1$};
    
    \draw (m) --  (i1) ;
    \draw (i1) --  (l1) ;
    \draw (i1) --  (i2) ;
    \draw (i2) --  (l2) ;
\end{scope}
\begin{scope}[xshift=9.5cm,yshift=-1cm]

\tikzstyle{every node}=[fill, draw,inner sep=2pt,scale=0.8]
    \draw [circle,color=red] (3,1)  node (i1){};
    \draw [circle,color=red] (2,2)  node (i2){};

\tikzstyle{every node}=[inner sep=1pt, minimum width=14pt,scale=0.7]

    \draw (2,0)  node (m){$2$};
    \draw (4,0)  node (l1){$3$};
    \draw (1,1)  node (l2){$1$};
    
    \draw (m) --  (i1) ;
    \draw (i1) --  (l1) ;
    \draw (i1) --  (i2) ;
    \draw (i2) --  (l2) ;
\end{scope}

\begin{scope}[xshift=0,yshift=-5cm]

\tikzstyle{every node}=[fill, draw,inner sep=2pt,scale=0.8]
    \draw [color=blue] (1,1)  node (i1){};
    \draw [color=blue] (2,2)  node (i2){};

\tikzstyle{every node}=[inner sep=1pt, minimum width=14pt,scale=0.7]

    \draw (0,0)  node (m){$1$};
    \draw (2,0)  node (l1){$3$};
    \draw (3,1)  node (l2){$2$};

    \draw (m) --  (i1) ;
    \draw (i1) --  (l1) ;
    \draw (i1) --  (i2) ;
    \draw (i2) --  (l2) ;
\end{scope}

\begin{scope}[xshift=3.5cm,yshift=-6cm]
\tikzstyle{every node}=[fill, draw,inner sep=2pt,scale=0.8]
    \draw [circle,color=red] (1,1)  node (i1){};
    \draw [color=blue] (2,2)  node (i2){};

\tikzstyle{every node}=[inner sep=1pt, minimum width=14pt,scale=0.7]

    \draw (0,0)  node (m){$1$};
    \draw (2,0)  node (l1){$3$};
    \draw (3,1)  node (l2){$2$};

    \draw (m) --  (i1) ;
    \draw (i1) --  (l1) ;
    \draw (i1) --  (i2) ;
    \draw (i2) --  (l2) ;
\end{scope}

\begin{scope}[xshift=7cm,yshift=-6cm]
\tikzstyle{every node}=[fill, draw,inner sep=2pt,scale=0.8]
    \draw [color=blue] (1,1)  node (i1){};
    \draw [circle,color=red] (2,2)  node (i2){};

\tikzstyle{every node}=[inner sep=1pt, minimum width=14pt,scale=0.7]

    \draw (0,0)  node (m){$1$};
    \draw (2,0)  node (l1){$3$};
    \draw (3,1)  node (l2){$2$};

    \draw (m) --  (i1) ;
    \draw (i1) --  (l1) ;
    \draw (i1) --  (i2) ;
    \draw (i2) --  (l2) ;
\end{scope}
\begin{scope}[xshift=10.5cm,yshift=-5cm]
\tikzstyle{every node}=[fill, draw,inner sep=2pt,scale=0.8]
    \draw [circle,color=red] (1,1)  node (i1){};
    \draw [circle,color=red] (2,2)  node (i2){};

\tikzstyle{every node}=[inner sep=1pt, minimum width=14pt,scale=0.7]

    \draw (0,0)  node (m){$1$};
    \draw (2,0)  node (l1){$3$};
    \draw (3,1)  node (l2){$2$};
    
    \draw (m) --  (i1) ;
    \draw (i1) --  (l1) ;
    \draw (i1) --  (i2) ;
    \draw (i2) --  (l2) ;
\end{scope}

\begin{scope}[xshift=5cm,yshift=-3cm]
\tikzstyle{every node}=[fill, draw,inner sep=2pt,scale=0.8]
    \draw [circle,color=red] (1,1)  node (i1){};
    \draw [color=blue] (2,2)  node (i2){};

\tikzstyle{every node}=[inner sep=1pt, minimum width=14pt,scale=0.7]

    \draw (0,0)  node (m){$1$};
    \draw (2,0)  node (l1){$2$};
    \draw (3,1)  node (l2){$3$};

    \draw (m) --  (i1) ;
    \draw (i1) --  (l1) ;
    \draw (i1) --  (i2) ;
    \draw (i2) --  (l2) ;

\end{scope}
\end{tikzpicture}
 \caption{Set of bicolored Lyndon trees for $n=3$}
\label{fig:bicoloredlyndons}
  \end{figure}
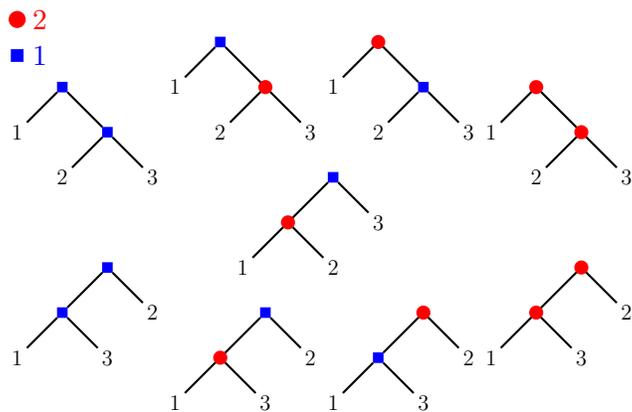

We will show that the ascent-free maximal chains of the EL-labeling of $[\hat 0,[n]^{\mu}]$ 
given in Theorem~\ref{theorem:ellabelingposet} are of the form $c(T,\sigma,\tau)$, where $(T,\sigma)
\in \lyn_{\mu}$ 
and $\tau$ is the linear extension of the internal nodes of $T$, which we now describe:
It is easy to see that there is a unique linear extension of the internal notes of $(T,\sigma) \in 
\BT_{\mu}$ in which the valencies of the nodes weakly decrease.   Let $\tau_{T,\sigma}$ denote the
permutation that  induces this linear extension.

\begin{theorem}\label{thm:ascfreeEL}
The set $\{c(T,\sigma,\tau_{T,\sigma}) \mid \, (T,\sigma) \in \lyn_{\mu}\}$ is the set of 
ascent-free
maximal chains of the EL-labeling of $[\hat 0, [n]^\mu]$ given in Theorem
\ref{theorem:ellabelingposet}.
\end{theorem}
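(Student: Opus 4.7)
The plan is to analyze the label sequence $\bar\lambda(c(T,\sigma,\tau))$ in the ordinal sum $\Lambda^k_n = \Gamma_1 \oplus \cdots \oplus \Gamma_n$ and match the ascent-free condition with the defining axioms of a colored Lyndon tree. First, at step $t$ of $c(T,\sigma,\tau)$ the cover-relation merges the blocks $\pi(L_{\tau(t)})$ and $\pi(R_{\tau(t)})$ with color $j_{\tau(t)}$, so the edge label is $(\min(v(L_{\tau(t)}), v(R_{\tau(t)})), \max(v(L_{\tau(t)}), v(R_{\tau(t)})))^{\clr(v_{\tau(t)})}$; in particular its first coordinate equals the valency $v(v_{\tau(t)})$. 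Because $\Gamma_a$ sits strictly below $\Gamma_b$ in $\Lambda^k_n$ whenever $a<b$, an ascent between steps $t$ and $t+1$ is automatic if $v(v_{\tau(t)})<v(v_{\tau(t+1)})$, is impossible if $v(v_{\tau(t)})>v(v_{\tau(t+1)})$, and must be decided inside $\Gamma_v$ when the valencies coincide. Hence ascent-freeness forces the valency sequence along $\tau$ to be weakly decreasing; combined with the children-before-parent constraint, and the fact that for each leaf label $v$ the internal ancestors of valency $v$ form a chain of consecutive ancestors of leaf $v$, this pins down $\tau=\tau_{T,\sigma}$ uniquely.

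Next, I would fix consecutive steps $t,t+1$ of $\tau_{T,\sigma}$ with $v(v_{\tau(t)})=v(v_{\tau(t+1)})=v$, and set $x:=v_{\tau_{T,\sigma}(t)}$ and $y:=v_{\tau_{T,\sigma}(t+1)}$. The chain structure above forces $y$ to be the parent of $x$ in $T$, and the normalized identity $v(L(y))=v(y)$ together with the disjointness of the leaf sets of $L(y)$ and $R(y)$ gives $x=L(y)$. Since leaf labels are distinct, the second coordinates $v(R(x))$ and $v(R(y))$ of the two labels are unequal, and using the product-of-chains description of $\Gamma_v$ one checks that no-ascent at this pair is equivalent to \emph{either} $v(R(x))>v(R(y))$, which is the Lyndon-node condition at $y$ from Proposition~\ref{prop:valen}, \emph{or} $\clr(L(y))>\clr(y)$, which is exactly the colored-Lyndon condition \eqref{equation:lyndoncondition}. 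Since pairs with strictly decreasing valency automatically produce no ascent, combining over all consecutive pairs yields that $c(T,\sigma,\tau_{T,\sigma})$ is ascent-free if and only if $(T,\sigma)\in\lyn_\mu$.

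To close the containment $\supseteq$, I would note that any maximal chain of $[\hat 0, [n]^\mu]$ is of the form $c(T,\sigma,\tau)$ for a unique triple $(T,\sigma,\tau)$ once one adopts the normalized (smaller-minimum-on-the-left) convention when reading off the merges along the chain; the previous two paragraphs then identify this triple as $((T,\sigma),\tau_{T,\sigma})$ with $(T,\sigma)\in\lyn_\mu$. I expect the main obstacle to be the local analysis at consecutive same-valency steps: getting the order on $\Gamma_v$ right, establishing the forced equality $x=L(y)$, and using distinctness of leaf labels to exclude the tie case $(v(R(x)),\clr(x))=(v(R(y)),\clr(y))$ so that the dichotomy really matches the two-clause definition of colored Lyndon trees.
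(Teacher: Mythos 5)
Your proposal is correct and follows essentially the same route as the paper's proof: read the label word of $c(T,\sigma,\tau)$, observe its first coordinates are the valencies $v(v_{\tau(t)})$, use the ordinal-sum structure of $\Lambda_n^k$ to force weak decrease of valencies (and hence $\tau=\tau_{T,\sigma}$), reduce the remaining analysis to consecutive equal-valency pairs, show $x = L(y)$ there, and then match the product-of-chains non-ascent condition in $\Gamma_v$ to the disjunction ``$y$ is a Lyndon node'' or ``$\clr(L(y)) > \clr(y)$'' --- exactly the two-clause definition of a colored Lyndon tree. The paper organizes this as two separate directions (colored Lyndon $\Rightarrow$ ascent-free, then the converse), while you merge them into a single equivalence over consecutive pairs; your handling of the potential ``tie'' case via distinctness of leaf labels is the same observation the paper makes when noting $v(R(x_{\tau(i)}))\ne v(R(x_{\tau(i+1)}))$. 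No gaps.
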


\begin{proof}

We begin by showing that $c:=c(T,\sigma,\tau)$ is ascent-free whenever $(T,\sigma) \in \lyn_{\mu}$
and $\tau= \tau_{T,\sigma}$ .  Let  $x_i$ be the $i$th internal node of $T$ in postorder.  Then by
the definition of $\tau_{T,\sigma}$,
\begin{align} \label{eq:val} v(x_{\tau(1)}) \ge v(x_{\tau(2)}) \ge \dots \ge 
v(x_{\tau(n-1)}),\end{align} 
where $v$ is the valency.  For each $i$, the $i$th letter of  the label word $\bar \lambda(c)$ is 
given
by 
$$\bar \lambda_i(c) = (v(L(x_{\tau(i)})), v(R(x_{\tau(i)})))^{u_i}= (v(x_{\tau(i)}),
v(R(x_{\tau(i)})))^{u_i},$$ where  $u_i = \clr(x_{\tau(i)})$. 
Note that since $(T,\sigma)$ is normalized, $v(R(x_{\tau(i)})) \ne v(R(x_{\tau(i+1)}))$ for all $i 
\in [n-1]$.
Now suppose the word $\bar 
\lambda(c)$ has
an ascent at $i$.  Then it follows from (\ref{eq:val})
that 
\begin{align} \label{eq:ascent} v(x_{\tau(i)}) = v(x_{\tau(i+1)}),\,\,\, v(R(x_{\tau(i)}))<
v(R(x_{\tau(i+1)})),\,\mbox{ and } \, u_i \le u_{i+1} .\end{align}  
The equality of valencies implies that $x_{\tau(i)}= L(x_{\tau(i+1)})$ since $(T,\sigma)$ is
normalized and $\tau$ is a linear extension.  
Hence by (\ref{eq:ascent}),  $$ v(R(L(x_{\tau(i+1)})))<v(R(x_{\tau(i+1)})).$$
It follows that $x_{\tau(i+1)}$ is not a Lyndon node.  So by the coloring restriction on colored
Lyndon trees
\begin{align*}
 u_i=\clr(x_{\tau(i)})=\clr(L(x_{\tau(i+1)}))>\clr(x_{\tau(i+1)})=u_{i+1},
\end{align*}
which contradicts (\ref{eq:ascent}).  Hence the chain $c$ is ascent-free.

Conversely, assume $c$ is an ascent-free maximal chain of $[\hat 0, [n]^{\mu}]$.  Then $c =
c(T,\sigma,\tau)$ for some  bicolored labeled tree $(T,\sigma)$ and some permutation $\tau \in
\sym_{n-1}$.  We can assume without loss of generality that $(T,\sigma)$ is normalized.  Since $c$
is ascent-free, (\ref{eq:val}) holds.  This implies that $\tau$ is the unique permutation that
induces the valency-decreasing linear extension, namely $ \tau_{T,\sigma}$.  

If all internal nodes of $(T,\sigma)$ are Lyndon nodes we are done.  So  let $i\in [n-1]$ be such
that  $x_{\tau(i)}$ is not a Lyndon node.   That is $$  v(R(L(x_{\tau(i)})))< v(R(x_{\tau(i)})) .$$
  Since $(T,\sigma)$ is normalized and (\ref{eq:val}) holds,  $ L(x_{\tau(i)})= x_{\tau(i-1)}$. 
Hence,
$v(R(x_{\tau(i-1)}))<v(R(x_{\tau(i)})) $.  Since $(T,\sigma)$ is normalized we also have
$v(L(x_{\tau(i-1)})) = v(L(x_{\tau(i)}))$.  Since $c$ is ascent-free  we must have
that
\begin{align*}
\clr(x_{\tau(i-1)})>\clr(x_{\tau(i)}),
\end{align*}
which is precisely what we need to conclude that
$(T,\sigma)$ is a colored Lyndon tree.
\end{proof}

From Theorem \ref{theorem:elth}, Theorem \ref{thm:ascfreeEL}  and Corollary  \ref{corollary:elth}, 
we have the following corollary.
\begin{corollary}\label{corollary:muintervalslyn} For all $n \ge 
1$ and for all $\mu \in 
\wcomp_{n-1}$ with 
$\supp(\mu)\subseteq [k]$,  the order complex $\Delta((\hat 0, [n]^{\mu}))$ has the homotopy type 
of a wedge of 
$|\lyn_{\mu}|$ spheres of dimension $n-3$. Consequently,
\[
 \dim \widetilde H^{n-3}((\hat 0, [n]^{\mu}))=|\lyn_{\mu}|
\]
and
\[ \bar \mu_{\Pi_{n}^{k}}(\hat{0},[n]^{\mu})=(-1)^{n-1}|\lyn_{\mu}|.\]
\end{corollary}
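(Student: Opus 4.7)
The plan is to assemble this corollary directly from the three main ingredients established earlier in the section: the EL-labeling of $\widehat{\Pi_n^k}$ from Theorem \ref{theorem:ellabelingposet}, the Björner--Wachs topological/combinatorial machinery recorded in Theorem \ref{theorem:elth}, and the explicit description of ascent-free maximal chains from Theorem \ref{thm:ascfreeEL}. No new constructions are required; the only thing to check carefully is a rank count.

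First I would verify that the closed interval $[\hat 0, [n]^\mu]$ is pure of length $n-1$. This is immediate from the cover relations in $\Pi_n^k$: each cover merges two blocks into one, decreasing the number of blocks by exactly one, so a maximal chain from $\hat 0$ (which has $n$ blocks) to $[n]^\mu$ (which has one block) has exactly $n-1$ edges. In particular the open interval $(\hat 0, [n]^\mu)$ has length $n-3$ in the sense relevant to Theorem \ref{theorem:elth}, so its ascent-free chains contribute spheres of dimension $(n-1)-2 = n-3$.

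Next I would invoke Theorem \ref{theorem:ellabelingposet}, which gives an EL-labeling of $\widehat{\Pi_n^k}$, and restrict it to $[\hat 0, [n]^\mu]$; the restriction of an EL-labeling to a closed interval is again an EL-labeling. Applying Theorem \ref{theorem:elth} to $[\hat 0, [n]^\mu]$ then shows that $(\hat 0, [n]^\mu)$ is homotopy equivalent to a wedge of $(n-3)$-spheres, one for each ascent-free maximal chain of $[\hat 0, [n]^\mu]$. By Theorem \ref{thm:ascfreeEL}, these ascent-free chains are in bijection with the colored Lyndon trees in $\lyn_\mu$, so the number of spheres is exactly $|\lyn_\mu|$. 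This gives both the homotopy statement and, taking reduced cohomology, the dimension formula $\dim \widetilde H^{n-3}((\hat 0, [n]^\mu)) = |\lyn_\mu|$.

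Finally, for the Möbius invariant, I would use the standard identity $\bar\mu_{\widehat{[\hat 0,[n]^\mu]}}(\hat 0, [n]^\mu) = \widetilde\chi(\Delta((\hat 0, [n]^\mu)))$ between the Möbius function of a bounded poset and the reduced Euler characteristic of the order complex of its proper part. Since a wedge of $|\lyn_\mu|$ spheres of dimension $n-3$ has reduced Euler characteristic $(-1)^{n-3}|\lyn_\mu| = (-1)^{n-1}|\lyn_\mu|$, the claimed formula
\[
\bar\mu_{\Pi_n^k}(\hat 0,[n]^\mu) = (-1)^{n-1}|\lyn_\mu|
\]
follows. There is no real obstacle here; the only mild subtlety is keeping straight that the ``length $r+2$'' convention in Theorem \ref{theorem:elth} refers to the closed interval, so the dimension of the spheres comes out to $n-3$ rather than $n-1$, and correspondingly the Euler-characteristic sign is $(-1)^{n-3}=(-1)^{n-1}$.
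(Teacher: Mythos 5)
Your proof is correct and follows essentially the same route as the paper, which derives this corollary directly from Theorem \ref{theorem:elth}, Theorem \ref{thm:ascfreeEL}, and Corollary \ref{corollary:elth}. You have merely spelled out the rank count and the Euler-characteristic computation for the M\"obius invariant, which the paper leaves implicit by citing Corollary \ref{corollary:elth}.
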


\section{The dimension of $\lie(\mu)$}\label{section:binarystirling}

In this section we present various formulas for the dimension of $\lie(\mu)$.  We begin by using 
the isomorphism between $\lie(\mu)$ and $\widetilde H^{n-3}((\hat{0},[n]^{\mu}))$ of Theorem 
\ref{theorem:liehomisomorphism} to transfer information on $\widetilde 
H^{n-3}((\hat{0},[n]^{\mu}))$ obtained in the previous section to $\lie(\mu)$.

\begin{theorem}[Theorem \ref{theorem:compositionalinverse}]\label{theorem:compositionalinverse2}
We have
  \begin{align*}
   \sum_{n\ge1}\sum_{\mu \in \wcomp_{n-1}}\dim 
\lie(\mu)\,\xx^{\mu}\frac{y^n}{n!} =\left [ \sum_{n\ge1}(-1)^{n-1} 
h_{n-1}(\xx)\frac{y^n}{n!} \right ] ^{<-1>},
  \end{align*}
 \end{theorem}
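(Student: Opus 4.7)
The plan is to assemble Theorem \ref{theorem:compositionalinverse2} from three ingredients already in hand: the isomorphism of Theorem \ref{theorem:liekiso}, the Cohen--Macaulay conclusion of Theorem \ref{theorem:elk}, and the M\"obius generating function of Theorem \ref{theorem:compositionalinversemu}. A simple sign substitution in the last of these will produce the stated identity.

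First I would translate dimensions across the isomorphism. By Theorem \ref{theorem:liekiso} we have $\dim\lie(\mu)=\dim\widetilde H^{n-3}((\hat{0},[n]^{\mu}))$ for every $\mu\in\wcomp_{n-1}$. Theorem \ref{theorem:elk} says $\widehat{\Pi_n^k}$ is Cohen--Macaulay, so $\widetilde H_i((\hat 0,[n]^\mu))=0$ for $i<n-3$; equivalently the order complex of $(\hat 0,[n]^\mu)$ is a wedge of $(n-3)$-spheres. Combining the Euler--Poincar\'e formula with the classical identity $\bar\mu_{\Pi_n^k}(\hat 0,[n]^\mu)=\tilde\chi(\Delta((\hat 0,[n]^\mu)))$ (as recalled before Corollary \ref{corollary:elth}) yields
\[
\dim\lie(\mu)=\dim\widetilde H^{n-3}((\hat 0,[n]^\mu))=(-1)^{n-1}\,\bar\mu_{\Pi_n^k}(\hat 0,[n]^\mu).
\]

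Next I would invoke Theorem \ref{theorem:compositionalinversemu}. By choosing $k$ large enough (or passing to the limit in the ring $\Lambda_{\ZZ}[[y]]$, which is legitimate because each coefficient of the relevant series involves only finitely many variables $x_j$), that theorem gives
\[
\sum_{n\ge 1}\sum_{\mu\in\wcomp_{n-1}}\bar\mu_{\Pi_n^k}(\hat 0,[n]^\mu)\,\xx^{\mu}\frac{y^n}{n!}=\left[\sum_{n\ge 1}h_{n-1}(\xx)\frac{y^n}{n!}\right]^{<-1>}.
\]
Let $f(y)=\sum_{n\ge 1}h_{n-1}(\xx)\tfrac{y^n}{n!}$ and $g=f^{<-1>}$. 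The series $\tilde f(y):=-f(-y)=\sum_{n\ge 1}(-1)^{n-1}h_{n-1}(\xx)\tfrac{y^n}{n!}$ has compositional inverse $\tilde g(y)=-g(-y)$, a standard fact obtained by plugging $y\mapsto -y$ into $f\circ g=\mathrm{id}$. The coefficients of $\tilde g$ are therefore obtained from those of $g$ by multiplying the $\tfrac{y^n}{n!}$-coefficient by $(-1)^{n-1}$. Combining this sign flip with the dimension identity above immediately yields
\[
\sum_{n\ge 1}\sum_{\mu\in\wcomp_{n-1}}\dim\lie(\mu)\,\xx^{\mu}\frac{y^n}{n!}=\left[\sum_{n\ge 1}(-1)^{n-1}h_{n-1}(\xx)\frac{y^n}{n!}\right]^{<-1>},
\]
which is the desired formula.

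There is no real obstacle: all the genuine work was done in Theorems \ref{theorem:liekiso}, \ref{theorem:elk} and \ref{theorem:compositionalinversemu}. The only mild care point is the interpretation of both sides as formal power series in $y$ with coefficients in the ring $\Lambda_{\ZZ}$ of symmetric functions in $\xx$, so that the $k\to\infty$ passage and the sign substitution $y\mapsto -y$ are justified term by term; this is routine since each $h_{n-1}(\xx)$ and each $\dim\lie(\mu)$ is determined by finitely many variables.
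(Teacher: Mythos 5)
Your proof is correct and follows essentially the same route as the paper's. The only cosmetic differences are that you invoke the Cohen--Macaulay/Euler--Poincar\'e argument directly rather than citing Corollary~\ref{corollary:muintervalslyn} (which packages exactly that consequence), and you spell out the sign substitution $y\mapsto -y$ that the paper leaves implicit in ``the theorem now follows.''
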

 
\begin{proof}
 From Corollary \ref{corollary:muintervalslyn}  we have that 
\begin{align*}
 \bar \mu_{\Pi_{n}^{k}}(\hat{0},[n]^{\mu})=(-1)^{n-1} \dim \widetilde H^{n-3}((\hat{0},[n]^{\mu})).
\end{align*}
The theorem now follows from Theorems \ref{theorem:liehomisomorphism} and  
\ref{theorem:compositionalinversemu} when we let $k$ get large.
\end{proof}

We have a combinatorial description for the 
dimension of $\lie(\mu)$.

\begin{theorem}\label{theorem:dimliemu} For all $n \ge 1$ and $\mu \in \wcomp_{n-1}$,
\[
 \dim \lie(\mu)=|\lyn_{\mu}|.
\]
\end{theorem}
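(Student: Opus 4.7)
The plan is to combine two results already established in the paper. First, Theorem \ref{theorem:liehomisomorphism} provides an explicit $\sym_n$-module isomorphism $\lie(\mu) \simeq_{\sym_n} \widetilde H^{n-3}((\hat 0, [n]^{\mu})) \otimes \sgn_n$. Since tensoring with the one-dimensional sign representation does not change the underlying vector space dimension, we immediately get
\[
\dim \lie(\mu) = \dim \widetilde H^{n-3}((\hat 0, [n]^{\mu})).
\]

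Second, Corollary \ref{corollary:muintervalslyn}, which was a consequence of the EL-shellability of $\widehat{\Pi_n^k}$ (Theorem \ref{theorem:elk}) together with the identification in Theorem \ref{thm:ascfreeEL} of the ascent-free maximal chains of $[\hat 0, [n]^\mu]$ with chains indexed by colored Lyndon trees in $\lyn_\mu$, yields
\[
\dim \widetilde H^{n-3}((\hat 0, [n]^{\mu})) = |\lyn_{\mu}|.
\]

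Stringing these two equalities together gives $\dim \lie(\mu) = |\lyn_\mu|$. There is no real obstacle: the substantive work (the isomorphism and the enumeration of ascent-free chains) has already been done in the preceding sections, and the proof amounts to citing the two results and taking dimensions.
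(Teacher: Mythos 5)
Your proof is correct and matches the paper's own argument exactly: the paper's proof likewise cites Corollary \ref{corollary:muintervalslyn} for $\dim \widetilde H^{n-3}((\hat 0, [n]^{\mu})) = |\lyn_{\mu}|$ and Theorem \ref{theorem:liehomisomorphism} for the dimension-preserving isomorphism.
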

\begin{proof}
 We know from Corollary \ref{corollary:muintervalslyn} that $\dim \widetilde 
H^{n-3}((\hat{0},[n]^{\mu}))=|\lyn_{\mu}|$. Hence, the isomorphism of 
Theorem \ref{theorem:liehomisomorphism} proves the theorem.
\end{proof}

\subsection{Lyndon type of a normalized tree}\label{section:lyndontype}

With a normalized tree $\Upsilon \in \nor_n$ we can associate a (set) partition 
$\pi^{\lyn}(\Upsilon)$ of the set of 
internal nodes of $\Upsilon$, defined to be the finest partition satisfying the condition:
\begin{itemize}
 \item for every internal node $x$ that is not Lyndon, $x$ and $L(x)$ belong to the same block 
of $\pi^{\lyn}(\Upsilon)$.
\end{itemize}
For the tree in Figure \ref{fig:coloredlyndontype}, the shaded rectangles indicate the 
blocks of $\pi^{\lyn}(\Upsilon)$. 

Note that the coloring condition (\ref{equation:lyndoncondition}) implies that in 
a 
colored Lyndon tree $\Upsilon$ there are no repeated colors in each block $B$ 
of the partition 
$\pi^{\lyn}(\Upsilon)$ 
associated with $\Upsilon$. Hence after choosing a 
set of $|B|$ colors for the internal nodes in $B$ there is 
a unique way 
to assign the different colors such
that the colored tree $\Upsilon$ is a colored Lyndon tree (the colors must decrease towards 
the 
root in each block of $\pi^{\lyn}(\Upsilon)$). 

Define the \emph{Lyndon type}  $\lyndonlambda(\Upsilon)$ of a normalized tree 
(colored or 
uncolored) $\Upsilon$ to be the (integer) partition whose parts are the block sizes of the 
partition $\pi^{\lyn}(\Upsilon)$. For the tree $\Upsilon$ in Figure 
\ref{fig:coloredlyndontype}, we have $\lyndonlambda(\Upsilon)=(3,2,2,1)$. 

\begin{figure}[ht]
        \centering
        \usetikzlibrary{shapes,snakes}

\begin{tikzpicture}[thick,scale=0.8]

\begin{scope}[xshift=0cm,yshift=1cm]
 \draw [color=blue] (1.5,4)  node (blue){ $1$};
\draw [circle,color=red] (1.5,3.5)  node (red){ $2$};
 \draw [color=brown] (1.5,2.9)  node (brown){$3$};
\tikzstyle{every node}=[fill, draw,inner sep=4pt, minimum width=1pt,scale=0.8]
    
    \draw [color=blue] (1,4)  node (b){};
\draw [circle,color=red] (1,3.5)  node (r){};
    \draw [diamond,color=brown] (1,2.9)  node (g){};
\end{scope}

\node[fill=blue!20,blue!20,draw,rectangle,rounded corners,rotate=45, minimum width=70pt,minimum 
height=30pt,scale=0.8] at  (2.5,1.5) {};
\node[fill=blue!20,blue!20,draw,rectangle,rounded corners,rotate=26.565, minimum width=90pt,minimum 
height=30pt,scale=0.8] at  (5,3.5) {};
\node[fill=blue!20,blue!20,draw,rectangle,rounded corners,rotate=45, minimum width=120pt,minimum 
height=30pt,scale=0.8] at  (7.5,2) {};
\node[fill=blue!20,blue!20,draw,rectangle,rounded corners,rotate=45, minimum width=30pt,minimum 
height=30pt,scale=0.8] at  (5,2) {};
\tikzstyle{every node}=[fill,draw,inner sep=1pt, minimum width=10pt,scale=0.8]

    \draw [color=blue] (6,4)  node (i1)[pin=above left:\color{black}1]{N};
    \draw [color=blue] (8.5,3)  node (i2)[pin=above left:\color{black}3]{N};
    \draw [circle,color=red] (7.5,2)  node (i3)[pin=above left:\color{black}3]{n};
    \draw [diamond, color=brown] (6.5,1)  node (i4)[pin=above left:\color{black}3]{n};
    \draw [diamond,color=brown] (4,3)  node (i5)[pin=above left:\color{black}1]{n};
    \draw [diamond,color=brown] (5,2)  node (i6)[pin=above right:\color{black}2]{n};
    \draw [color=blue] (3,2)  node (i7)[pin=above left:\color{black}1]{N};
    \draw [circle,color=red] (2,1)  node (i8)[pin=above left:\color{black}1]{n};
\tikzstyle{every node}=[inner sep=1pt, minimum width=14pt,scale=0.8]

    \draw (4.3,1)  node (l1){2};
    \draw (5.5,0)  node (l2){3};
    \draw (1,0)  node (l3){1};
    \draw (3,0)  node (l4){4};
    \draw (6,1)  node (l5){5};
    \draw (3.7,1)  node (l6){6};
    \draw (7.5,0)  node (l7){7};
    \draw (9.5,2)  node (l8){9};
    \draw (8.5,1)  node (l9){8};

    \draw (i1) --  (i2) ;
    \draw (i1) --  (i5) ;
    \draw (i2) --  (i3) ;
    \draw (i2) --  (l8) ;
    \draw (i3) --  (i4) ;
    \draw (i3) --  (l9) ;
    \draw (i4) --  (l7) ;
    \draw (i4) --  (l2) ;
    \draw (i5) --  (i6) ;
    
    \draw (i5) --  (i7) ;
    \draw (i6) --  (l5) ;
    \draw (i6) --  (l1) ;
    \draw (i7) --  (l6) ;
    \draw (i7) --  (i8) ;
    \draw (i8) --  (l3) ;
    \draw (i8) --  (l4) ;
\end{tikzpicture}
 \caption{Example of a colored Lyndon tree of type (3,2,2,1). The numbers above the  lines 
correspond to the valencies of the internal nodes}
\label{fig:coloredlyndontype}
  \end{figure}
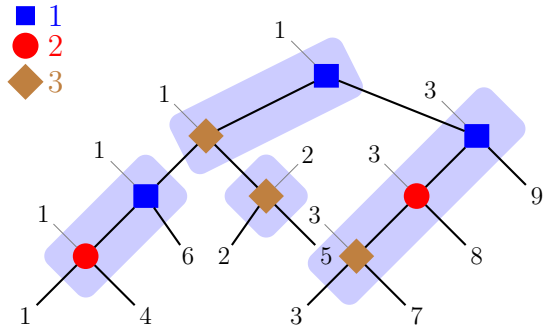
Let $e_{\lambda}(\xx)$ 
be the elementary symmetric function associated with the partition $\lambda$. 

\begin{theorem}\label{theorem:dimensionslyndontype}For all $n$,
 \begin{align*}
  \sum_{\mu \in \wcomp_{n-1}}\dim \lie(\mu)\,\xx^{\mu}= \sum_{\Upsilon \in \nor_n} 
e_{\lyndonlambda(\Upsilon)}(\xx).
 \end{align*}
 \end{theorem}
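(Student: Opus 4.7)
The plan is to use Theorem \ref{theorem:dimliemu} to convert the algebraic dimension $\dim \lie(\mu)$ into the combinatorial count $|\lyn_{\mu}|$, and then to organize the sum $\sum_{\mu \in \wcomp_{n-1}} |\lyn_{\mu}|\, \xx^\mu$ by first fixing the underlying normalized tree and then summing over admissible colorings.

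More precisely, I would start from the identity
\[
\sum_{\mu \in \wcomp_{n-1}}\dim \lie(\mu)\,\xx^{\mu} = \sum_{\mu \in \wcomp_{n-1}} |\lyn_\mu|\, \xx^\mu = \sum_{(T,\sigma) \in \lyn_n} \xx^{\mu(T,\sigma)},
\]
where $\mu(T,\sigma)$ records the multiset of colors of the internal nodes. Since every colored Lyndon tree has, by definition, an underlying normalized tree $\Upsilon$, I would regroup this sum according to the underlying uncolored normalized tree:
\[
\sum_{(T,\sigma) \in \lyn_n} \xx^{\mu(T,\sigma)} = \sum_{\Upsilon \in \nor_n} \sum_{\substack{c \text{ coloring of } \Upsilon \\ c \text{ makes } \Upsilon \text{ Lyndon}}} \xx^{\mu(\Upsilon,c)}.
\]

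The key step is to evaluate the inner sum. By the discussion preceding the theorem (using the coloring constraint \eqref{equation:lyndoncondition}), a coloring $c$ of a normalized tree $\Upsilon$ yields a colored Lyndon tree if and only if, within each block $B$ of the Lyndon partition $\pi^{\lyn}(\Upsilon)$, the colors assigned to the nodes of $B$ are pairwise distinct and strictly decrease towards the root of $B$. The block structure is determined purely by $\Upsilon$, and the constraint on different blocks is independent. Hence, for each block $B$ of size $|B|$, the valid assignments are in bijection with choices of a $|B|$-element subset of $\PP$ (once the subset is chosen, the ordering inside $B$ is forced), so the weighted count for $B$ is exactly $e_{|B|}(\xx)$. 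Multiplying over the blocks of $\pi^{\lyn}(\Upsilon)$ gives
\[
\sum_{c} \xx^{\mu(\Upsilon,c)} = \prod_{B \in \pi^{\lyn}(\Upsilon)} e_{|B|}(\xx) = e_{\lyndonlambda(\Upsilon)}(\xx).
\]
Summing over $\Upsilon \in \nor_n$ yields the claimed identity.

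The main (and essentially only) obstacle is the verification of the bijective claim about colorings of a single block, but this is a direct consequence of condition \eqref{equation:lyndoncondition}: along a maximal chain of consecutive non-Lyndon ancestors (which is precisely what each block is, since the block relation is generated by the edges $\{x, L(x)\}$ for non-Lyndon $x$), the colors must strictly decrease from the bottom node up to the top of the block, while across distinct blocks the colors are unconstrained with respect to each other. Once this is in place, the proof reduces to the product/sum manipulation above.
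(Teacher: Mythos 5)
Your proof is correct and takes essentially the same approach as the paper: convert $\dim\lie(\mu)$ to $|\lyn_\mu|$ via Theorem~\ref{theorem:dimliemu}, regroup the sum by underlying uncolored normalized tree, and observe that the admissible colorings of each $\Upsilon$ factor over the blocks of $\pi^{\lyn}(\Upsilon)$, each contributing an elementary symmetric function. You spell out the per-block coloring claim (distinct colors forced to strictly decrease towards the root of the block, so the choice is a free $|B|$-subset of $\PP$) a bit more explicitly than the paper does, but the argument is the same.
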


\begin{proof}
For a colored labeled binary tree $\Psi$ we define the \emph{content} $\mu(\Psi)$ of $G$ as the 
weak 
composition $\mu$ where $\mu(i)$ is the number of internal nodes of $\Psi$ that have color $i$.
Recall that  $\widetilde \Psi$ denotes the underlying uncolored labeled binary tree of $\Psi$.
Note that the comments above imply that for $\Upsilon \in \nor_n$, the generating 
function of
colored Lyndon trees associated with $\Upsilon$ is
\begin{align}\label{equation:contribution}
\sum_{\substack{\Psi \in \lyn_n\\ \widetilde \Psi = \Upsilon}}\xx^{\mu(\Psi)}= 
e_{\lyndonlambda(\Upsilon)}(\xx).
\end{align}
Indeed the internal 
nodes in a block of size $i$ in the partition  $\pi^{\lyn}(\Upsilon)$ can be colored 
uniquely with any set of $i$ different colors and so the contribution from this block of   
$\pi^{\lyn}(\Upsilon)$  to 
the generating function in (\ref{equation:contribution}) is $e_i(\xx)$.

By Theorem \ref{theorem:dimliemu},
\begin{align*}
\sum_{\mu \in \wcomp_{n-1}}\dim \lie(\mu)\,\xx^{\mu}&=\sum_{\mu \in 
\wcomp_{n-1}}|\lyn_{\mu}|\,\xx^{\mu}\\
          &=\sum_{\Psi \in \lyn_{n}}\xx^{\mu(\Psi)}\\
          &=\sum_{\Upsilon \in \nor_{n}}\sum_{\substack{\Psi \in \lyn_n\\ \widetilde \Psi = 
\Upsilon}}\xx^{\mu(\Psi)}\\
         &=\sum_{\Upsilon \in \nor_{n}}e_{\lyndonlambda(\Upsilon)}(\xx),
\end{align*}
with the last equation following from (\ref{equation:contribution}).
\end{proof}

\subsection{Stirling permutations}

A \emph{Stirling permutation} on the 
set $[n]$ is a permutation of the multiset $\{1,1,2,2,\cdots,n,n\}$ such that for all $m \in [n]$,
all numbers between the two occurrences of $m$ are larger than $m$.
The set of Stirling permutations on $[n]$ will be 
denoted by $\Q_n$. For example, the permutation
$12332144$ is in $\Q_n$ but $43341122$ is not since $3$ is between the two ocurrences of $4$. 
Stirling permutations were introduced by Stanley and Gessel in 
\cite{StanleyGessel1978} and have been also
studied by B\'ona, Park, Janson, Kuba, Panholzer and others (see
\cite{Bona2009,Park1994-1,JansonKubaPanholzer2011,HaglundVisontai2012}).  For an 
arbitrary subset $A:=\{a_1,a_2,\dots,a_n\}$ of positive integers, we denote by $\Q_A$, 
the set of Stirling permutations of $A$; that is,  
permutations of the multiset $\{a_1,a_1,a_2,a_2,\dots,a_n,a_n\}$, satisfying the 
condition 
above. 

It is known
that $|\Q_{n-1}|=(2n-3)!!$. So this set of Stirling permutations is equinumerous with the set 
$\nor_n$ of 
normalized binary trees with label set $[n]$. We will present an explicit bijection between these 
two sets. 
Moreover, this bijection has some nice properties that allow us to translate the previous results 
in this section to the language of Stirling permutations and to ultimately prove Theorem 
\ref{theorem:dimensionscombtype}.

\subsection{Type of a Stirling permutation}\label{section:stirlingtype}

A \emph{segment} $u$ of a Stirling permutation $\theta=\theta_1\theta_2\cdots\theta_{2n}$ is a 
subword  of $\theta$ of the form $u=\theta_{i}\theta_{i+1}\cdots\theta_{i+\ell}$, i.e., 
all the letters of $u$ are adjacent in $\theta$. A \emph{block} in a 
Stirling permutation $\theta$ is a segment of $\theta$ that starts and ends with the same letter. 
For example, $455774$ is a block of $12245577413366$. We define $B_{\theta}(a)$ to be the block of 
$\theta$ that starts and ends with the letter $a$, and define $\mathring{B_{\theta}}(a)$  to be the 
segment 
obtained from $B_{\theta}(a)$ after removing the two occurrences of the letter $a$. For example, 
$B_{\theta}(1)=1224557741$ in $\theta=12245577413366$ and $\mathring{B_{\theta}}(1)=22455774$. 

We call $(a,b)$ an \emph{ascending adjacent pair} if $a<b$ and the blocks $B_{\theta}(a)$ and 
$B_{\theta}(b)$ are adjacent in $\theta$, i.e., $\theta=\theta^{\prime} 
B_{\theta}(a)B_{\theta}(b)\theta^{\prime\prime}$. An 
\emph{ascending adjacent sequence} of $\theta$ of length $k$ is a 
subsequence  $a_1<a_2<\cdots<a_k$ such that $(a_j,a_{j+1})$  is an ascending adjacent pair for 
$j=1,\dots,k-1$. Similarly, for a Stirling permutation $\theta \in \Q_n$ we call $(a,b)$ a
\emph{terminally nested pair} if $a<b$ and the block $B_{\theta}(b)$ is the last 
block in $\mathring{B_{\theta}}(a)$, i.e., 
$\mathring{B_{\theta}}(a)=\theta^{\prime} 
B_{\theta}(b)$ for some Stirling permutation $\theta^{\prime}$. A 
\emph{terminally nested sequence} of 
$\theta$ of length $k$ is a 
subsequence  $a_1<a_2<\cdots<a_k$ such that $(a_j,a_{j+1})$  is a terminally nested pair for 
$j=1,\dots,k-1$.

We can associate a \emph{type} to a Stirling permutation $\theta \in \Q_n$ in two ways. We define
the \emph{ascending adjacent type} $\aalambda(\theta)$, to be the partition whose parts are the 
lengths of maximal ascending adjacent sequences; and  we define the \emph{terminally nested 
type} $\tnlambda(\theta)$, to be the partition whose parts are the lengths of maximal terminally 
nested sequences.
We will show that these two types are equinumerous in $\Q_n$.

\begin{example}
If $\theta=158851244667723399$, then the maximal ascending adjacent sequences
are $1239$, $467$, $5$ and $8$; then   
$\aalambda(\theta)=(4,3,1,1)$, which is a partition of $n=9$. Also the maximal terminally nested 
sequences are $158$, $27$, $3$, $4$, $6$ and $9$;  then 
$\tnlambda(\theta)=(3,2,1,1,1,1)$, which is also a partition of $n=9$.
\end{example}

It is 
easy to see that every Stirling permutation has a unique factorization 
$\theta=B_{\theta}(a_1)B_{\theta}(a_2)\cdots B_{\theta}(a_{\ell})$ into adjacent blocks. 
We call this factorization the \emph{block factorization} of $\theta$.
For example, $12245577413366$ has a block factorization $1224557741-33-66$. 
A \emph{Stirling factorization} of a Stirling 
permutation $\theta$ is a decomposition 
$\theta=\theta^{1}\theta^{2}\cdots \theta^{\ell}$, such that $\theta^{i}$ is a  
Stirling permutation for all $i$. Note that the block factorization of $\theta$ is the finest 
Stirling factorization.

Denote by $\kappa(a):=a_k$, the largest letter of the maximal terminally nested sequence 
$a=a_1<a_2<\cdots<a_k$ of $B_{\theta}(a)$ that contains $a$. In $\theta=158851244667723399$, we 
have for example that $\kappa(1)=8$, $\kappa(2)=7$ and $\kappa(7)=7$.
 We define the following two types of restricted Stirling factorizations:
 \begin{itemize}
 \item The \emph{ascending adjacent factorization} of $\theta$ is the Stirling factorization 
$\theta=\theta^{1}\theta^{2}$ in which $\theta^1$ is the shortest nonempty prefix of $\theta$ such 
that if $\theta^1=\alpha B_{\theta}(a)$ and $\theta^2=B_{\theta}(b)\beta$ then $a>b$.
For example if $\theta=133155442662$, then the ascending adjacent factorization of 
$\theta$ is $133155-442662$.
 
  \item The \emph{terminally nested factorization} of $\theta$ is the Stirling factorization 
$\theta=\theta^{1}\theta^{2}$ in which $\theta^1$ is the shortest nonempty prefix of $\theta$ 
such that if $\theta^1=B_{\theta}(a)\alpha$ and $\theta^2=B_{\theta}(b)\beta$ then $\kappa(a)>b$. 
In 
the case of 
$\theta=133155442662$, the terminally nested factorization of  $\theta$ is 
$13315544-2662$.
 \end{itemize}

 An \emph{irreducible AA-word} is a Stirling permutation that has no nontrivial ascending adjacent 
factorization.  It is not difficult to see that an irreducible AA-word is a Stirling permutation of 
the form 
\begin{align*}
 B_{\theta}(a_1)B_{\theta}(a_2)\cdots 
B_{\theta}(a_k)=
 a_1\tau_1a_1\,a_2\tau_2a_2\cdots a_{k-1}\tau_{k-1}a_{k-1}\,a_k\tau_ka_k,
\end{align*}
where $a_1<a_2< \dots <a_k$ and $\tau_i$ are Stirling permutations for each $i$. 

An \emph{irreducible TN-word}  is a Stirling permutation that has no nontrivial terminally nested 
factorization. It is not difficult to see that an irreducible TN-word is a Stirling permutation of 
the 
form 
\begin{align*}
B_{\theta}(a)\alpha
\end{align*}
where $\kappa(a)<a^{\prime}$ for any letter $a^{\prime}$ in $\alpha$. Equivalently, an irreducible 
TN-word is a Stirling permutation of 
the 
form 
\begin{align*}
a_1\tau_1a_2\tau_2a_{k-1}\tau_{k-1}a_ka_ka_{k-1}\cdots a_2a_1\tau_k
\end{align*}
where $a_1<a_2< \dots <a_k$ and $\tau_i$ are Stirling permutations for each $i$ with 
$a_k<a^{\prime}$ for 
any letter $a^{\prime}$ in $\tau_k$.

The \emph{complete ascending adjacent (terminally nested) factorization} of $\theta$ is the 
factorization $\theta=\theta^1\theta^2\cdots \theta^l$ that we obtain by factoring 
$\theta$ into $\theta^1\theta^2$ by the ascending adjacent (resp., terminally nested) 
factorization and then recursively applying the same procedure to $\theta^2$.

Let $A$ be a subset of the positive integers. We define a map  $\xi:\Q_A \rightarrow \Q_A$ 
recursively as follows:

\begin{enumerate}
 \item If $\theta=mm$ then $\xi(\theta)=mm$.
 \item If $\theta$ is an irreducible AA-word $a_1\tau_1a_1a_2\tau_2a_2\cdots 
a_{k-1}\tau_{k-1}a_{k-1}a_k\tau_ka_k$ then
\begin{align*}
 \xi(\theta)=a_1\xi(\tau_1)a_2\xi(\tau_2)\cdots a_{k-1}\xi(\tau_{k-1})a_ka_ka_{k-1}\cdots a_2a_1
\xi(\tau_k).
 \end{align*}
\item If $\theta=\theta^1\theta^2\cdots \theta^l$ is the complete ascending adjacent 
factorization of
$\theta$ then 
\[
 \xi(\theta)=\xi(\theta^1)\xi(\theta^2)\cdots \xi(\theta^l).
\]
\end{enumerate}

Step (2) guarantees that $\xi$ is well-defined. Indeed, in an 
irreducible AA-word of the form given in (2), we have $a_s<a_{s+1}<\cdots<a_k$ for any 
$s$. Hence, we are inserting only letters that are greater than $a_s$ between the two 
occurrences of $a_s$. 

The map $\xi$ is in fact a bijection and it is not difficult to check that
its inverse $\xi^{-1}:\Q_A \rightarrow \Q_A$ is defined by:

\begin{enumerate}
 \item If $\theta=mm$ then $\xi^{-1}(\theta)=mm$.
 \item If $\theta$ is an irreducible TN-word 
$a_1\tau_1a_2\tau_2 \cdots a_{k-1}\tau_{k-1}a_ka_ka_{k-1}\cdots a_2a_1\tau_k$ then
\begin{align*}
 \xi^{-1}(\theta)=a_1\xi^{-1}(\tau_1)a_1a_2\xi^{-1}(\tau_2)a_2\cdots
a_{k-1}\xi^{-1}(\tau_{k-1})a_{k-1}a_k\xi^{-1}(\tau_k)a_k.
\end{align*}
\item If $\theta=\theta^1\theta^2\cdots \theta^l$ is the complete terminally nested factorization 
of
$\theta$ then 
\begin{align*}
 \xi^{-1}(\theta)=\xi^{-1}(\theta^1)\xi^{-1}(\theta^2)\cdots \xi^{-1}(\theta^l).
\end{align*}
\end{enumerate}
Step (2) guarantees that $\xi^{-1}$ is well-defined since in an 
irreducible TN-word, $a_k<b$ for any letter $b$ in $\tau_k$.

\begin{example}
Consider $\theta=233772499468861551$. Its complete ascending factorization is
 $23377249946886-1551$; then
 \begin{align*}
  \xi(\theta)	&=\xi(23377249946886-1551)\\
		&=\xi(23377249946886)-\xi(1551)\\
		&=2\xi(3377)4\xi(99)6642\xi(88)-11\xi(55)\\
		&=237734996642881155.
 \end{align*}
Note that the maximal ascending  adjacent sequences of $\theta$ are $(246,37,1,5,8,9)$ which
are also the maximal terminally nested sequences of $\xi(\theta)$. These observations hold in 
general.
\end{example}

\begin{proposition}\label{proposition:propxi}The map $\xi:\Q_A \rightarrow \Q_A$ is a well-defined 
bijection that satisfies:	
 \begin{enumerate}
  \item $(i,j)$ is an ascending adjacent pair in $\theta$ if and only if $(i,j)$ is a terminally 
nested pair in $\xi(\theta)$,
\item $\tnlambda(\xi(\theta))=\aalambda(\theta)$.
  
 \end{enumerate}

\end{proposition}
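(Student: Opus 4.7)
My plan is to prove Proposition \ref{proposition:propxi} by induction on the length of $\theta$, with the main structural observation being that $\xi$ takes irreducible AA-words bijectively to irreducible TN-words, and that this extends to a bijection in which the complete AA-factorization of $\theta$ corresponds to the complete TN-factorization of $\xi(\theta)$.

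First I would verify $\xi$ is well-defined. The only nontrivial case is the irreducible AA-word $\theta = a_1\tau_1 a_1 a_2\tau_2 a_2 \cdots a_k\tau_k a_k$ with $a_1<\cdots<a_k$. One checks that $\xi(\theta) = a_1\xi(\tau_1) a_2 \xi(\tau_2) \cdots a_{k-1}\xi(\tau_{k-1}) a_k a_k a_{k-1} \cdots a_1 \xi(\tau_k)$ is a Stirling permutation: between the two occurrences of any $a_s$ with $s<k$ lie only letters from $\xi(\tau_s), \ldots, \xi(\tau_{k-1})$ and the values $a_{s+1}, \ldots, a_k$, all strictly greater than $a_s$; the two copies of $a_k$ are adjacent; and the Stirling condition inside each $\xi(\tau_i)$ holds by induction.

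Next I would establish bijectivity. For an irreducible AA-word $\theta$ as above, $\xi(\theta)$ has the shape of an irreducible TN-word, because $B(a_1)$ is its first block and $\kappa(a_1) = a_k$, which is strictly less than every letter of $\xi(\tau_k)$ by the Stirling property of $\theta$. The candidate $\xi^{-1}$ given in the statement reverses this construction block-by-block, and induction gives $\xi^{-1} \circ \xi = \xi \circ \xi^{-1} = \mathrm{id}$ on irreducibles. To pass to general $\theta$, the key structural claim is that the complete AA-factorization $\theta = \theta^1\cdots\theta^\ell$ yields the complete TN-factorization $\xi(\theta) = \xi(\theta^1)\cdots\xi(\theta^\ell)$: each $\xi(\theta^i)$ is irreducible TN by the above, and at the boundary between $\xi(\theta^i)$ and $\xi(\theta^{i+1})$ we have $\kappa_{\xi(\theta)}(a_1^{(i)}) = a_k^{(i)}$, the leading letter of the last block of $\theta^i$, which by the defining property of the complete AA-factorization exceeds the leading letter of the first block of $\theta^{i+1}$. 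Verifying this factorization-matching is the main technical obstacle.

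For property (1), I argue inductively using this matching. Within an irreducible AA-factor $\theta^i$ the ascending adjacent pairs are exactly $(a_s, a_{s+1})$ for $s=1,\ldots,k-1$ together with the AA-pairs interior to each $\tau_j$. A direct computation of $\mathring B_{\xi(\theta^i)}(a_s) = \xi(\tau_s) B_{\xi(\theta^i)}(a_{s+1})$ shows the TN-pairs of $\xi(\theta^i)$ are exactly $(a_s, a_{s+1})$ for $s=1,\ldots,k-1$ together with the TN-pairs interior to each $\xi(\tau_j)$. Disjointness of letter sets across the factors $\theta^i$ prevents any AA- or TN-pair from crossing their boundaries, so induction on the $\tau_j$ yields the pairwise bijection claimed in (1). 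Property (2) is then immediate: a maximal ascending adjacent sequence in $\theta$ is a maximal chain of AA-pairs, which under (1) transports to a maximal chain of TN-pairs in $\xi(\theta)$, giving $\tnlambda(\xi(\theta)) = \aalambda(\theta)$.
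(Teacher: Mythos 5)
Your proof is correct and follows essentially the same approach as the paper's: reduce via the complete AA- and TN-factorizations to irreducible words, establish the pair correspondence there using step (2) of the definitions of $\xi$ and $\xi^{-1}$, and recurse on the $\tau_j$. The paper's own proof is more terse, so you have usefully made explicit the factorization-matching claim (that $\xi$ carries the complete AA-factorization of $\theta$ to the complete TN-factorization of $\xi(\theta)$) that the paper relies on implicitly when invoking step (3) of the definitions.
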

\begin{proof}
 Note that if $\theta=\theta^1\theta^2\cdots \theta^l$ is the complete ascending adjacent 
factorization of a Stirling permutation $\theta$, then an ascending adjacent pair can only occur 
within one of the factors $\theta^i$. Similarly, if $\theta=\theta^1\theta^2\cdots \theta^l$ is 
the complete terminally nested factorization of a Stirling permutation $\theta$, then a 
terminally nested pair can only occur within one of the factors $\theta^i$. Hence, without loss of 
generality, as a consequence of step (3) in the definitions of $\xi$ and $\xi^{-1}$, we can assume 
that the word $\theta$ is an irreducible AA-word or an irreducible TN-word. Then the  
first assertion follows directly from step (2) in the definitions of $\xi$ and $\xi^{-1}$ and 
induction on the length of $\theta$. The 
second assertion is an immediate consequence of the first.
\end{proof}

From Proposition \ref{proposition:propxi} we see that $\aalambda$ and $\tnlambda$ are 
equidistributed on $\Q_n$.

\subsection{A bijection between normalized trees and Stirling permutations}

Let $\hat{\Q}_n$ be the set of permutations  $\theta \in \Q_n$ where
$\theta_1=\theta_{2n}=1$. There is a natural bijection $\redu:\hat{\Q}_n \rightarrow \Q_{n-1}$ 
obtained by 
removing the leading and trailing $1$ from $\theta=1\theta'1$ and then reducing the word 
$\theta'$ by decreasing every letter in $\{2,\dots,n\}$ by one. For example, 
$\redu(12332441)=122133$. In greater generality, for $A$ a subset of the positive integers, let 
$\hat{\Q}_A$ be the set of Stirling permutations of $A$ such that both the 
first
and last letter of the permutation is $\min A$.  Define  the map\footnote{The same map has 
appeared before in \cite{Dotsenko2012}.} $\tilde \gamma: \nor_A \rightarrow
\hat{\Q}_A$ recursively by:
\begin{enumerate}
 \item If $\Upsilon=(\bullet, m)$ then $\tilde \gamma(\Upsilon)=mm$.
 \item If $\Upsilon$ is of the form 
 
 \begin{center}
 \begin{tikzpicture}[thick,scale=0.6]

\tikzstyle{every node}=[draw,scale=0.5]

    \draw [circle,radius=20pt,color=black] (1,1)  node (i1){};
    \draw [circle,color=black] (2,2)  node (i2){};
    \draw [circle,color=black] (4,4)  node (i4){};
    \draw [circle,color=black] (3,3)  node (i3){};

\tikzstyle{every node}=[inner sep=1pt, minimum width=14pt,scale=0.7]

    \draw (0,0)  node (m){$m$};
    \draw (2,0)  node (l1){$\Upsilon_1$};
    \draw (3,1)  node (l2){$\Upsilon_2$};
    \draw (4,2)  node (l3){$\Upsilon_{j-1}$};
    \draw (5,3)  node (l4){$\Upsilon_j$};
    \draw (6,2) node (comma){\Large ,};

    \draw (m) --  (i1) ;
    \draw (i1) --  (l1) ;
    \draw (i2) --  (l2) ;
    \draw (i3) --  (l3) ;
    \draw (i4) --  (l4) ;
    \draw (i1) --  (i2) ;
    \draw [dashed, thick] (i2) --  (i3) ;
    \draw [dotted, thick] (2.6,1.6) --  (3.3,2.3) ;

    \draw (i3) --  (i4) ;

\end{tikzpicture}
\end{center}

then $\tilde \gamma(\Upsilon)=m\tilde \gamma(\Upsilon_1)\tilde \gamma(\Upsilon_2)\cdots
\tilde \gamma(\Upsilon_{j-1})\tilde \gamma(\Upsilon_j)m.$ 
\end{enumerate}

The function $\tilde \gamma$ is well-defined since the tree is normalized. Indeed, $m$ is the 
minimal letter and we always obtain a word with values greater than $m$ between the two occurrences 
of $m$. Proceeding by induction on the number of internal nodes of $\Upsilon$, we have that the 
words $\gamma(\Upsilon_i)$ are Stirling permutations for each $i$ and so it is $\gamma(\Upsilon)$.

It is not difficult to check that the inverse $\tilde \gamma^{-1}:\hat{\Q}_A \rightarrow \nor_A$ 
can also be defined recursively by

\begin{enumerate}
 \item If $\theta=mm$ then $\tilde \gamma^{-1}(mm)=(\bullet,m)$.
 \item If $\theta=B_{\theta}(m)$ and 
$\mathring{B_{\theta}}(m)=B_{\theta}(a_1)B_{\theta}(a_2)\cdots 
B_{\theta}(a_{j-1})B_{\theta}(a_j)$, then  

\begin{center}
 
 \begin{tikzpicture}[thick,scale=0.6]
\draw (-2,3) node {$\tilde\gamma^{-1}(\theta)=$};
\tikzstyle{every node}=[draw,scale=0.5]

    \draw [circle,radius=20pt,color=black] (1,1)  node (i1){};
    \draw [circle,color=black] (2,2)  node (i2){};
    \draw [circle,color=black] (4,4)  node (i4){};
    \draw [circle,color=black] (3,3)  node (i3){};

\tikzstyle{every node}=[inner sep=1pt, minimum width=14pt,scale=0.7]
    \draw (0,0)  node (m){$m$};
    \draw (2,0)  node (l1){$\tilde\gamma^{-1}(B_{\theta}(a_1))$};
    \draw (3,1)  node (l2){$\tilde\gamma^{-1}(B_{\theta}(a_2))$};
    \draw (4,2)  node (l3){$\tilde\gamma^{-1}(B_{\theta}(a_{j-1}))$};
    \draw (5,3)  node (l4){$\tilde\gamma^{-1}(B_{\theta}(a_j))$};
   \draw (6,2)  node (dot){\Large .};

    \draw (m) --  (i1) ;
    \draw (i1) --  (l1) ;
    \draw (i2) --  (l2) ;
    \draw (i3) --  (l3) ;
    \draw (i4) --  (l4) ;
    \draw (i1) --  (i2) ;
    \draw [dashed, thick] (i2) --  (i3) ;
    \draw [dotted, thick] (2.6,1.6) --  (3.3,2.3) ;

    \draw (i3) --  (i4) ;

\end{tikzpicture}
\end{center}

\end{enumerate}

The tree defined in the step above is clearly normalized. So we can encode any normalized
binary tree with a permutation in $\hat{\Q}_n$. See Figure \ref{fig:examplebijectionphi} for an 
example of the bijection.

\begin{figure}[ht]
\centering
   \begin{tikzpicture}[thick,scale=0.6]

\begin{scope}[xshift=0cm,yshift=0cm]

\tikzstyle{every node}=[inner sep=2pt,scale=0.8]

    \draw [circle] (0,0)  node (l1){$1$};
    \draw [circle] (0,-2)  node (l2){$2$};
    \draw [circle] (1,-3)  node (l3){$3$};
    \draw [circle] (3,-3)  node (l5){$5$};

 \draw [circle] (3,-1)  node (l6){$6$};
 \draw [circle] (3,1)  node (l4){$4$};
 \draw [circle] (4,2)  node (l7){$7$};

\tikzstyle{every node}=[draw,inner sep=2pt,scale=0.7]

    \draw [circle] (1,1)  node (n1)[pin={[color=red,pin distance=1pt]135:$2$}]{};
    \draw [circle] (2,0)  node (n3)[pin={[color=red,pin distance=1pt]45:$6$}]{};
    \draw [circle] (1,-1)  node (n2)[pin={[color=red,pin distance=1pt]135:$3$}]{};

     \draw [circle] (2,2)  node (n6)[pin={[color=red,pin distance=1pt]135:$4$}]{};
     \draw [circle] (3,3)  node (n4)[pin={[color=red,pin distance=1pt]135:$7$}]{};
   \draw [circle] (2,-2)  node (n5)[pin={[color=red,pin distance=1pt]45:$5$}]{};

     \draw (l1) --  (n1) ;
     \draw (n2) --  (l2) ;
\draw (n2) --  (n3) ;
\draw (n2) --  (n5) ;

\draw (n1) --  (n3) ;
     \draw (n3) --  (l6) ;
     \draw (n4) --  (l7) ;
\draw (n6) --  (l4) ;

     \draw (n6) --  (n1) ;
     \draw (n4) --  (n6) ;
     \draw (n5) --  (l3) ;
     \draw (n5) --  (l5) ;

\end{scope}

\begin{scope}[xshift=12cm,yshift=0cm]
\tikzstyle{every node}=[inner sep=2pt,scale=1]

   \draw [circle] (2,0)  node (i0){$12355366244771$};
\end{scope}

  \draw[color=black,->] (5,0.2) -- node[above] {$\tilde\gamma$}(10,0.2);
  \draw[color=black,->] (10,-0.2) -- node[below] {$\tilde\gamma^{-1}$}(5,-0.2);

\end{tikzpicture}
 
  \caption{Example of the bijection $\tilde \gamma$}
   \label{fig:examplebijectionphi}
   \end{figure}
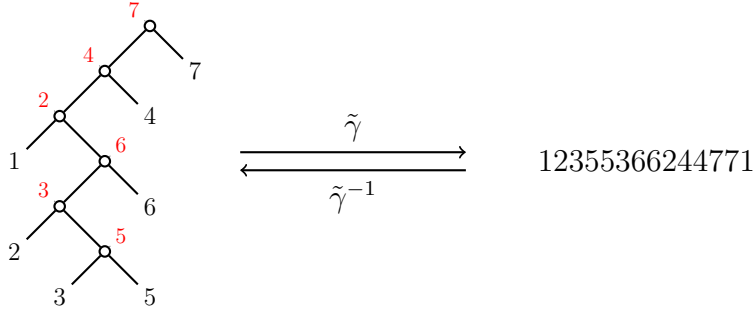

We give an alternative description of $\tilde \gamma$. First we extend the leaf labeling of 
$\Upsilon \in \nor_n$ to a labeling $\theta$ that includes the internal nodes. For each internal 
node $x$, let $\theta(x)$ be the smallest leaf label in the right subtree of the subtree of 
$\Upsilon$ rooted at $x$; for each leaf $x$, let $\theta(x)$ be the leaf label of $x$ (See Figure 
\ref{fig:examplebijectionphi}). Let $x_1,\dots,x_{2n-1}$ be the listing of all the nodes of 
$\Upsilon$ (internal and leaves) in postorder and  let 
$\theta(\Upsilon): = \theta(x_1)\theta(x_2)\dots \theta(x_{2n-1}).$

\begin{proposition}\label{proposition:seconddescriptiongamma}
For all $\Upsilon \in \nor_n$,
 \begin{align*}
  \tilde \gamma (\Upsilon)=\theta(\Upsilon)\theta(x_1)
 \end{align*}
 where $x_1$ is the leftmost leaf of $\Upsilon$.
\end{proposition}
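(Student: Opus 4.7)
The plan is to prove Proposition~\ref{proposition:seconddescriptiongamma} by induction on $n$, the number of leaves of $\Upsilon$. The base case $n=1$ is immediate: if $\Upsilon = (\bullet, m)$ then the postorder listing consists of the single leaf $m$, so $\theta(\Upsilon)\theta(x_1) = mm = \tilde\gamma(\Upsilon)$, using the first clause in the recursive definition of $\tilde\gamma$.

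For the inductive step, I would decompose $\Upsilon$ along its left spine exactly as in the second clause of the recursive definition of $\tilde\gamma$. Let $m$ be the leftmost leaf of $\Upsilon$, let $y_1, y_2, \dots, y_j$ be the internal nodes along the left spine ordered so that $y_1$ is the deepest (its left child is $m$) and $y_j$ is the root, and let $\Upsilon_i$ be the right subtree of $y_i$ for $i = 1, \dots, j$. Because $\Upsilon$ is normalized, each $\Upsilon_i$ is normalized, and the leftmost leaf $m_i$ of $\Upsilon_i$ is the smallest leaf label in $\Upsilon_i$; hence by the definition of the extended labeling, $\theta(y_i) = m_i$.

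The key computation is to expand the postorder listing of $\Upsilon$ via the left-spine decomposition. Tracing recursively through $y_j, y_{j-1}, \dots, y_1$, the postorder listing of all nodes of $\Upsilon$ is
\begin{equation*}
m,\ \text{postorder}(\Upsilon_1),\ y_1,\ \text{postorder}(\Upsilon_2),\ y_2,\ \dots,\ \text{postorder}(\Upsilon_j),\ y_j.
\end{equation*}
Substituting the values of $\theta$ gives
\begin{equation*}
\theta(\Upsilon) = m \cdot \theta(\Upsilon_1) \cdot m_1 \cdot \theta(\Upsilon_2) \cdot m_2 \cdots \theta(\Upsilon_j) \cdot m_j,
\end{equation*}
and since $\theta(x_1) = m$, we have $\theta(\Upsilon)\theta(x_1) = m \cdot \theta(\Upsilon_1) m_1 \cdots \theta(\Upsilon_j) m_j \cdot m$. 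By the inductive hypothesis applied to each $\Upsilon_i$, $\tilde\gamma(\Upsilon_i) = \theta(\Upsilon_i) m_i$, so this last expression is exactly $m\, \tilde\gamma(\Upsilon_1) \tilde\gamma(\Upsilon_2) \cdots \tilde\gamma(\Upsilon_j)\, m$, which equals $\tilde\gamma(\Upsilon)$ by the recursive definition.

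I do not anticipate any serious obstacle; the only point requiring care is to verify that in postorder the internal node $y_i$ appears immediately after the postorder listing of $\Upsilon_i$ and before the postorder listing of $\Upsilon_{i+1}$, which is a straightforward unwinding of the postorder rule (left subtree, right subtree, root) applied down the left spine. Normalization is used only to identify $\theta(y_i)$ with the leftmost leaf of $\Upsilon_i$, matching the leading letter of $\tilde\gamma(\Upsilon_i)$ produced by the induction.
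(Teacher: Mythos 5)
Your proof is correct and follows essentially the same route as the paper: both decompose $\Upsilon$ along the left spine, identify $\theta(y_i)$ with the smallest (equivalently, leftmost) leaf label of $\Upsilon_i$ using normalization, observe that postorder traverses $x_1$, then $\Upsilon_1$ and $y_1$, then $\Upsilon_2$ and $y_2$, and so on, and close the argument by applying the inductive hypothesis to each $\Upsilon_i$.
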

\begin{proof}
If $\Upsilon=(\bullet,m)$ is a single node then  $\tilde \gamma 
(\Upsilon)=mm=\theta(\Upsilon)\theta(x_1)$.
If $\Upsilon$ has internal nodes, it can be expressed as
\begin{align*}
\Upsilon =(\dots (((x_1,v(x_1)) \wedge \Upsilon_1) \wedge \Upsilon_2) \wedge \dots \wedge 
\Upsilon_j)
\end{align*}
(like the one in step (2) of the definition of $\tilde \gamma$).

Let $y_i$ denote the parent of the root of $\Upsilon_i$ for each $i$.  As a consequence 
of the definition of $\theta$, we have that $\theta(y_i) = \theta(z_i)$, where $z_i$ is the 
smallest leaf of $\Upsilon_i$. 
By induction, using the definition of $\tilde \gamma $, 
\begin{align*}
\tilde \gamma(\Upsilon) &= v(x_1) \tilde\gamma(\Upsilon_1) \dots 
\tilde\gamma(\Upsilon_j) v(x_1) \\
&=\theta(x_1) \theta(\Upsilon_1) \theta(y_1) \dots 
\theta(\Upsilon_j)\theta(y_j) \theta(x_1)\\
&=\theta(\Upsilon) \theta(x_1).
\end{align*}
The last step holds since the postorder traversal of $\Upsilon$ lists first $x_1$, followed by 
postorder traversal of $\Upsilon _1$ followed by $y_1$, followed by postorder traversal of 
$\Upsilon_2$ followed by $y_2$, and so on.
\end{proof}

To remove the unnecessary leading and trailing ones in $\tilde \gamma(\Upsilon)$, we consider 
instead the 
map $\gamma:\nor_n \rightarrow \Q_{n-1}$ defined by $\gamma(\Upsilon):=\redu(\tilde 
\gamma(\Upsilon))$ 
for each $\Upsilon \in \nor_n$.

We invite the reader to recall the definition of comb type $\comblambda(\Upsilon)$ of a normalized 
tree $\Upsilon$ given in Section 
\ref{section:introduction} before Theorem \ref{theorem:dimensionscombtype} and the definition of 
Lyndon type $\lyndonlambda(\Upsilon)$ given in Section \ref{section:lyndontype}.
Recall also the definition of ascending adjacent and terminally nested pairs of a Stirling 
permutation $\theta \in \Q_n$, and the associated types $\aalambda(\theta)$ and 
$\tnlambda(\theta)$, given in the first part of this section. 
We give an 
equivalent characterization of these pairs. An ascending adjacent pair of $\theta \in \Q_n$ is a 
pair $(a,b)$ such that $a<b$ and in $\theta$ the second occurrence of $a$ is the immediate 
predecessor of the first occurrence of $b$. A 
terminally nested pair of $\theta \in \Q_n$ is a pair $(a,b)$ such that 
$a<b$ and in $\theta$ the second occurrence of $a$ is 
the immediate successor of the second occurrence of $b$.

For any node (internal or leaf) $x$ of $\Upsilon$ we define the (\emph{reduced 
valency}) $\vr(x):=v(x)-1$.\\
\begin{proposition}\label{proposition:propgamma}The map $\gamma: \nor_n \rightarrow 
\Q_{n-1}$
is a well-defined bijection that satisfies for each $\Upsilon \in \nor_n$ and internal 
node $x$ of  $\Upsilon$,
 \begin{enumerate}
  \item $x$ is a non-Lyndon node of $\Upsilon$ if and only if  $(\vr(R(L(x))),\vr(R(x)))$ 
is an ascending adjacent pair in $\gamma(\Upsilon)$. Moreover, every ascending pair of 
$\gamma(\Upsilon)$ is of the form $(\vr(R(L(x))),\vr(R(x)))$ for some internal node $x$ of 
$\Upsilon$;
    \item $x$ has a right child $R(x)$ that is also an internal node 
if and only if 
 $(\vr(R(x)),\vr(R(R(x))))$ is a terminally nested pair in 
$\gamma(\Upsilon)$. Moreover, every terminally nested pair of $\gamma(\Upsilon)$ is of the form 
$(\vr(R(x)),\vr(R(R(x))))$ for some internal node $x$ of 
$\Upsilon$;
\item $\aalambda(\gamma(\Upsilon))=\lyndonlambda(\Upsilon)$;
\item $\tnlambda(\gamma(\Upsilon))=\comblambda(\Upsilon)$.
 \end{enumerate}
\end{proposition}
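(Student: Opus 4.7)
The plan is to lift everything through Proposition \ref{proposition:seconddescriptiongamma}, which identifies $\tilde\gamma(\Upsilon)$ with the postorder listing of the extended labels $\theta$ followed by a trailing copy of $\theta(x_1)$. Since $\gamma = \redu \circ \tilde\gamma$ and $\redu$ is a bijection, well-definedness and bijectivity of $\gamma$ are immediate from those of $\tilde\gamma$. For (1)--(4) it is more convenient to argue in $\tilde\gamma(\Upsilon)$ before reducing, translating an ascending adjacent (resp.\ terminally nested) pair $(a,b)$ in $\gamma(\Upsilon)$ into the pair $(a+1,b+1)$ in $\tilde\gamma(\Upsilon)$. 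First I would verify a bookkeeping lemma: each value $a \in [n]$ appears in $\tilde\gamma(\Upsilon)$ exactly twice, once as the leaf labeled $a$ and (for $a \geq 2$) once as $\theta(x) = v(R(x))$, where $x$ is the unique internal node obtained by walking up from leaf $a$ until it first becomes a right child. Since the leaf occurrence always precedes this internal-node occurrence in postorder, the \emph{second} occurrence of $a$ in $\tilde\gamma(\Upsilon)$ sits precisely at the postorder position of that internal node. The value $1$ occupies positions $1$ and $2n$, which are stripped by $\redu$.

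For (1), observe that for any internal node $x$ with $L(x)$ also internal, postorder visits $L(x)$ and then immediately enters the subtree at $R(x)$, whose first visited node is the leftmost leaf of $R(x)$, labeled $v(R(x))$. Thus $\theta(L(x)) = v(R(L(x)))$ and $v(R(x))$ occupy consecutive positions of $\tilde\gamma(\Upsilon)$; the former is a second occurrence and the latter a first occurrence, producing the adjacent pair $(\vr(R(L(x))), \vr(R(x)))$ in $\gamma(\Upsilon)$. This pair is \emph{ascending} precisely when $v(R(L(x))) < v(R(x))$, which (since values are distinct) is exactly the condition that $x$ is not a Lyndon node. Conversely, every ascending adjacent pair in $\gamma(\Upsilon)$ has the form (second occurrence at an internal node position) $\cdot$ (first occurrence at the adjacent position), and postorder forces the internal node to be of the form $L(x)$ with the next postorder position being the leftmost leaf of $R(x)$; this identifies every ascending pair with such an $x$.

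Property (2) is dual: for any internal $y$ with $R(y)$ internal, $R(y)$ sits immediately before $y$ in postorder (since $y$ is visited right after completing its right subtree), and both $\theta(R(y))=v(R(R(y)))$ and $\theta(y)=v(R(y))$ are second occurrences of their values. By normalization of the subtree at $R(y)$, $v(R(y)) < v(R(R(y)))$, so this yields the terminally nested pair $(\vr(R(y)), \vr(R(R(y))))$. Conversely, in any terminally nested pair, the two second-occurrence positions are adjacent in postorder; the only way two second occurrences can be consecutive is the configuration above, since if the earlier adjacent node were a leaf it would contribute a first occurrence, not a second.

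Finally, (3) and (4) follow by assembling the local bijections of (1) and (2) into maximal sequences. A maximal ascending adjacent sequence $a_1 < \cdots < a_k$ in $\gamma(\Upsilon)$ corresponds via (1) to a chain of internal nodes $z_1, z_2,\dots, z_k$ with $z_i = L(z_{i+1})$ and each $z_{i+1}$ non-Lyndon, which is exactly a block of $\pi^{\lyn}(\Upsilon)$; maximality of the sequence matches maximality of the chain (at the bottom, $z_1$ is Lyndon, and at the top, $z_k$ is either not a left child or its parent is Lyndon), so $\aalambda(\gamma(\Upsilon)) = \lyndonlambda(\Upsilon)$. A symmetric argument using (2) identifies maximal terminally nested sequences in $\gamma(\Upsilon)$ with the blocks of $\pi^{\comb}(\Upsilon)$, yielding $\tnlambda(\gamma(\Upsilon)) = \comblambda(\Upsilon)$. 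The main obstacle I expect is the careful postorder bookkeeping underlying (1) and (2), particularly locating each \emph{second} occurrence and ruling out stray adjacencies of second occurrences that do not come from the $L(x)$/$R(y)$ patterns above.
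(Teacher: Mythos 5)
Your argument is correct and follows essentially the same route as the paper's: both work in $\tilde\gamma(\Upsilon)=\theta(\Upsilon)\theta(x_1)$ via Proposition~\ref{proposition:seconddescriptiongamma}, both rest on the observation that postorder positions of leaves (resp.\ internal nodes) contain first (resp.\ second) occurrences of letters, and both derive (3)--(4) from (1)--(2) by stitching the local adjacency pairs into maximal chains. The paper packages the postorder bookkeeping as two explicit Claims verified through four left/right $\times$ leaf/internal cases, while you phrase it through a ``where does each second occurrence sit'' lemma, but the content is identical.
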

\begin{proof}Let $\Upsilon \in \nor_n$ and let $x_i$ be the $i$th node of $\Upsilon$ listed in 
postorder. 
We use the alternative characterization of $\tilde \gamma$ given in Proposition 
\ref{proposition:seconddescriptiongamma}.
We claim that:

\noindent{\bf Claim 1:} The pair $(\theta(x_{i}),\theta(x_{i+1}))$ is an ascending adjacent pair 
of $\tilde \gamma(\Upsilon)$ if 
and only if $x_i$ is a left child that is not a leaf and its parent $\p(x_i)$ satisfies 
$\theta(\p(x_i))>\theta(x_i)$. (The latter condition is equivalent 
to $\p(x_i)$ being a non-Lyndon node.)
 
\noindent{\bf Claim 2:} $(\theta(x_{i+1}),\theta(x_{i}))$ is a terminally nested pair of $\tilde 
\gamma(\Upsilon)$ if and only 
if $x_i$ is a right child that is not a leaf.

We say that $\theta \in Q_n$ has a \emph{first occurrence} of the letter $\theta_i$ at position $i$
if $\theta_j \ne \theta_i$ for all $j<i$. We say that $\theta \in Q_n$ has a \emph{second 
occurrence} of the letter $\theta_i$ at position $i$ if there is a $j <i$ such that $\theta_j=  
\theta_i$.
Before proving these claims we first observe that  
in the word $\tilde \gamma(\Upsilon)=\theta(\Upsilon)\theta(x_1)$ (Proposition 
\ref{proposition:seconddescriptiongamma}), there is a first occurrence of a letter at position $i$ 
if $x_i$ is a leaf and a second occurrence of a letter if $x_i$ is an internal node.
The proof of the two claims follows from the following four cases that in turn are consequences of 
this observation.

\noindent{\bf Case 1:} Let $x_i$ be a left child that is not a leaf. Then 
$x_{i+1}$ is the leftmost leaf of the right subtree of the subtree 
of $\Upsilon$ rooted at $\p(x_i)$. By the observation above, the position $i$ of 
$\tilde\gamma(\Upsilon)$ contains the second occurrence of 
a letter while the position $i+1$ contains the first occurrence of a letter.  
Note that $\theta(\p(x_i))=\theta(x_{i+1})$. 

\noindent{\bf Case 2:} Let $x_i$ be a left child that is a leaf. Then 
$x_{i+1}$ is the smallest leaf of the right subtree of the subtree 
of $\Upsilon$ rooted at $\p(x_i)$.
Hence, positions $i$ and $i+1$ contain first occurrences of letters in 
$\tilde\gamma(\Upsilon)$ and $\theta(x_i) < \theta(x_{i+1})=\theta(\p(x_i))$.

\noindent{\bf Case 3:} Let $x_i$ be a right child that is not a leaf. Then by postorder 
$x_{i+1}=\p(x_i)$ and  positions $i$ and $i+1$ contain second 
occurrences of letters in $\tilde\gamma(\Upsilon)$. 
Note that $\theta(x_i) > \theta(x_{i+1})$.

\noindent{\bf Case 4:} Let $x_i$ be a right child that is a leaf. Then by postorder 
$x_{i+1}=\p(x_i)$ and by the definition of $\theta$ we have that $\theta(x_i)= \theta(\p(x_i)) = 
\theta(x_{i+1})$.

It is not difficult to see that the two claims imply (1) and (2) after applying the definitions of 
$\redu$ and $\vr$.
Parts (3) and (4) are immediate consequences of parts (1) and (2), respectively.
\end{proof}

We have now four different combinatorial interpretations of the  coefficients of the symmetric 
function $L_{n}(\xx):=\sum_{\mu \in \wcomp_{n}}\dim \lie(\mu)\,\xx^{\mu}$ in the elementary 
symmetric function basis. 
Theorem \ref{theorem:dimensionsofliekstirling} below includes Theorem 
\ref{theorem:dimensionscombtype}.

\begin{theorem}\label{theorem:dimensionsofliekstirling}
For all $n$,
 \begin{align*}
\sum_{\mu \in \wcomp_{n-1}}\dim \lie(\mu)\,\xx^{\mu} &= \sum_{\Upsilon \in \nor_n} 
e_{\lyndonlambda(\Upsilon)}(\xx) \\
 &= \sum_{\theta \in \Q_{n-1}} 
e_{\aalambda(\theta)}(\xx)\nonumber\\
&= \sum_{\theta \in \Q_{n-1}} e_{\tnlambda(\theta)}(\xx)\nonumber\\
&= \sum_{\Upsilon \in \nor_n} e_{\comblambda(\Upsilon)}(\xx)\nonumber.
 \end{align*}

\end{theorem}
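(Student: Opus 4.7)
The plan is to prove the chain of four equalities by invoking the combinatorial tools already established in the preceding subsections, so that the theorem becomes essentially a bookkeeping argument. The first equality is already the content of Theorem \ref{theorem:dimensionslyndontype}, so I would start by citing that identity and then show that the three remaining sums are equal to the first by exhibiting type-preserving bijections between the indexing sets.

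For the second equality, I would apply the bijection $\gamma : \nor_n \to \Q_{n-1}$ from the previous subsection. By Proposition \ref{proposition:propgamma}(3), $\aalambda(\gamma(\Upsilon)) = \lyndonlambda(\Upsilon)$ for every $\Upsilon \in \nor_n$, so summing $e_{\lyndonlambda(\Upsilon)}(\xx)$ over $\nor_n$ reindexes to $\sum_{\theta \in \Q_{n-1}} e_{\aalambda(\theta)}(\xx)$. Similarly, the fourth equality follows by instead using Proposition \ref{proposition:propgamma}(4), which says $\tnlambda(\gamma(\Upsilon)) = \comblambda(\Upsilon)$; this reindexing sends $\sum_{\Upsilon \in \nor_n} e_{\comblambda(\Upsilon)}(\xx)$ to $\sum_{\theta \in \Q_{n-1}} e_{\tnlambda(\theta)}(\xx)$, yielding the equality of the third and fourth expressions.

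For the third equality, connecting the AA-type sum with the TN-type sum, I would use the involution-like bijection $\xi : \Q_{n-1} \to \Q_{n-1}$ constructed in Section \ref{section:stirlingtype}. Proposition \ref{proposition:propxi}(2) states that $\tnlambda(\xi(\theta)) = \aalambda(\theta)$, so summing $e_{\aalambda(\theta)}(\xx)$ over $\Q_{n-1}$ reindexes via $\theta \mapsto \xi(\theta)$ to $\sum_{\theta \in \Q_{n-1}} e_{\tnlambda(\theta)}(\xx)$, giving the second equals third identity. Chaining the four equalities then produces the theorem.

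There is no serious obstacle remaining at this stage: all of the heavy lifting has been placed in the supporting results (Theorem \ref{theorem:dimensionslyndontype}, Proposition \ref{proposition:propxi}, and Proposition \ref{proposition:propgamma}), whose proofs encapsulate the nontrivial combinatorics (the EL-labeling and colored Lyndon basis for the first, and the recursive bijections $\xi$ and $\gamma$ for the rest). The one place where care is needed is to make sure the bijections are being applied in the correct direction and that the type statistics line up exactly as stated; so in the writeup I would be explicit about which part of each proposition justifies each reindexing, in order to make it transparent that the four sums coincide term by term after applying the appropriate bijection.
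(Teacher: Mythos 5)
Your proposal is correct and follows the same route as the paper's proof: Theorem \ref{theorem:dimensionslyndontype} gives the first equality, Proposition \ref{proposition:propgamma} parts (3) and (4) give the second and fourth via the bijection $\gamma$, and Proposition \ref{proposition:propxi}(2) gives the third via $\xi$. You have merely spelled out the reindexing slightly more explicitly than the paper does.
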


\begin{proof}The first equality comes from Theorem \ref{theorem:dimensionslyndontype}, the third 
equality is a consequence of Proposition \ref{proposition:propxi}, and the second 
and fourth equality are consequences of Proposition 
\ref{proposition:propgamma}. 
\end{proof}

For a permutation $\theta \in \Q_n$ we define the \emph{initial permutation} $\init(\theta)\in
\sym_n$ to be the subword of $\theta$ formed by the first occurrence of each of the letters in 
$\theta$.  For example, $\init(233772499468861551)=237496815$.

\begin{proposition}\label{proposition:init} For any $\theta \in \Q_n$ and $\Upsilon=(T,\sigma) \in 
\nor_n$, 
\begin{enumerate}
\item $\init(\xi(\theta))=\init(\theta)$
\item $\sigma=\init(\tilde \gamma(\Upsilon))$.
\end{enumerate}
\end{proposition}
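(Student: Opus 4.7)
The plan is to prove the two parts separately, with part (1) by induction on the length of $\theta$ using the three-case recursive definition of $\xi$, and part (2) as a nearly direct consequence of Proposition \ref{proposition:seconddescriptiongamma}.

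For part (1), the base case $\theta=mm$ is immediate since $\xi(mm)=mm$. For the irreducible AA-word case
\[
\theta=a_1\tau_1a_1\,a_2\tau_2a_2\cdots a_{k-1}\tau_{k-1}a_{k-1}\,a_k\tau_k a_k,
\]
I would read off the first occurrences of letters in both $\theta$ and
\[
\xi(\theta)=a_1\xi(\tau_1)a_2\xi(\tau_2)\cdots a_{k-1}\xi(\tau_{k-1})a_ka_ka_{k-1}\cdots a_2a_1\,\xi(\tau_k).
\]
The key observation is that in $\xi(\theta)$ the terminal descending segment $a_ka_k a_{k-1}\cdots a_1$ contributes only \emph{second} occurrences of the $a_i$'s, while the $a_i$ that opens each subword $a_i\xi(\tau_i)$ is the first occurrence of $a_i$. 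Hence the initial permutation of $\xi(\theta)$ is
\[
a_1\cdot \init(\xi(\tau_1))\cdot a_2\cdot \init(\xi(\tau_2))\cdots a_k\cdot \init(\xi(\tau_k)),
\]
which by the inductive hypothesis equals $a_1\cdot \init(\tau_1)\cdot a_2\cdot \init(\tau_2)\cdots a_k\cdot \init(\tau_k)=\init(\theta)$. For the remaining case, when $\theta=\theta^1\theta^2\cdots\theta^l$ is the complete ascending adjacent factorization, I would use the fact that the letter sets of distinct factors are pairwise disjoint (each letter $a$ appears in exactly one block $B_\theta(a)$, which lies inside a single factor). Therefore initial permutations concatenate: $\init(\xi(\theta))=\init(\xi(\theta^1))\cdots\init(\xi(\theta^l))$, and by induction this equals $\init(\theta^1)\cdots\init(\theta^l)=\init(\theta)$.

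For part (2), I would apply Proposition \ref{proposition:seconddescriptiongamma} to write $\tilde\gamma(\Upsilon)=\theta(x_1)\theta(x_2)\cdots\theta(x_{2n-1})\theta(x_1)$, where $x_1,\dots,x_{2n-1}$ is the postorder listing of all nodes of $\Upsilon$. The crucial point is that position $i$ in this word is a first occurrence of its letter exactly when $x_i$ is a leaf. Indeed, if $x_i$ is internal then $\theta(x_i)$ is by definition the smallest leaf label of the right subtree of $x_i$; that leaf is visited earlier in postorder, so $\theta(x_i)$ has already appeared. The trailing copy of $\theta(x_1)$ is likewise a second occurrence because the leftmost leaf $x_1$ is the very first node listed. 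Therefore $\init(\tilde\gamma(\Upsilon))$ is exactly the sequence of leaf labels read in postorder, which by definition of $\sigma$ and the fact that postorder visits leaves from left to right is $\sigma(1)\sigma(2)\cdots\sigma(n)=\sigma$.

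The main obstacle is purely bookkeeping: keeping careful track of which occurrences in the recursive construction of $\xi$ are first versus second, and verifying in part (2) that the auxiliary labeling $\theta$ on internal nodes always repeats a label already used by a leaf. Both claims reduce to inspection of the definitions once the recursive structure and the alternative description of $\tilde\gamma$ are set up; no deeper combinatorial input is needed.
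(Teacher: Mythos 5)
Your proof is correct and follows the same approach as the paper: part (1) unpacks, by induction on the three-case definition of $\xi$, the paper's terse observation that $\xi$ preserves the relative order of first occurrences, and part (2) uses Proposition~\ref{proposition:seconddescriptiongamma} together with the observation that a position in $\tilde\gamma(\Upsilon)$ is a first occurrence exactly when the corresponding postorder node is a leaf. The extra bookkeeping you supply (the disjointness of letter sets across blocks, and the ancestor argument showing internal nodes repeat an earlier leaf label) is exactly the detail the paper leaves implicit.
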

\begin{proof}
In the definition of $\xi$, the relative order of the initial occurrence of the letters is not 
changed; which proves (1). We consider the alternative characterization of $\tilde \gamma$ of 
Proposition \ref{proposition:seconddescriptiongamma}. Recall that $\theta(x_i)$ is a first 
occurrence of a letter in $\theta(\Upsilon)$ if and only if $x_i$ is a leaf of $\Upsilon$.  Hence, 
part (2) follows from the fact that postorder of the nodes of $\Upsilon$ restricted to the leaves 
is 
just left to right reading of the leaves.
\end{proof}

We have the following diagram of bijections:

   \begin{tikzpicture}[scale=0.7]

\tikzstyle{every node}=[inner sep=2pt,scale=0.9]
  
    \draw [circle] (0,0)  node (i1){$\nor_n$};
    \draw [circle] (5,0)  node (i2){$\Q_{n-1}$};
    \draw [circle] (10,0)  node (i3){$\Q_{n-1}$};
     \draw [circle] (15,0)  node (i3){$\nor_n$};

 \draw[color=black,->] (1.2,0.1) -- node[above] {$\gamma$ }(3.5,0.1);
 \draw[color=black,->] (3.5,-0.1) -- node[below] {$\gamma^{-1}$ }(1.2,-0.1);
  \draw[color=black,->] (6.5,0.1) -- node[above] {$\xi$ }(8.5,0.1);
 \draw[color=black,->] (8.5,-0.1) -- node[below] {$\xi^{-1}$ }(6.5,-0.1);
 \draw[color=black,->] (11.5,0.1) -- node[above] {$\gamma^{-1}$ }(14,0.1);
 \draw[color=black,->] (14,-0.1) -- node[below] {$\gamma$ }(11.5,-0.1);

\end{tikzpicture}

 The following theorem is a generalization of the classical bijections between Lyndon 
trees, combs and permutations in $\sym_{n-1}$. See Figure \ref{fig:exampleofbijections} for a 
complete example of the bijections.
  
\begin{corollary}\label{corollary:bijectiontypes}
The map $\gamma^{-1}\xi\gamma$ is a bijection on $\nor_n$ that translates between the 
Lyndon type and comb type. Moreover, the bijection preserves the permutation of leaf labels for 
each tree.
\end{corollary}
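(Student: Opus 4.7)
The plan is to chain together the properties of the three bijections $\gamma$, $\xi$, and $\gamma^{-1}$ already established in Propositions \ref{proposition:propxi}, \ref{proposition:propgamma}, and \ref{proposition:init}; no new combinatorial ideas are required. Bijectivity of $\gamma^{-1}\xi\gamma$ is immediate since each factor is a bijection.

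For the type-translating assertion, given $\Upsilon \in \nor_n$ I would set $\Upsilon' := \gamma^{-1}(\xi(\gamma(\Upsilon)))$ and verify the chain
\begin{align*}
\comblambda(\Upsilon') = \tnlambda(\gamma(\Upsilon')) = \tnlambda(\xi(\gamma(\Upsilon))) = \aalambda(\gamma(\Upsilon)) = \lyndonlambda(\Upsilon),
\end{align*}
using Proposition \ref{proposition:propgamma}(4) for the first equality, the defining relation $\gamma(\Upsilon')=\xi(\gamma(\Upsilon))$ for the second, Proposition \ref{proposition:propxi}(2) for the third, and Proposition \ref{proposition:propgamma}(3) for the fourth.

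For the preservation of leaf labels, writing $\Upsilon = (T,\sigma)$ and $\Upsilon' = (T',\sigma')$, the plan is to track $\sigma$ through the composition via the initial-permutation map $\init$. By Proposition \ref{proposition:init}(2), $\sigma = \init(\tilde\gamma(\Upsilon))$ and $\sigma' = \init(\tilde\gamma(\Upsilon'))$; by Proposition \ref{proposition:init}(1), $\init$ is invariant under $\xi$; and by definition $\gamma = \redu \circ \tilde\gamma$. Since $\redu$ merely strips the leading and trailing $1$ and uniformly lowers each remaining letter by one, it preserves the relative order of first occurrences, so $\init$ commutes with $\redu$ in the natural sense. Chasing these identities from $\sigma$ through $\tilde\gamma$, $\redu$, $\xi$, $\redu^{-1}$, and $\tilde\gamma^{-1}$ yields $\sigma' = \sigma$.

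The only mildly delicate point is the commutation of $\init$ with $\redu$, but this is a one-line observation once unpacked. The substance of the corollary has already been accomplished in the preparatory propositions, so this last step is essentially an assembly of those results.
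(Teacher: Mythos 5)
Your proposal is correct and follows essentially the same route as the paper: the same four-link chain of equalities citing Propositions~\ref{proposition:propgamma}(3)--(4) and \ref{proposition:propxi}(2), and the same appeal to Proposition~\ref{proposition:init} for preservation of the leaf labels (where the paper states the label-preservation step in one line, you merely unpack it by noting that $\init$ interacts cleanly with $\redu$).
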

\begin{proof}
By Propositions \ref{proposition:propxi} and \ref{proposition:propgamma}, 
\begin{align*}
 \comblambda(\gamma^{-1}\xi\gamma(\Upsilon))&=\tnlambda(\xi\gamma(\Upsilon))\\
 &=\aalambda(\gamma(\Upsilon))\\
 &=\lyndonlambda(\Upsilon).
\end{align*}
Proposition \ref{proposition:init} implies that the order of the leaf labels is preserved. 
\end{proof}

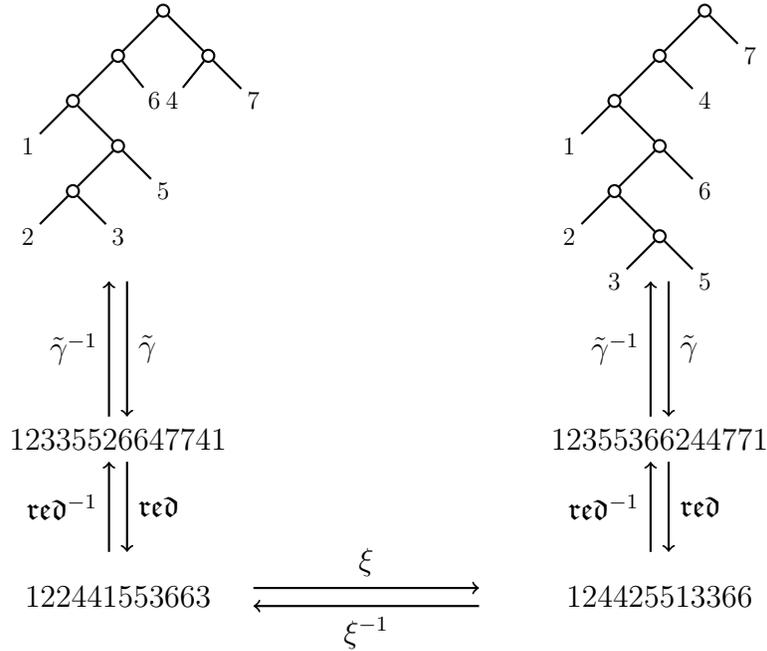
\begin{figure}[ht]
\centering
   \begin{tikzpicture}[thick,scale=0.6]

\begin{scope}[xshift=0cm,yshift=0cm]
\tikzstyle{every node}=[inner sep=2pt,scale=0.8]
  
    \draw [circle] (0,0)  node (l1){$1$};
    \draw [circle] (0,-2)  node (l2){$2$};
    \draw [circle] (2,-2)  node (l3){$3$};
    \draw [circle] (3,-1)  node (l5){$5$};

 \draw [circle] (2.8,1)  node (l6){$6$};
 \draw [circle] (3.2,1)  node (l4){$4$};
 \draw [circle] (5,1)  node (l7){$7$};

\tikzstyle{every node}=[draw,inner sep=2pt,scale=0.8]

    \draw [circle] (1,1)  node (n1){};
    \draw [circle] (2,0)  node (n3){};
     \draw [circle] (1,-1)  node (n2){};
     \draw [circle] (2,2)  node (n6){};
     \draw [circle] (3,3)  node (n4){};
     \draw [circle] (4,2)  node (n7){};

     \draw (l1) --  (n1) ;
     \draw (n1) --  (n3) ;
     \draw (n2) --  (n3) ;
     \draw (n3) --  (l5) ;
     \draw (n2) --  (l2) ;
     \draw (n2) --  (l3) ;

\draw (n6) --  (l6) ;
     \draw (n6) --  (n1) ;
     \draw (n4) --  (n7) ;
     \draw (n4) --  (n6) ;
 \draw (n7) --  (l4) ;
 \draw (n7) --  (l7) ;
\end{scope}

\begin{scope}[xshift=12cm,yshift=0cm]

\tikzstyle{every node}=[inner sep=2pt,scale=0.8]

    \draw [circle] (0,0)  node (l1){$1$};
    \draw [circle] (0,-2)  node (l2){$2$};
    \draw [circle] (1,-3)  node (l3){$3$};
    \draw [circle] (3,-3)  node (l5){$5$};

 \draw [circle] (3,-1)  node (l6){$6$};
 \draw [circle] (3,1)  node (l4){$4$};
 \draw [circle] (4,2)  node (l7){$7$};

\tikzstyle{every node}=[draw,inner sep=2pt,scale=0.8]

    \draw [circle] (1,1)  node (n1){};
    \draw [circle] (2,0)  node (n3){};
    \draw [circle] (1,-1)  node (n2){};

     \draw [circle] (2,2)  node (n6){};
     \draw [circle] (3,3)  node (n4){};
   \draw [circle] (2,-2)  node (n5){};

     \draw (l1) --  (n1) ;
     \draw (n2) --  (l2) ;
\draw (n2) --  (n3) ;
\draw (n2) --  (n5) ;

\draw (n1) --  (n3) ;
     \draw (n3) --  (l6) ;
     \draw (n4) --  (l7) ;
\draw (n6) --  (l4) ;

     \draw (n6) --  (n1) ;
     \draw (n4) --  (n6) ;
     \draw (n5) --  (l3) ;
     \draw (n5) --  (l5) ;

\end{scope}

\begin{scope}[xshift=12cm,yshift=-6.5cm]
\tikzstyle{every node}=[inner sep=2pt,scale=1]

   \draw [circle] (2,0)  node (i0){$12355366244771$};
\end{scope}

\begin{scope}[xshift=0cm,yshift=-6.5cm]

\tikzstyle{every node}=[inner sep=2pt,scale=1]

   \draw [circle] (2,0)  node (i0){$12335526647741$};

\end{scope}

\begin{scope}[xshift=12cm,yshift=-10cm]
\tikzstyle{every node}=[inner sep=2pt,scale=1]

   \draw [circle] (2,0)  node (i0){$124425513366$};
\end{scope}

\begin{scope}[xshift=0cm,yshift=-10cm]

\tikzstyle{every node}=[inner sep=2pt,scale=1]

   \draw [circle] (2,0)  node (i0){$122441553663$};

\end{scope}

 \draw[color=black,->] (2.2,-3) -- node[right] {$\tilde \gamma$}(2.2,-6);
 \draw[color=black,->] (1.8,-6) -- node[left] {$\tilde \gamma^{-1}$}(1.8,-3);

\draw[color=black,->] (14.2,-3) -- node[right] {$\tilde \gamma$}(14.2,-6);
 \draw[color=black,->] (13.8,-6) -- node[left] {$\tilde \gamma^{-1}$}(13.8,-3);

 \draw[color=black,->] (2.2,-7) -- node[right] {$\redu$}(2.2,-9);
 \draw[color=black,->] (1.8,-9) -- node[left] {$\redu^{-1}$}(1.8,-7);

\draw[color=black,->] (14.2,-7) -- node[right] {$\redu$}(14.2,-9);
 \draw[color=black,->] (13.8,-9) -- node[left] {$\redu^{-1}$}(13.8,-7);

  \draw[color=black,->] (5,-9.8) -- node[above] {$\xi$}(10,-9.8);
  \draw[color=black,->] (10,-10.2) -- node[below] {$\xi^{-1}$}(5,-10.2);

\end{tikzpicture}
 
 \caption{Example of the bijections $\tilde \gamma$, $\redu$ and $\xi$}
 \label{fig:exampleofbijections}
 \end{figure}

We can combine Theorem \ref{theorem:compositionalinverse} with Theorem 
\ref{theorem:dimensionsofliekstirling} and conclude the following 
$e$-positivity result.

\begin{theorem}\label{theorem:compositionalinversefinal}
We have
  \begin{align*}
 \left [ \sum_{n\ge1}(-1)^{n-1} 
h_{n-1}(\xx)\frac{y^n}{n!} \right ] ^{<-1>} &= \sum_{n\ge1}\sum_{\Upsilon \in \nor_n} 
e_{\lambda(\Upsilon)}(\xx)\frac{y^n}{n!}\\
&=  \sum_{n\ge1}\sum_{\theta \in \Q_{n-1}} 
e_{\lambda(\theta)}(\xx)\frac{y^n}{n!}\nonumber,
  \end{align*}
  where $\lambda(\Upsilon)$  is either the Lyndon type or the comb type of the normalized tree 
$\Upsilon$ and $\lambda(\theta)$ is either the AA type or the TN type of the Stirling permutation 
$\theta$.
 \end{theorem}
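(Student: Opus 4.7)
The plan is to obtain Theorem \ref{theorem:compositionalinversefinal} as a direct corollary by substituting the four combinatorial formulas for $\sum_{\mu \in \wcomp_{n-1}}\dim \lie(\mu)\,\xx^{\mu}$ provided by Theorem \ref{theorem:dimensionsofliekstirling} into the compositional inverse identity of Theorem \ref{theorem:compositionalinverse2} (equivalently, Theorem \ref{theorem:compositionalinverse}). No further structural work is required; the theorem is really a repackaging of prior results that makes the $e$-positivity explicit.

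More precisely, I will start by recalling that Theorem \ref{theorem:compositionalinverse2} gives the identity
\begin{align*}
\sum_{n\ge1}\sum_{\mu \in \wcomp_{n-1}}\dim \lie(\mu)\,\xx^{\mu}\frac{y^n}{n!} = \left[ \sum_{n\ge1}(-1)^{n-1} h_{n-1}(\xx)\frac{y^n}{n!} \right]^{<-1>}.
\end{align*}
Then I will invoke Theorem \ref{theorem:dimensionsofliekstirling}, which expresses the inner sum on the left-hand side in four equivalent ways:
\begin{align*}
\sum_{\mu \in \wcomp_{n-1}}\dim \lie(\mu)\,\xx^{\mu} &= \sum_{\Upsilon \in \nor_n} e_{\lyndonlambda(\Upsilon)}(\xx) = \sum_{\theta \in \Q_{n-1}} e_{\aalambda(\theta)}(\xx) \\
&= \sum_{\theta \in \Q_{n-1}} e_{\tnlambda(\theta)}(\xx) = \sum_{\Upsilon \in \nor_n} e_{\comblambda(\Upsilon)}(\xx).
\end{align*}
Substituting any one of these expressions into the left-hand side of the compositional inverse identity and summing over $n\ge 1$ yields the desired equalities.

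There is no genuine obstacle here, since each of the intermediate claims has been established in the preceding sections: Theorem \ref{theorem:compositionalinverse2} follows from the M\"obius-theoretic computation (Theorem \ref{theorem:compositionalinversemu}) combined with the $\sym_n$-isomorphism (Theorem \ref{theorem:liehomisomorphism}), and Theorem \ref{theorem:dimensionsofliekstirling} follows from the colored Lyndon basis together with the bijections $\xi$ and $\gamma$ analyzed in Propositions \ref{proposition:propxi} and \ref{proposition:propgamma}. The only thing worth emphasizing in the write-up is that, because $\lie(\mu) \simeq_{\sym_n} \lie(\nu)$ whenever $\nu$ is a rearrangement of $\mu$, each of the four generating functions is a genuine symmetric function in $\xx$, so the elementary-symmetric expansions on the right-hand side make sense and the identity holds in the ring $\Lambda_{\ZZ}[[y]]$ of formal power series in $y$ with coefficients in symmetric functions. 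The upshot is the $e$-positivity of the compositional inverse, which is the substantive content that this corollary records.
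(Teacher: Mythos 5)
Your proposal is correct and takes exactly the same route the paper does: the paper introduces Theorem~\ref{theorem:compositionalinversefinal} with the single sentence that it follows by combining Theorem~\ref{theorem:compositionalinverse} with Theorem~\ref{theorem:dimensionsofliekstirling}, which is precisely the substitution you carry out. Nothing more is required.
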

 
 In \cite{Dleon2013b} the author gives another proof of Theorem 
\ref{theorem:compositionalinversefinal} that 
does not involve poset topology and instead involves a nice interpretation of the compositional 
inverse of exponential generating functions given by B. Drake in \cite{Drake2008}.

\subsection{A remark about colored Stirling permutations}
We can also define colored Stirling permutations in analogy with the case of colored 
normalized binary trees. An \emph{AA colored Stirling permutation} $\Theta=(\theta, c)$ is a 
Stirling permutation $\theta \in \Q_n$ together with a map $c:[n] \rightarrow \PP$ such that for 
every occurrence of an ascending adjacent pair $(a,b)$ in $\theta$, $c$ satisfies the condition 
$c(a)>c(b)$. 
\begin{example}
 If $\theta=233772499468861551$, the map $c:[9]\rightarrow \PP$ defined by the pairs
 $(i,c(i))$:
\[\{(1,{\color{blue}1}),(2,{\color{brown}3}),(3,{\color{brown}3}),(4,{\color{red}2}),(5,{\color{red}
2 } ) ,(6,{\color{ blue}1}),(7, {\color{blue}1}),(8, {\color{red}2}),(9,{\color{blue}1})\}\]
is an AA coloring, but
\[\{(1,{\color{blue}1}),(2,{\color{red}2}),(3,{\color{brown}3}),(4,{\color{brown}3}),(5,{\color{red}
2 } ) ,(6,{\color{ blue}1}),(7, {\color{blue}1}),(8, {\color{red}2}),(9,{\color{blue}1})\}\]
is not since $24$ is an adjacent ascending pair but
$c(2)={\color{red}2}<{\color{brown}3}=c(4)$.  
\end{example}
In the same manner we define a \emph{TN colored Stirling permutation} to the pair $\Theta=(\theta, 
c)$, where $c$ satisfies $c(a)>c(b)$ whenever $(a,b)$ is a terminally nested pair. For $\mu \in 
\wcomp_n$, we say that a colored Stirling permutation $(\theta,c)$ is $\mu$-colored if $\mu(i) = 
|c^{-1}(i)|$ for all $i$.
We denote by 
$\Q^{AA}_{\mu}$ the set of AA $\mu$-colored Stirling permutations of $[n]$ and 
$\Q^{TN}_{\mu}$ the set of TN $\mu$-colored Stirling permutations of $[n]$.

\begin{corollary}[of Corollary \ref{corollary:muintervalslyn} ]\label{corollary:muintervalsperm} 
For 
all $n \ge 1$ and $\mu \in \wcomp_{n-1}$
\[ \bar \mu_{\Pi_{n}^{k}}(\hat{0},[n]^{\mu})=(-1)^{n-1}|\Q^{AA}_{\mu}|=(-1)^{n-1}|\Q^{TN}_{\mu}|.\]
Consequently,
\[
 \dim \widetilde H^{n-3}((\hat 0, [n]^{\mu}))=|\Q^{AA}_{\mu}|=|\Q^{TN}_{\mu}|.
\]
\end{corollary}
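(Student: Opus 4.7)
The plan is to reduce the statement to Corollary \ref{corollary:muintervalslyn}, which already asserts that $\bar \mu_{\Pi_{n}^{k}}(\hat{0},[n]^{\mu}) = (-1)^{n-1}|\lyn_{\mu}|$ and $\dim \widetilde H^{n-3}((\hat 0, [n]^{\mu})) = |\lyn_{\mu}|$. It therefore suffices to exhibit bijections
\[
\lyn_{\mu} \;\longleftrightarrow\; \Q^{AA}_{\mu} \;\longleftrightarrow\; \Q^{TN}_{\mu},
\]
and the whole corollary will follow by substitution. Conveniently, both of the needed bijections are essentially built in Propositions \ref{proposition:propxi} and \ref{proposition:propgamma}; what remains is to promote them to their colored analogues.

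For the first bijection, I would lift the map $\gamma : \nor_n \to \Q_{n-1}$ of Proposition \ref{proposition:propgamma} to the colored setting. Each internal node $x$ of a normalized tree $\Upsilon \in \nor_n$ corresponds canonically to a single letter of $\gamma(\Upsilon)$, namely $\vr(R(x))$; this is transparent from the recursive definition of $\tilde \gamma$ after applying $\redu$. Given $\Psi \in \lyn_{\mu}$ with underlying tree $\widetilde \Psi = \Upsilon$, I would transfer the color of each internal node $x$ to its associated letter in $\gamma(\Upsilon)$, producing a coloring $c$ with the same multiplicity vector $\mu$. By Proposition \ref{proposition:propgamma}(1), a node $x$ is non-Lyndon precisely when $(\vr(R(L(x))),\vr(R(x)))$ is an ascending adjacent pair in $\gamma(\Upsilon)$, and every AA pair arises in this way. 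Hence the colored Lyndon condition $\clr(L(x)) > \clr(x)$ for each non-Lyndon $x$ translates term by term into the AA coloring condition $c(a) > c(b)$ for every ascending adjacent pair $(a,b)$. This gives the bijection $\lyn_{\mu} \cong \Q^{AA}_{\mu}$.

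For the second bijection, I would apply $\xi : \Q_{n-1} \to \Q_{n-1}$ of Proposition \ref{proposition:propxi}. Since $\xi$ permutes Stirling words of the same underlying alphabet, any coloring $c$ on $\theta$ can be transported unchanged to $\xi(\theta)$, preserving $\mu$. By Proposition \ref{proposition:propxi}(1), $(i,j)$ is an ascending adjacent pair in $\theta$ if and only if it is a terminally nested pair in $\xi(\theta)$, so the AA coloring condition on $\theta$ becomes the TN coloring condition on $\xi(\theta)$. Thus $\xi$ induces a bijection $\Q^{AA}_{\mu} \cong \Q^{TN}_{\mu}$. Chaining these bijections yields $|\lyn_{\mu}| = |\Q^{AA}_{\mu}| = |\Q^{TN}_{\mu}|$, and substituting into Corollary \ref{corollary:muintervalslyn} gives both displayed equalities; the second follows from the first via Theorem \ref{theorem:elk} and Corollary \ref{corollary:elth}.

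No serious obstacle is expected, since the combinatorial heart of the proof has already been built up in the earlier sections. The only point requiring care, though not difficulty, is pinning down the identification of internal nodes of $\Upsilon$ with letters of $\gamma(\Upsilon)$ so that the coloring transfer is unambiguous and respects the multiplicity vector $\mu$.
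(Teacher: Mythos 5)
Your proof is correct and matches the paper's argument exactly: the paper likewise extends $\gamma$ to a bijection $\lyn_{\mu}\cong\Q^{AA}_{\mu}$ and $\xi$ to a bijection $\Q^{AA}_{\mu}\cong\Q^{TN}_{\mu}$, then invokes Corollary~\ref{corollary:muintervalslyn}. You have simply spelled out the color-transfer details that the paper leaves to the phrase ``extends naturally.''
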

\begin{proof}
Note that the bijection $\gamma:\nor_n \rightarrow \Q_{n-1}$ extends naturally to a bijection 
$\lyn_{\mu} \cong \Q^{AA}_{\mu}$ and the bijection $\xi:\Q_{n-1}\rightarrow \Q_{n-1}$ extends 
naturally to a bijection  $ \Q^{AA}_{\mu}\cong \Q^{TN}_{\mu}$. Thus the result is a corollary 
of Corollary \ref{corollary:muintervalslyn}.
\end{proof}

By Theorem \ref{theorem:liehomisomorphism},
\begin{corollary}
For 
all $n \ge 1$ and $\mu \in \wcomp_{n-1}$

\[
 \dim \lie(\mu)=|\Q^{AA}_{\mu}|=|\Q^{TN}_{\mu}|.
\]
\end{corollary}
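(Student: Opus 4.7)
The plan is to combine the isomorphism of Theorem \ref{theorem:liehomisomorphism} with the enumerative identity of Corollary \ref{corollary:muintervalsperm}. Theorem \ref{theorem:liehomisomorphism} provides an $\sym_n$-module isomorphism
\[
\lie(\mu) \simeq_{\sym_n} \widetilde H^{n-3}((\hat 0, [n]^{\mu})) \otimes \sgn_n,
\]
and in particular an isomorphism of $\kk$-vector spaces after forgetting the group action. Since $\sgn_n$ is one-dimensional, tensoring with it does not alter dimensions, so I would first conclude
\[
\dim \lie(\mu) = \dim \widetilde H^{n-3}((\hat 0, [n]^{\mu})).
\]

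The second step is simply to invoke Corollary \ref{corollary:muintervalsperm}, which asserts the two equalities
\[
\dim \widetilde H^{n-3}((\hat 0, [n]^{\mu})) = |\Q^{AA}_{\mu}| = |\Q^{TN}_{\mu}|.
\]
Chaining these identities with the one obtained from the isomorphism gives the desired equalities.

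There is no real obstacle here: the content has already been established in the previous sections, namely the explicit sign-twisted isomorphism (obtained via Wachs' technique and the generating set descriptions of Propositions \ref{proposition:binarybasislie} and \ref{theorem:binarybasishomology}) and the bijective extensions of $\gamma$ and $\xi$ to colored objects. The proof is therefore a one-line deduction, and the statement just records the combinatorial upshot of the previous results for the dimensions of $\lie(\mu)$ in terms of colored Stirling permutations of AA and TN type.
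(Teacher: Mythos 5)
Your proposal is correct and matches the paper's own (implicit) argument: the paper introduces this corollary with the single phrase ``By Theorem~\ref{theorem:liehomisomorphism},'' relying on exactly the chaining you describe --- the sign-twisted isomorphism gives $\dim\lie(\mu)=\dim\widetilde H^{n-3}((\hat 0,[n]^{\mu}))$, and Corollary~\ref{corollary:muintervalsperm} supplies the identification of this dimension with $|\Q^{AA}_{\mu}|=|\Q^{TN}_{\mu}|$.
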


\section{Combinatorial bases}\label{section:combinatorialbases}
In this section we discuss  various  bases for $\lie(\mu)$,
$\widetilde H^{n-3}((\hat 0,[n]^{\mu}))$ and $\widetilde H^{n-2}(\Pi_n^k\setminus \{\hat{0}\})$.

\subsection{Colored Lyndon basis}\label{section:lyndonbasis}
Recall from Theorem \ref{theorem:elth} that the ascent-free maximal chains of the EL-labeling of  
$[\hat{0},[n]^{\mu}]$ yield a basis for the cohomology $\widetilde H^{n-3}((\hat{0},[n]^{\mu}))$.
Hence, Theorem \ref{thm:ascfreeEL} gives a description of
this basis in terms of colored Lyndon trees. 
The following result gives a basis closely related (equal up to signs).
By applying 
the isomorphism of
Theorem~\ref{theorem:liehomisomorphism}, one gets a corresponding basis for $\lie(\mu)$, which 
reduces to the classical Lyndon basis for $\lie(n)$ when $\mu$ has a single nonzero component. 

\begin{theorem}\label{theorem:lyndonbasis}
The set $\{\bar{c}(T,\sigma)\mid\, (T,\sigma) \in \lyn_{\mu}\}$ is a basis of
$\widetilde H^{n-3}((\hat{0},[n]^{\mu}))$ and the set
$\{[T,\sigma] \mid \, (T,\sigma) \in \lyn_{\mu}\}$ is a basis for $ \lie(\mu)$.
\end{theorem}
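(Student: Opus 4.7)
The plan is to deduce both statements from results already established in the excerpt, essentially as corollaries. First I would handle the cohomology basis: by Theorem \ref{theorem:elth}(2), for the EL-labeling of $\widehat{\Pi_n^k}$ given in Theorem \ref{theorem:ellabelingposet}, the set $\{\bar{c} \mid c \text{ is an ascent-free maximal chain of } [\hat{0},[n]^\mu]\}$ forms a basis of $\widetilde{H}^{n-3}((\hat{0},[n]^\mu))$. By Theorem \ref{thm:ascfreeEL}, this set is exactly $\{\bar{c}(T,\sigma,\tau_{T,\sigma}) \mid (T,\sigma) \in \lyn_\mu\}$.

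Next I would invoke Lemma \ref{lemma:52}, which gives the cohomology relation
\[
\bar{c}(T,\sigma,\tau_{T,\sigma}) = \sgn(\tau_{T,\sigma}) \bar{c}(T,\sigma),
\]
so each basis element $\bar{c}(T,\sigma,\tau_{T,\sigma})$ differs from $\bar{c}(T,\sigma)$ only by a scalar $\pm 1$. Hence $\{\bar{c}(T,\sigma) \mid (T,\sigma) \in \lyn_\mu\}$ is also a basis of $\widetilde{H}^{n-3}((\hat{0},[n]^\mu))$. This settles the cohomology statement.

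For the Lie algebra statement, I would apply the $\sym_n$-module isomorphism $\varphi: \lie(\mu) \to \widetilde{H}^{n-3}((\hat{0},[n]^\mu)) \otimes \sgn_n$ from Theorem \ref{theorem:liehomisomorphism}, determined by $\varphi([T,\sigma]) = \sgn(\sigma)\sgn(T)\bar{c}(T,\sigma)$ for all $(T,\sigma) \in \BT_\mu$. Restricting $\varphi$ to the span of $\{[T,\sigma] \mid (T,\sigma) \in \lyn_\mu\}$, the image is the span of $\{\sgn(\sigma)\sgn(T)\bar{c}(T,\sigma) \mid (T,\sigma) \in \lyn_\mu\}$, which by the first part of the proof is a basis of $\widetilde{H}^{n-3}((\hat{0},[n]^\mu))$ up to nonzero scalars. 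Since $\varphi$ is an isomorphism, the preimage $\{[T,\sigma] \mid (T,\sigma) \in \lyn_\mu\}$ must therefore be a basis of $\lie(\mu)$.

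There is no substantive obstacle here; the theorem is essentially an assembly of Theorems \ref{theorem:elth}, \ref{thm:ascfreeEL}, \ref{theorem:liehomisomorphism}, and Lemma \ref{lemma:52}. The only minor point to verify is that the sign twist $\sgn(\sigma)\sgn(T)$ in $\varphi$ and the sign $\sgn(\tau_{T,\sigma})$ from Lemma \ref{lemma:52} are all nonzero, so that neither the map from Lyndon chains to $\bar c(T,\sigma)$ nor the isomorphism $\varphi$ can destroy linear independence or spanning.
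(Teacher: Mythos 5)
Your proof is correct and follows essentially the same route as the paper's: combine Theorem \ref{theorem:elth} with Theorem \ref{thm:ascfreeEL} to get the basis indexed by ascent-free chains $\bar c(T,\sigma,\tau_{T,\sigma})$, then use Lemma \ref{lemma:52} to trade $\tau_{T,\sigma}$ for postorder at the cost of a $\pm 1$ sign, and finally push through the isomorphism $\varphi$ of Theorem \ref{theorem:liehomisomorphism} to obtain the Lyndon basis for $\lie(\mu)$. The only difference is that you spell out the $\lie(\mu)$ step explicitly, whereas the paper leaves it implicit; the substance is identical.
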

\begin{proof}
By Theorem  \ref{thm:ascfreeEL} and Theorem  \ref{theorem:elth}, the set  
$\{\bar{c}(T,\sigma,\tau_{T,\sigma})\mid\, (T,\sigma) \in \lyn_{\mu}\}$ is a basis of
$\widetilde H^{n-3}((\hat{0},[n]^{\mu}))$. Lemma \ref{lemma:52} 
implies that we can replace $\tau_{T,\sigma}$ by any other linear 
extension and still obtain a basis for $\widetilde H^{n-3}((\hat{0},[n]^{\mu}))$. In particular, we 
can 
replace 
it by postorder.
\end{proof}

Theorem \ref{theorem:lyndonbasis} already implies that the set of maximal chains coming 
from colored
Lyndon trees spans $\widetilde H^{n-3}((\hat 0, [n]^{\mu}))$; however, in order to complete the 
proof 
of
Theorem \ref{theorem:binarybasishomology}, and conclude that the relations in the theorem
generate all the cohomology relations, we will show that we can represent any  
$\bar{c}
\in \widetilde H^{n-3}((\hat 0, [n]^{\mu}))$ as a linear combination of chains in
$\{\bar{c}(T,\sigma)\mid\, (T,\sigma) \in \lyn_{\mu}\}$ using only the relations in Proposition
\ref{theorem:binarybasishomology}.

\begin{proposition}\label{proposition:lyndononlyrelations}
 The relations in Proposition \ref{theorem:binarybasishomology} generate all the cohomology
relations in $\widetilde H^{n-3}((\hat 0, [n]^{\mu}))$.
\begin{proof}
We use a ``straightening'' strategy using the relations of Theorem 
\ref{theorem:binarybasishomology} in order to prove the result. Recall that for an internal node 
$x$ of a normalized binary tree $\Upsilon \in \nor_n$, we define the valency $v(x)$ to be the 
smallest of the labels in the subtree of $\Upsilon$ rooted at $x$. We define a \emph{valency 
inversion} in $\Upsilon \in \nor_n$ to be a pair of internal 
nodes $(x,y)$ such that:
\begin{itemize}
 \item $x$ is in the subtree rooted at the left child of $y$,
 \item $v(R(x))<v(R(y))$.
\end{itemize}
Let $\valinv(\Upsilon)$ denote the number of valency inversions in $\Upsilon$. 
Note for example that a Lyndon tree is a normalized binary tree such that $\valinv(\Upsilon)=0$.

A \emph{coloring inversion} is a pair of internal nodes $(x,y)$ in $\Upsilon$ such that
\begin{itemize}
\item $v(x)=v(y)$,
\item $x$ is in the subtree rooted at the left child of $y$,
\item $\clr(x)<\clr(y)$.
\end{itemize}
We denote by $\colinv(\Upsilon)$, the number of coloring inversions in $\Upsilon$.

Define the \emph{inversion pair} of $\Upsilon$ to be $(\valinv(\Upsilon),\colinv(\Upsilon))$.  We 
order these pairs lexicographically; that is, we say  
\begin{align*}
(\valinv(\Upsilon),\colinv(\Upsilon))< 
(\valinv(\Upsilon^\prime),\colinv(\Upsilon^\prime)),
\end{align*} 
if either $\valinv(\Upsilon) < 
\valinv(\Upsilon^\prime)$ or $\valinv(\Upsilon)=\valinv(\Upsilon^\prime)$ and $\colinv(\Upsilon) < 
\colinv(\Upsilon^\prime)$. Note that
if the inversion pair of $\Upsilon$ is $(0,0)$ then $\Upsilon$ is a colored Lyndon tree since
in particular its underlying uncolored tree is a Lyndon tree.

Now let $\Upsilon \in \BT_{\mu}$ be a colored normalized  binary tree that is not a colored 
Lyndon tree. Then $\Upsilon$ must have a subtree of the form: 
$(\Upsilon_1 \substack{i \\ \wedge} \Upsilon_2 )\substack{j \\ \wedge} \Upsilon_3$, with 
$v(\Upsilon_2)<v(\Upsilon_3)$ and $i \le j$. We will show  that $\bar c(\Upsilon)$ can be expressed 
as a linear combination of chains associated with colored normalized binary trees with smaller
inversion pair.

\noindent{\bf Case $i=j$:} 
Using  relation (\ref{relation:3h}) (and  relation(\ref{relation:1h})) we have that
\begin{align*}
\bar c(\alpha((\Upsilon_1 \substack{i \\ \wedge} \Upsilon_2) \substack{i \\ \wedge}
\Upsilon_3)\beta)
=\pm\bar c( \alpha(\Upsilon_1 \substack{i \\ \wedge} (\Upsilon_2 \substack{i \\ 
\wedge}\Upsilon_3))\beta )
\pm \bar c(\alpha((\Upsilon_1 \substack{i \\ \wedge} \Upsilon_3)\substack{i \\ \wedge}
\Upsilon_2)\beta).
\end{align*}
(The  signs in the relations of Theorem~\ref{theorem:binarybasishomology} are not relevant 
here and have therefore been suppressed.)

Let $\p(\Upsilon_j)$ denote the parent of the root of the subtree $\Upsilon_j$ in $\Upsilon$. We 
then have that
\begin{align*}
 \valinv(\alpha((\Upsilon_1 \substack{i \\ \wedge} \Upsilon_2) \substack{i \\ \wedge}
\Upsilon_3)\beta)
-\valinv( \alpha(\Upsilon_1 \substack{i \\ \wedge} (\Upsilon_2 \substack{i \\ 
\wedge}\Upsilon_3))\beta)&\ge 1,
\end{align*}
since the pair $(\p(\Upsilon_2),\p(\Upsilon_3))$ and any other valency inversion between an 
internal node of $\Upsilon_1$ and $\p(\Upsilon_3)$ are valency inversions in the former tree but 
not in the later and no other change occurs to the set of valency inversions. We also have that
\begin{align*}
\valinv(\alpha((\Upsilon_1 \substack{i \\ \wedge} \Upsilon_2) \substack{i \\ \wedge}
\Upsilon_3)\beta)
-\valinv(\alpha((\Upsilon_1 \substack{i \\ \wedge} \Upsilon_3)\substack{i \\ \wedge}
\Upsilon_2)\beta)& \ge 1,
\end{align*}
since the pair $(\p(\Upsilon_2),\p(\Upsilon_3))$ and any other valency inversion between an 
internal node of $\Upsilon_2$ and $\p(\Upsilon_3)$ are valency inversions in the former tree but 
not in the later and no other change occurs to the set of valency inversions.

\noindent{\bf Case $i < j$:}
Using  relation (\ref{relation:5h}) (and  relation (\ref{relation:1h})) we have that

\begin{align*}\bar c(\alpha((\Upsilon_1 \substack{i \\ \wedge} \Upsilon_2)\substack{j \\ 
\wedge}
\Upsilon_3)\beta)=&
\pm \bar c(\alpha((\Upsilon_1 \substack{j \\ \wedge} \Upsilon_2 )\substack{i \\ \wedge}
\Upsilon_3)\beta) \\ & 
\pm \bar c(\alpha(\Upsilon_1 \substack{j \\ \wedge} (\Upsilon_2\substack{i \\ \wedge}
\Upsilon_3))\beta) \\ & 
\pm  \bar c(\alpha( \Upsilon_1 \substack{i \\ \wedge} (\Upsilon_2 \substack{j\\
\wedge} \Upsilon_3) )\beta) \\ & 
\pm \bar c(\alpha((\Upsilon_1 \substack{i \\ \wedge} \Upsilon_3 )\substack{j \\ \wedge}
\Upsilon_2)\beta) \\ & 
\pm \bar c(\alpha((\Upsilon_1 \substack{j \\ \wedge} \Upsilon_3 )\substack{i \\ \wedge}
\Upsilon_2)\beta) .
\end{align*}

Just as in the previous case, all the labeled colored trees on the right hand side of the 
equation, except for the first, have a smaller number of valency inversions than that of 
the tree in the left hand side.  The first labeled colored tree $\bar c(\alpha((\Upsilon_1 
\substack{j \\ \wedge} \Upsilon_2 )\substack{i \\ \wedge}
\Upsilon_3)\beta)$ has the same number of valency inversions as that of $c(\alpha((\Upsilon_1 
\substack{i \\ \wedge} \Upsilon_2)\substack{j \\ 
\wedge} \Upsilon_3)\beta)$.  However the coloring inversion number is reduced by one and so the 
inversion pair is reduced.

From the two cases above we conclude that  if $\Upsilon \in \BT_{\mu}$ is a colored normalized  
binary tree then $\bar c(\Upsilon)$ can be expressed as a linear 
combination of chains, associated to colored normalized  binary 
trees, of smaller inversion pair. Hence by induction on the inversion 
pair, $\bar 
c(\Upsilon)$ can be expressed as a linear combination of chains of the form $\bar c 
(\Upsilon^\prime)$ where $\Upsilon^\prime \in \lyn_{\mu}$.
Also since by relation (\ref{relation:1h}) any $\Upsilon \in \BT_{\mu}$ is of the form $\pm \bar 
c(\Upsilon^\prime)$, where $\Upsilon^\prime$ is a colored normalized binary tree, the same is true 
for any $\Upsilon \in \BT_{\mu}$.

Since the set $\{\bar c(\Upsilon) \mid \Upsilon \in \BT_{\mu}\}$ is a spanning set for $\widetilde 
H^{n-3}((\hat 0, [n]^{\mu}))$,  we have shown using only the relations in  Theorem 
\ref{theorem:binarybasishomology} that  $\{\bar 
c(\Upsilon) \mid \Upsilon \in \lyn_{\mu}\}$ is also a spanning set for $\widetilde H^{n-3}((\hat 
0, 
[n]^{\mu}))$. The fact that $\{\bar 
c(\Upsilon) \mid \Upsilon \in \lyn_{\mu}\}$ is a basis (Theorem \ref{theorem:lyndonbasis}), proves 
the result.
\end{proof}

\end{proposition}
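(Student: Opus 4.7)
The plan is to show that every $\bar c(\Upsilon)$ with $\Upsilon \in \BT_\mu$ can be rewritten, using only relations (\ref{relation:1h}), (\ref{relation:3h}), and (\ref{relation:5h}), as a linear combination of $\bar c(\Upsilon')$ with $\Upsilon' \in \lyn_\mu$. Since $\{\bar c(\Upsilon') \mid \Upsilon' \in \lyn_\mu\}$ is already known to be a basis for $\widetilde H^{n-3}((\hat 0, [n]^\mu))$ by Theorem~\ref{theorem:lyndonbasis}, this suffices: any cohomology relation among the $\bar c(\Upsilon)$'s, once rewritten via the listed relations, must collapse to the trivial relation on the Lyndon basis.

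First I would use (\ref{relation:1h}) to reduce to normalized colored trees, since that antisymmetry lets one swap left and right subtrees at any internal node up to sign. Next, to organize the induction, I would attach to each normalized colored tree $\Upsilon$ the \emph{inversion pair} $(\valinv(\Upsilon),\colinv(\Upsilon))$ ordered lexicographically, where $\valinv(\Upsilon)$ counts pairs of internal nodes $(x,y)$ with $x$ in the left subtree of $y$ and $v(R(x))<v(R(y))$, and $\colinv(\Upsilon)$ counts analogous pairs with $v(x)=v(y)$ and $\clr(x)<\clr(y)$. By Proposition~\ref{prop:valen} combined with the coloring condition (\ref{equation:lyndoncondition}), a normalized tree has inversion pair $(0,0)$ precisely when it is a colored Lyndon tree.

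If $\Upsilon$ is normalized but not a colored Lyndon tree, there is a subtree of the form $(\Upsilon_1 \wedge_i \Upsilon_2)\wedge_j \Upsilon_3$ witnessing failure, with $v(\Upsilon_2)<v(\Upsilon_3)$ and $i \le j$. When $i=j$, I would apply (\ref{relation:3h}) to this subtree, rewriting $\bar c(\Upsilon)$ as a signed sum of two other tree classes in each of which the offending valency inversion is destroyed and no new valency inversion is created, so $\valinv$ strictly drops. When $i<j$, I would apply (\ref{relation:5h}): four of the five resulting summands strictly reduce $\valinv$ for the same reason, while the fifth keeps $\valinv$ unchanged but swaps the colors $i$ and $j$ at the critical pair of nodes, strictly reducing $\colinv$. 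In both cases the inversion pair strictly decreases lexicographically, so induction on this pair, with colored Lyndon trees as the base case, yields the claim.

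The main obstacle is the bookkeeping in the mixed-Jacobi case (\ref{relation:5h}): one must check each of the five resulting terms and confirm that exactly one preserves $\valinv$ while strictly reducing $\colinv$, and that the other four strictly reduce $\valinv$, tracking which pairs of internal nodes form valency or coloring inversions after the rearrangement. A secondary point is confirming that local rewriting inside a larger tree only modifies the inversion pair in a controlled way, since the only inversions that can appear or disappear involve the internal nodes of the affected subtree (and possibly their parents), not nodes in the surrounding context $\alpha,\beta$. Once these local checks are done, the double induction on the lexicographic inversion pair, combined with the initial normalization step via (\ref{relation:1h}), completes the proof.
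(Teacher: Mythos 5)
Your proposal matches the paper's proof essentially line for line: the lexicographic inversion pair $(\valinv,\colinv)$, reduction to normalized trees via (\ref{relation:1h}), the case split $i=j$ vs.\ $i<j$ handled by (\ref{relation:3h}) and (\ref{relation:5h}) respectively, and the final appeal to Theorem~\ref{theorem:lyndonbasis} to upgrade spanning to the full conclusion. The bookkeeping you flag as remaining work is exactly what the paper carries out, and your sketch of it (four of five summands drop $\valinv$, the fifth fixes $\valinv$ and drops $\colinv$) is correct.
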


\subsection{Colored comb basis}\label{section:coloredcombs}

A \emph{colored comb} is a  normalized colored binary tree that satisfies the following
coloring restriction:  for each internal node $x$ whose right child $R(x)$ is not a leaf, 
\begin{align}\label{equation:combcondition}
 \clr(x)>\clr(R(x)).
\end{align}

Let $\comb_n$ be the set of colored combs in $\BT_n$ and $\comb_{\mu}$
be the set of the $\mu$-colored ones. Note that in a monochromatic comb every right child has to be 
a 
leaf and hence they are the classical left combs that yield a basis for $\lie(n)$ (see 
\cite[Proposition~2.3]{Wachs1998}). The colored combs generalize also the bicolored combs that 
yield a basis of $\lie(n,i)$ in \cite{DleonWachs2013a}.
Figure \ref{fig:combsn3} illustrates the bicolored combs for $n=3$.

\begin{figure}[ht]
  \begin{tikzpicture}[thick,scale=0.6]
 \draw [circle,color=red] (0.5,2.5)  node (red){\small 2};
    \draw [color=blue] (0.5,1.7)  node (blue){ \small 1};
\tikzstyle{every node}=[fill, draw,inner sep=4pt, minimum width=1pt,scale=0.5]
    \draw [circle,color=red] (0,2.5)  node (r){};
    \draw [color=blue] (0,1.7)  node (b){};

\begin{scope}[xshift=0,yshift=-1cm]

\tikzstyle{every node}=[fill, draw,inner sep=2pt,scale=0.8]
    \draw [color=blue] (1,1)  node (i1){};
    \draw [color=blue] (2,2)  node (i2){};

\tikzstyle{every node}=[inner sep=1pt, minimum width=14pt,scale=0.7]

    \draw (0,0)  node (m){$1$};
    \draw (2,0)  node (l1){$2$};
    \draw (3,1)  node (l2){$3$};

    \draw (m) --  (i1) ;
    \draw (i1) --  (l1) ;
    \draw (i1) --  (i2) ;
    \draw (i2) --  (l2) ;
\end{scope}

\begin{scope}[xshift=3.5cm,yshift=0]
\tikzstyle{every node}=[fill, draw,inner sep=2pt,scale=0.8]
    \draw [circle,color=red] (1,1)  node (i1){};
    \draw [color=blue] (2,2)  node (i2){};

\tikzstyle{every node}=[inner sep=1pt, minimum width=14pt,scale=0.7]

    \draw (0,0)  node (m){$1$};
    \draw (2,0)  node (l1){$2$};
    \draw (3,1)  node (l2){$3$};

    \draw (m) --  (i1) ;
    \draw (i1) --  (l1) ;
    \draw (i1) --  (i2) ;
    \draw (i2) --  (l2) ;
\end{scope}

\begin{scope}[xshift=7cm,yshift=0]
\tikzstyle{every node}=[fill, draw,inner sep=2pt,scale=0.8]
    \draw [color=blue] (1,1)  node (i1){};
    \draw [circle,color=red] (2,2)  node (i2){};

\tikzstyle{every node}=[inner sep=1pt, minimum width=14pt,scale=0.7]

    \draw (0,0)  node (m){$1$};
    \draw (2,0)  node (l1){$2$};
    \draw (3,1)  node (l2){$3$};

    \draw (m) --  (i1) ;
    \draw (i1) --  (l1) ;
    \draw (i1) --  (i2) ;
    \draw (i2) --  (l2) ;
\end{scope}
\begin{scope}[xshift=10.5cm,yshift=-1cm]
\tikzstyle{every node}=[fill, draw,inner sep=2pt,scale=0.8]
    \draw [circle,color=red] (1,1)  node (i1){};
    \draw [circle,color=red] (2,2)  node (i2){};

\tikzstyle{every node}=[inner sep=1pt, minimum width=14pt,scale=0.7]

    \draw (0,0)  node (m){$1$};
    \draw (2,0)  node (l1){$2$};
    \draw (3,1)  node (l2){$3$};

    \draw (m) --  (i1) ;
    \draw (i1) --  (l1) ;
    \draw (i1) --  (i2) ;
    \draw (i2) --  (l2) ;
\end{scope}

\begin{scope}[xshift=0,yshift=-5cm]

\tikzstyle{every node}=[fill, draw,inner sep=2pt,scale=0.8]
    \draw [color=blue] (1,1)  node (i1){};
    \draw [color=blue] (2,2)  node (i2){};

\tikzstyle{every node}=[inner sep=1pt, minimum width=14pt,scale=0.7]

    \draw (0,0)  node (m){$1$};
    \draw (2,0)  node (l1){$3$};
    \draw (3,1)  node (l2){$2$};

    \draw (m) --  (i1) ;
    \draw (i1) --  (l1) ;
    \draw (i1) --  (i2) ;
    \draw (i2) --  (l2) ;
\end{scope}

\begin{scope}[xshift=3.5cm,yshift=-6cm]
\tikzstyle{every node}=[fill, draw,inner sep=2pt,scale=0.8]
    \draw [circle,color=red] (1,1)  node (i1){};
    \draw [color=blue] (2,2)  node (i2){};

\tikzstyle{every node}=[inner sep=1pt, minimum width=14pt,scale=0.7]

    \draw (0,0)  node (m){$1$};
    \draw (2,0)  node (l1){$3$};
    \draw (3,1)  node (l2){$2$};

    \draw (m) --  (i1) ;
    \draw (i1) --  (l1) ;
    \draw (i1) --  (i2) ;
    \draw (i2) --  (l2) ;
\end{scope}

\begin{scope}[xshift=7cm,yshift=-6cm]
\tikzstyle{every node}=[fill, draw,inner sep=2pt,scale=0.8]
    \draw [color=blue] (1,1)  node (i1){};
    \draw [circle,color=red] (2,2)  node (i2){};

\tikzstyle{every node}=[inner sep=1pt, minimum width=14pt,scale=0.7]

    \draw (0,0)  node (m){$1$};
    \draw (2,0)  node (l1){$3$};
    \draw (3,1)  node (l2){$2$};

    \draw (m) --  (i1) ;
    \draw (i1) --  (l1) ;
    \draw (i1) --  (i2) ;
    \draw (i2) --  (l2) ;
\end{scope}
\begin{scope}[xshift=10.5cm,yshift=-5cm]
\tikzstyle{every node}=[fill, draw,inner sep=2pt,scale=0.8]
    \draw [circle,color=red] (1,1)  node (i1){};
    \draw [circle,color=red] (2,2)  node (i2){};

\tikzstyle{every node}=[inner sep=1pt, minimum width=14pt,scale=0.7]

    \draw (0,0)  node (m){$1$};
    \draw (2,0)  node (l1){$3$};
    \draw (3,1)  node (l2){$2$};
    
    \draw (m) --  (i1) ;
    \draw (i1) --  (l1) ;
    \draw (i1) --  (i2) ;
    \draw (i2) --  (l2) ;
\end{scope}

\begin{scope}[xshift=4cm,yshift=-3cm]
\tikzstyle{every node}=[fill, draw,inner sep=2pt,scale=0.8]
    \draw [color=blue] (3,1)  node (i1){};
    \draw [circle,color=red] (2,2)  node (i2){};

\tikzstyle{every node}=[inner sep=1pt, minimum width=14pt,scale=0.7]

    \draw (2,0)  node (m){$2$};
    \draw (4,0)  node (l1){$3$};
    \draw (1,1)  node (l2){$1$};
    
    \draw (m) --  (i1) ;
    \draw (i1) --  (l1) ;
    \draw (i1) --  (i2) ;
    \draw (i2) --  (l2) ;
\end{scope}
\end{tikzpicture}
 \caption{Set of bicolored combs for $n=3$}
 \label{fig:combsn3}
\end{figure}
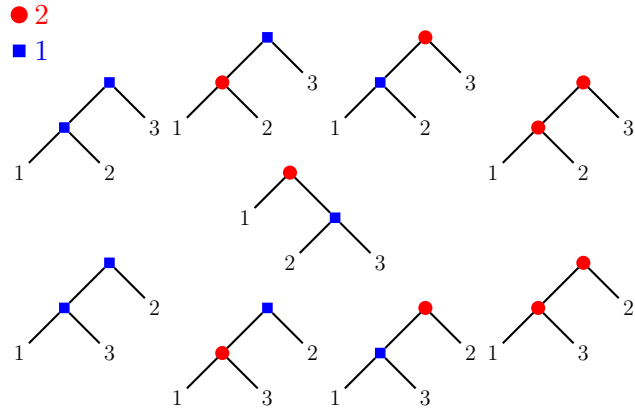	

\begin{remark}
Note that the coloring condition (\ref{equation:combcondition}) is closely related to the 
comb type of a normalized tree defined in Section \ref{section:introduction} before Theorem 
\ref{theorem:dimensionscombtype}. The coloring condition 
implies that in a colored comb $\Upsilon$ there are no repeated colors in each block $B$ of 
the partition $\pi^{\comb}(\Upsilon)$ associated to $\Upsilon$.
So after choosing 
$|B|$ 
different colors for the 
internal nodes of $\Upsilon$ in $B$,  there is a unique way to assign the colors such 
that $\Upsilon$ is a colored comb (the colors must decrease towards the 
right in each block of $\pi^{\comb}(\Upsilon)$). 
In Figure \ref{fig:combtype} this relation is illustrated.
\end{remark}

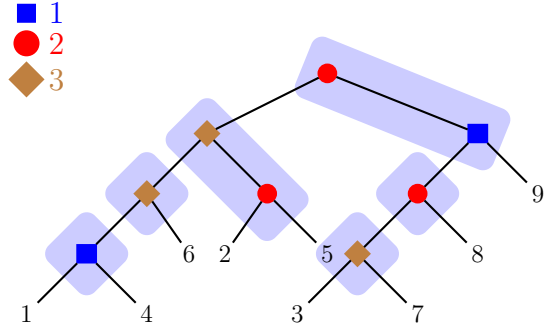
\begin{figure}[ht]
        \centering
        \usetikzlibrary{shapes,snakes}

\begin{tikzpicture}[thick,scale=0.8]

\begin{scope}[xshift=0cm,yshift=1cm]
 \draw [color=blue] (1.5,4)  node (blue){$1$};
\draw [circle,color=red] (1.5,3.5)  node (red){$2$};
 \draw [color=brown] (1.5,2.9)  node (brown){$3$};
\tikzstyle{every node}=[fill, draw,inner sep=4pt, minimum width=1pt,scale=0.8]
    
   \draw [color=blue] (1,4)  node (b){};
\draw [circle,color=red] (1,3.5)  node (r){};
    \draw [diamond,color=brown] (1,2.9)  node (g){};
\end{scope}

\node[fill=blue!20,blue!20,draw,rectangle,rounded corners,rotate=-45, minimum width=70pt,minimum 
height=30pt,scale=0.8] at  (4.5,2.4) {};
\node[fill=blue!20,blue!20,draw,rectangle,rounded corners,rotate=-22, minimum width=100pt,minimum 
height=30pt,scale=0.8] at  (7.25,3.5) {};
height=30pt,scale=0.8] at  (7.5,2) {};
\node[fill=blue!20,blue!20,draw,rectangle,rounded corners,rotate=45, minimum width=30pt,minimum 
height=30pt,scale=0.8] at  (2,1) {};
\node[fill=blue!20,blue!20,draw,rectangle,rounded corners,rotate=45, minimum width=30pt,minimum 
height=30pt,scale=0.8] at  (3,2) {};
\node[fill=blue!20,blue!20,draw,rectangle,rounded corners,rotate=45, minimum width=30pt,minimum 
height=30pt,scale=0.8] at  (6.5,1) {};
\node[fill=blue!20,blue!20,draw,rectangle,rounded corners,rotate=45, minimum width=30pt,minimum 
height=30pt,scale=0.8] at  (7.5,2) {};
\tikzstyle{every node}=[fill,draw,inner sep=0pt, minimum width=1 pt, scale=0.8]

    \draw [circle,color=red] (6,4)  node (i1){n};
    \draw [color=blue]  (8.5,3)  node (i2){N};
    \draw [circle,color=red]  (7.5,2)  node (i3){n};
    \draw [diamond,color=brown]  (6.5,1)  node (i4){n};
    \draw[diamond, color=brown]  (4,3)  node (i5){n};
    \draw [circle,color=red]  (5,2)  node (i6){n};
    \draw [diamond,color=brown] (3,2)  node (i7){n};
    \draw[color=blue]  (2,1)  node (i8){N};
\tikzstyle{every node}=[inner sep=1pt, minimum width=14pt,scale=0.8]

    \draw (4.3,1)  node (l1){2};
    \draw (5.5,0)  node (l2){3};
    \draw (1,0)  node (l3){1};
    \draw (3,0)  node (l4){4};
    \draw (6,1)  node (l5){5};
    \draw (3.7,1)  node (l6){6};
    \draw (7.5,0)  node (l7){7};
    \draw (9.5,2)  node (l8){9};
    \draw (8.5,1)  node (l9){8};

    \draw (i1) --  (i2) ;
    \draw (i1) --  (i5) ;
    \draw (i2) --  (i3) ;
    \draw (i2) --  (l8) ;
    \draw (i3) --  (i4) ;
    \draw (i3) --  (l9) ;
    \draw (i4) --  (l7) ;
    \draw (i4) --  (l2) ;
    \draw (i5) --  (i6) ;
    
    \draw (i5) --  (i7) ;
    \draw (i6) --  (l5) ;
    \draw (i6) --  (l1) ;
    \draw (i7) --  (l6) ;
    \draw (i7) --  (i8) ;
    \draw (i8) --  (l3) ;
    \draw (i8) --  (l4) ;
\end{tikzpicture}
 \caption{Example of a colored comb of comb type $(2,2,1,1,1,1)$}
\label{fig:combtype}
  \end{figure}
\begin{theorem}\label{theorem:combbasiscardinality}
There is a bijection
\[
\lyn_{\mu} \cong \comb_{\mu}.
\]
\end{theorem}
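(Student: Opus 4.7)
The plan is to construct the bijection explicitly by transferring colorings through the identification between internal nodes of normalized trees and letters of Stirling permutations provided by the maps $\gamma$ and $\xi$ from Section~\ref{section:binarystirling}.

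First I would set up the correspondence at the level of underlying uncolored trees. For any $\widetilde\Upsilon \in \nor_n$, the map $x \mapsto \vr(R(x))$ sends the $n-1$ internal nodes of $\widetilde\Upsilon$ bijectively onto the distinct letters $\{1,\dots,n-1\}$ appearing in $\gamma(\widetilde\Upsilon) \in \Q_{n-1}$; injectivity holds because in a normalized tree the leftmost leaf of a right subtree $R(x)$ cannot lie in any proper nested right sub-subtree, so different internal nodes yield different values of $v(R(\cdot))$. Consequently, specifying a coloring $\clr$ of the internal nodes of $\widetilde\Upsilon$ with content $\mu$ is equivalent to specifying a coloring $\clr_\ell$ of the distinct letters of $\gamma(\widetilde\Upsilon)$ of the same content, and analogously for any $\widetilde\Psi \in \nor_n$ via its own $\gamma$-image.

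Next I would translate the two coloring conditions into intrinsic conditions on $\clr_\ell$. By Proposition~\ref{proposition:propgamma}(1), $(\widetilde\Upsilon,\clr)\in\lyn_\mu$ if and only if $\clr_\ell(a)>\clr_\ell(b)$ for every ascending adjacent pair $(a,b)$ of $\gamma(\widetilde\Upsilon)$ (the non-Lyndon condition is exactly an AA pair, and both sides of (\ref{equation:lyndoncondition}) correspond to $\clr_\ell$-values of the two letters of the pair). By Proposition~\ref{proposition:propgamma}(2), $(\widetilde\Psi,\clr)\in\comb_\mu$ if and only if $\clr_\ell(a)>\clr_\ell(b)$ for every terminally nested pair $(a,b)$ of $\gamma(\widetilde\Psi)$, in the same way.

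The bijection $\Phi:\lyn_\mu\to\comb_\mu$ is then the composition ``$\gamma^{-1}\xi\gamma$ carrying the coloring along'': given $\Upsilon=(\widetilde\Upsilon,\clr)\in\lyn_\mu$, transfer $\clr$ to $\clr_\ell$ on the letters of $\gamma(\widetilde\Upsilon)$, apply $\xi$, and transfer the (unchanged) $\clr_\ell$ back to a coloring of the internal nodes of $\widetilde\Psi:=\gamma^{-1}\xi\gamma(\widetilde\Upsilon)$ via the letter identification for $\widetilde\Psi$. By Proposition~\ref{proposition:propxi}(1), $\xi$ maps the set of AA pairs of $\gamma(\widetilde\Upsilon)$ bijectively onto the set of TN pairs of $\xi\gamma(\widetilde\Upsilon)$ with the same letter labels, so the ``AA-decreasing'' condition on $\clr_\ell$ becomes precisely the ``TN-decreasing'' condition, which by the previous paragraph is exactly the comb condition for $\Psi$. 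Content is preserved throughout, so $\Phi(\Upsilon)\in\comb_\mu$. The inverse is $\gamma^{-1}\xi^{-1}\gamma$ by the symmetric argument, using $\xi^{-1}$ in place of $\xi$.

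The main obstacle, and the only genuine content beyond bookkeeping, is the faithful translation of the two coloring conditions into the common language of ``decreasing along pairs'' on the Stirling side; this is exactly what Propositions~\ref{proposition:propgamma} and~\ref{proposition:propxi} provide, so once the node-to-letter dictionary $x\mapsto\vr(R(x))$ is verified to be a bijection the result follows from the combinatorics already developed in Section~\ref{section:binarystirling}.
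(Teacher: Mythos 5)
Your proposal is correct and follows essentially the same route as the paper: the paper's phrase that $\gamma^{-1}\xi\gamma$ "extends naturally to a bijection $\lyn_\mu\cong\comb_\mu$" is precisely the transfer-of-colorings construction you spell out, and the paper's own remark on colored Stirling permutations (introducing $\Q^{AA}_\mu$ and $\Q^{TN}_\mu$ and the induced bijections $\lyn_\mu\cong\Q^{AA}_\mu\cong\Q^{TN}_\mu\cong\comb_\mu$) uses exactly the node-to-letter dictionary $x\mapsto\vr(R(x))$ together with Propositions~\ref{proposition:propgamma} and~\ref{proposition:propxi} as you do. You have merely filled in, carefully and correctly, the details the paper leaves implicit.
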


\begin{proof}
This is a consequence of Theorem \ref{corollary:bijectiontypes}. Indeed, the 
bijection $\gamma^{-1}\xi\gamma$ that translates between the 
Lyndon type and comb type on $\nor_n$ extends naturally to a bijection $\lyn_{\mu} 
\cong \comb_{\mu}$.
\end{proof}

We obtain the following corollary from Corollary \ref{corollary:muintervalslyn}.   
\begin{corollary}\label{corollary:muintervalscomb} For all $n \ge 1$ and $\mu \in \wcomp_{n-1}$,
\[ \bar \mu_{\Pi_{n}^{k}}(\hat{0},[n]^{\mu})=(-1)^{n-1}|\comb_{\mu}|.\]
Hence,
\[
 \dim \widetilde H^{n-3}((\hat 0, [n]^{\mu}))=|\comb_{\mu}|.
\]
\end{corollary}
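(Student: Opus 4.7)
The plan is to deduce this directly from Corollary \ref{corollary:muintervalslyn} by producing a bijection $\lyn_\mu \to \comb_\mu$; once we have $|\lyn_\mu|=|\comb_\mu|$, both displayed equalities follow immediately by substitution, since Corollary \ref{corollary:muintervalslyn} already expresses the M\"obius invariant and the top cohomology dimension in terms of $|\lyn_\mu|$. So the only real content is Theorem \ref{theorem:combbasiscardinality}, i.e.\ the cardinality identity.

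To build the bijection I would lift the map $\gamma^{-1}\xi\gamma:\nor_n\to\nor_n$ from Corollary \ref{corollary:bijectiontypes} to the colored setting. The key observation is that the coloring data in each of $\lyn_\mu$ and $\comb_\mu$ is completely controlled by the corresponding block partition of internal nodes: for a colored Lyndon tree the condition (\ref{equation:lyndoncondition}) forces the colors to be distinct within each block of $\pi^{\lyn}(\Upsilon)$ and to decrease toward the root along left‑child chains inside that block, while for a colored comb the condition (\ref{equation:combcondition}) forces the colors to be distinct within each block of $\pi^{\comb}(\Upsilon)$ and to decrease from left to right along right‑child chains. In both cases, once one chooses for each block a subset of $\PP$ of the right size, the assignment of those colors inside the block is uniquely determined.

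Concretely, given $\Psi\in\lyn_\mu$ with underlying normalized tree $\widetilde\Psi=\Upsilon\in\nor_n$, I would strip colors, form $\Upsilon':=\gamma^{-1}\xi\gamma(\Upsilon)\in\nor_n$, and then transport the color multisets block‑by‑block from $\pi^{\lyn}(\Upsilon)$ to $\pi^{\comb}(\Upsilon')$. This makes sense precisely because Corollary \ref{corollary:bijectiontypes} gives $\comblambda(\Upsilon')=\lyndonlambda(\Upsilon)$, so the two partitions have blocks of the same sizes and one may match them (say, by ordering blocks through their smallest element) and transfer color multisets accordingly. The forced monotonicity in each block then produces a unique colored comb in $\comb_\mu$, yielding a map $\lyn_\mu\to\comb_\mu$; running the same construction with $\gamma^{-1}\xi^{-1}\gamma$ provides the inverse.

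The one point that needs a touch of care, and is really the only non‑mechanical step, is the bookkeeping showing that the map on blocks induced by $\gamma^{-1}\xi\gamma$ is well‑defined and size‑preserving in a canonical way; but this is ensured by the identities $\aalambda(\gamma(\Upsilon))=\lyndonlambda(\Upsilon)$ and $\tnlambda(\xi(\theta))=\aalambda(\theta)$ and $\tnlambda(\gamma(\Upsilon'))=\comblambda(\Upsilon')$ from Propositions \ref{proposition:propgamma} and \ref{proposition:propxi}, which together give a canonical correspondence of blocks through their realization as maximal ascending adjacent / terminally nested sequences of the associated Stirling permutation. Combining the resulting bijection $\lyn_\mu\cong\comb_\mu$ with Corollary \ref{corollary:muintervalslyn} yields both equalities of the corollary.
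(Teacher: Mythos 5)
Your proposal matches the paper's argument: the paper derives the corollary from Corollary \ref{corollary:muintervalslyn} via the bijection $\lyn_\mu\cong\comb_\mu$ of Theorem \ref{theorem:combbasiscardinality}, which in turn is obtained by lifting the type-translating bijection $\gamma^{-1}\xi\gamma$ of Corollary \ref{corollary:bijectiontypes} to the colored setting, exactly as you describe. Your block-by-block color-transport explanation simply spells out the ``extends naturally'' step that the paper leaves to the reader.
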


By Theorem \ref{theorem:dimliemu} or Theorem \ref{theorem:liehomisomorphism} we have,
\begin{corollary}
For 
all $n \ge 1$ and $\mu \in \wcomp_{n-1}$

\[
 \dim \lie(\mu)=|\comb_{\mu}|.
\]
\end{corollary}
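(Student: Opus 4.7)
The proof is essentially a chain of citations of results already established earlier in the excerpt, so the plan is short. My first move is to invoke Theorem \ref{theorem:dimliemu}, which gives $\dim \lie(\mu) = |\lyn_\mu|$, the number of $\mu$-colored Lyndon trees. This in turn was obtained by combining the $\sym_n$-module isomorphism of Theorem \ref{theorem:liehomisomorphism} with the count $\dim \widetilde H^{n-3}((\hat 0, [n]^{\mu})) = |\lyn_\mu|$ from Corollary \ref{corollary:muintervalslyn}, itself a consequence of the EL-shellability (Theorem \ref{theorem:ellabelingposet}) and the description of ascent-free chains as colored Lyndon trees (Theorem \ref{thm:ascfreeEL}).

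Next, I would appeal to the bijection $\lyn_\mu \cong \comb_\mu$ established in Theorem \ref{theorem:combbasiscardinality}. This bijection was built by extending the composite $\gamma^{-1}\xi\gamma$ of Corollary \ref{corollary:bijectiontypes}: the map $\gamma$ converts a normalized tree into a Stirling permutation sending Lyndon type to AA type, the map $\xi$ converts AA type to TN type, and $\gamma^{-1}$ converts the Stirling permutation back to a normalized tree carrying comb type. Since the coloring condition (\ref{equation:lyndoncondition}) on Lyndon colorings (resp.\ (\ref{equation:combcondition}) on comb colorings) depends only on the Lyndon type (resp.\ comb type) of the underlying normalized tree, the bijection lifts to colored objects.

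Combining $\dim \lie(\mu) = |\lyn_\mu|$ with $|\lyn_\mu| = |\comb_\mu|$ yields the stated equality. Alternatively, one could go through the homological side: Corollary \ref{corollary:muintervalscomb} (which the excerpt records immediately before this corollary) gives $\dim \widetilde H^{n-3}((\hat 0, [n]^{\mu})) = |\comb_\mu|$, and then Theorem \ref{theorem:liehomisomorphism} translates this into a dimension count for $\lie(\mu)$. There is no genuine obstacle here: the bulk of the work was done in assembling the bijection $\gamma^{-1}\xi\gamma$ and the EL-shellability; this corollary is just the final bookkeeping step.
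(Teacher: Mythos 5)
Your proof is correct and follows the same two routes the paper itself indicates (the paper's proof is a one-line pointer to ``Theorem \ref{theorem:dimliemu} or Theorem \ref{theorem:liehomisomorphism}''): either $\dim\lie(\mu)=|\lyn_\mu|$ via Theorem \ref{theorem:dimliemu} together with the bijection $\lyn_\mu\cong\comb_\mu$ of Theorem \ref{theorem:combbasiscardinality}, or $\dim\lie(\mu)=\dim\widetilde H^{n-3}((\hat 0,[n]^\mu))=|\comb_\mu|$ via Theorem \ref{theorem:liehomisomorphism} together with Corollary \ref{corollary:muintervalscomb}. One small imprecision worth flagging: the coloring conditions (\ref{equation:lyndoncondition}) and (\ref{equation:combcondition}) actually depend on the set partitions $\pi^{\lyn}(\Upsilon)$ and $\pi^{\comb}(\Upsilon)$, not merely on the integer partitions $\lyndonlambda(\Upsilon)$ and $\comblambda(\Upsilon)$; what is true is that the \emph{number} of valid $\mu$-colorings over a fixed normalized tree is $[\xx^\mu]\,e_{\lyndonlambda(\Upsilon)}$ (resp.\ $[\xx^\mu]\,e_{\comblambda(\Upsilon)}$), and since $\gamma^{-1}\xi\gamma$ matches types these counts agree, which is all that is needed to lift the bijection to colored objects --- this is exactly the content of Theorem \ref{theorem:combbasiscardinality} as used here, so your conclusion stands.
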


\begin{theorem}\label{theorem:combsarebasis}
$\{\bar c(T,\sigma)\mid (T,\sigma) \in \comb_{\mu} \}$ is a basis for $\widetilde H^{n-3}((\hat 0,
[n]^{\mu}))$ and $\{[T,\sigma]\mid (T,\sigma) \in \comb_{\mu} \}$ is a basis for $\lie(\mu)$.
\end{theorem}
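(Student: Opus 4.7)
The plan is to combine a dimension count with a straightening argument that mirrors the proof of the Lyndon basis in Proposition \ref{proposition:lyndononlyrelations}. By Corollary \ref{corollary:muintervalscomb} and Theorem \ref{theorem:liehomisomorphism} we have $|\comb_\mu|=\dim \widetilde H^{n-3}((\hat 0,[n]^{\mu}))=\dim \lie(\mu)$, so it suffices to show that $\{[T,\sigma]:(T,\sigma)\in\comb_\mu\}$ spans $\lie(\mu)$; applying the isomorphism $\varphi$ of Theorem \ref{theorem:liehomisomorphism} will then produce the corresponding basis for cohomology.

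To prove spanning, I would start from the generating set $\{[T,\sigma]:(T,\sigma)\in\BT_\mu\}$ of Proposition \ref{proposition:binarybasislie} and use the relations (\ref{relation:1}), (\ref{relation:3}), (\ref{relation:5}) to rewrite each generator as a $\kk$-linear combination of colored combs. Antisymmetry (\ref{relation:1}) first reduces the problem to normalized trees. On the set of normalized trees, introduce a two-part statistic $(\mathrm{rsize}(\Upsilon),\mathrm{wcinv}(\Upsilon))$ ordered lexicographically, where $\mathrm{rsize}(\Upsilon):=\sum_{x\in I(\Upsilon)}|I(R(x))|$ measures the total size of right subtrees, and $\mathrm{wcinv}(\Upsilon)$ is a depth-weighted count of the comb-condition violations $\{x\in I(\Upsilon):R(x)\in I(\Upsilon)\text{ and }\clr(x)\le\clr(R(x))\}$, weighting each violation by a decreasing function of the depth of $x$ so that violations closer to the root dominate. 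A normalized tree $\Upsilon$ satisfies $\mathrm{wcinv}(\Upsilon)=0$ precisely when it is a colored comb.

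If $\Upsilon$ has a local violation, that is, a subtree at some internal node $x$ of the form $\Upsilon_1\substack{j \\ \wedge}(\Upsilon_2\substack{i \\ \wedge}\Upsilon_3)$ with $j\le i$, then apply (\ref{relation:3}) when $i=j$ or (\ref{relation:5}) when $j<i$. A direct computation, using the valency ordering $v(\Upsilon_1)<v(\Upsilon_2)<v(\Upsilon_3)$ imposed by normalization and renormalizing any non-normalized terms by (\ref{relation:1}), shows that the ``right-rotated'' terms strictly decrease $\mathrm{rsize}$ (the right subtree at $x$ shrinks from $\Upsilon_2\substack{i \\ \wedge}\Upsilon_3$ to $\Upsilon_3$ or $\Upsilon_2$), while the single shape-preserving term $\Upsilon_1\substack{i \\ \wedge}(\Upsilon_2\substack{j \\ \wedge}\Upsilon_3)$ appearing in the mixed Jacobi expansion preserves $\mathrm{rsize}$ but swaps the colors at $x$ and $R(x)$, thereby strictly reducing $\mathrm{wcinv}$ in the chosen weighting. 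Induction on $(\mathrm{rsize},\mathrm{wcinv})$ then expresses every generator as a linear combination of combs.

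The main obstacle is the design of $\mathrm{wcinv}$: an unweighted count of right-spine color violations can fail to decrease in the shape-preserving term of mixed Jacobi, because swapping the colors at $x$ may introduce a fresh violation at $R(x)$ when the right child of $R(x)$ is itself an internal node. Choosing the weight so that a violation at $x$ strictly dominates any possible new violation at $R(x)$ (for instance, assigning weight $2^{-d}$ where $d$ is the depth of $x$) guarantees a strict lexicographic decrease. Checking this term by term across the six-term mixed Jacobi expansion, and verifying that antisymmetry renormalization preserves the leading statistic $\mathrm{rsize}$, is the technical heart of the argument. Once the straightening is shown to terminate, spanning is established and, together with the cardinality equality from Corollary \ref{corollary:muintervalscomb}, upgrades to a basis; applying $\varphi$ transfers the result to $\widetilde H^{n-3}((\hat 0,[n]^\mu))$.
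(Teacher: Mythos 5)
Your overall strategy matches the paper's exactly: reduce to spanning via the dimension count from Corollary~\ref{corollary:muintervalscomb}, and run a straightening argument modeled on Proposition~\ref{proposition:lyndononlyrelations} with a lexicographic two-part statistic whose first coordinate is $\sum_{x\in I(T)}|I(R(x))|$. That first statistic is precisely what the paper uses, and it correctly handles relations (\ref{relation:1}), (\ref{relation:3}), and all the shape-changing terms of (\ref{relation:5}).

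The gap is in your second statistic. You correctly identify that an unweighted count of adjacent right-edge violations $\{x : R(x)\in I(\Upsilon),\ \clr(x)\le\clr(R(x))\}$ can fail to decrease because the color swap at $x$ and $R(x)$ may create a fresh violation at $R(x)$; your $2^{-d}$ depth-weighting protects against that. But the swap can \emph{also} create a new violation at the parent $p(x)$ when $x=R(p(x))$: before the swap there is no violation there (so $\clr(p(x))>\clr(x)=j$), while after the swap $\clr(x)=i>j$, so if $j<\clr(p(x))\le i$ a new violation appears at depth $d-1$, which carries weight $2^{-(d-1)}=2\cdot 2^{-d}$, strictly exceeding the $2^{-d}$ you gain by fixing the violation at $x$. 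Concretely, take a right spine $p(x),x,R(x),c$ at depths $0,1,2,3$ with colors $(3,2,4,3)$; the only violation is at $x$, weighted count $\tfrac12$, and after the shape-preserving swap the colors are $(3,4,2,3)$, with violations at $p(x)$ and at $R(x)$, weighted count $1+\tfrac14=\tfrac54$. So your lexicographic pair can strictly increase and the straightening need not terminate.

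The paper's second statistic avoids this: it counts \emph{all} pairs $(a,b)$ of internal nodes with $b$ reachable from $a$ along a path of right edges and $\clr(a)<\clr(b)$ --- an ``inversion'' count along right chains, analogous to $\colinv$ in the Lyndon case, rather than a weighted count of adjacent violations. When the colors at $x$ and $R(x)$ are swapped from $(j,i)$ to $(i,j)$ with $j<i$, the gains and losses among pairs $(a,x)$ versus $(a,R(x))$ with $a$ a right-chain ancestor cancel, as do those among pairs $(x,b)$ versus $(R(x),b)$ with $b$ a strict right-chain descendant of $R(x)$, and the one remaining pair $(x,R(x))$ switches from an inversion to a non-inversion, so the count drops by exactly one. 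Replacing your $\mathrm{wcinv}$ with this count repairs the argument and recovers the paper's proof.
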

\begin{proof}
The ``spanning" part of the proof for $\widetilde H^{n-3}((\hat 0,
[n]^{\mu}))$ follows the same idea as in the proof of Proposition 
\ref{proposition:lyndononlyrelations}. Instead of using $\valinv(T)$ we use $\sum_{x \in I(T)} 
r(x)$, 
where $I(T)$ is the set of internal nodes  of  $T$ and $r(x)$ is the number of internal nodes in 
the 
right subtree of $x$. And instead of using $\colinv(T)$ we use  the number 
of pairs $(x,y)$ of internal nodes of $T$ such that  $y$ is a descendant of $x$ that can be reached 
from $x$ along a path of right edges and $\clr(x)<\clr(y)$ (see \cite[Theorem 5.1]{DleonWachs2013a} 
for the special case in which $\supp(\mu) \subseteq [2]$).

Using Corollary 
\ref{corollary:muintervalscomb} we conclude that the set $\{\bar c(T,\sigma)\mid (T,\sigma) \in 
\comb_{\mu} \}$ is a basis for $\widetilde H^{n-3}((\hat 0,
[n]^{\mu}))$ and using Theorem \ref{theorem:liehomisomorphism} that $\{[T,\sigma]\mid 
(T,\sigma) \in \comb_{\mu} \}$ is a basis for $\lie(\mu)$.
\end{proof}

\subsection{Bases for cohomology of the full weighted partition poset}

In \cite{DleonWachs2013a} bicolored combs and bicolored Lyndon trees are used to 
construct bases for $\widetilde H^{n-2}(\Pi_n^w \setminus \{\hat 0\})$.

Denote the root of a colored binary tree $T$ by $\root(T)$, and 
define
\begin{align*}
\BT_n^k&:= \bigcup_{\substack{ \mu \in \wcomp_{n-1}\\\supp(\mu) 
\subseteq [k]}}\BT_{\mu},\\
\comb_n^k&:= \bigcup_{\substack{ \mu \in \wcomp_{n-1}\\\supp(\mu) \subseteq 
[k]}}\comb_{\mu},\\
\lyn_n^k&:= \bigcup_{\substack{ \mu \in \wcomp_{n-1}\\\supp(\mu) \subseteq 
[k]}}\lyn_{\mu}.
\end{align*}
For a chain $c$ in $\Pi_n^k$, let $$\breve c:= c \setminus \{\hat 0\}.$$
\begin{theorem}\label{theorem:lyndonbasiscohomologyhat}
 The set
$$\{\breve c(T,\sigma) \mid \,(T,\sigma) \in \lyn_n^k, \, \clr(\root(T)) \neq 1 \}$$
is a basis for 
 $\widetilde H^{n-2}(\Pi_n^k \setminus \{\hat 0\})$. 
\end{theorem}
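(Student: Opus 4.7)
The plan is to derive the basis directly from the EL-shellability of $\widehat{\Pi_n^k}$ established in Theorem \ref{theorem:elk}, by identifying the ascent-free maximal chains of $\widehat{\Pi_n^k}$ and then stripping off the top and bottom elements. Since $\widehat{\Pi_n^k}$ is bounded and EL-shellable of length $n$, Theorem \ref{theorem:elth} gives that $\widetilde H^{n-2}(\overline{\widehat{\Pi_n^k}}) = \widetilde H^{n-2}(\Pi_n^k\setminus\{\hat 0\})$ has a basis indexed by the ascent-free maximal chains of $\widehat{\Pi_n^k}$, where the basis element associated with a chain $c$ is $c\setminus\{\hat 0,\hat 1\}$.

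First I would decompose an arbitrary maximal chain of $\widehat{\Pi_n^k}$ as $\hat c\cup\{[n]^\mu\lessdot \hat 1\}$ where $\hat c$ is a maximal chain of $[\hat 0,[n]^\mu]$ for some $\mu\in \wcomp_{n-1}$ with $\supp(\mu)\subseteq[k]$, and note that its label word is the label word of $\hat c$ followed by $\bar\lambda([n]^\mu,\hat 1)=(1,n+1)^1$. Hence the extended chain is ascent-free in $\widehat{\Pi_n^k}$ if and only if $\hat c$ is ascent-free in $[\hat 0,[n]^\mu]$ and, in addition, the final label of $\hat c$ is not below $(1,n+1)^1$ in $\Lambda_n^k$.

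Next I would invoke Theorem \ref{thm:ascfreeEL} to identify the ascent-free maximal chains of $[\hat 0,[n]^\mu]$ as the chains $c(T,\sigma,\tau_{T,\sigma})$ with $(T,\sigma)\in \lyn_\mu$. The final label of this chain is $(v(L(\root(T))), v(R(\root(T))))^{\clr(\root(T))}=(1,v(R(\root(T))))^{\clr(\root(T))}$ since $(T,\sigma)$ is normalized. Because $v(R(\root(T)))\le n<n+1$, the order relation in $\Lambda_n^k=\Gamma_1\oplus\cdots\oplus\Gamma_n$ gives $(1,v(R(\root(T))))^{\clr(\root(T))}<(1,n+1)^1$ precisely when $\clr(\root(T))=1$. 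So the ascent at the top is avoided if and only if $\clr(\root(T))\ne 1$, which yields exactly the indexing set in the statement.

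Finally I would apply Lemma \ref{lemma:52} to replace the linear extension $\tau_{T,\sigma}$ by postorder, showing that $\bar c(T,\sigma,\tau_{T,\sigma})=\pm\bar c(T,\sigma)$ in cohomology; consequently, after adjoining $[n]^\mu$ back (since $\breve c=c\setminus\{\hat 0\}$ still contains $[n]^\mu$, while Theorem \ref{theorem:elth} produces $c\setminus\{\hat 0,\hat 1\}$, which coincides with $\breve c$ for maximal chains of $\widehat{\Pi_n^k}$), the set $\{\breve c(T,\sigma)\mid (T,\sigma)\in\lyn_n^k,\ \clr(\root(T))\ne 1\}$ is a basis, up to signs, for $\widetilde H^{n-2}(\Pi_n^k\setminus\{\hat 0\})$. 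No step here is delicate, but the main thing to verify carefully is the ascent condition at the top edge and the bookkeeping distinguishing $\bar c$ (which removes both endpoints of the chain in $[\hat 0,[n]^\mu]$) from $\breve c$ (which removes only $\hat 0$ from a chain of $\widehat{\Pi_n^k}$).
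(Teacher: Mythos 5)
Your proof is correct and follows essentially the same approach as the paper's: both identify the ascent-free maximal chains of $\widehat{\Pi_n^k}$ via Theorem \ref{thm:ascfreeEL} and the observation that the top edge always carries label $(1,n+1)^1$, so that avoiding an ascent at the top edge is equivalent to $\clr(\root(T))\neq 1$, and both then invoke Lemma \ref{lemma:52} to pass to postorder. Your write-up is a more explicit version of the same argument, carefully verifying the order relation in $\Lambda_n^k$ and the $\bar c$ vs.\ $\breve c$ bookkeeping, both of which the paper's proof leaves implicit.
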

\begin{proof}
 From the EL-labeling of
Theorem
\ref{theorem:ellabelingposet} we have that all the maximal chains of $\widehat{\Pi_n^k}$ have last 
label
$(1,n+1)^1$. Then for a maximal chain to be ascent-free it must have a second to last label of
the form $(1,a)^j$ for $a \in [n]$ and $j \in [k]\setminus \{1\}$. By
Theorem~\ref{thm:ascfreeEL}, we see that the ascent-free chains correspond to
colored Lyndon trees such that the color of the root is different from $1$.  It therefore follows 
from Theorem~\ref{theorem:elth} and Lemma~\ref{lemma:52} (with 
$\bar c$ 
replaced by $\breve c$) that the  set is a basis for $\tilde H^{n-2}(\Pi^k_n \setminus{\hat 
0})$.
\end{proof}

Using an extended version of the straightening algorithm of \cite[Theorem 
5.15]{DleonWachs2013a} with an additional cohomology relation that is more general than the one in 
\cite[Theorem 5.15]{DleonWachs2013a}, 
the reader can verify the following theorem (or see \cite[Proposition 6.3.2]{Dleon2014} for the 
proof).

\begin{proposition}\label{proposition:combbasiscohomologyhat}
 The set $$\{\breve c(T,\sigma) \mid \,(T,\sigma) \in \comb_n^k, \, \clr(\root(T)) \neq k \}$$
spans
 $\widetilde H^{n-2}(\Pi_n^k \setminus \{\hat 0\})$. 
\end{proposition}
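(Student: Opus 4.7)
\textbf{Proof plan for Proposition \ref{proposition:combbasiscohomologyhat}.} The strategy is to extend the straightening algorithm used in the proof of Theorem \ref{theorem:combsarebasis} (and in \cite[Theorem 5.15]{DleonWachs2013a} for the case $k=2$) to the full poset $\Pi_n^k \setminus \{\hat 0\}$, incorporating one additional cohomology relation that reflects the presence of multiple maximal elements in $\Pi_n^k$.

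First, I would establish a chain-level presentation of $\widetilde H^{n-2}(\Pi_n^k \setminus \{\hat 0\})$ analogous to Theorem \ref{theorem:binarybasishomology}. As before, every maximal chain has the form $\breve c(T,\sigma,\tau)$ for some $(T,\sigma) \in \BT_n^k$ and some linear extension $\tau \in e(T)$, and an analogue of Lemma \ref{lemma:52} (with $\bar c$ replaced by $\breve c$) lets us pass freely between linear extensions. So $\{\breve c(T,\sigma) \mid (T,\sigma) \in \BT_n^k\}$ is a generating set, subject to the same three relations (\ref{relation:1h}), (\ref{relation:3h}), (\ref{relation:5h}) (with $\bar c$ replaced by $\breve c$), plus one additional relation which is the new ingredient. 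For any $\mu$-colored tree $\Upsilon = \Upsilon_1 \substack{j \\ \wedge} \Upsilon_2$ the coboundary of the codimension-$1$ chain obtained by deleting $[n]^\mu$ from $c(\Upsilon)$ gives
\[
\sum_{r=1}^{k} \breve c(\Upsilon_1 \substack{r \\ \wedge} \Upsilon_2) = 0
\]
in $\widetilde H^{n-2}(\Pi_n^k \setminus \{\hat 0\})$, since the rank $n-2$ element with two blocks $\pi(\Upsilon_1)$ and $\pi(\Upsilon_2)$ is covered by exactly $k$ maximal elements, one per color.

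Second, I would apply the comb straightening algorithm from the proof of Theorem \ref{theorem:combsarebasis} to the generating set $\{\breve c(T,\sigma) \mid (T,\sigma) \in \BT_n^k\}$, using the two statistics described there (the sum of right-subtree sizes over internal nodes, and the number of inverted right-descendant color pairs). The relations used by the algorithm are local in the tree and are identical to the closed-interval ones, so the argument goes through verbatim and reduces any $\breve c(T,\sigma)$ to a linear combination of chains $\breve c(T',\sigma')$ with $(T',\sigma') \in \comb_n^k$.

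Third, I would apply the additional relation above at the root of any $(T,\sigma) \in \comb_n^k$ with $\clr(\root(T))=k$ to write
\[
\breve c(T,\sigma) \;=\; -\sum_{r=1}^{k-1} \breve c(T^{(r)},\sigma),
\]
where $T^{(r)}$ is $T$ with its root color replaced by $r$. When the right child of $\root(T)$ is a leaf, each $T^{(r)}$ is already a colored comb with root color in $[k-1]$, and we are done. Otherwise, if the right child of the root of $T$ has color $c<k$, then for $r \in \{c+1,\dots,k-1\}$ the tree $T^{(r)}$ is still a comb with root color $\ne k$, while for $r \in [c]$ the tree $T^{(r)}$ violates the comb condition only at the root and can be re-straightened by the comb algorithm.

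The main obstacle is ensuring that this final re-straightening does not reintroduce combs with root color $k$ at the same level of complexity, so that the procedure terminates. To handle this, I would equip the set of trees with a well-founded measure --- for instance the lexicographic pair (size of the right subtree of the root, inversion statistic of Theorem \ref{theorem:combsarebasis}) --- and argue that both the root-color substitution and the subsequent comb straightening strictly decrease this measure. A careful execution of this termination argument, essentially the one carried out in \cite[Proposition 6.3.2]{Dleon2014}, completes the proof that $\{\breve c(T,\sigma) \mid (T,\sigma) \in \comb_n^k,\ \clr(\root(T)) \ne k\}$ spans $\widetilde H^{n-2}(\Pi_n^k \setminus \{\hat 0\})$.
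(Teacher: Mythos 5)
Your proposal follows the paper's approach: derive the extra coboundary relation (recorded as relation (\ref{eq:type4}) in the paper) from the fact that a two-block weighted partition is covered by exactly $k$ maximal elements, and then extend the comb straightening of Theorem~\ref{theorem:combsarebasis} with an iterated root-color substitution, deferring the detailed termination argument to \cite[Proposition 6.3.2]{Dleon2014} exactly as the paper does. One caution on the last step as you have sketched it: the root-color substitution $\breve c(T,\sigma)\mapsto -\sum_{r<k}\breve c(T^{(r)},\sigma)$ fixes the tree shape (so the size of the right subtree of the root is unchanged) and, by lowering the root color, can only increase the right-spine color-inversion count, so your candidate lexicographic pair is not decreased by that step alone; what one should track is the statistic $\sum_{x\in I(T)}r(x)$ used in Theorem~\ref{theorem:combsarebasis}, showing that the composite of substitution followed by re-straightening of each non-comb $T^{(r)}$ strictly decreases it, while the $T^{(r)}$ that are already combs have root color in $[k-1]$ and require no further processing.
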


\begin{theorem}[{\cite[Theorem 5.15]{DleonWachs2013a}}]
 The set $$\{\breve c(T,\sigma) \mid \,(T,\sigma) \in \comb_n^2, \, \clr(\root(T)) \neq 2 \}$$
is a basis of
 $\widetilde H^{n-2}(\Pi_n^w \setminus \{\hat 0\})$.
\end{theorem}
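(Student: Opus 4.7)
The plan is to combine the spanning assertion of Proposition~\ref{proposition:combbasiscohomologyhat}, specialized to $k=2$, with a cardinality computation. Proposition~\ref{proposition:combbasiscohomologyhat} already gives that the set
\[
S := \{\breve c(T,\sigma) \mid (T,\sigma) \in \comb_n^2,\ \clr(\root(T))\neq 2\}
\]
spans $\widetilde H^{n-2}(\Pi_n^w \setminus \{\hat 0\})$, so it suffices to verify that $|S|$ equals the dimension of this cohomology group. By Theorem~\ref{theorem:lyndonbasiscohomologyhat} applied with $k=2$, that dimension equals $|\{(T,\sigma) \in \lyn_n^2 : \clr(\root(T)) = 2\}|$, so I must exhibit a bijection
\[
\Lambda:\{(T,\sigma) \in \lyn_n^2 : \clr(\root(T)) = 2\} \longrightarrow \{(T',\sigma) \in \comb_n^2 : \clr(\root(T')) = 1\}.
\]

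I would build $\Lambda$ from the bijection $\Phi := \gamma^{-1} \xi \gamma$ of Corollary~\ref{corollary:bijectiontypes}, which takes a normalized tree to a normalized tree, preserves the permutation of leaf labels (by Proposition~\ref{proposition:init}), and converts Lyndon type to comb type. Given a bicolored Lyndon tree $(T,\sigma)$ with $\clr(\root(T))=2$, I apply $\Phi$ to the underlying uncolored tree to produce $T'$, and transfer the colors from the Lyndon blocks of $T$ to the comb blocks of $T'$ via the block correspondence of $\Phi$ (inside each block of either partition, the coloring is uniquely determined by the chosen color set once the orientation — decreasing toward the root for Lyndon, decreasing toward the right for combs — is taken into account). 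The key observation that makes the bijection compatible with the root color restrictions is that for $k=2$, the root of $T$ is colored $2$ iff its Lyndon block in $\pi^{\lyn}(T)$ is a singleton, since any block of size $\geq 2$ uses both colors $\{1,2\}$ and forces the minimum color $1$ at the top; symmetrically, the root of $T'$ is colored $1$ iff its comb block in $\pi^{\comb}(T')$ is a singleton.

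The main obstacle is verifying that $\Phi$ sends normalized trees whose root is a Lyndon node (equivalently, whose Lyndon root-block is a singleton) to normalized trees whose right child of the root is a leaf (equivalently, whose comb root-block is a singleton). I plan to verify this at the Stirling-permutation level using Proposition~\ref{proposition:propgamma}: writing $\gamma(T) = B_1 B_2 \cdots B_j$ for the block factorization, the root of $T$ is a Lyndon node iff $j = 1$ or the last two blocks $(B_{j-1}, B_j)$ do not form an ascending adjacent pair; dually, $R(\root(T'))$ is a leaf iff the last block of $\gamma(T')$ has length $2$. A recursive analysis of the complete ascending adjacent factorization on which $\xi$ is defined, together with the corresponding terminally nested factorization of $\xi(\gamma(T))$, should show that these two conditions are exchanged by $\xi$, concentrating the analysis on the behavior of the rightmost irreducible factor. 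Combining this root-block correspondence with the block-size preservation inherent in $\Phi$ then yields a well-defined bijection $\Lambda$ and completes the proof.
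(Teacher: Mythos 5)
Your overall strategy — combine the spanning assertion of Proposition~\ref{proposition:combbasiscohomologyhat} for $k=2$ with a cardinality check against the Lyndon basis of Theorem~\ref{theorem:lyndonbasiscohomologyhat} — is a sound plan, and the two Stirling-level reformulations you extract from Proposition~\ref{proposition:propgamma} (root of $T$ is Lyndon iff $j=1$ or $(B_{j-1},B_j)$ is not ascending adjacent; $R(\root(T'))$ is a leaf iff the last block of $\gamma(T')$ has length $2$) are both correct. But the central lemma on which your bijection $\Lambda$ depends is false: $\Phi=\gamma^{-1}\xi\gamma$ does \emph{not} send trees whose root is a Lyndon node to trees whose root has a leaf right child.

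Here is a concrete counterexample. Take $\Upsilon = 1\wedge\bigl((2\wedge 3)\wedge 4\bigr)\in\nor_4$. Its root has left child the leaf $1$, so the root is a Lyndon node and its Lyndon root-block is a singleton. Compute $\tilde\gamma(\Upsilon)=12334421$, hence $\gamma(\Upsilon)=122331$, a single block and hence an irreducible AA-word $1\cdot 2233\cdot 1$ with $k=1$. Step (2) of the definition of $\xi$ gives $\xi(122331)=11\cdot\xi(2233)=11\cdot 2332 = 112332$, whose last block is $2332$ of length $4$, not $2$. Pulling back, $\gamma^{-1}(112332)=(1\wedge 2)\wedge(3\wedge 4)$, whose root has right child the internal node $3\wedge 4$; its comb root-block has size $2$. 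Both types are $(2,1)$ as required, but the "root block is a singleton" property is not preserved by $\Phi$. So your conditions are not exchanged by $\xi$, and $\Lambda$ as proposed is not well-defined on the stated domains. (A secondary but smaller issue: $\Phi$ matches only the multiset of block sizes and supplies no canonical block-to-block correspondence, so "transfer the colors via the block correspondence of $\Phi$" is underspecified — though for a pure counting argument this could be worked around.) The cardinalities in question are in fact equal (one can check, e.g., that for $n=4$ both sides equal $27$), so the spanning-plus-counting strategy can in principle succeed, but the count must be established by a different argument than the one you give; $\Phi$ is the wrong bijection for this purpose.
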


We propose the following conjecture.

\begin{conjecture}
 The set $$\{\breve c(T,\sigma) \mid \,(T,\sigma) \in \comb_n^k, \, \clr(\root(T)) \neq k \}$$
is a basis of
 $\widetilde H^{n-2}(\Pi_n^k \setminus \{\hat 0\})$. 
\end{conjecture}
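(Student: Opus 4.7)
The plan is first to invoke Proposition \ref{proposition:combbasiscohomologyhat}, which already establishes that the proposed set spans $\widetilde H^{n-2}(\Pi_n^k \setminus \{\hat 0\})$. The remaining task is therefore to show that the cardinality of this set matches $\dim \widetilde H^{n-2}(\Pi_n^k \setminus \{\hat 0\})$, since a spanning set of the correct size is automatically a basis. By Theorem \ref{theorem:lyndonbasiscohomologyhat}, this dimension equals $|\{(T,\sigma) \in \lyn_n^k : \clr(\root(T)) \neq 1\}|$, and by Theorem \ref{theorem:combbasiscardinality} we have $|\lyn_n^k| = |\comb_n^k|$. So the conjecture is reduced to the combinatorial identity
\[
|\{(T,\sigma) \in \comb_n^k : \clr(\root(T)) = k\}| \;=\; |\{(T,\sigma) \in \lyn_n^k : \clr(\root(T)) = 1\}|.
\]

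My next step would be to rewrite both sides in terms of the uncolored normalized tree $U$ and its block partitions. Using the remarks in Sections \ref{section:lyndontype} and \ref{section:coloredcombs}, the number of colorings of $U$ that yield a colored Lyndon tree with root color $1$ equals $\binom{k-1}{|B_r^{\lyn}(U)|-1}\prod_{B \neq B_r^{\lyn}} \binom{k}{|B|}$, where $B_r^{\lyn}(U)$ is the block of $\pi^{\lyn}(U)$ containing the root; the comb side has the analogous expression with $\pi^{\comb}$ and root color $k$. Since these polynomials in $k$ must agree for the identity to hold for all $k$, the problem becomes a statement about the joint distribution over $\nor_n$ of the pair $(\text{size of root block},\text{multiset of all block sizes})$ under the Lyndon and comb partitions.

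The natural candidate bijection is the map $\gamma^{-1}\xi\gamma$ of Corollary \ref{corollary:bijectiontypes}, which by the Stirling-permutation correspondence matches the multiset of maximal ascending-adjacent sequences of $\gamma(U)$ with the multiset of maximal terminally-nested sequences of $\gamma(\gamma^{-1}\xi\gamma(U))$, and hence pairs Lyndon types with comb types. The difficulty, and the main obstacle, is that this bijection does \emph{not} in general preserve the size of the block containing the root: the block of $\pi^{\lyn}(U)$ containing the root corresponds via $\xi$ to the TN-sequence in $\gamma(U')$ with the same letters, and that sequence is typically not the one corresponding to the root block of $\pi^{\comb}(U')$ (small examples in $\nor_4$ already exhibit a mismatch).

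Consequently the plan is to engineer a refined bijection on $\nor_n$, or equivalently on colored Stirling permutations, that preserves the additional statistic ``size of the block containing $v(R(\root))-1$.'' One natural angle is to work directly with $\Q^{AA}_\mu$ and $\Q^{TN}_\mu$ from Corollary \ref{corollary:muintervalsperm}, and to modify $\xi$ by an extra shuffling step that reassigns which maximal sequence contains the distinguished letter $\vr(R(\root))$. Failing a clean bijection, an alternative would be to prove the cardinality identity nonbijectively by setting up a straightening algorithm in $\widetilde H^{n-2}(\Pi_n^k \setminus \{\hat{0}\})$ that uses, in addition to the relations from Theorem \ref{theorem:binarybasishomology}, the ``top-face'' coboundary relation arising from the fact that $\hat 1$ covers the $\binom{k+n-2}{n-1}$ maximal elements $[n]^\mu$: this extra relation allows chains with top-edge color $k$ (i.e., root-color $k$ in the comb picture) to be rewritten in terms of chains with top-edge color $<k$, and uniqueness of the resulting expression would yield both independence and the desired cardinality.
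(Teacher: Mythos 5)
The statement you were asked to prove is, in the paper, explicitly labeled a \emph{conjecture}; the author does not prove it, and it remains open (the $k=2$ case is a theorem cited from \cite{DleonWachs2013a}, and $k=1$ is classical). So there is no ``paper's proof'' to compare against. Your write-up is, correctly, a reduction and a plan rather than a proof, and you should be aware that the gap you identify is exactly where the problem stands in the literature.

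Your reduction is sound and matches the natural attack. Given Proposition \ref{proposition:combbasiscohomologyhat} (spanning) and the basis of Theorem \ref{theorem:lyndonbasiscohomologyhat}, together with $|\lyn_n^k|=|\comb_n^k|$ from Theorem \ref{theorem:combbasiscardinality}, the conjecture is equivalent to the cardinality identity
\[
|\{\Upsilon \in \comb_n^k : \clr(\root(\Upsilon)) = k\}| = |\{\Upsilon \in \lyn_n^k : \clr(\root(\Upsilon)) = 1\}|,
\]
and your expansion of both sides as $\binom{k-1}{|B_r|-1}\prod_{B\ne B_r}\binom{k}{|B|}$ over uncolored normalized trees $U$ (with $B_r$ the root block of $\pi^{\lyn}(U)$ or $\pi^{\comb}(U)$) is accurate, since the coloring constraints force a strictly decreasing chain of colors within each block with a boundary condition at the root block. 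Your observation that $\gamma^{-1}\xi\gamma$ equidistributes the \emph{type} (Theorem \ref{corollary:bijectiontypes}) but need not carry the distinguished root block to the root block is the real obstruction, and it is worth stating plainly that this is the open content of the conjecture.

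The gap, then, is concrete: neither of your two proposed completions is carried out, and both face genuine difficulties. For the bijective route, you would need a map on $\nor_n$ (or on Stirling permutations) that refines the type-equidistribution by simultaneously matching the size of the block containing the root; no such refinement of $\xi$ is constructed, and you yourself report counterexamples in $\nor_4$ to the naive choice. For the straightening route, Proposition \ref{proposition:combbasiscohomologyhat} already gives spanning via relation (\ref{eq:type4}); what is missing is linear independence, and a straightening algorithm by itself does not deliver uniqueness of the normal form without a separate dimension count --- which returns you to the very cardinality identity you have not established. In short, your reduction is correct and clarifies what must be shown, but the conjecture remains unproven by your argument, consistent with its status in the paper.
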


We can combine Theorem \ref{theorem:lyndonbasiscohomologyhat} and the exact same idea of the 
proof of Proposition \ref{proposition:lyndononlyrelations} to show that 
the set $B=\{\breve c(T,\sigma) \mid \,(T,\sigma) \in \lyn_n^k, \, \clr(\root(T)) \neq k \}$  
spans $\tilde
H^{n-2}(\Pi_n^k\setminus \{\hat 0 \})$ by using only the relations of  
Theorem~\ref{theorem:binarybasishomology} and the additional relation
\begin{align} \label{eq:type4}
 \breve c(\Upsilon_1 \substack{1 \\ \wedge} \Upsilon_2) +
\breve c(\Upsilon_1 \substack{2 \\ \wedge}\Upsilon_2)+\cdots+
\breve c(\Upsilon_1 \substack{k \\ \wedge} \Upsilon_2) =0.\end{align}
  for all $A \subseteq [n]$ and for all $\Upsilon_1 \in \BT_A$ and $\Upsilon_2 \in 
\BT_{[n]\setminus A}$.
We conclude 
that 
these are the 
only relations in a presentation of $\tilde
H^{n-3}(\Pi_n^k\setminus \{\hat 0 \})$ since $B$ is a basis.  We summarize with the following 
result.

\begin{theorem} 
 The set $\{ \breve{c}(\Upsilon) : \Upsilon \in \BT_n^k \}$ is a generating set 
for $\tilde
H^{n-3}(\Pi_n^k\setminus \{\hat 0 \})$,
subject only to the relations of Theorem~\ref{theorem:binarybasishomology} (with $\bar c$ 
replaced by 
$\breve c$) and relation (\ref{eq:type4}).
\end{theorem}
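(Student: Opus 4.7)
The plan is to combine the known basis in Theorem~\ref{theorem:lyndonbasiscohomologyhat} with the straightening algorithm of Proposition~\ref{proposition:lyndononlyrelations}, following the same strategy used there but with two wrinkles: the chains now descend only to the atom level (not all the way to $\hat 0$), and we need the extra top-level relation (\ref{eq:type4}) to deal with the root color.

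First I would verify that each of the three families of relations in Theorem~\ref{theorem:binarybasishomology} remains valid in $\widetilde H^{n-2}(\Pi_n^k\setminus\{\hat 0\})$ when $\bar c$ is replaced by $\breve c$. Each such relation is the image, under the coboundary map, of a chain of length one less than maximal in which an internal rank has been deleted, the missing rank being filled in by the alternating sum that appears in the relation. Since these deletions always occur strictly between the atom level and the top of the chain, the argument is insensitive to whether $\hat 0$ is present or not, so it transfers verbatim. I also have to check that relation (\ref{eq:type4}) is a genuine cohomology relation: it is exactly the coboundary of the chain obtained from $\breve c(\Upsilon_1 \substack{j \\ \wedge} \Upsilon_2)$ by removing its maximal element, and filling in the rank corresponding to the root merge forces a summation over the $k$ possible choices of color $j \in [k]$, giving the stated identity.

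Next I would run the straightening argument from Proposition~\ref{proposition:lyndononlyrelations} essentially unchanged to reduce any $\breve c(\Upsilon)$ with $\Upsilon\in\BT_n^k$ to a linear combination of $\breve c(\Upsilon')$ with $\Upsilon'\in\lyn_n^k$: the moves there act only on a subtree and produce locally modified chains, so they apply to $\breve c$ just as well as to $\bar c$, and the same lexicographic induction on $(\valinv,\colinv)$ terminates. Finally, I would apply relation (\ref{eq:type4}) once at the root to eliminate those colored Lyndon trees whose root has color $1$ in favor of those with root color in $\{2,\dots,k\}$. The resulting spanning set is precisely the basis $B=\{\breve c(T,\sigma) \mid (T,\sigma)\in\lyn_n^k,\ \clr(\root(T))\ne 1\}$ from Theorem~\ref{theorem:lyndonbasiscohomologyhat}. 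Since $B$ is linearly independent, any cohomology relation among the $\breve c(\Upsilon)$ must reduce, via the moves above, to a trivial relation on $B$; this shows that the listed relations are a complete set.

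The main obstacle is purely combinatorial bookkeeping: one must verify that relation (\ref{eq:type4}), when applied at the root, strictly decreases the appropriate statistic and therefore does not create an infinite descent when interleaved with the straightening steps of Proposition~\ref{proposition:lyndononlyrelations}. This amounts to observing that (\ref{eq:type4}) is only ever invoked \emph{after} the tree has already been put in colored Lyndon form, so its only role is a one-shot elimination of root color $1$, and the overall algorithm terminates. No new conceptual ingredients beyond Proposition~\ref{proposition:lyndononlyrelations} and Theorem~\ref{theorem:lyndonbasiscohomologyhat} are needed.
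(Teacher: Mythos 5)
Your verification that relation (\ref{eq:type4}) is a coboundary relation and that the relations of Theorem~\ref{theorem:binarybasishomology} transfer to $\breve c$ are both correct, and the straightening step via Proposition~\ref{proposition:lyndononlyrelations} carries over as you say. The gap is in the claim that applying (\ref{eq:type4}) to a colored Lyndon tree with root color $1$ is a ``one-shot'' elimination. A colored Lyndon tree $\Upsilon_1\substack{1\\\wedge}\Upsilon_2$ with root color $1$ may have a root that is \emph{not} a Lyndon node: condition (\ref{equation:lyndoncondition}) only forces $\clr(\root(\Upsilon_1))>1$. For such a tree, the replacement $\Upsilon_1\substack{j\\\wedge}\Upsilon_2$ is a colored Lyndon tree only when $j<\clr(\root(\Upsilon_1))$; the terms with $j\ge\clr(\root(\Upsilon_1))$ violate (\ref{equation:lyndoncondition}) at the root and must be re-straightened. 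Re-straightening at the root can replace the root color with the color of its left child, which after deeper moves may again equal $1$, so further applications of (\ref{eq:type4}) are not excluded and your observation does not yield a well-founded ranking.

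The simplest repair within your framework is to take the number $\mu(\Upsilon)(1)$ of color-$1$ internal nodes as the outer induction parameter: it is preserved by every relation of Theorem~\ref{theorem:binarybasishomology} and drops by exactly one whenever (\ref{eq:type4}) is applied to a root of color $1$, so the interleaved process terminates and lands on the basis $\{(T,\sigma)\in\lyn_n^k : \clr(\root(T))\ne 1\}$ of Theorem~\ref{theorem:lyndonbasiscohomologyhat}, after which your concluding independence argument is correct. For comparison, the paper's sketch eliminates root color $k$ instead, for which the one-shot claim \emph{is} genuine: a colored Lyndon tree with root color $k$ is forced to have a Lyndon root, since $\clr(L(\root))>k$ is impossible when all colors lie in $[k]$. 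But the resulting spanning set $\{\clr(\root)\ne k\}$ is in general strictly larger than the basis with $\clr(\root)\ne 1$ (already for $n=3$, $k=2$), so even in that version a dimension-pinning reduction such as the $\mu(1)$ induction is still needed; your choice of target is the cleaner one once the termination argument is repaired.
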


\section{Whitney numbers and Whitney (co)homology}\label{section:whitneynumbers}
In this section we discuss weighted Whitney numbers and Whitney (co)homology of $\Pi_n^k$.

\subsection{Whitney numbers and weighted uniformity}

Let $P$ 
denote a pure poset 
with a minimum element $\hat 0$. Denote by $Int(P)$ the
set of closed intervals $[x,y]$ in the poset $P$. For some unitary commutative ring $R$ (for 
example $\kk[x]$ or $\kk[x_1,\dots,x_k]$) we say that a \emph{weight function} 
$\varpi_{P}:Int(P)\rightarrow R$ is
\emph{$P$-compatible} if
\begin{itemize}
 \item for any $\alpha\in P$, $\varpi_{P}(\alpha,\alpha)=1$ and,
 \item $\theta \le \alpha \le \beta$ in $P$ implies
$\varpi_{P}(\theta,\beta)=\varpi_{P}(\theta,\alpha)\varpi_{P}(\alpha,\beta)$.
\end{itemize}
Equivalently,
let $\kk[Int(P)]$ be the unitary commutative algebra over $\kk$ generated by intervals
$[\alpha,\beta] \in Int(P)$ subject to the relations:
\begin{itemize}
 \item $[\alpha,\alpha]=1$ for any $\alpha\in P$, and
 \item $[\theta,\beta]=[\theta,\alpha][\alpha,\beta]$ for all $\theta \le \alpha \le \beta$ in $P$.
\end{itemize}
Then a $P$-compatible weight function is just an algebra homomorphism 
$\varpi_{P}:\kk[Int(P)]\rightarrow
R$. The poset $\Pi_n^k$ has a natural $\Pi_n^k$-compatible weight function $\varpi_{\Pi_n^k}$. 
Indeed, we define the map $\varpi_{\Pi_n^k}:\kk[Int(P)]\rightarrow \kk[x_1,\dots,x_k]$ by letting
$\varpi_{\Pi_n^k}(\hat{0},\hat{0})=1$ and $\varpi_{\Pi_n^k}(\hat{0},\alpha)=x_1^{w(1)}\cdots
x_k^{w(k)}$ for any $\alpha = \{A_1^{\mu_1},\dots,A_s^{\mu_s}\} \in \Pi_n^k$, with 
$w=\mu(\alpha)=\sum_{i=1}^s \mu_i$. This extends to any interval 
$[\alpha,\beta]$, by 
setting 
$\varpi_{\Pi_n^k}(\alpha,\beta)=\dfrac{\varpi_{\Pi_n^k}(\hat{0},\beta)}{\varpi_{\Pi_n^k}(\hat{0},
\alpha)}$ 
(clearly a monomial in $\kk[x_1,\dots,x_k]$), and to $\kk[Int(P)]$ by linearity.

The \emph{weighted Whitney numbers} $w_j(P,\varpi_{P})$ and $W_j(P,\varpi_{P})$ of the first and 
second kind
are defined as:
\begin{align*}
 w_j(P,\varpi_{P})&=\sum_{\substack{\alpha \in P\\
\rho(\alpha)=j}}\overline{\mu}_{P}(\hat{0},\alpha)\varpi_{P}(\hat{0},\alpha),\\
 W_j(P,\varpi_{P})&=\sum_{\substack{\alpha \in P\\ \rho(\alpha)=j}}\varpi_{P}(\hat{0},\alpha).
\end{align*}

Note that if $\varpi_{P}$ is the trivial $P$-compatible function defined by 
$\varpi_{P}(\alpha,\alpha^{\prime})=1$ for all $\alpha \le \alpha^{\prime} \in P$, then 
$w_j(P):=w_j(P,\varpi_{P})$ and $W_j(P):=W_j(P,\varpi_{P})$ are the classical Whitney numbers of 
the first and second kind respectively.

Recall that for each 
$\alpha \in \Pi_n^k$, we have
$\rho(\alpha)=n-|\alpha|$. For a partition $\lambda \vdash n$, with $\ell(\lambda)=r$
and where a part of size $i$ occurs
$m_i(\lambda)$ times, let $\lambda\smallsetminus(1^{r})$ denote the partition obtained from 
$\lambda$ by decreasing each of its parts by 1. Recall the symmetric function
\begin{align}\label{equation:defln}
 L_{n}(\xx):=\sum_{\mu \in \wcomp_{n}}\dim \lie(\mu)\,\xx^{\mu},
\end{align}
and for a partition $\lambda=(\lambda_1,\dots,\lambda_r)$, define
\begin{align*}
 L_{\lambda}(\xx):=L_{\lambda_1}(\xx)\cdots L_{\lambda_r}(\xx).
\end{align*}
Note that $L_{\lambda}(\xx)$ is a homogeneous symmetric function of 
degree $|\lambda|$. Define $m(\lambda)!:=\prod_{s=1}^{n}m_s(\lambda)!$.

\begin{proposition}  \label{proposition:whitneynumbers} For all $n \ge 1$,  the weighted Whitney 
numbers are given by 
\begin{align}
w_r(\Pi_n^k,\varpi_{\Pi_n^k}) &= (-1)^{r}\sum_{\substack{\lambda\vdash n\\
\ell(\lambda)=n-r}}\binom{n}{\lambda}\frac{1}{m(\lambda)!}L_{\lambda\smallsetminus(1^{n-r})}(x_1,
\dots,
x_k),\label{equation:whitneyfirst}\\ 
 W_r(\Pi_n^k,\varpi_{\Pi_n^k})&=\sum_{\substack{\lambda \vdash n\\
\ell(\lambda)=n-r}}\binom{n}{\lambda}\frac{1}{m(\lambda)!}h_{\lambda\smallsetminus(1^{n-r})}(x_1,
\dots,
x_k),\label{equation:whitneysecond}
\end{align}
where $h_{\lambda}$ denotes the complete homogeneous symmetric function 
associated with the partition $\lambda$. 
\end{proposition}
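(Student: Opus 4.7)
The plan is to compute both weighted Whitney numbers by parameterizing the rank-$r$ elements of $\Pi_n^k$ explicitly and then applying the standard multiplicativity of the M\"obius function. Fix $r$, and write an arbitrary rank-$r$ element as $\alpha=\{A_1^{\mu_1},\dots,A_s^{\mu_s}\}$ with $s=n-r$. Its underlying set partition has block-size partition some $\lambda\vdash n$ with $\ell(\lambda)=n-r$, and for each such $\lambda$ the number of unordered set partitions of $[n]$ of type $\lambda$ is $\binom{n}{\lambda}/m(\lambda)!$. Given the set partition, choosing $\alpha$ amounts to choosing weak compositions $\mu_i\in\wcomp_{|A_i|-1}$ with $\supp(\mu_i)\subseteq[k]$ for each block $A_i$, and the weight is $\varpi_{\Pi_n^k}(\hat 0,\alpha)=\xx^{\mu_1+\cdots+\mu_s}$.

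For the second kind $W_r(\Pi_n^k,\varpi_{\Pi_n^k})$, I would first note the elementary identity
\begin{align*}
\sum_{\substack{\mu\in\wcomp_{m-1}\\ \supp(\mu)\subseteq[k]}}\xx^{\mu}=h_{m-1}(x_1,\dots,x_k),
\end{align*}
since the left hand side is exactly the sum of all monomials in $x_1,\dots,x_k$ of total degree $m-1$. Summing $\xx^{\mu(\alpha)}$ over rank-$r$ weighted partitions with a fixed underlying partition of type $\lambda$ then factors over the $n-r$ blocks, yielding $\prod_i h_{\lambda_i-1}(x_1,\dots,x_k)=h_{\lambda\smallsetminus(1^{n-r})}(x_1,\dots,x_k)$ by definition of $h_\lambda$. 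Multiplying by the number $\binom{n}{\lambda}/m(\lambda)!$ of set partitions of type $\lambda$ and summing over $\lambda$ gives (\ref{equation:whitneysecond}).

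For the first kind $w_r(\Pi_n^k,\varpi_{\Pi_n^k})$, the key input is Proposition \ref{proposition:upperlowerideals}(3), which yields the product decomposition $[\hat 0,\alpha]\cong\prod_{i=1}^{s}[\hat 0,[|A_i|]^{\mu_i}]$, and hence the multiplicativity
\begin{align*}
\bar\mu_{\Pi_n^k}(\hat 0,\alpha)=\prod_{i=1}^{s}\bar\mu_{\Pi_{|A_i|}^k}(\hat 0,[|A_i|]^{\mu_i}).
\end{align*}
By Corollary \ref{corollary:muintervalslyn} (equivalently Theorem \ref{theorem:dimliemu}), each factor equals $(-1)^{|A_i|-1}\dim\lie(\mu_i)$, so the product of signs is $(-1)^{\sum_i(|A_i|-1)}=(-1)^{n-s}=(-1)^r$, independent of $\alpha$. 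Factoring the sum over $\alpha$ with fixed type $\lambda$ over blocks and invoking the defining identity (\ref{equation:defln}) restricted to $k$ variables,
\begin{align*}
\sum_{\substack{\mu\in\wcomp_{m-1}\\ \supp(\mu)\subseteq[k]}}\dim\lie(\mu)\,\xx^{\mu}=L_{m-1}(x_1,\dots,x_k),
\end{align*}
produces the factor $\prod_i L_{\lambda_i-1}(x_1,\dots,x_k)=L_{\lambda\smallsetminus(1^{n-r})}(x_1,\dots,x_k)$, and summing over $\lambda$ yields (\ref{equation:whitneyfirst}).

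There is no real obstacle here; the computation is essentially bookkeeping once the correct inputs are assembled. The only place to be careful is in tracking the combinatorial coefficient $\binom{n}{\lambda}/m(\lambda)!$ counting unordered set partitions of type $\lambda$, and in verifying that the sign $(-1)^{\sum(|A_i|-1)}$ collapses uniformly to $(-1)^r$. Both formulas then follow by formally interchanging the sums over $\alpha$ with the grouping by type $\lambda$ and by block.
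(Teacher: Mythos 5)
Your proof is correct and follows essentially the same route as the paper's: parameterize rank-$r$ elements by the type $\lambda$ of the underlying set partition with the coefficient $\binom{n}{\lambda}/m(\lambda)!$, compute the per-block weight contribution to get $h_{\lambda\smallsetminus(1^{n-r})}$ for the second kind, and for the first kind invoke Proposition \ref{proposition:upperlowerideals}(3) together with multiplicativity of the M\"obius function and the identification $\bar\mu_{\Pi_n^k}(\hat 0,[n]^\mu)=(-1)^{n-1}\dim\lie(\mu)$. The only cosmetic difference is that you cite Corollary \ref{corollary:muintervalslyn}/Theorem \ref{theorem:dimliemu} where the paper cites Theorem \ref{theorem:liehomisomorphism} directly, but these amount to the same input.
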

\begin{proof}
We want to construct a weighted partition $\alpha$ that has underlying (unweighted) set 
partition $\pi \in \Pi_n$. For a block of size $s$ in $\pi$, 
any monomial $x_1^{\mu(1)}\cdots x_k^{\mu(k)}$ with $|\mu|=s-1$ is a valid weight, so the 
contribution of 
this block corresponds to the complete homogeneous symmetric polynomial 
$h_{s-1}(x_1,\dots,x_k)$.  Then $\pi$ has a contribution of $h_{\lambda(\pi) \smallsetminus 
(1^{|\pi|})}$, where  $\lambda(\pi)$ denotes the integer partition whose parts are equal to the 
block sizes of  $\pi$, proving
equation (\ref{equation:whitneysecond}).

By Proposition \ref{proposition:upperlowerideals}, intervals of the form $[\hat{0}, \alpha]$ are 
isomorphic to products of maximal intervals of smaller copies of $\Pi_n^k$. Following a
similar argument as in (\ref{equation:whitneysecond}), using the fact that the M\"obius 
function is multiplicative and Theorem \ref{theorem:liehomisomorphism},
equation (\ref{equation:whitneyfirst}) follows.
\end{proof}

\begin{definition}
A pure   poset $P$ of length $\ell$ with minimum element $\hat 0$ and with rank function $\rho$, is
said to be {\it uniform}  if
there is a family of posets $\{P_i \mid 0 \le i \le \ell\}$ such that for all 
$x\in P$, the  upper order ideal  $I_x:=\{y \in P \mid x \le y\}$ is isomorphic to $P_i$, where $i 
= 
\ell-
\rho(x)$.
\end{definition}
We refer to
$(P_0,\dots,P_{\ell})$  as the associated {\em uniform sequence}.
It follows from Proposition~\ref{proposition:upperlowerideals} that $P=\Pi_n^k$ is uniform
with $P_i =\Pi_{i+1}^k$ for $i=0,\dots, n-1$.  

Note that a $P$-compatible weight function 
$\varpi_P$ induces, 
for any $x\in P$, an $I_x$-compatible weight function $\varpi_{I_x}$, the restriction of 
$\varpi_P$ to $\kk[Int(I_x)]$. For a uniform poset $P$, we say that a $P$-compatible weight 
function $\varpi_P$ is \emph{uniform} if for any two elements $x,y \in P$ such that 
$\rho(x)=\rho(y)$ there is a poset isomorphism $f:I_x \rightarrow I_y$, such that the induced 
weight functions $\varpi_{I_x}$ and $\varpi_{I_y}$ satisfy 
$\varpi_{I_x}(z,z')=\varpi_{I_y}(f(z),f(z'))$ for all $z \le z' \in I_x$. For example, 
the $\Pi_n^k$-compatible weight function $\varpi_{\Pi_n^k}$ defined before is uniform. 
It is clear that for a  
uniform poset $P$ with associated uniform sequence $(P_0,\dots,P_{\ell})$ and uniform  
$P$-compatible weight function $\varpi_P$ there is a well-defined induced $P_i$-compatible weight 
function $\varpi_{P_i}$ for each $i$. The following proposition is a weighted version of a variant 
of \cite[Exercise 3.130 (a)]{Stanley2012}.

\begin{proposition} \label{uniprop}Let $P$ be a uniform
poset of length $\ell$, with associated uniform sequence $(P_0,\dots,P_{\ell})$ and  a uniform 
$P$-compatible weight function $\varpi_{P}$. 
Then the matrices  $[w_{i-j}(P_i,\varpi_{P_i})]_{0\le i,j \le \ell}$ and 
$[W_{i-j}(P_i,\varpi_{P_i})]_{0\le i,j 
\le \ell}$ are
inverses of each other.
\end{proposition}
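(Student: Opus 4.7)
The plan is to show that $AB = I$, where $A = [w_{i-j}(P_i, \varpi_{P_i})]_{0 \le i,j \le \ell}$ and $B = [W_{i-j}(P_i, \varpi_{P_i})]_{0 \le i,j \le \ell}$. First I would observe that both matrices are lower triangular with $1$'s along the diagonal: for $j > i$ both $w_{i-j}$ and $W_{i-j}$ vanish, while $w_0(P_i, \varpi_{P_i}) = W_0(P_i, \varpi_{P_i}) = 1$ directly from the definitions. Thus $A$ is invertible in the ring of matrices over $R$ and it suffices to verify $AB=I$ one-sided.

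Next I would observe that, by iterating the definition of uniformity, $P_i$ is itself uniform with associated sequence $(P_0, P_1, \dots, P_i)$: for $y \in P_i$ with $\rho_{P_i}(y) = i-j$, the upper order ideal of $y$ in $P_i$ is isomorphic to $P_j$. Moreover, because $\varpi_P$ is uniform, the inherited weight function on such an upper ideal is identified with $\varpi_{P_j}$ under this isomorphism (this is essentially the content of the definition of a uniform weight function). Granted this, it is enough to prove the identity for the bottom row of $AB$, i.e., the matrix identity for $P = P_\ell$ itself; the same argument applied inside $P_i$ yields the $i$-th row.

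The heart of the proof is a weighted M\"obius inversion. Starting from the defining relation $\sum_{\hat 0 \le \alpha \le \beta} \bar\mu_P(\hat 0, \alpha) = \delta_{\hat 0, \beta}$, I multiply both sides by $\varpi_P(\hat 0, \beta)$ and apply the compatibility identity $\varpi_P(\hat 0, \beta) = \varpi_P(\hat 0, \alpha)\,\varpi_P(\alpha, \beta)$ term by term on the left to get
\[
\sum_{\hat 0 \le \alpha \le \beta} \bar\mu_P(\hat 0, \alpha)\, \varpi_P(\hat 0, \alpha)\, \varpi_P(\alpha, \beta) \;=\; \delta_{\hat 0, \beta}.
\]
Now sum over $\beta$ of rank $r$, swap the two sums, and group the $\alpha$'s by rank $k$. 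The inner sum $\sum_{\beta \ge \alpha,\, \rho(\beta)=r} \varpi_P(\alpha,\beta)$ is the weighted Whitney number $W_{r-k}$ of the upper order ideal $I_\alpha$ with the restricted weight function, which by uniformity equals $W_{r-k}(P_{\ell-k}, \varpi_{P_{\ell-k}})$. The outer sum over $\alpha$ of rank $k$ then yields $w_k(P, \varpi_P)$, so that
\[
\sum_{k=0}^{\ell} w_k(P, \varpi_P)\, W_{r-k}(P_{\ell - k}, \varpi_{P_{\ell-k}}) \;=\; \delta_{r, 0}, \qquad 0 \le r \le \ell.
\]
After reindexing $j = \ell - k$ and $m = \ell - r$, this becomes $\sum_j A_{\ell, j}\, B_{j, m} = \delta_{\ell, m}$, i.e., the $\ell$-th row of $AB$ is correct. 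Repeating the argument inside $P_i$ in place of $P$ gives the $i$-th row for every $i$.

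The main potential obstacle is the bookkeeping in the second step: one must verify carefully that the restriction of $\varpi_P$ to the upper order ideal $I_\alpha$ really does correspond to $\varpi_{P_{\ell-\rho(\alpha)}}$ under the uniformity isomorphism, and that this identification is consistent when we iterate and apply the argument inside $P_i$. This is all forced by the definitions but should be spelled out explicitly; once it is, the rest of the proof is a direct weighted analogue of the classical M\"obius inversion argument from \cite[Exercise 3.130(a)]{Stanley2012}.
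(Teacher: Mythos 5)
Your proof is correct and takes essentially the same route as the paper: a weighted M\"obius inversion obtained by starting from $\sum_{x\in[\alpha,\beta]}\overline{\mu}_P(\alpha,x)=\delta_{\alpha,\beta}$, multiplying by weights via $P$-compatibility, summing over $\beta$ of a fixed rank, swapping the two sums, and grouping by the rank of the intermediate element, then invoking uniformity to identify the resulting factors with $w$ and $W$ of the posets $P_i$. The only organizational difference is that the paper fixes a reference element $\alpha$ at arbitrary corank $i$ and so obtains all rows of $AB=I$ in one computation, whereas you fix $\alpha=\hat 0$ to get the bottom row and then iterate inside $P_i$ (after checking that $P_i$ is itself uniform with sequence $(P_0,\dots,P_i)$ and inherited weight function); both packagings are sound and cover the same ground.
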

\begin{proof}
 For a fixed $\alpha \in P$ with $\rho(\alpha)=\ell-i$ we have by the recursive 
definition of the
M\"obius function and the uniformity of $P$
\begin{align*}
 \delta_{i,j}&=\sum_{\substack{\beta \in
P\\ \rho(\beta)=\ell-j}}\varpi_{P}(\alpha,\beta)\sum_{x \in
[\alpha,\beta]}\overline{\mu}_{P}(\alpha,x)\\
&=\sum_{s=0}^{\ell}\sum_{\substack{x \in P\\
\rho(x)=\ell-s}}\overline{\mu}_{P}(\alpha,x)\varpi_{P}(\alpha,x)
\sum_{\substack{\beta \ge x\\
\rho(\beta)=\ell-j}}\varpi_{P}(x,\beta)\\
&=\sum_{s=0}^{\ell}\sum_{\substack{\tilde{x} \in P_i\\
\rho(\tilde{x})=i-s}}\overline{\mu}_{P_i}(\hat{0},\tilde{x})\varpi_{P_i}(\hat{0},\tilde{x})
\sum_{\substack{\tilde{\beta} \in P_s\\
\rho(\beta)=s-j}}\varpi_{P_s}(\hat{0},\tilde{\beta})\\
&=\sum_{s=0}^{\ell}w_{i-s}(P_i,\varpi_{P_i})W_{s-j}(P_s,\varpi_{P_s}).\qedhere
\end{align*}
\end{proof}

From the uniformity of the pair $(\Pi_n^k,\varpi_{\Pi_n^k})$ and Proposition 
\ref{proposition:whitneynumbers},  we have the
following consequence of Proposition~\ref{uniprop}.
\begin{corollary}\label{corollary:matrices} The matrices $ A=\Biggl [ 
(-1)^{i-j}\sum_{\substack{\lambda\vdash i\\
\ell(\lambda)=j}}\binom{i}{\lambda } 
\frac{1}{m(\lambda)!}L_{\lambda\smallsetminus(1^{j})}(\xx)\Biggr ]_{0\le i,j \le n-1}$ and 
$B=\Biggl [\sum_{\substack{\lambda \vdash i\\
\ell(\lambda)=j}}\binom{i}{\lambda}\frac{1}{m(\lambda)!}h_{\lambda\smallsetminus(1^{j})}(\xx) 
\Biggr ]_{0\le i,j \le n-1}$ are 
inverses of each other.  
\end{corollary}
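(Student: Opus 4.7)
The proof will be a direct application of Proposition~\ref{uniprop} to the pair $(\Pi_n^k, \varpi_{\Pi_n^k})$, combined with the explicit Whitney number formulas given in Proposition~\ref{proposition:whitneynumbers}. The plan is first to recognize the entries of $A$ and $B$ as weighted Whitney numbers of the first and second kind, then to verify that $(\Pi_n^k, \varpi_{\Pi_n^k})$ satisfies the hypotheses of Proposition~\ref{uniprop}, and finally to invoke the proposition.

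First, substituting $n \to i$ and $r \to i - j$ in the formulas of Proposition~\ref{proposition:whitneynumbers}, I will identify the $(i, j)$ entry of $B$ as $W_{i-j}(\Pi_i^k, \varpi_{\Pi_i^k})$ and the $(i, j)$ entry of $A$ as $w_{i-j}(\Pi_i^k, \varpi_{\Pi_i^k})$, where $\Pi_0^k$ is interpreted as the trivial one-point poset so that the boundary row and column carry only the identity entry at position $(0,0)$ and zeros elsewhere. Next, the uniformity of $\Pi_n^k$ follows from Proposition~\ref{proposition:upperlowerideals}(1): each upper order ideal $I_\alpha$ in $\Pi_n^k$ is isomorphic to $\Pi_{|\alpha|}^k$, exhibiting $\Pi_n^k$ as uniform with associated sequence $(\Pi_1^k, \Pi_2^k, \dots, \Pi_n^k)$. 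The weight function $\varpi_{\Pi_n^k}$ is itself uniform, since its value on an interval is determined by the weight-composition difference of the endpoints, and that difference transfers naturally under the isomorphisms of Proposition~\ref{proposition:upperlowerideals}(1); in particular, the induced weight function on $I_\alpha$ agrees with $\varpi_{\Pi_{|\alpha|}^k}$ under the canonical identification.

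Finally, Proposition~\ref{uniprop} immediately yields that the matrices $[w_{i-j}(P_i, \varpi_{P_i})]$ and $[W_{i-j}(P_i, \varpi_{P_i})]$ are inverse to one another. The main obstacle is really just bookkeeping: the convention of Proposition~\ref{uniprop} indexes the uniform sequence so that $P_i = \Pi_{i+1}^k$, while the Corollary's formula naturally corresponds to $P_i = \Pi_i^k$, differing by one in the index. The cleanest way to handle this is to observe that the Corollary's $n \times n$ matrix decomposes as a block-diagonal direct sum of the $1 \times 1$ identity block at the $(0,0)$ corner with the $(n-1) \times (n-1)$ matrix obtained from Proposition~\ref{uniprop} applied to $\Pi_{n-1}^k$ (after the index shift). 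Since the identity block is trivially self-inverse and the nontrivial block inverts to the corresponding $w$-block by Proposition~\ref{uniprop}, the desired identity $AB = BA = I$ follows at once.
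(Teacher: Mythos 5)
Your proposal is correct and follows the same strategy as the paper: recognize the entries of $A$ and $B$ as the weighted Whitney numbers from Proposition~\ref{proposition:whitneynumbers}, establish that $(\Pi_n^k,\varpi_{\Pi_n^k})$ is a uniform poset with a uniform compatible weight function via Proposition~\ref{proposition:upperlowerideals}, and invoke Proposition~\ref{uniprop}. The paper's proof is a single sentence citing these facts; your write-up goes further by explicitly handling the off-by-one in the indexing. Applying Proposition~\ref{uniprop} directly to $\Pi_n^k$ yields the matrices $[w_{i-j}(\Pi_{i+1}^k,\varpi_{\Pi_{i+1}^k})]_{0\le i,j\le n-1}$, whereas the Corollary's $(i,j)$ entry corresponds to $w_{i-j}(\Pi_i^k,\varpi_{\Pi_i^k})$; your observation that the Corollary's matrix is block-diagonal, with a trivial $1\times 1$ block at $(0,0)$ (since the only $\lambda\vdash 0$ with $\ell(\lambda)=0$ is the empty partition, and no $\lambda\vdash i>0$ has $\ell(\lambda)=0$) and the Proposition-\ref{uniprop} matrix for $\Pi_{n-1}^k$ as the other block after reindexing $i\mapsto i-1$, $j\mapsto j-1$, cleanly resolves this. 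An alternative is to note that both matrices are lower-triangular with $1$'s on the diagonal, so the identity $AB=I$ is index-range-independent once one knows it on any cofinal family of ranges; but your block decomposition is equally valid.
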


When $x_1=x_2=1$ and $x_i=0$ for $i\ge 3$, these matrices have the simpler form given in the 
following result.

\begin{theorem}[{\cite[Corollary 2.11]{DleonWachs2013a}}] The matrices $ A=[(-1)^{i-j} \binom 
{i-1} {j-1}i^{i-j} ]_{1\le i,j \le n}$ and 
$B=[\binom i {j} j^{i-j}]_{1\le i,j \le n}$ are inverses of each other.  
\end{theorem}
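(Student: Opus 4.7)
The plan is to deduce this as a specialization of Corollary \ref{corollary:matrices} at $\xx = (1,1,0,0,\ldots)$. Under this substitution the $k=2$ theory is active, so the formulas from Section \ref{section:binarystirling} give concrete numerical values to the matrix entries. First I would compute $h_{n-1}(1,1,0,\ldots) = n$ (immediate, since $h_{n-1}(x_1,x_2)$ evaluated at $(1,1)$ counts monomials of degree $n-1$ in two variables), and $L_{n-1}(1,1,0,\ldots) = \sum_{\mu \in \wcomp_{n-1},\,\supp(\mu)\subseteq[2]} \dim \lie(\mu) = \dim \lie_2(n) = n^{n-1}$ by equation (\ref{equation:dimensionlie2}).

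Next I would convert the unordered sums over partitions in Corollary \ref{corollary:matrices} into ordered sums over positive compositions using the standard identity $\sum_{\lambda\vdash i,\,\ell(\lambda)=j} \frac{1}{m(\lambda)!} f(\lambda) = \frac{1}{j!} \sum_{(\lambda_1,\ldots,\lambda_j) \in \PP^j,\,\sum \lambda_s = i} f(\lambda_1,\ldots,\lambda_j)$. Under this, the $B$ entry becomes
\begin{align*}
B_{ij} = \frac{1}{j!} \sum_{\substack{(\lambda_1,\ldots,\lambda_j)\in\PP^j\\ \lambda_1+\cdots+\lambda_j=i}} \binom{i}{\lambda_1,\ldots,\lambda_j}\lambda_1\cdots\lambda_j = \frac{i!}{j!}\sum_{\substack{(\mu_1,\ldots,\mu_j)\in\NN^j\\ \mu_1+\cdots+\mu_j=i-j}}\frac{1}{\mu_1!\cdots\mu_j!}
\end{align*}
after the shift $\mu_s = \lambda_s - 1$. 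The multinomial theorem gives the inner sum as $j^{i-j}/(i-j)!$, so $B_{ij} = \binom{i}{j} j^{i-j}$, matching the theorem statement.

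The $A$ entry requires the tree generating function. Applying the same reindexing,
\begin{align*}
A_{ij} = (-1)^{i-j}\frac{i!}{j!}\sum_{\substack{(\lambda_1,\ldots,\lambda_j)\in\PP^j\\ \lambda_1+\cdots+\lambda_j=i}} \prod_{s=1}^j \frac{\lambda_s^{\lambda_s-1}}{\lambda_s!}.
\end{align*}
The inner sum is precisely $[x^i] T(x)^j$, where $T(x)=\sum_{n\ge 1} n^{n-1}x^n/n!$ is the exponential generating function for rooted labeled trees. Since $T$ satisfies $T=xe^T$, Lagrange inversion gives $[x^i]T(x)^j = \frac{j}{i}[u^{i-j}]e^{iu} = \frac{j \cdot i^{i-j-1}}{(i-j)!}$. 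Multiplying by $i!/j!$ and using the identity $\frac{j}{i}\binom{i}{j}=\binom{i-1}{j-1}$ yields $A_{ij} = (-1)^{i-j}\binom{i-1}{j-1}i^{i-j}$.

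There is no serious obstacle here; the only nontrivial input is the Lagrange inversion formula applied to the tree function, and the main conceptual content has already been supplied by Corollary \ref{corollary:matrices} together with Feigin's identity $\dim \lie_2(n)=n^{n-1}$ (Equation (\ref{equation:dimensionlie2})). Alternatively one could substitute the interpretation $L_{\lambda\smallsetminus(1^j)}(1,1,0,\ldots)=\prod_s \lambda_s^{\lambda_s-1}$ into the fact that the left-hand side counts labeled rooted forests with $j$ components on $i$ vertices, whose total count is $\binom{i-1}{j-1}i^{i-j}$ by a standard enumerative argument, thereby bypassing Lagrange inversion.
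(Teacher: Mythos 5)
Your proposal is correct and follows exactly the approach the paper intends: specialize Corollary \ref{corollary:matrices} at $\xx = (1,1,0,0,\ldots)$, using $h_{m-1}(1,1)=m$ and $L_{m-1}(1,1,0,\ldots)=\dim\lie_2(m)=m^{m-1}$, then convert the partition sums to composition sums and evaluate via the multinomial theorem (for $B$) and Lagrange inversion for the tree function $T=xe^T$ (for $A$). The paper itself leaves this computation implicit, citing the resulting identity from \cite{DleonWachs2013a}; your derivation correctly supplies the missing algebra, and your closing observation that the $A$ entries count labeled rooted forests on $i$ vertices with $j$ components gives a clean bijective alternative to Lagrange inversion.
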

It can be shown that when  $x_1=1$ and $x_i=0$ for $i\ge 2$, Corollary  \ref{corollary:matrices} 
reduces to the following classical result since $\Pi_n^1=\Pi_n$.
\begin{theorem}[{see \cite{Stanley2012}}] Let $\mathbf{s}(i,j)$ and 
$\mathbf{S}(i,j)$ denote respectively, the Stirling 
numbers of the first and of the second kind. The matrices $ A=[\mathbf{s}(i,j) ]_{1\le i,j \le n}$ 
and 
$B=[\mathbf{S}(i,j)]_{1\le i,j \le n}$ are inverses of each other.
\end{theorem}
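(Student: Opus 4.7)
The plan is to derive this classical identity as a direct specialization of Corollary \ref{corollary:matrices}, applied in the one-bracket case. Since $\Pi_n^1 \simeq \Pi_n$, the weighted partition poset reduces to the ordinary partition lattice, and one only needs to check that the two matrix entries in Corollary \ref{corollary:matrices} collapse to the signed Stirling numbers of the first kind and the ordinary Stirling numbers of the second kind when $x_1=1$ and $x_i=0$ for $i\ge 2$.

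First I would evaluate the elementary pieces at this specialization. For $h_m(\xx)$, the substitution gives $h_m(1,0,0,\dots)=1$ for every $m$, so that $h_{\lambda\smallsetminus(1^j)}(\xx)=1$. For $L_m(\xx)$, the only weak composition in $\wcomp_m$ with support in $\{1\}$ is $\mu=(m,0,0,\dots)$, and $\lie(\mu)\simeq_{\sym_{m+1}}\lie(m+1)$, whose dimension is the classical $m!$. Hence $L_m(1,0,0,\dots)=m!$ and consequently $L_{\lambda\smallsetminus(1^j)}(\xx)=\prod_k(\lambda_k-1)!$ when $\ell(\lambda)=j$.

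Next I would identify the resulting sums combinatorially. After the specialization the $(i,j)$ entry of $B$ becomes
\begin{align*}
\sum_{\substack{\lambda\vdash i\\ \ell(\lambda)=j}}\binom{i}{\lambda}\frac{1}{m(\lambda)!},
\end{align*}
which is exactly the number of set partitions of $[i]$ into $j$ blocks, i.e.\ $\mathbf{S}(i,j)$, since $\binom{i}{\lambda}/m(\lambda)!$ counts the set partitions of shape $\lambda$. Similarly, the $(i,j)$ entry of $A$ becomes
\begin{align*}
(-1)^{i-j}\sum_{\substack{\lambda\vdash i\\ \ell(\lambda)=j}}\binom{i}{\lambda}\frac{\prod_k(\lambda_k-1)!}{m(\lambda)!} \;=\; (-1)^{i-j}\sum_{\substack{\lambda\vdash i\\ \ell(\lambda)=j}}\frac{i!}{\prod_k\lambda_k\cdot m(\lambda)!},
\end{align*}
which is $(-1)^{i-j}$ times the number of permutations of $[i]$ with $j$ cycles; this is precisely the signed Stirling number of the first kind $\mathbf{s}(i,j)$.

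Having made these two identifications, the claim follows immediately from Corollary \ref{corollary:matrices} (note that the index shift from $0\le i,j\le n-1$ to $1\le i,j\le n$ is harmless, simply a relabeling). There is no real obstacle: the only care required is in the bookkeeping that connects the set-partition count with $\binom{i}{\lambda}/m(\lambda)!$ and the cycle count with $i!/(\prod_k\lambda_k\cdot m(\lambda)!)$, both of which are standard. The conceptual content is entirely carried by the poset-theoretic statement of Corollary \ref{corollary:matrices}; the theorem is presented to emphasize that the classical Stirling inversion formula is the $k=1$ shadow of the weighted identity proved in this section.
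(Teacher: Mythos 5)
Your verification is correct and is exactly the computation the paper has in mind (the text only asserts "it can be shown"). The two specialization facts $h_m(1,0,0,\dots)=1$ and $L_m(1,0,0,\dots)=m!$, the identification $\binom{i}{\lambda}/m(\lambda)!=\#\{\text{set partitions of }[i]\text{ of type }\lambda\}$, and the algebraic rewrite $\binom{i}{\lambda}\prod_k(\lambda_k-1)!/m(\lambda)! = i!/\bigl(\prod_k\lambda_k\cdot m(\lambda)!\bigr) = \#\{\text{permutations of cycle type }\lambda\}$ are all right, as is the sign bookkeeping giving $\mathbf{s}(i,j)=(-1)^{i-j}c(i,j)$.

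One small caution about the closing remark: the passage from the index range $0\le i,j\le n-1$ to $1\le i,j\le n$ is not literally a relabeling. A relabeling $i\mapsto i-1$, $j\mapsto j-1$ of your computed entries would produce $[\mathbf{s}(i-1,j-1)]_{1\le i,j\le n}$, which is the same matrix as before and not the one in the theorem's statement. What actually makes the passage work is either (a) applying Corollary \ref{corollary:matrices} with $n+1$ in place of $n$ and then taking the $1\le i,j\le n$ principal window, using the standard fact that windows of mutually inverse lower-triangular matrices remain mutually inverse (both matrices are unitriangular here since $\lambda\vdash i$ with $\ell(\lambda)=j$ forces $j\le i$); or (b) tracing through Propositions \ref{proposition:whitneynumbers} and \ref{uniprop}, which actually produce the entries $w_{i-j}(\Pi_{i+1}^k)$, so that the Corollary's formula already belongs with the shifted index range $1\le i,j\le n$. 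Either route closes the gap, so the conclusion is unaffected, but it deserves a word beyond "relabeling."
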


\subsection{Whitney (co)homology}\label{section:whitneycohomology}

Whitney homology was introduced by Baclawski in \cite{Baclawski1975}  
giving an affirmative answer to a
question of Rota about the existence of a homology theory on the category of posets where the 
Betti 
numbers for geometric lattices are given by the Whitney numbers of the first kind. Whitney 
homology was later used to compute group representations on the homology of Cohen-Macaulay 
posets by Sundaram \cite{Sundaram1994} and generalized 
to the non-pure case by Wachs \cite{Wachs1999} (see also \cite{Wachs2007}). 

\emph{Whitney cohomology} (over the field ${\bf k}$) of a poset $P$ with a minimum element $\hat 0$ 
can be
defined for each integer $r$ as follows:
\begin{align*}
 WH^r(P):= \bigoplus_{x \in P} \widetilde H^{r-2}((\hat 0, x);{\bf k}).
\end{align*}
In the case of a Cohen-Macaulay poset this formula becomes
\begin{align}\label{equation:defwhitneyhomology}
 WH^r(P):= \bigoplus_{\substack{x \in P\\ \rho(x)=r}} \widetilde H^{r-2}((\hat 0, x);{\bf k}).
\end{align}
Note that
\begin{align}\label{equation:dimensionWH}
 \dim WH^{r}(P)=|w_r(P)|.
\end{align}
where $w_r(P)$ is the classical $r$th Whitney number of the first kind.

Define $\land^r \lie_k(n)$ to be the multilinear component of the $r$th exterior power of the free 
Lie algebra on $[n]$ with $k$ compatible brackets. From the definition of $\land^r 
\lie_k(n)$ and equation (\ref{equation:defln}) we can derive the following proposition.
\begin{proposition}\label{proposition:dimexteriorlie}
 For $0 \le r \le n-1$ and $k\ge 1$, 
\begin{align*}\dim \land^{r} \lie_k(n) = 
\sum_{\substack{\lambda\vdash n\\
\ell(\lambda)=r}}\binom{n}{\lambda}\frac{1}{m(\lambda)!}L_{\lambda\smallsetminus(1^{r})}(1^k). 
\end{align*}
Consequently, if $\land \lie_k(n)$ is the multilinear component of the exterior algebra of the free 
Lie
algebra with $k$ compatible brackets on $n$ generators then
\begin{align*}
\dim \land \lie_k(n) = \sum_{\lambda\vdash 
n}\binom{n}{\lambda}\frac{1}{m(\lambda)!}L_{\lambda\smallsetminus(1^{\ell(\lambda)})}(1^k).
\end{align*}
Equivalently,
\begin{align*}
 \dim \land \lie_k(n)=n![x^n]\exp\left(\sum_{i \ge 1} L_{i-1}(1^k)\dfrac{x^i}{i!}\right),
\end{align*}
where $[x^n]F(x)$ denotes the coefficient of $x^n$ in the formal power series $F(x)$ and
$\exp(x)=\sum_{n\ge 1}\dfrac{x^n}{n!}$ (see \cite[Theorem 5.1.4]{Stanley1999}).
\end{proposition}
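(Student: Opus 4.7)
The plan is to observe that the exterior algebra of a direct sum decomposes combinatorially over unordered set partitions, and then identify the pieces with the generating series $L_n(\xx)$ evaluated at $\xx = 1^k = (\underbrace{1,\dots,1}_{k},0,0,\dots)$.

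First I would record the structural decomposition
\[
\lie_k = \bigoplus_{\emptyset \ne B \subset \PP} \lie_k(B),
\]
where $\lie_k(B)$ denotes the multilinear component of the free Lie algebra on $B$ with $k$ compatible brackets, which has the same dimension as $\lie_k(|B|)$ by relabeling. From the definition of the exterior power and the fact that basis elements coming from different summands of a direct sum are automatically linearly independent (so wedge nontrivially), one gets the formula
\[
\dim \land^r \lie_k(n) \;=\; \sum_{\{B_1,\dots,B_r\}} \prod_{i=1}^{r} \dim \lie_k(|B_i|),
\]
where the sum ranges over unordered set partitions of $[n]$ into exactly $r$ nonempty blocks.

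Next I would group these unordered set partitions according to the underlying integer partition $\lambda \vdash n$ of block sizes. The standard count gives
\[
\#\{\text{unordered set partitions of }[n]\text{ of shape }\lambda\} \;=\; \frac{1}{m(\lambda)!}\binom{n}{\lambda},
\]
and each such partition contributes $\prod_i \dim\lie_k(\lambda_i)$. Using Theorem \ref{theorem:dimliemu} (or the definition of $L_m$ in \eqref{definition:stirlingeuleriansymmetric}) together with
\[
\dim \lie_k(m) \;=\; \sum_{\substack{\mu \in \wcomp_{m-1}\\ \supp(\mu)\subseteq [k]}} \dim \lie(\mu) \;=\; L_{m-1}(1^k),
\]
I can rewrite $\prod_i \dim\lie_k(\lambda_i) = L_{\lambda\smallsetminus(1^r)}(1^k)$, which yields the first displayed identity.

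Summing over $r$ gives the second identity, and for the exponential generating function formulation I would apply the exponential formula (\cite[Theorem 5.1.4]{Stanley1999}) to the species whose connected structures on a set $B$ of size $m$ have count $L_{m-1}(1^k)$; this species is precisely $\lie_k$ restricted to nonempty sets, and the exterior algebra $\land \lie_k$ is its ``assembly,'' so its multilinear generating function is the exponential. The only subtle point is the bookkeeping between ordered and unordered block choices and the factor $1/m(\lambda)!$, but this is the standard passage from exponential generating functions to integer-partition sums, so no real obstacle arises.
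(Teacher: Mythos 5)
Your proof is correct and spells out exactly the argument the paper leaves implicit; the paper merely asserts the proposition follows from the definition of $\land^r\lie_k(n)$ together with the definition of $L_n(\xx)$. Decomposing the multilinear component of $\land^r\lie_k$ over set partitions of $[n]$ into $r$ blocks, using $\dim\lie_k(m)=L_{m-1}(1^k)$ for each block, regrouping by the integer partition of block sizes with the standard count $\frac{1}{m(\lambda)!}\binom{n}{\lambda}$, and invoking the exponential formula for the final form is precisely the intended derivation.
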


Note that since, by equation (\ref{equation:dimensionWH}), $\dim WH^{r}(\Pi_n^k)$ equals the 
signless $r$th Whitney number of the first
kind $|w_r(\Pi_n^k)|$, Propositions 
\ref{proposition:whitneynumbers} and \ref{proposition:dimexteriorlie} imply that the dimensions of 
$\dim WH^{n-r}(\Pi_n^k)$ and $\land^r \lie_k(n)$ are equal.

\begin{corollary}\label{corollary:whitdim} For $0 \le r \le n-1$ and $k\ge 1$, 
\begin{align*}\dim \land^{r} \lie_k(n) = \dim WH^{n-r}(\Pi_n^k). 
\end{align*}
\begin{align*}
\dim \land \lie_k(n) = \dim WH(\Pi_n^k).
\end{align*}
where $WH(\Pi_n^k) := \oplus_{r \ge 0} WH^r(\Pi_n^k)$.
\end{corollary}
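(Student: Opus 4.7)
The plan is to derive the corollary by a direct comparison of two closed-form expressions, one for each side of the claimed equality. Both formulas have already been set up in the paper, so the argument will be a bookkeeping computation rather than a new structural idea.

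First I would invoke equation (\ref{equation:dimensionWH}), which identifies $\dim WH^r(\Pi_n^k)$ with the signless classical Whitney number of the first kind $|w_r(\Pi_n^k)|$. To get a formula for $w_r(\Pi_n^k)$ itself, I would specialize the weighted formula in Proposition \ref{proposition:whitneynumbers} by setting $x_1 = \cdots = x_k = 1$; since $\varpi_{\Pi_n^k}(\hat 0,\alpha) = x_1^{\mu(1)}\cdots x_k^{\mu(k)}$ reduces to $1$ under this specialization, the weighted Whitney number $w_r(\Pi_n^k, \varpi_{\Pi_n^k})$ collapses to $w_r(\Pi_n^k)$. This yields
\begin{align*}
\dim WH^r(\Pi_n^k) \;=\; \sum_{\substack{\lambda \vdash n \\ \ell(\lambda) = n-r}} \binom{n}{\lambda} \frac{1}{m(\lambda)!}\, L_{\lambda \smallsetminus (1^{n-r})}(1^k),
\end{align*}
after canceling the sign $(-1)^r$ against the signs coming from the M\"obius values (recall that by Corollary \ref{corollary:muintervalslyn}, $\bar\mu_{\Pi_n^k}(\hat 0,[n]^\mu)$ carries sign $(-1)^{n-1}$, so products over the blocks of a rank-$r$ partition $\lambda$ of length $n-r$ produce the correct sign cancellation).

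Second, I would write down Proposition \ref{proposition:dimexteriorlie} with $r$ replaced by $n-r$, which gives exactly
\begin{align*}
\dim \wedge^{n-r} \lie_k(n) \;=\; \sum_{\substack{\lambda \vdash n \\ \ell(\lambda) = n-r}} \binom{n}{\lambda}\frac{1}{m(\lambda)!}\, L_{\lambda \smallsetminus (1^{n-r})}(1^k).
\end{align*}
Comparing this with the previous display yields $\dim \wedge^{n-r}\lie_k(n) = \dim WH^r(\Pi_n^k)$, which is the first claimed identity (after renaming $n-r \leftrightarrow r$). The second identity follows immediately by summing over $r = 0, 1, \dots, n-1$, since both $\wedge \lie_k(n) = \bigoplus_r \wedge^r \lie_k(n)$ and $WH(\Pi_n^k) = \bigoplus_r WH^r(\Pi_n^k)$ decompose as direct sums.

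The only substantive point to verify is the sign bookkeeping in step one: checking that setting $\xx = (1^k, 0, 0, \dots)$ in Proposition \ref{proposition:whitneynumbers} and taking absolute values produces the stated positive expression. This amounts to observing that $L_{\lambda \smallsetminus (1^{n-r})}(1^k) \ge 0$ (in fact, it counts normalized colored trees, by Theorem \ref{theorem:dimliemu}), and that the factor $(-1)^r$ in front is exactly the sign of $w_r$, so passing to $|w_r|$ removes it. Everything else is routine substitution, and no new combinatorial or topological input is required beyond the two propositions already established.
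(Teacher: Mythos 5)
Your proof is correct and takes essentially the same approach as the paper: identify $\dim WH^r(\Pi_n^k)$ with the signless Whitney number via equation (\ref{equation:dimensionWH}), then compare the formula in Proposition \ref{proposition:whitneynumbers} (specialized at $x_1 = \cdots = x_k = 1$) term by term with the formula in Proposition \ref{proposition:dimexteriorlie}. The extra digression into Corollary \ref{corollary:muintervalslyn} to justify the sign is unnecessary---Proposition \ref{proposition:whitneynumbers} already has the $(-1)^r$ isolated out front, and positivity of $L_{\lambda\smallsetminus(1^{n-r})}(1^k)$ makes passing to absolute value immediate---but it is not wrong.
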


If a group $G$ of automorphisms acts on a poset $P$, this action induces a representation of $G$  
on $WH^r(P)$ for every $r$. Thus the action of $\sym_n$ on $\Pi_n^k$ turns  $WH^r(\Pi_n^k)$ into an 
$\sym_n$-module for each $r$. Moreover, the symmetric group $\sym_n$ acts naturally on 
$\land^r \lie_k(n)$ giving it the structure of an $\sym_n$-module.
We will present an equivariant verion of Corollary \ref{corollary:whitdim} below. 

In 
\cite{BarceloBergeron1990} Barcelo and Bergeron proved the  following $\sym_n$-module 
isomorphism 
for the poset of partitions:
\begin{align*}
 WH^{n-r}(\Pi_n) \simeq_{\sym_n} \land^r\lie_1(n) \otimes \sgn_n.
\end{align*}
In \cite{Wachs1998} Wachs shows that an extension of her correspondence between generating sets of
$\widetilde H^{n-3}(\overline{\Pi}_n)$ and $\lie(n) \otimes \sgn_n$ can be used to prove this 
result.
The same technique in \cite{DleonWachs2013a} proves:
\begin{align*}
 WH^{n-r}(\Pi_n^w) \simeq_{\sym_n} \land^r\lie_2(n) \otimes \sgn_n.
\end{align*}

We use the same technique to prove that in general
\begin{align*}
 WH^{n-r}(\Pi_n^k) \simeq_{\sym_n} \land^r\lie_k(n) \otimes \sgn_n.
\end{align*}

A {\em colored binary forest} is a sequence of colored
binary trees.  Given a colored binary forest $F$ with $n$ leaves and  $\sigma \in \sym_n$, let
$(F,\sigma)$ denote the {\it labeled}  colored binary forest  whose $i$th leaf from left to right
has label $\sigma(i)$.  Let $\mathcal {BF}_{n,r}$ be the set of labeled  colored binary forests
with $n$ leaves and $r$ trees.   If the $j$th labeled colored binary tree of 
$(F,\sigma)$ is $(T_j,\sigma_j)$ for each $j=1,\dots r$ then define
$$[F,\sigma] := [T_1,\sigma_1] \land \dots \land [T_r,\sigma_r],$$ where
now $\land$ denotes the wedge product operation in the exterior algebra.  The set $\{[F,\sigma] :
(F,\sigma) \in \mathcal {BF}_{n,r}\}$ is a generating set for $\land^r \lie_k(n)$.

The set $\mathcal {BF}_{n,r}$ also provides a natural generating set for $WH^{n-r}(\Pi_n^k)$.  For
$(F,\sigma) \in \mathcal {BF}_{n,r}$, let $ c(F,\sigma)$ be the unrefinable chain of $\Pi_n^k$ whose
rank $i$ partition is obtained from its rank $i-1$ partition by $\col_i$-merging the blocks $L_i$
and $R_i$, where $\col_i$ is the color of the $i$th postorder internal node $v_i$ of $F$, and $L_i$ 
and $R_i$ are  the respective sets of leaf labels in the left and right subtrees of $v_i$.

We have the following generalization of Theorem~\ref{theorem:liehomisomorphism} and \cite[Theorem
7.2]{Wachs1998}.  The proof is similar to that of Theorem~\ref{theorem:liehomisomorphism} and is
left to the reader.
\begin{theorem}\label{theorem:whitney} For each $r$, there is an $\sym_n$-module isomorphism
$$\phi:\land^r \lie_k(n) \to WH^{n-r}(\Pi_n^k) \otimes \sgn_n$$ determined by
$$\phi([F,\sigma]) = \sgn(\sigma) \sgn(F) \bar c(F,\sigma), \qquad (F,\sigma) \in \mathcal
{BF}_{n,r},$$
where if $F$ is the sequence $T_1,\dots,T_r$ of colored binary trees  then 
$$\sgn(F) :=  (-1)^{I(T_2)+I(T_4) + \dots + I(T_{2\lfloor r/2 \rfloor})} \sgn(T_1) \sgn(T_2) \dots
\sgn(T_r).$$
\end{theorem}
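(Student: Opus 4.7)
The plan is to mimic the proof of Theorem \ref{theorem:liehomisomorphism}, lifting it from a single interval to the whole Whitney cohomology by splitting along rank. First, I would set up parallel generating sets and relations for the two modules. On the Lie side, the elements $[F,\sigma]$, with $(F,\sigma) \in \BF_{n,r}$, clearly span $\land^r \lie_k(n)$ because $\{[T,\sigma] : (T,\sigma) \in \BT_\mu\}$ spans $\lie(\mu)$ for each $\mu$ by Proposition \ref{proposition:binarybasislie}. The defining relations are the exterior-algebra anti-symmetry between adjacent trees together with, inside each individual tree, the bracket relations (\ref{relation:1}), (\ref{relation:3}), (\ref{relation:5}).

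On the Whitney cohomology side, decompose
\begin{align*}
WH^{n-r}(\Pi_n^k) = \bigoplus_{\rho(x)=n-r} \widetilde H^{n-r-2}((\hat 0,x)),
\end{align*}
and for each weighted partition $x = \{A_1^{\mu_1},\dots,A_r^{\mu_r}\}$ of rank $n-r$, apply Proposition \ref{proposition:upperlowerideals}(3) together with the K\"unneth formula for order complexes of products (noting $[\hat 0, x]\simeq \prod_i [\hat 0,[|A_i|]^{\mu_i}]$) to obtain
\begin{align*}
\widetilde H^{n-r-2}((\hat 0,x)) \simeq \bigotimes_{i=1}^{r} \widetilde H^{|A_i|-3}((\hat 0,[|A_i|]^{\mu_i})).
\end{align*}
Under this identification, a pure tensor of the cohomology classes $\bar c(T_i,\sigma_i)$ for each block corresponds to the chain $\bar c(F,\sigma)$, where $F=(T_1,\dots,T_r)$ and $\sigma$ is the concatenation of the $\sigma_i$. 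Hence $\{\bar c(F,\sigma) : (F,\sigma)\in\BF_{n,r}\}$ spans $WH^{n-r}(\Pi_n^k)$, and the relations are: the tree-level relations of Theorem \ref{theorem:binarybasishomology} inside each component, together with the Koszul-style sign rule produced by permuting the factors in the tensor product of the intervals (equivalently, by reordering the trees of the forest, which permutes the blocks of the target partition $x$).

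With both presentations in hand, I would verify that the map $\phi([F,\sigma]) := \sgn(\sigma)\sgn(F)\bar c(F,\sigma)$ sends generators to generators and relations to relations. The tree-internal relations transform correctly because, block by block, $\phi$ restricts to the isomorphism of Theorem \ref{theorem:liehomisomorphism} (the signs $\sgn(T_i)$ on the $i$th component are precisely the signs needed there). The only new input is the interaction between wedge anti-symmetry and the K\"unneth sign: swapping adjacent trees $T_i,T_{i+1}$ in $F$ multiplies $\bar c(F,\sigma)$ by $(-1)^{|I(T_i)||I(T_{i+1})|}$ (because $\dim \widetilde H^{|A_i|-3}(\cdots)$ lives in degree $|A_i|-2 = |I(T_i)|$ on the $\widetilde H$ side) times a further $(-1)^{\sgn(\sigma')/\sgn(\sigma)}$ accounting for the rearrangement of leaf labels. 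The ad hoc sign $\sgn(F)=(-1)^{|I(T_2)|+|I(T_4)|+\cdots}\prod_i\sgn(T_i)$ is engineered precisely so that combining it with $\sgn(\sigma)$ converts the graded anti-symmetry on the cohomology side into the ungraded anti-symmetry on the exterior algebra side; a short induction on $r$ using a single adjacent transposition handles this.

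Finally, $\sym_n$-equivariance is immediate: the action on both sides merely relabels the leaves of $F$ via $\sigma\mapsto\tau\sigma$, and the signs $\sgn(F)$ and $\sgn(T)$ depend only on the underlying uncolored unlabeled shape, so they are preserved; the factor $\sgn(\sigma)$ accounts for the twist by $\sgn_n$. Together with the matching dimension counts coming from Corollary \ref{corollary:whitdim} (which shows surjectivity plus equality of dimensions implies bijectivity), this yields the claimed isomorphism. The main obstacle, as in \cite{Wachs1998} and \cite{DleonWachs2013a}, is the bookkeeping of signs: verifying that the Koszul signs coming from K\"unneth and chain reordering combine with $\sgn(F)$ and $\sgn(T_i)$ to exactly produce the exterior-algebra sign when two adjacent trees are swapped.
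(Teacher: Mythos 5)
Your outline is correct and is precisely the argument the paper intends when it declares the proof ``similar to that of Theorem~\ref{theorem:liehomisomorphism} and left to the reader'' (following Wachs's Theorem~7.2 for $k=1$): decompose each lower interval $[\hat 0, x]$ via Proposition~\ref{proposition:upperlowerideals}(3), match presentations blockwise via Theorem~\ref{theorem:liehomisomorphism}, and verify that $\sgn(F)\sgn(\sigma)$ converts the reordering sign $(-1)^{|I(T_i)||I(T_{i+1})|}$ on the cohomology side into the exterior-algebra sign $-1$, which indeed holds since $(-1)^{|A_i||A_{i+1}|}(-1)^{|I(T_i)|+|I(T_{i+1})|}(-1)^{|I(T_i)||I(T_{i+1})|}=-1$. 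Two cosmetic slips do not affect the substance: $|I(T_i)|=|A_i|-1$, not $|A_i|-2$, and the expression ``$(-1)^{\sgn(\sigma')/\sgn(\sigma)}$'' should simply read $\sgn(\sigma')\sgn(\sigma)=(-1)^{|A_i||A_{i+1}|}$, the sign of the block-swap permutation.
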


\begin{remark}
When $k=1$ it is proved in \cite{BarceloBergeron1990} that
\begin{align*}
  \dim \land \lie(n) = \dim WH(\Pi_n) = n!,
 \end{align*}
 and when $k=2$ it is proved in \cite{DleonWachs2013a} that , 
 \begin{align*}
  \dim \land \lie_2(n) = \dim WH(\Pi_n^w) = (n+1)^{n-1}.
 \end{align*}
\end{remark}

\section{The Frobenius characteristic of $\lie(\mu)$} \label{section:frobeniuscharacteristic}

In this section  we prove Theorem \ref{theorem:lierepresentation}. We use a technique developed 
by Sundaram \cite{Sundaram1994}, and further developed by Wachs \cite{Wachs1999}, to compute 
group representations on the (co)homology of Cohen-Macaulay posets using Whitney (co)homology.
We introduce and develop first the concepts and results necessary to  prove Theorem 
\ref{theorem:lierepresentation} in Sections \ref{section:wreathproducandplethysm} and 
\ref{section:weightednumberpartitions}; we give a proof 
of the theorem in Section \ref{section:sundaramtechnique}. For information not presented here
about symmetric functions, plethysm  and the representation theory of the symmetric group see 
\cite{Macdonald1995}, \cite{Sagan2001}, \cite{JamesKerber1981} and \cite[Chapter 7]{Stanley1999}.

\subsection{Wreath product modules and plethysm}\label{section:wreathproducandplethysm}

In the following we follow closely the exposition and the results in \cite{Wachs1999}. 

Let $R$ be a commutative ring containing $\QQ$ and let $\Lambda_{R}$ denote the ring of 
symmetric functions with 
coefficients in $R$ with variables $(y_1,y_2,\dots)$. The \emph{power-sum symmetric functions} 
$p_k$ are defined by $p_0=1$ and
\begin{align*}
p_{k}&=y_1^k+y_2^k+\cdots \text{ for }k \in \PP\nonumber.
\end{align*}
For a partition $\lambda\vdash n$, $p_{\lambda}$ denotes the power-sum symmetric 
function associated to $\lambda$, i.e., $p_{\lambda}=p_{\lambda_1}p_{\lambda_2}\cdots 
p_{\lambda_{\ell(\lambda)}}$, where  $\ell(\lambda)$ is the number of nonzero parts of $\lambda$.
It is well-known that the set $\{p_{\lambda} \mid \lambda \vdash n\}$ is a basis for the component 
$\Lambda^n_{R}$ of 
$\Lambda_{R}$ consisting of homogeneous symmetric functions of degree $n$.

Let $\QQ[[z_1,z_2,\dots]]$ be the ring of formal power series in variables $(z_1,z_2,\dots)$. If $g 
\in \QQ[[z_1,z_2,\dots]]$ then  
\emph{plethysm} 
$p_k[g]$ of $p_k$ and $g$ is defined as:
\begin{align}\label{equation:definitionplethysm}
 p_k[g]=g(z_1^k,z_2^k,\dots).
\end{align}
The definition of plethysm is then extended to $p_{\lambda}$ multiplicatively and then to all of 
$\Lambda_{R}$ linearly with respect to $R$. 

It follows from (\ref{equation:definitionplethysm}), that 
if $f \in \Lambda^n_{R}$ and $g\in \QQ[[z_1,z_2,\dots]]$, the following identity holds:
\begin{align}\label{equation:plethysmnegative2}
f[-g]=(-1)^n\omega(f)[g]. 
\end{align}
where $\omega(\cdot)$ is the involution in $\Lambda_{R}$ that maps $p_{i}(\yy)$ to 
$(-1)^{i-1}p_{i}(\yy)$.

For (perhaps empty) integer partitions $\nu$ and $\lambda$ such that $\nu\subseteq \lambda$ 
(that is $\nu(i) \le \lambda(i)$ for all $i$), let
$S^{\lambda / \nu}$ denote the Specht module of shape $\lambda / \nu$ and $s_{\lambda / \nu}$ the 
Schur function of shape  $\lambda / \nu$. Recall that $s_{\lambda / \nu}$ is the image in the ring 
of symmetric functions of the specht module $S^{\lambda / \nu}$  under the Frobenius 
characteristic map $\ch$, i.e., $\ch S^{\lambda / \nu}=s_{\lambda / \nu}$.

We will use the following standard results in the theory of symmetric 
functions and the representation theory of the symmetric group, respectively.
\begin{proposition}[cf. \cite{Macdonald1995} and 
\cite{Wachs1999}]\label{proposition:macdonaldplethysm}
Let $\nu$ be a non empty integer partition and let $\{f_i\}_{i\ge 1}$ be a sequence of formal power 
series $f_i \in \ZZ[z_1,z_2,\dots]$ such that the sum $\sum_i f_i$ exists as a formal power 
series. Then
\begin{align*}
 s_{\lambda}\left [ \sum_{i\ge 1} f_i\right ]=\sum_{\substack{\emptyset= 
 \nu_0 \subseteq \nu_1 \subseteq \cdots \subseteq \nu_{j-1} \subsetneq \nu_j=\lambda\\j\ge 1}} 
\prod_{i=1}^j 
s_{\nu_i/\nu_{i-1}}[f_i].
\end{align*}
\end{proposition}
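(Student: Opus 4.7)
The plan is to deduce the formula from the standard coproduct identity for Schur functions under plethystic substitution, namely
\[
s_\lambda[f+g] \;=\; \sum_{\mu \subseteq \lambda} s_{\mu}[f]\, s_{\lambda/\mu}[g],
\]
valid for any two formal power series $f,g$ (with $s_{\lambda/\mu} = 0$ unless $\mu \subseteq \lambda$). This identity is the plethystic image of the comultiplication $\Delta(s_\lambda) = \sum_{\mu} s_\mu \otimes s_{\lambda/\mu}$ in the Hopf algebra of symmetric functions, combined with the fact that plethysm in the first slot is a ring homomorphism, so that $s_\lambda[f+g]$ is obtained by applying $\Delta$ and then evaluating the two tensor factors at $f$ and $g$ respectively.

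First, I would record the two-term identity above, either by citing the Hopf-algebraic derivation or, for a self-contained argument, by expanding $s_\lambda$ in the power-sum basis, using $p_k[f+g] = p_k[f] + p_k[g]$, and then resumming via the Littlewood--Richardson / Jacobi--Trudi expansion of skew Schur functions. Next, I would proceed by induction on the number $N$ of nonzero terms in the finite partial sum $F_N = f_1 + \cdots + f_N$. The base case $N=1$ gives $s_\lambda[f_1]$ on both sides (there the only chain is $\emptyset = \nu_0 \subsetneq \nu_1 = \lambda$, assuming $\lambda$ is nonempty). For the inductive step, write $s_\lambda[F_N] = s_\lambda[F_{N-1} + f_N] = \sum_{\mu \subseteq \lambda} s_\mu[F_{N-1}]\, s_{\lambda/\mu}[f_N]$, apply the induction hypothesis to each $s_\mu[F_{N-1}]$, and concatenate the resulting chain ending at $\mu$ with the final step $\mu \subseteq \lambda$. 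Collapsing any trailing equalities $\nu_{j-1} = \nu_j = \lambda$ into a single endpoint produces precisely the indexing in the statement, where the last containment $\nu_{j-1} \subsetneq \nu_j = \lambda$ is strict and $j$ is the first index at which the chain reaches $\lambda$ (the condition $\lambda \ne \emptyset$ guarantees $j \ge 1$).

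Finally, I would pass from finite partial sums to the full formal series $\sum_{i \ge 1} f_i$. The key point is that for each fixed monomial $z^\alpha$ appearing in the expansion of $s_\lambda\!\left[\sum_i f_i\right]$, only finitely many of the $f_i$ can contribute (because $\sum_i f_i$ exists as a formal power series, each monomial is supported on finitely many $f_i$, and $s_\lambda$ is a polynomial of bounded degree $|\lambda|$ in the $p_k[\cdot]$). Hence the coefficient of $z^\alpha$ in $s_\lambda[F_N]$ stabilizes for $N$ sufficiently large, so taking the limit in $N$ in the identity above yields the stated formula, where the sum on the right is well-defined monomial-by-monomial.

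The main obstacle is the bookkeeping around the strict last containment and the indexing by $j$: the naive iteration of the two-term coproduct produces chains indexed by all $i \ge 1$ with possibly repeated entries, and one has to verify that collapsing the repetitions (i.e., truncating at the first time the chain hits $\lambda$) is compatible with $s_{\nu_i/\nu_i}[f_i] = 1$ and accounts correctly for all contributions; the $j \ge 1$ and the non-emptiness of $\lambda$ together ensure that no ``empty chain'' appears and that the truncation is well-defined.
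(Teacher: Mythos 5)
The paper does not actually prove this proposition: it is stated with ``cf.~\cite{Macdonald1995}'' and ``\cite{Wachs1999}'' and invoked as a known fact, so there is no paper argument to compare against. Your proof is correct and uses what is probably the most natural route: iterate the two-term coproduct identity $s_\lambda[f+g]=\sum_{\mu\subseteq\lambda}s_\mu[f]\,s_{\lambda/\mu}[g]$ over finite partial sums, regroup by the first time the increasing chain of partitions hits $\lambda$ (using $s_{\nu/\nu}\equiv 1$ to absorb trailing equalities, which is precisely where $\lambda\neq\emptyset$ is needed to ensure $j\ge 1$), and pass to the formal limit, noting that any fixed target monomial involves only finitely many of the $f_i$. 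One suggestion for cleanliness: establish first the ``non-strict'' finite identity
\[
s_\lambda\left[\sum_{i=1}^N f_i\right]=\sum_{\emptyset=\nu_0\subseteq\nu_1\subseteq\cdots\subseteq\nu_N=\lambda}\ \prod_{i=1}^N s_{\nu_i/\nu_{i-1}}[f_i]
\]
by induction on $N$; this version holds uniformly for every partition (including $\emptyset$), so the inductive hypothesis applies to every $\mu\subseteq\lambda$ with no special cases, and the step index $i$ is automatically synchronized with the summand $f_i$. Only afterwards regroup by the first index $j$ at which $\nu_j=\lambda$ to obtain the strict version in the statement. Using the strict version directly as the inductive hypothesis, as your write-up suggests, runs into exactly the indexing headache you flag at the end: the strict version fails for $\mu=\emptyset$ (its right-hand side is empty while $s_\emptyset=1$), and it does not remember which $f_i$ corresponds to which step, so ``concatenation with the final step $\mu\subseteq\lambda$'' does not cleanly produce a chain whose $i$th containment matches $f_i$.
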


\begin{proposition}[cf. \cite{JamesKerber1981} and 
\cite{Wachs1999}]\label{proposition:specht}
 Let $\lambda \vdash \ell$ and let $(m_1,m_2,\dots,m_t)$ be a sequence of nonnegative integers 
whose 
sum is $\ell$. Then the restriction of the $\sym_{\ell}$-module $S^{\lambda}$ to the Young subgroup 
$\times_{i=1}^t\sym_{m_i}$ decomposes into a direct sum of outer tensor products of 
$\sym_{m_i}$-modules as follows,
\begin{align*}
 S^{\lambda} \downarrow_{\times \sym_{m_i}}^{\sym_{\ell}} = \bigoplus_{\substack{\emptyset= 
 \nu_0 \subseteq \nu_1 \subseteq \cdots \subseteq \nu_t=\lambda\\|\nu_i|-|\nu_{i-1}|=m_i}}
 \bigotimes_{i=1}^t S^{\nu_i/\nu_{i-1}}.
\end{align*}
\end{proposition}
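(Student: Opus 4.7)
The plan is to prove the statement by induction on $t$, reducing to the two-factor case and then iterating. For $t=1$ the only admissible chain is $\emptyset \subseteq \lambda$, and the claim reduces to the tautology $S^\lambda \downarrow^{\sym_\ell}_{\sym_\ell} = S^{\lambda/\emptyset} = S^\lambda$.

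For the two-factor case $t=2$, I would show the branching rule
\begin{align*}
S^\lambda \downarrow^{\sym_\ell}_{\sym_a \times \sym_b} \;=\; \bigoplus_{\substack{\nu \subseteq \lambda \\ |\nu|=a}} S^\nu \boxtimes S^{\lambda/\nu},
\end{align*}
where $a+b=\ell$ and $S^{\lambda/\nu}$ is the skew Specht module (characterized by $\ch S^{\lambda/\nu}=s_{\lambda/\nu}$). Because the restriction and induction functors between $\sym_\ell$ and the Young subgroup $\sym_a \times \sym_b$ are adjoints, Frobenius reciprocity yields for any irreducibles $S^\mu$ of $\sym_a$ and $S^\rho$ of $\sym_b$,
\begin{align*}
\bigl\langle S^\lambda \downarrow,\; S^\mu \boxtimes S^\rho \bigr\rangle_{\sym_a \times \sym_b}
\;=\; \bigl\langle S^\lambda,\; \mathrm{Ind}(S^\mu \boxtimes S^\rho) \bigr\rangle_{\sym_\ell}
\;=\; c^{\lambda}_{\mu\rho},
\end{align*}
the Littlewood--Richardson coefficient, where the second equality uses that $\ch$ carries induction from a Young subgroup to multiplication of symmetric functions, so that $\ch\,\mathrm{Ind}(S^\mu \boxtimes S^\rho)=s_\mu s_\rho$. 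On the right-hand side, the multiplicity of $S^\mu \boxtimes S^\rho$ is the multiplicity of $S^\rho$ in $S^{\lambda/\mu}$, which is the coefficient of $s_\rho$ in $s_{\lambda/\mu}$, also equal to $c^{\lambda}_{\mu\rho}$ by the classical expansion $s_{\lambda/\mu}=\sum_\rho c^{\lambda}_{\mu\rho}\,s_\rho$. Since both sides have the same multiplicities on every irreducible of $\sym_a \times \sym_b$, they are isomorphic.

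For the inductive step, given the result for $t-1$, I would first apply the two-factor rule to the decomposition $\sym_\ell \supseteq \sym_{m_1+\cdots+m_{t-1}} \times \sym_{m_t}$, obtaining
\begin{align*}
S^\lambda \downarrow^{\sym_\ell}_{\sym_{m_1+\cdots+m_{t-1}} \times \sym_{m_t}}
\;=\; \bigoplus_{\substack{\nu_{t-1} \subseteq \lambda \\ |\nu_{t-1}|=m_1+\cdots+m_{t-1}}} S^{\nu_{t-1}} \boxtimes S^{\lambda/\nu_{t-1}}.
\end{align*}
Then I would further restrict each $S^{\nu_{t-1}}$ along $\sym_{m_1+\cdots+m_{t-1}} \supseteq \sym_{m_1} \times \cdots \times \sym_{m_{t-1}}$ using the inductive hypothesis applied to the partition $\nu_{t-1} \vdash m_1+\cdots+m_{t-1}$. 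Reindexing the nested sums by the full chain $\emptyset = \nu_0 \subseteq \nu_1 \subseteq \cdots \subseteq \nu_{t-1} \subseteq \nu_t=\lambda$ with $|\nu_i|-|\nu_{i-1}|=m_i$ yields the stated decomposition.

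The only nontrivial ingredient is the two-factor branching rule, and within that the main technical obstacle is the identification of skew Specht modules via their Frobenius characteristics $s_{\lambda/\nu}$; this in turn rests on the coproduct identity $s_\lambda(\xx,\yy)=\sum_\nu s_\nu(\xx)\,s_{\lambda/\nu}(\yy)$ together with the fact that $\ch$ is a Hopf-algebra isomorphism intertwining induction/restriction with multiplication/comultiplication of symmetric functions. Once the two-factor case is in hand, the inductive step is purely bookkeeping, so the argument reduces entirely to classical symmetric-function identities.
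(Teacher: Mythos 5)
Your proof is correct. The paper does not actually prove this proposition; it is stated with a pointer to James--Kerber and Wachs and used as a standard black box. Your argument supplies a self-contained proof along the classical lines: reduce to the two-factor case, establish that case by computing multiplicities on both sides via Frobenius reciprocity, the Hall inner product, and the Littlewood--Richardson expansion $s_{\lambda/\mu}=\sum_\rho c^\lambda_{\mu\rho}s_\rho$, and then iterate using the tower $\sym_{m_1}\times\cdots\times\sym_{m_t}\subseteq \sym_{m_1+\cdots+m_{t-1}}\times\sym_{m_t}\subseteq\sym_\ell$. The only implicit hypothesis worth flagging is that you are comparing modules by comparing multiplicities of irreducibles, which identifies them up to isomorphism only because the ground field has characteristic zero; in the paper's setting (Section 7 works over $R\supseteq\QQ$ and uses $\ch$ throughout) this is the standing assumption, so the argument is sound. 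One could alternatively cite the classical fact that $s_{\lambda}(\xx,\yy)=\sum_\nu s_\nu(\xx)s_{\lambda/\nu}(\yy)$ and use the Hopf-algebra adjunction between restriction/comultiplication and induction/multiplication, which is the route you gesture at in the final paragraph; this is an equivalent packaging of the same Frobenius-reciprocity computation, not a genuinely different argument.
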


Recall that the 
\emph{wreath product} of the symmetric groups $\sym_m$ and $\sym_n$, denoted $\sym_m[\sym_n]$, is 
the normalizer of the Young subgroup 
$\stackrel{m \text{ times}}{\overbrace{\sym_n\times \cdots \times  \sym_n}}$ of $\sym_{mn}$. 
Each element of $\sym_m[\sym_n]$ can be represented as an $(m+1)$-tuple 
$(\sigma_1,\dots,\sigma_{m};\tau)$ with $\tau \in \sym_m$ and $\sigma_i \in \sym_n$ for all $i \in 
[m]$.

From an $\sym_n$-module $W$ we can construct a representation $\widetilde{W^{\otimes m}}$ of 
$\sym_m[\sym_n]$ on the vector space $W^{\otimes m}:=\stackrel{m \text{ 
times}}{\overbrace{W\otimes \cdots \otimes W}}$ with action given 
by
\begin{align*} 
(\sigma_1,\dots,\sigma_{m};\tau)(w_1\times\cdots\times w_m):=\sigma_1 w_{\tau^{-1}(1)}
\times\cdots\times \sigma_m w_{\tau^{-1}(m)},
\end{align*}
and from an $\sym_m$-module $V$ we can construct a representation $\widehat V$ of $\sym_m[\sym_n]$ 
with 
action given by
\begin{align*}
 (\sigma_1,\dots,\sigma_{m};\tau)(v):=\tau v,
\end{align*}
called the \emph{pullback} of $V$ from $\sym_m$ to $\sym_m[\sym_n]$.
The \emph{wreath product module} $V[W]$ of the $\sym_m$-module $V$ and the $\sym_n$-module $W$ is 
the $\sym_m[\sym_n]$-module
\begin{align}\label{definition:wreathproductmodule}
 V[W]:=\widetilde{W^{\otimes m}}\otimes \widehat{V},
\end{align}
where $\otimes$ denotes inner tensor product.

\begin{proposition}[\cite{Macdonald1995}]\label{proposition:tensorandwreathch}
Let $V$ be an $\sym_m$-module and $W$ an $\sym_n$-module. Then
\begin{align*}
 \ch \left ( (V \otimes W) \uparrow_{\sym_m \times \sym_n}^{\sym_{m+n}} \right )&= \ch V\ch W,\\
 \ch \left ( V[W]\uparrow_{\sym_m [\sym_n]}^{\sym_{mn}}\right )&=\ch V [\ch W],
\end{align*}
where $\uparrow_*^*$ denotes induction.
\end{proposition}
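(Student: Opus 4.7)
The plan is to establish both identities by explicit character computations, working in the power-sum basis where the Frobenius characteristic map is most transparent. Recall that for any $\sym_n$-module $W$ with character $\chi_W$,
\[
\ch W \;=\; \sum_{\lambda \vdash n} \chi_W(C_\lambda)\,\frac{p_\lambda}{z_\lambda},
\]
where $C_\lambda$ is any element of cycle type $\lambda$ and $z_\lambda$ is the usual centralizer order. Hence each identity reduces to computing the induced character on elements of a given cycle type and matching coefficients.

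For the first identity, I would apply Frobenius's induced character formula: if $\sigma \in \sym_{m+n}$ has cycle type $\lambda$, then
\[
\chi_{\mathrm{ind}}(\sigma) \;=\; \frac{1}{m!\,n!}\sum_{\substack{\pi \in \sym_{m+n}\\ \pi\sigma\pi^{-1}\in\sym_m\times\sym_n}} \chi_V(\pi_1)\,\chi_W(\pi_2),
\]
where $\pi\sigma\pi^{-1}=(\pi_1,\pi_2)$. Grouping by how the cycles of $\sigma$ are partitioned between $\sym_m$ and $\sym_n$, this rearranges to
\[
\chi_{\mathrm{ind}}(C_\lambda) \;=\; \sum_{\substack{\mu\vdash m,\ \nu\vdash n\\ \mu\cup\nu=\lambda}} \frac{z_\lambda}{z_\mu z_\nu}\,\chi_V(C_\mu)\,\chi_W(C_\nu).
\]
Summing $\chi_{\mathrm{ind}}(C_\lambda)\,p_\lambda/z_\lambda$ over $\lambda\vdash m+n$ and using $p_\mu p_\nu = p_{\mu\cup\nu}$ gives $\ch V \cdot \ch W$.

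For the second identity, I would first derive the character formula for $V[W]$ from the definition (\ref{definition:wreathproductmodule}): for $g=(\sigma_1,\dots,\sigma_m;\tau)\in\sym_m[\sym_n]$,
\[
\chi_{V[W]}(g) \;=\; \chi_V(\tau)\,\prod_{c\ \text{cycle of}\ \tau}\chi_W\!\left(\textstyle\prod_{i\in c}\sigma_i\right),
\]
the product over cycles being obtained by diagonalizing the block-permutation action. The critical combinatorial input is then the cycle type of $g$ viewed inside $\sym_{mn}$: if $\tau$ has a $k$-cycle $c$ and the associated product $\prod_{i\in c}\sigma_i\in\sym_n$ has cycle type $\nu$, then $c$ contributes cycles of lengths $k\nu_1,k\nu_2,\dots$ to $g$. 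In power-sum language, this is exactly the rule $p_k[p_\nu]=p_{k\nu}$. Expanding $\chi_V(\tau)$ and each factor $\chi_W(\prod_{i\in c}\sigma_i)$ in power sums via the Frobenius characteristic, applying Frobenius induction from $\sym_m[\sym_n]$ to $\sym_{mn}$, and re-collecting the result by the cycle type of $g$ in $\sym_{mn}$, the wreath product structure together with $p_k[p_\nu]=p_{k\nu}$ delivers precisely $(\ch V)[\ch W]$.

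The main obstacle is the bookkeeping in the second identity, specifically tracking the Frobenius induction normalizing factors against the orbit-counting intrinsic to plethysm. Concretely, one must show that the product of centralizer orders of the sub-permutations $\prod_{i\in c}\sigma_i$ (indexed by cycles of $\tau$), together with $|Z_{\sym_m}(\tau)|$, matches $z_\lambda$ for the resulting cycle type $\lambda$ of $g$ in $\sym_{mn}$ under the plethystic substitution. Once this numerical identity is verified (it is a direct computation using the explicit formula for $z_\lambda$ and the structure of the centralizer of $g$ in $\sym_m[\sym_n]$), both sides agree coefficient-by-coefficient in the power-sum basis and the proposition follows.
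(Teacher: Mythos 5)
Your proposal is sound, and it fills in what the paper leaves entirely to the literature: the paper gives no argument here, citing only Macdonald. Both identities you outline are correct. For the first, the Frobenius induced-character formula together with the observation that exactly $z_\lambda/(z_\mu z_\nu)$ conjugates of a class-$\lambda$ element land in $\sym_m\times\sym_n$ with components of types $\mu,\nu$ (where $\mu\cup\nu=\lambda$) yields $\chi_{\mathrm{ind}}(C_\lambda)=z_\lambda\sum_{\mu\cup\nu=\lambda}\chi_V(\mu)\chi_W(\nu)/(z_\mu z_\nu)$, and the $p_\mu p_\nu=p_{\mu\cup\nu}$ identity finishes the computation. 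For the second, the character formula $\chi_{V[W]}(g)=\chi_V(\tau)\prod_{c}\chi_W(\text{cycle product of }c)$ does follow from the definition $V[W]=\widetilde{W^{\otimes m}}\otimes\widehat V$ (the trace of $\widetilde{W^{\otimes m}}$ on $g$ factors over the cycles of $\tau$, each contributing the character of $W$ at the cycle product), and the observation that a $k$-cycle of $\tau$ whose cycle product has type $\nu$ contributes a part of type $k\nu$ to the cycle type of $g$ in $\sym_{mn}$ is exactly the combinatorial meaning of $p_k[p_\nu]=p_{k\nu}$.

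The one place where you stop short is the centralizer bookkeeping you flag at the end: you assert but do not carry out the verification that the induction normalization factors match $z_\lambda$ for the resulting cycle type in $\sym_{mn}$. This does work out — the centralizer of $g$ in $\sym_m[\sym_n]$ has order $\prod_{k,\nu} \bigl(k^{\,m_{k,\nu}}\, z_\nu^{\,m_{k,\nu}}\, m_{k,\nu}!\bigr)$ where $m_{k,\nu}$ is the number of $k$-cycles of $\tau$ with cycle-product type $\nu$, and after summing over the fusion of such data into a single cycle type $\lambda\vdash mn$ the normalization against $z_\lambda$ is exactly absorbed by the plethystic substitution — but it is the nontrivial step, and your proof is not complete until it is written out. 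Also, when you write the cycle product as $\prod_{i\in c}\sigma_i$ you should note that this product is ordered by the cycle and is only well-defined up to conjugacy; that ambiguity is harmless because you only ever evaluate a class function on it, but it deserves a sentence. Macdonald's own treatment circumvents this bookkeeping by constructing a characteristic map for $\sym_n\wr\sym_m$ directly and proving it intertwines with plethysm, which is cleaner but less self-contained than your direct character computation; either route is acceptable once the details are supplied.
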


\subsection{Weighted integer partitions} \label{section:weightednumberpartitions}
Now let $\Phi$ be a finitary totally ordered set and let $||\cdot||: \Phi \rightarrow 
\PP$ be a map. We call a finite multiset $\tlambda=(\tlambda_1 \ge_{\Phi} 
\tlambda_2\ge_{\Phi}\cdots \ge_{\Phi} \tlambda_j)$ of $\Phi$ a \emph{$\Phi$-partition}  of 
\emph{length} $\ell(\tlambda):=j$.
We also define $|\tlambda|:=\sum_{j} ||\tlambda_j||$ and say that $\tlambda$ is a $\Phi$-partition 
of $n$ if  $|\tlambda|=n$. Denote the 
set of $\Phi$-partitions by $\Par(\Phi)$ and the set of $\Phi$-partitions of length $\ell$ by 
$\Par_{\ell}(\Phi)$. For $\phi \in \Phi$, we denote by $m_{\phi}(\tlambda)$, the number of 
times  
$\phi$ appears in $\tlambda$.

Let $V$ be an $\sym_{\ell}$-module, $W_{\phi}$ be an $\sym_{||\phi||}$-module for each 
$\phi \in \Phi$ and $\tlambda$ a $\Phi$-partition with $\ell$ parts.  Note that
$\times_{\phi 
\in \Phi}\sym_{m_{\phi}(\tlambda)}[\sym_ { ||\phi||}]$
 is a finite product since $\tlambda$ is a 
finite multiset.
The module
\begin{align*}
 \bigotimes_{\phi \in \Phi} \widetilde{W_{\phi}^{\otimes m_{\phi}(\tlambda)}} \otimes 
\widehat{V}^{\tlambda},
\end{align*}
is the inner tensor product of two 
$\times_{\phi  \in \Phi}\sym_{m_{\phi}(\tlambda)}[\sym_{||\phi||}]$-modules. The first module is 
the 
outer tensor product 
$ \bigotimes_{\phi \in \Phi} \widetilde{W_{\phi}^{\otimes m_{\phi}(\tlambda)}}$ of the 
$\sym_{m_{\phi}(\tlambda)}[\sym_{||\phi||}]$-modules 
$\widetilde{W_{\phi}^{\otimes 
m_{\phi}(\tlambda)}}$ (cf. Section \ref{section:wreathproducandplethysm}) 
and the second module is the pullback 
$\widehat{V}^{\tlambda}$ of the restricted representation $V\downarrow^{\sym_{\ell}}_{\times_{\phi 
\in \Phi}
\sym_{m_{\phi}(\tlambda)}}$ to 
$\times_{\phi \in \Phi}\sym_{m_{\phi}(\tlambda)}[\sym_{||\phi||}]$ 
through 
the product of the natural homomorphisms 
$\sym_{m_{\phi}(\tlambda)}[\sym_{||\phi||}]\rightarrow 
\sym_{m_{\phi}(\tlambda)}$ given by 
$(\sigma_1,\dots,\sigma_{m_{\phi}(\tlambda)};\tau)\mapsto 
\tau$.

The following theorem generalizes \cite[Theorem 5.5]{Wachs1999} and the proof follows 
the same idea.

\begin{theorem}\label{theorem:wachsplethysm}
 Let $V$ be an $\sym_{\ell}$-module and $W_{\phi}$ be an $\sym_{||\phi||}$-module for each 
$\phi \in \Phi$. Then
 \begin{align*}
  \sum_{\tlambda \in \Par_{\ell}(\Phi)} \ch \left ( \left ( \bigotimes_{\phi \in \Phi} 
\widetilde{W_{\phi}^{\otimes 
m_{\phi}(\tlambda)}} 
\otimes 
\widehat{V}^{\tlambda} \right ) \Biggl 
\uparrow^{\sym_{|\tlambda|}}_{\times_{\phi \in \Phi}\sym_{m_{\phi}(\tlambda)}[\sym_{||\phi||}]} 
\right ) 
z_{\tlambda}=\ch(V)\left [\sum_{\phi \in \Phi}\ch(W_{\phi})z_{\phi}\right ],
\end{align*}
where  $z_{\phi}$ are 
indeterminates with $z_{\tlambda}:=z_{\tlambda_1}\cdots z_{\tlambda_{\ell}}$ .
\end{theorem}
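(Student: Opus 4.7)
The plan is to reduce, by bilinearity of both sides in $V$, to the case where $V = S^\lambda$ is a single Specht module, and then match the two sides term-by-term using Propositions \ref{proposition:macdonaldplethysm}, \ref{proposition:specht}, and \ref{proposition:tensorandwreathch}. The overall strategy mirrors that of \cite[Theorem 5.5]{Wachs1999}, with $\Phi$ replacing the finite index set used there.

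I would first fix a linear enumeration $\Phi = \{\phi_1 < \phi_2 < \cdots\}$ compatible with the given total order, set $f_i := \ch(W_{\phi_i})\,z_{\phi_i}$, and apply Proposition \ref{proposition:macdonaldplethysm} to obtain
\begin{align*}
s_\lambda\!\left[\sum_i f_i\right] = \sum_{\substack{\emptyset = \nu_0 \subseteq \cdots \subseteq \nu_{j-1} \subsetneq \nu_j = \lambda \\ j \ge 1}} \prod_{i=1}^j s_{\nu_i/\nu_{i-1}}[\ch W_{\phi_i}]\, z_{\phi_i}^{|\nu_i/\nu_{i-1}|},
\end{align*}
using that $p_k[z_\phi\, g] = z_\phi^k\, p_k[g]$ for a scalar formal variable $z_\phi$, which upon extension yields $s_{\nu/\mu}[z_\phi\, g] = z_\phi^{|\nu/\mu|} s_{\nu/\mu}[g]$ for any skew shape. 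Regrouping by the $\Phi$-partition $\tlambda$ of length $\ell$ defined via $m_{\phi_i}(\tlambda) := |\nu_i/\nu_{i-1}|$, the coefficient of $z_\tlambda$ becomes a sum over chains with prescribed step sizes. Each factor $s_{\nu_i/\nu_{i-1}}[\ch W_{\phi_i}]$ then equals $\ch\bigl(S^{\nu_i/\nu_{i-1}}[W_{\phi_i}]\uparrow_{\sym_{|\nu_i/\nu_{i-1}|}[\sym_{||\phi_i||}]}^{\sym_{|\nu_i/\nu_{i-1}|\cdot ||\phi_i||}}\bigr)$ by the second identity of Proposition \ref{proposition:tensorandwreathch}, and the first identity combined with the transitivity of induction assembles the product into the characteristic of a single $\sym_{|\tlambda|}$-module induced from $\times_{\phi}\sym_{m_\phi(\tlambda)}[\sym_{||\phi||}]$.

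The final step is to recognize this induced module as the module appearing on the left-hand side when $V = S^\lambda$. Applying Proposition \ref{proposition:specht} to the restriction $S^\lambda \downarrow^{\sym_\ell}_{\times_\phi \sym_{m_\phi(\tlambda)}}$ and summing over chains with the prescribed step sizes yields the isomorphism
\begin{align*}
\bigoplus_{\substack{\text{chains}\\ |\nu_i/\nu_{i-1}| = m_{\phi_i}(\tlambda)}} \bigotimes_i S^{\nu_i/\nu_{i-1}}[W_{\phi_i}] \;\cong\; \bigotimes_{\phi \in \Phi}\widetilde{W_\phi^{\otimes m_\phi(\tlambda)}} \otimes \widehat{S^\lambda}^\tlambda
\end{align*}
of $\times_{\phi}\sym_{m_\phi(\tlambda)}[\sym_{||\phi||}]$-modules. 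Taking Frobenius characteristics, multiplying by $z_\tlambda$, and summing over $\tlambda \in \Par_\ell(\Phi)$ then gives the theorem for $V = S^\lambda$, and linearity in $V$ extends the identity to arbitrary $\sym_\ell$-modules.

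The main obstacle is the bookkeeping behind this last isomorphism: one must carefully unwind the wreath-product module definition (\ref{definition:wreathproductmodule}) to factor the outer tensor pieces $\widetilde{W_\phi^{\otimes m_\phi(\tlambda)}}$ out of the direct sum over chains, so that the remaining pullback summands $\widehat{S^{\nu_i/\nu_{i-1}}}$ reassemble through Proposition \ref{proposition:specht} into a single pullback $\widehat{S^\lambda}^\tlambda$ of the restricted Specht module. This combinatorial identification is the analogue in our setting of the central calculation in \cite[Theorem 5.5]{Wachs1999}, and once it is verified the theorem follows by the linearity argument described above.
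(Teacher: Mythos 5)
Your proof is correct and follows essentially the same approach as the paper's own proof, differing only in direction of presentation: you expand the right-hand side via Proposition~\ref{proposition:macdonaldplethysm} and work back toward the module description, whereas the paper decomposes the left-hand side module via Proposition~\ref{proposition:specht} and the wreath-product definition first, applying the plethysm formula as the final step. Both arguments reduce to $V = S^\lambda$ by linearity and invoke the same ingredients (Propositions~\ref{proposition:macdonaldplethysm}, \ref{proposition:specht}, and \ref{proposition:tensorandwreathch} together with transitivity of induction), so the underlying computation is identical.
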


\begin{proof}
Note that restriction, induction, pullback, $\ch$ and plethysm in the outer 
component are all linear and inner tensor product is bilinear. Thus it is enough to 
prove the theorem for $V$ equal to an irreducible $\sym_{\ell}$-module $S^{\eta}$ (the Specht 
module associated to 
the partition $\eta\vdash \ell$). 
Since the set $\Phi$ is a finitary totally ordered set, we can denote
by $\phi^{i}$, the $i$th element in the total order of $\Phi$.
Consider $\tlambda \in \Par_{\ell}(\Phi)$ and let  $t:=\max\{i \mid \phi^i \in \tlambda\}$.
Now using Proposition \ref{proposition:specht} and the definition of a wreath product module in 
equation (\ref{definition:wreathproductmodule}) yields
\begin{align*}
 \bigotimes_{i=1}^{t} \widetilde{W_{\phi^{i}}^{\otimes m_{\phi^{i}}(\tlambda)}} \otimes 
\widehat{S^{\eta}}^{\tlambda}&=\bigotimes_{i=1}^{t} \widetilde{W_{\phi^{i}}^{\otimes 
m_{\phi^{i}}(\tlambda)}} \otimes \left (\bigoplus_{\substack{\emptyset= 
 \nu_0 \subseteq \nu_1 \subseteq \cdots \subseteq \nu_t=\eta 
\\|\nu_i|-|\nu_{i-1}|=m_{\phi^{i}}(\tlambda)}}
 \bigotimes_{i=1}^t \widehat{S^{\nu_i/\nu_{i-1}}}\right )\nonumber\\
 &=\bigoplus_{\substack{\emptyset= 
 \nu_0 \subseteq \nu_1 \subseteq \cdots \subseteq \nu_t=\eta 
\\|\nu_i|-|\nu_{i-1}|=m_{\phi^{i}}(\tlambda)}}
\left ( \bigotimes_{i=1}^{t} \widetilde{W_{\phi^{i}}^{\otimes 
m_{\phi^{i}}(\tlambda)}} \otimes 
 \bigotimes_{i=1}^t \widehat{S^{\nu_i/\nu_{i-1}}}\right ) \nonumber\\
 &=\bigoplus_{\substack{\emptyset= 
 \nu_0 \subseteq \nu_1 \subseteq \cdots \subseteq \nu_t=\eta 
\\|\nu_i|-|\nu_{i-1}|=m_{\phi^{i}}(\tlambda)}}
\bigotimes_{i=1}^{t} \left ( \widetilde{W_{\phi^{i}}^{\otimes 
m_{\phi^{i}}(\tlambda)}} \otimes 
\widehat{S^{\nu_i/\nu_{i-1}}}\right ) \nonumber\\
&=\bigoplus_{\substack{\emptyset= 
 \nu_0 \subseteq \nu_1 \subseteq \cdots \subseteq \nu_t=\eta 
\\|\nu_i|-|\nu_{i-1}|=m_{\phi^{i}}(\tlambda)}}
\bigotimes_{i=1}^{t} S^{\nu_i/\nu_{i-1}}[W_{\phi^{i}}].\nonumber
\end{align*}
We induce and then apply the Frobenius characteristic map $\ch$. Then using 
Proposition
\ref{proposition:tensorandwreathch} and the transitivity 
property of induction of representations, we have that
\begin{align*}
 &\ch \left ( \left ( \bigotimes_{i=1}^{t} \widetilde{W_{\phi^{i}}^{\otimes 
m_{\phi^{i}}(\tlambda)}} 
\otimes 
\widehat{S^{\eta}}^{\tlambda}\right )
\Biggl \uparrow^{\sym_{|\tlambda|}}_{\times_{i=1}^t\sym_{m_{\phi^{i}}(\tlambda) } [ 
\sym_{||\phi^{i}||}]} \right )
\hspace{2.5in} \nonumber\\
=& \ch \left ( \bigoplus_{\substack{\emptyset= 
 \nu_0 \subseteq \nu_1 \subseteq \cdots \subseteq \nu_t=\eta 
\\|\nu_i|-|\nu_{i-1}|=m_{\phi^{i}}(\tlambda)}}
\left (\bigotimes_{i=1}^{t} 
S^{\nu_i/\nu_{i-1}}[W_{\phi^{i}}]\right)
\Biggl \uparrow^{\sym_{|\tlambda|}}_{\times_{i=1}^t\sym_{m_{\phi^{i}}(\tlambda)}[
\sym_{||\phi^{i}||}]} \right ) \nonumber\\
=&  \sum_{\substack{\emptyset= 
 \nu_0 \subseteq \nu_1 \subseteq \cdots \subseteq \nu_t=\eta 
\\|\nu_i|-|\nu_{i-1}|=m_{\phi^{i}}(\tlambda)}}
\ch \left ( \left (\bigotimes_{i=1}^{t} 
S^{\nu_i/\nu_{i-1}}[W_{\phi^{i}}]\right)
\Biggl \uparrow^{\sym_{|\tlambda|}}_{\times_{i=1}^t\sym_{m_{\phi^{i}}(\tlambda)}[
\sym_{||\phi^{i}||}]} \right ) \nonumber\\
=&  \sum_{\substack{\emptyset= 
 \nu_0 \subseteq \nu_1 \subseteq \cdots \subseteq \nu_t=\eta 
\\|\nu_i|-|\nu_{i-1}|=m_{\phi^{i}}(\tlambda)}}
\ch \left (  \left ( \left( \bigotimes_{i=1}^{t} 
S^{\nu_i/\nu_{i-1}}[W_{\phi^{i}}]\right)
\Biggl 
\uparrow^{\times_{i=1}^t\sym_{m_{\phi^{i}}(\tlambda)||\phi^{i}||}}_{\times_{i=1}^t\sym_{m_{\phi^{i}}
(\tlambda)}[
\sym_{||\phi^{i}||}]} \right)
\Biggl \uparrow^{\sym_{|\tlambda|}}_{\times_{i=1}^t\sym_{m_{\phi^{i}}(\tlambda)||\phi^{i}||}} 
\right)
\nonumber\\
=&  \sum_{\substack{\emptyset= 
 \nu_0 \subseteq \nu_1 \subseteq \cdots \subseteq \nu_t=\eta 
\\|\nu_i|-|\nu_{i-1}|=m_{\phi^{i}}(\tlambda)}}
\ch \left ( \left (\bigotimes_{i=1}^{t} \left(
S^{\nu_i/\nu_{i-1}}[W_{\phi^{i}}]
\Biggl 
\uparrow^{\sym_{m_{\phi^{i}}(\tlambda)||\phi^{i}||}}_{\sym_{m_{\phi^{i}}
(\tlambda)}[
\sym_{||\phi^{i}||}]}\right) \right)
\Biggl \uparrow^{\sym_{|\tlambda|}}_{\times_{i=1}^t\sym_{m_{\phi^{i}}(\tlambda)||\phi^{i}||}} 
\right )
\nonumber\\
=&  \sum_{\substack{\emptyset= 
 \nu_0 \subseteq \nu_1 \subseteq \cdots \subseteq \nu_t=\eta 
\\|\nu_i|-|\nu_{i-1}|=m_{\phi^{i}}(\tlambda)}}
\prod_{i=1}^{t}\ch  \left(
S^{\nu_i/\nu_{i-1}}[W_{\phi^{i}}]
\Bigl 
\uparrow^{\sym_{m_{\phi^{i}}(\tlambda)||\phi^{i}||}}_{\sym_{m_{\phi^{i}}
(\tlambda)}[
\sym_{||\phi^{i}||}]}\right)
\nonumber\\
=&  \sum_{\substack{\emptyset= 
 \nu_0 \subseteq \nu_1 \subseteq \cdots \subseteq \nu_t=\eta 
\\|\nu_i|-|\nu_{i-1}|=m_{\phi^{i}}(\tlambda)}}
\prod_{i=1}^{t}
s_{\nu_i/\nu_{i-1}}[\ch W_{\phi^{i}}].
\nonumber
\end{align*}
Now note that $s_{\nu_i/\nu_{i-1}}[\ch W_{\phi}z_{\phi}]=s_{\nu_i/\nu_{i-1}}[\ch 
W_{\phi}]z_{\phi}^{|\nu_i|-|\nu_{i-1}|}$ and that $s_{\nu_i/\nu_{i-1}}=s_{\emptyset}=1$ if 
$\nu_i=\nu_{i-1}$. And using Proposition \ref{proposition:macdonaldplethysm}, we obtain
\begin{align*}
  \sum_{\tlambda \in \Par_{\ell}(\Phi)} \ch \left ( \left ( \bigotimes_{i\ge 1} 
\widetilde{W_{\phi^{i}}^{\otimes 
m_{\phi^{i}}(\tlambda)}} 
\otimes 
\widehat{S^{\eta}}^{\tlambda} \right ) \Biggl 
\uparrow^{\sym_{|\tlambda|}}_{\times_{i=1}^t\sym_{m_{\phi^{i}}(\tlambda)}[\sym_{||\phi^{i}||}]} 
\right ) 
z_{\tlambda}\hspace{2.5in}\nonumber
\end{align*}
\begin{align*}
 =&\sum_{\tlambda \in \Par_{\ell}(\Phi)} \sum_{\substack{\emptyset= 
 \nu_0 \subseteq \nu_1 \subseteq \cdots \subseteq \nu_t=\eta 
\\|\nu_i|-|\nu_{i-1}|=m_{\phi^{i}}(\tlambda)}}
\prod_{i=1}^{t}
s_{\nu_i/\nu_{i-1}}[\ch W_{\phi^{i}}z_{\phi^{i}}]\nonumber\\
 =&\sum_{\substack{\emptyset= 
 \nu_0 \subseteq \nu_1 \subseteq \cdots \subseteq \nu_{j-1} \subsetneq \nu_j=\eta\\ j\ge 1}} 
\prod_{i=1}^{j}
s_{\nu_i/\nu_{i-1}}[\ch W_{\phi^{i}}z_{\phi^{i}}]\nonumber\\
=&s_{\eta}\left[\sum_{i\ge1}\ch W_{\phi^{i}}z_{\phi^{i}}\right]\nonumber\\
=&\ch S^{\eta}\left[\sum_{\phi \in \Phi} \ch W_{\phi}z_{\phi}\right].\nonumber\qedhere
\end{align*}
\end{proof}

\subsection{Using Whitney (co)homology to compute (co)homology} \label{section:sundaramtechnique}

The technique of Sundaram \cite{Sundaram1994} to compute characters of  
$G$-representations on the (co)homology of pure $G$-posets is based on the following result: 
\begin{lemma}[\cite{Sundaram1994} Lemma 1.1]\label{lemma:sundaramsum}
Let $P$ be a bounded poset of length $\ell\ge 1$ and let $G$ be a group of automorphisms of $P$.
Then the following isomorphism of virtual $G$-modules holds
\begin{align*}
 \bigoplus_{i=0}^{\ell}(-1)^{i}WH^i(P)\cong_{G}0.
\end{align*}
\end{lemma}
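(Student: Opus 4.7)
The plan is to apply the Euler--Poincar\'e principle twice: once locally, to the order complex of each open interval $(\hat 0, x)$, and once globally, to the order complex of the subposet $(\hat 0, \hat 1]$. The local step is standard. The direct sum $\bigoplus_{x \in P}\widetilde C^{\bullet - 2}((\hat 0, x); \mathbf{k})$ of augmented cochain complexes is a complex of $G$-modules ($G$ permutes summands via its action on $P$ and acts by poset automorphisms within each summand), and its cohomology is exactly $WH^{\bullet}(P)$. Euler--Poincar\'e therefore gives the virtual $G$-module identity
\[
\sum_i (-1)^i WH^i(P) \;=\; \sum_i (-1)^i \bigoplus_{x \in P} \widetilde C^{i-2}((\hat 0, x); \mathbf{k}).
\]

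Next, I would reinterpret the right-hand side as coming from the chain complex of a single simplicial complex. A basis element of $\widetilde C^{i-2}((\hat 0, x); \mathbf{k})$ is a chain of length $i-2$ in $(\hat 0, x)$; adjoining $\hat 0$ below and $x$ above produces a chain of length $i$ in $P$ starting at $\hat 0$, and this $G$-equivariant correspondence identifies
\[
\bigoplus_{x \in P} \widetilde C^{i-2}((\hat 0, x); \mathbf{k}) \;\cong_G\; \mathbf{k}[\mathfrak M_i],
\]
where $\mathfrak M_i$ is the set of chains of length $i$ in $P$ starting at $\hat 0$ (with $\mathfrak M_0 = \{\{\hat 0\}\}$ coming from the empty chain in $(\hat 0, \hat 0)$ via the augmentation convention $\widetilde C^{-1}(\emptyset) = \mathbf{k}$). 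Deleting the initial $\hat 0$ from a chain in $\mathfrak M_i$ then identifies $\mathfrak M_i$ $G$-equivariantly with the set of $(i-1)$-dimensional simplices of the order complex $\Delta((\hat 0, \hat 1])$ of the subposet $(\hat 0, \hat 1] = P \setminus \{\hat 0\}$.

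The global step finishes the argument. Because $\ell \ge 1$, the maximum $\hat 1$ of $P$ lies in $(\hat 0, \hat 1]$, so $\Delta((\hat 0, \hat 1])$ is a cone with apex $\hat 1$ and is therefore contractible. Applying Euler--Poincar\'e to its augmented cochain complex of $G$-modules gives
\[
\sum_{j \ge -1} (-1)^j \widetilde C^j(\Delta((\hat 0, \hat 1]); \mathbf{k}) \;=\; \sum_{j \ge -1} (-1)^j \widetilde H^j(\Delta((\hat 0, \hat 1]); \mathbf{k}) \;=\; 0
\]
as virtual $G$-modules. Under the index shift $j = i-1$ from the previous paragraph, this vanishing is exactly the vanishing of the right-hand side of the first display, so $\sum_i (-1)^i WH^i(P) \cong_G 0$. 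The main obstacle will be to set up the boundary conventions consistently---in particular the contribution of $x = \hat 0$ via $\widetilde H^{-1}(\emptyset) = \mathbf{k}$ (so that the contribution of $x=\hat{0}$ to $WH^0(P)$ matches the empty simplex of $\Delta((\hat 0, \hat 1])$ after the shift)---so that the two applications of Euler--Poincar\'e match up without spurious terms or signs. The hypothesis $\ell \ge 1$ enters only to guarantee that $(\hat 0, \hat 1]$ is nonempty, which is what makes the cone argument available.
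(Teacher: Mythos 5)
Your argument is correct and is essentially the standard proof of this lemma (the paper states it as Sundaram's Lemma 1.1 without reproducing a proof; Sundaram's argument is exactly your decomposition of the augmented chain groups of the cone $\Delta(P\setminus\{\hat 0\})$ according to the top element of each chain, followed by two applications of Euler--Poincar\'e/Hopf trace). One bookkeeping correction: the $x=\hat 0$ summand must carry the usual Whitney-homology convention $\widetilde H^{-2}((\hat 0,\hat 0))=\mathbf{k}$, so that $\hat 0$ contributes the trivial module to $WH^{0}(P)$ (this is also the convention the paper uses in Section 7, where the degenerate interval at $r=1$ is declared to give the trivial representation); your parenthetical ``$\widetilde H^{-1}(\emptyset)=\mathbf{k}$'' would, read literally, place that copy of $\mathbf{k}$ in $WH^{1}(P)$, under which the alternating sum fails already for $P$ a two-element chain, whereas with the degree $-2$ convention your matching of the $\hat 0$ term with the empty simplex of $\Delta((\hat 0,\hat 1])$ under the shift $j=i-1$ is exactly right.
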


Recall that if a group $G$ of automorphisms acts on the poset $P$, this action induces a 
representation of $G$  
on $WH^r(P)$ for every $r$. From equation 
(\ref{equation:defwhitneyhomology}), when $P$ is Cohen-Macaulay, $WH^r(P)$ breaks into the direct 
sum of 
$G$-modules
\begin{align}\label{equation:defwhitneyhomologyinduced}
 WH^r(P)\cong_{P} \bigoplus_{\substack{x \in P/\sim\\ \rho(x)=r}} \widetilde H^{r-2}((\hat 0, 
x);{\bf 
k}) \bigl \uparrow _{G_x}^G,
\end{align}
where  $P/\sim$ is a set of orbit representatives and $G_x$ the stabilizer of $x$.

Let $\mu \in {\wcomp_{n-1}}$. We want to apply Lemma \ref{lemma:sundaramsum} to the 
dual poset $[\hat{0},[n]^{\mu}]^{*}$ of the maximal interval $[\hat{0},[n]^{\mu}]$, which by 
Theorem \ref{theorem:elk} is Cohen-Macaulay. In order to compute $WH^r([\hat{0},[n]^{\mu}]^{*})$, 
by equation 
(\ref{equation:defwhitneyhomologyinduced}), we need to 
specify a set of orbit representatives for the action of $\sym_n$ on $[\hat{0},[n]^{\mu}]^{*}$.
For this we consider the set  
\begin{align*}
\Phi=\{\phi \in \wcomp \mid \supp(\phi) \subseteq [k]\} 
\end{align*}
 and the map 
$||\phi||:=|\phi|+1$ 
for $\phi \in \wcomp$ (cf. Section \ref{section:weightednumberpartitions}). We fix any 
finitary total order on $\Phi$. For any  
$\Phi$-partition 
$\tlambda$ of $n$ of length $\ell$ we denote by $\alpha_{\tlambda}$, the weighted partition 
$\{A_1^{\tlambda_1},\dots, A_{\ell}^{\tlambda_{\ell}}\}$ of $[n]$ whose  blocks are of the form 
\begin{align*}
A_i=\left [\sum_{j=1}^i||\tlambda_j||\right]\setminus 
\left[\sum_{j=1}^{i-1}||\tlambda_j||\right]. 
\end{align*}

Recall that for $\nu,\mu
\in \wcomp$, we say that $\mu\le \nu$ if $\mu(i)\le \nu(i)$
for every $i$ and we denote by $\nu + \mu$, the weak composition 
defined by 
$(\nu + \mu)(i):=\nu(i) + \mu(i)$.
Let 
\begin{align*}
\Par^{\mu}(\Phi):=\{\tlambda \in \Par(\Phi) \mid |\tlambda|=||\mu||\,,\,\sum_{i}\tlambda_i\le 
\mu\}. 
\end{align*}
It is not difficult to see that 
$\{\alpha_{\tlambda} \mid \tlambda \in \Par^{\mu}(\Phi)\}$ is a set of orbit representatives for 
the action of $\sym_n$ on $[\hat{0},[n]^{\mu}]^{*}$. Indeed, any weighted partition $\beta \in 
[\hat{0},[n]^{\mu}]^{*}$ can be obtained as $\beta=\sigma \alpha_{\tlambda}$ for suitable $\tlambda 
\in \Par^{\mu}(\Phi)$ and $\sigma \in \sym_n$. 
It is also clear that $\alpha_{\tlambda} \ne \sigma 
\alpha_{\tlambda^{\prime}}$ for $\tlambda 
\ne \tlambda^{\prime} \in \Par^{\mu}(\Phi)$ and  
for every $\sigma \in \sym_n$.
Observe that the partition $\alpha_{\tlambda}$ has  stabilizer 
$\times_{\phi}\sym_{m_{\phi}(\tlambda)}[\sym_{||\phi||}]$. By equation
(\ref{equation:defwhitneyhomologyinduced}) applied to $[\hat{0},[n]^{\mu}]^{*}$,

\begin{align}\label{equation:whitneydualinterval}
  WH^r([\hat{0},[n]^{\mu}]^{*})\cong_{\sym_n}\bigoplus_{\substack{\tlambda \in 
\Par^{\mu}(\Phi)\\ \ell(\tlambda)=r}} \widetilde 
{H}^{r-3}((\alpha_{\tlambda},[n]^{\mu}))\bigl 
\uparrow_{\times_{\phi \in \Phi}\sym_{m_{\phi}(\tlambda)}[\sym_{||\phi||}]}^{\sym_{n}}.
\end{align}

Note that if  $r=2$ then the open interval $(\alpha_{\tlambda},[n]^{\mu})$  is the empty poset.  
Hence $\widetilde {H}^{r-3}((\alpha_{\tlambda},[n]^{\mu}))$ is isomorphic to the trivial 
representation of $\times_{\phi \in \Phi}\sym_{m_{\phi}(\tlambda)}[\sym_{||\phi||}]$.  If $r=1$ 
then 
$\alpha_{\tlambda}=[n]^\mu$.  In this case we use the convention that $\widetilde 
{H}^{r-3}((\alpha_{\tlambda},[n]^{\mu}))$ is isomorphic to the trivial representation of 
$\times_{\phi \in \Phi}\sym_{m_{\phi}(\tlambda)}[\sym_{||\phi||}]$.

We apply Lemma \ref{lemma:sundaramsum} together with equation 
(\ref{equation:whitneydualinterval}) to obtain the following result.

\begin{lemma} \label{lemma:equivariantrecursivity}For $n\ge 1$ and $\mu \in \wcomp_{n-1}$ we have 
the 
following $\sym_n$-module isomorphism

\begin{align}\label{equation:equivariantrecursivity}
  \mathbf{1}_{\sym_n}\delta_{n,1} \cong_{\sym_n}\bigoplus_{\substack{\tlambda \in 
\Par^{\mu}(\Phi)\\|\tlambda|=n}} (-1)^{\ell(\tlambda)-1}\widetilde 
{H}^{\ell(\tlambda)-3}((\alpha_{\tlambda},[n]^{\mu}))\bigl 
\uparrow_{\times_{\phi \in \Phi}\sym_{m_{\phi}(\tlambda)}[\sym_{||\phi||}]}^{\sym_{n}},
\end{align}
where $\mathbf{1}_{\sym_n}$ denotes the trivial representation of $\sym_n$.

\end{lemma}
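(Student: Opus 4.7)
The plan is to apply Sundaram's Lemma (Lemma \ref{lemma:sundaramsum}) to the dual poset $P^* := [\hat{0},[n]^{\mu}]^{*}$. Since $[\hat 0, [n]^\mu]$ is Cohen--Macaulay by Theorem \ref{theorem:elk} and the order complex is invariant under order reversal, $P^*$ is Cohen--Macaulay of length $n-1$. For $n \ge 2$ the lemma produces the virtual $\sym_n$-module identity
\[
\bigoplus_{r=0}^{n-1}(-1)^{r}WH^{r}(P^*) \cong_{\sym_n} 0.
\]

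Next I would substitute the equivariant orbit decomposition (\ref{equation:whitneydualinterval}), which splits each $WH^{r}(P^*)$ as a direct sum over $\Phi$-partitions $\tlambda$ of representations induced from the stabilizer $\times_{\phi \in \Phi}\sym_{m_{\phi}(\tlambda)}[\sym_{||\phi||}]$ of the orbit representative $\alpha_{\tlambda}$. The key input is that two weighted partitions are $\sym_n$-conjugate precisely when the multisets of their weighted blocks agree, so the orbits of the $\sym_n$-action on $P^*$ are naturally indexed by $\Par^{\mu}(\Phi)$, and the rank of $\alpha_{\tlambda}$ in $P^*$ is controlled by $\ell(\tlambda)$. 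Reindexing the alternating sum by $\ell := \ell(\tlambda)$ converts the sign $(-1)^{r}$ from Sundaram's identity into $(-1)^{\ell(\tlambda)-1}$ and yields precisely the right-hand side of the claim.

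The factor $\delta_{n,1}$ on the left-hand side accounts for the degenerate base case $n=1$: Sundaram's Lemma requires length $\ge 1$ and so is vacuous for the one-point poset $[\hat{0},[1]^{(\,)}]$, but the unique $\tlambda \in \Par^{\mu}(\Phi)$ with $|\tlambda|=1$ satisfies $\ell(\tlambda)=1$ and $\alpha_{\tlambda}=[1]^{(\,)}$, and the convention stated immediately before the lemma identifies $\widetilde H^{-2}((\alpha_{\tlambda},[1]^{(\,)}))$ with the trivial representation $\mathbf{1}_{\sym_{1}}$, matching the left-hand side. The main delicate point in writing out the proof will be to reconcile the edge conventions in (\ref{equation:whitneydualinterval}) — the cases $\ell(\tlambda)=1$ and $\ell(\tlambda)=2$ where the open interval $(\alpha_{\tlambda},[n]^{\mu})$ is empty or undefined — with the trivial $WH^{0}(P^*)$ contribution to Sundaram's alternating sum, so that the resulting identity reads uniformly for every $n \ge 1$.
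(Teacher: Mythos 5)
Your proposal is correct and follows essentially the same route as the paper: apply Sundaram's Lemma \ref{lemma:sundaramsum} to the dual poset $[\hat 0,[n]^\mu]^*$ (Cohen--Macaulay by Theorem \ref{theorem:elk}), substitute the orbit decomposition (\ref{equation:whitneydualinterval}) indexed by $\Par^\mu(\Phi)$ with stabilizers $\times_{\phi}\sym_{m_\phi(\tlambda)}[\sym_{\|\phi\|}]$, and reindex the alternating sum by $\ell(\tlambda)$. Your identification of the $n=1$ degenerate case and of the edge conventions for $\ell(\tlambda)\in\{1,2\}$ matches the discussion the paper places immediately before the lemma, so nothing is missing.
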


\begin{lemma}\label{lemma:isomodules}For all $\tlambda \in \Par(\Phi)$ with $|\tlambda|=n$ and $\nu 
\in \wcomp_{\ell(\tlambda)-1}$, the following 
$\times_{\phi \in \Phi}\sym_{m_{\phi}(\tlambda)}[\sym_{||\phi||}]$-module isomorphism 
holds:
\begin{align*}
\widetilde{H}^{\ell(\tlambda)-3}((\alpha_{\tlambda},[n]^{\nu+\sum \tlambda_j}))
\cong
\left ( \bigotimes_{\phi \in \Phi} \widetilde{(\mathbf{1}_{\sym_{||\phi||}})^{\otimes 
m_{\phi}(\tlambda)}} \right )
\otimes \widehat{\widetilde{H}^{\ell(\tlambda)-3}((\hat{0},[\ell(\tlambda)]^{\nu}))^{\tlambda}}.
\end{align*}
\end{lemma}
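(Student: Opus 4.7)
The plan is to build the required $G$-module isomorphism (where $G = \times_{\phi \in \Phi}\sym_{m_{\phi}(\tlambda)}[\sym_{||\phi||}]$ is the stabilizer of $\alpha_{\tlambda}$ in $\sym_n$) by first producing an underlying vector space isomorphism coming from a poset isomorphism, and then checking equivariance. First I would invoke Proposition \ref{proposition:upperlowerideals}(2), applied with its ambient weight $\nu$ replaced by $\nu+\sum_j\tlambda_j$, to obtain a poset isomorphism
\[
f:\ [\alpha_{\tlambda},[n]^{\nu+\sum \tlambda_j}]\ \xrightarrow{\sim}\ [\hat 0,[\ell]^{\nu}],
\]
where $\ell=\ell(\tlambda)$. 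Concretely, $f$ collapses each block $A_i$ of $\alpha_{\tlambda}$ to the element $i\in[\ell]$, and the extra weight accumulated above $\alpha_{\tlambda}$ inside the left-hand interval is recorded as the weight accumulated above $\hat 0$ on the right-hand side. Applying top cohomology to $f$ produces a $\kk$-linear isomorphism $f^{*}:\widetilde H^{\ell-3}((\hat 0,[\ell]^{\nu}))\xrightarrow{\sim}\widetilde H^{\ell-3}((\alpha_{\tlambda},[n]^{\nu+\sum\tlambda_j}))$.

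Second, I would analyze the $G$-action via the natural surjection
\[
p:\ G\ \twoheadrightarrow\ \times_{\phi\in\Phi}\sym_{m_{\phi}(\tlambda)}\ \hookrightarrow\ \sym_{\ell}
\]
whose factors are the quotient maps $\sym_{m_{\phi}(\tlambda)}[\sym_{||\phi||}]\twoheadrightarrow\sym_{m_{\phi}(\tlambda)}$, and whose image is a Young subgroup of $\sym_{\ell}$ (after identifying blocks of weight $\phi$ with the subset of $[\ell]$ of indices $i$ with $\tlambda_i=\phi$). Two claims then drive the argument. First, the kernel $K=\prod_{\phi}\sym_{||\phi||}^{m_{\phi}(\tlambda)}$ acts trivially on every $\beta\in[\alpha_{\tlambda},[n]^{\nu+\sum\tlambda_j}]$: any block of such a $\beta$ is a union of blocks $A_i$ of $\alpha_{\tlambda}$, so permuting labels within a single $A_i$ fixes each block of $\beta$ setwise. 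Hence $K$ acts trivially on the order complex and on cohomology. Second, the induced action of $G/K\cong \times_{\phi}\sym_{m_{\phi}(\tlambda)}$ on the interval matches, under $f$, the restriction of the $\sym_{\ell}$-action on $[\hat 0,[\ell]^{\nu}]$ to the Young subgroup $\times_{\phi}\sym_{m_{\phi}(\tlambda)}\le\sym_{\ell}$, simply because permuting blocks of $\alpha_{\tlambda}$ of the same weight $\phi$ translates under $f$ into permuting the indices in $[\ell]$ that label those blocks.

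Combining these two observations shows that $f^{*}$ is equivariant when the right-hand side carries its natural $G$-action and the left-hand side carries the pullback through $p$ of the restricted $\sym_{\ell}$-representation. This is precisely the module $\widehat{\widetilde H^{\ell-3}((\hat 0,[\ell]^{\nu}))^{\tlambda}}$ appearing in the statement. To produce the full right-hand side, I would note that each wreath-product factor $\widetilde{(\mathbf{1}_{\sym_{||\phi||}})^{\otimes m_{\phi}(\tlambda)}}$ is a one-dimensional trivial representation of $\sym_{m_{\phi}(\tlambda)}[\sym_{||\phi||}]$: its underlying space $\kk^{\otimes m_{\phi}(\tlambda)}=\kk$ is acted on trivially by both $\sym_{||\phi||}^{m_{\phi}}$ and by the wreath factor $\sym_{m_{\phi}}$. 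Hence the external tensor $\bigotimes_{\phi}\widetilde{(\mathbf{1}_{\sym_{||\phi||}})^{\otimes m_{\phi}(\tlambda)}}$ is the trivial rep of $G$, and taking its inner tensor product with the pullback module does not change it, yielding the stated isomorphism.

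The core step is not deep but requires careful bookkeeping: verifying claim (b) above amounts to tracing through the explicit description of the isomorphism of Proposition \ref{proposition:upperlowerideals}(2) and checking that the chosen labeling $A_1,\dots,A_{\ell}\leftrightarrow 1,\dots,\ell$ intertwines the permutation of same-weight blocks on the left with the action of the corresponding Young subgroup element of $\sym_{\ell}$ on the right. Once this compatibility is pinned down, the rest of the proof is essentially formal manipulation of restriction, pullback, and tensoring with the trivial representation.
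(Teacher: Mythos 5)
Your proposal is correct and follows essentially the same route as the paper's proof: it exhibits the poset isomorphism of Proposition~\ref{proposition:upperlowerideals}(2) (the paper's map $\Gamma$, your $f$) from $[\alpha_{\tlambda},[n]^{\nu+\sum\tlambda_j}]$ to $[\hat 0,[\ell(\tlambda)]^{\nu}]$, observes that it intertwines the $G$-action with the pullback action, and identifies $\bigotimes_{\phi}\widetilde{(\mathbf{1}_{\sym_{||\phi||}})^{\otimes m_{\phi}(\tlambda)}}$ as the trivial $G$-representation. Your extra remark that the kernel $\prod_{\phi}\sym_{||\phi||}^{m_{\phi}(\tlambda)}$ acts trivially on the interval is a helpful elaboration of the equivariance claim the paper asserts without detail.
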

\begin{proof}The poset $[\hat{0},[\ell(\tlambda)]^{\nu}]$ is a 
$\times_{\phi \in \Phi}\sym_{m_{\phi}(\tlambda)}[\sym_{||\phi||}]$-poset with the action given by 
the 
pullback through the 
product of the natural homomorphisms $\sym_{m_{\phi}(\tlambda)}[\sym_{||\phi||}]\rightarrow 
\sym_{m_{\phi}(\tlambda)}$.
There is a natural poset isomorphism between $[\alpha_{\tlambda},[n]^{\nu+\sum \tlambda_j}]$ 
and $[\hat{0},[\ell(\tlambda)]^{\nu}]$. Indeed, for a weighted partition  
$\{B_1^{\mu_1},\dots, 
B_t^{\mu_t}\} \ge\alpha_{\tlambda}=\{A_1^{\tlambda_1},\dots, A_{\ell}^{\tlambda_{\ell}}\}$, each 
weighted block $B_j^{\mu_j}$ is of the form $B_j=A_{i_1}\cup A_{i_2}\cup ... \cup A_{i_s}$ 
and 
 $\mu_j=u_j+\sum_k \tlambda_{i_k}$, where $|u_j|=s-1$ and $\sum_j u_j \le \nu$. 
 Let \begin{align*}
      \Gamma:[\alpha_{\tlambda},[n]^{\nu+\sum \tlambda_j}]\rightarrow 
      [\hat{0},[\ell(\tlambda)]^{\nu}]
     \end{align*}
     be the map such that $\Gamma(\{B_1^{\mu_1},\dots, 
B_t^{\mu_t}\})$
is the weighted partition in which each weighted block $B_j^{\mu_j}$ is replaced by  
$\{i_1,i_2,\dots,i_s\}^{u_j}$.
The map $\Gamma$ is an isomorphism of posets that commutes with the action 
of $\times_{\phi \in \Phi}\sym_{m_{\phi}(\tlambda)}[\sym_{||\phi||}]$. The 
isomorphism 
of 
$\times_{\phi \in \Phi}\sym_{m_{\phi}(\tlambda)}[\sym_{||\phi||}]$-posets induces an isomorphism of 
the 
$\times_{\phi \in \Phi}\sym_{m_{\phi}(\tlambda)}[\sym_{||\phi||}]$-modules 
$\widetilde{H}^{\ell(\tlambda)-3}((\alpha_{\tlambda},[n]^{\nu+\sum \tlambda_j}))$ and 
$\widehat{\widetilde{H}^{\ell(\tlambda)-3}((\hat{0},[\ell(\tlambda)]^{\nu}))^{\tlambda}}$.
The result follows since 
$\bigotimes_{\phi \in \Phi} \widetilde{(\mathbf{1}_{\sym_{||\phi||}})^{\otimes 
m_{\phi}(\tlambda)}}$ 
is the 
trivial 
representation of $\times_{\phi \in \Phi}\sym_{m_{\phi}(\tlambda)}[\sym_{||\phi||}]$.
\end{proof}

Let $R$ be the ring of symmetric functions $\Lambda_{\QQ}$ in variables $\xx=(x_1,x_2,\dots)$. 
There is a natural inner product in $\Lambda_R$ defined for arbitrary 
partitions $\lambda$ and $\nu$, by 
$$\langle p_{\lambda},p_{\nu} \rangle=\delta_{\lambda,\nu},$$  and then
extended linearly to $\Lambda_R$. This inner product defines a notion of 
convergence. Indeed, 
for a sequence of symmetric functions $f_n \in \Lambda_R$, $n \ge 1$, we say that 
$\{f_n\}_{n\ge1}$ converges 
if for every partition $\nu$ there is a number $N$ such that 
$\langle f_n,p_{\nu}\rangle=\langle f_m,p_{\nu}\rangle$ whenever 
$n,m \ge N$. We use $\widehat{\Lambda_R}$ to denote the completion of the ring of $\Lambda_R$ with 
respect to 
this topology.
It is not difficult to verify that $\widehat{\Lambda_R}$ consists of the class of formal 
power series in two sets of variables, 
$\xx=(x_1,x_2,\dots)$ and $\yy=(y_1,y_2,\dots)$, 
that can be expressed as $\sum_{\lambda} c_{\lambda}(\xx) 
p_{\lambda}(\yy)$, where $c_{\lambda}(\xx) \in \Lambda_{\QQ}$.  
Given a formal power series $F(\yy)=\sum_{\lambda} c_{\lambda}(\xx) 
p_{\lambda}(\yy)$ in $\widehat{\Lambda_R}$ and a formal power series 
$g \in \QQ[[z_1,z_2,\dots]]$, we can extend the definition of plethysm from 
symmetric 
functions to formal power series in $\widehat{\Lambda_R}$ by 
\begin{align*}
 F[g]:=\sum_{\lambda} c_{\lambda}(\xx) p_{\lambda}[g].
\end{align*}
The reader can check that $\widehat{\Lambda_R}$, together with 
plethysm and the plethystic unit $p_1(\yy)$, has the structure of a monoid.

 Let $G(\yy)$ and $F(\yy)$ be in $\widehat{\Lambda_R}$. 
 The 
power series $G(\yy)$ is said to be a \emph{plethystic 
inverse} of $F(\yy)$ with respect to $\yy$, if $F(\yy)[G(\yy)] = p_1(\yy)$. It is 
straightforward to show that 
if 
this is the case, then $F(\yy)$ is unique and also $G(\yy)[F(\yy)] = p_1(\yy)$. Thus $G(\yy)$ and 
$F(\yy)$ are 
said to be \emph{plethystic inverses} of each other with respect to $\yy$, and we write 
$G(\yy)=F^{[-1]}(\yy)$. Note that 
$$\sum_{\mu \in \wcomp_{n-1}}\ch  
\widetilde{H}^{n-3}((\hat{0},[n]^{\mu}))\,\xx^{\mu}=\sum_{\lambda 
\vdash {n-1}}\ch  
\widetilde{H}^{n-3}((\hat{0},[n]^{\lambda}))\,m_{\lambda}(\xx) \in \Lambda_R,$$
where $m_{\lambda}$ is the monomial symmetric function associated to the partition $\lambda$.
Hence the left hand side of equation (\ref{equation:cohomologyrep}) below is in  
$\widehat{\Lambda_R}$.

\begin{theorem}\label{theorem:cohomologyrepresentation}
We have 
\begin{align}\label{equation:cohomologyrep}
 \sum_{n\ge 1}(-1)^{n-1} \sum_{\mu \in \wcomp_{n-1}}\ch  
\widetilde{H}^{n-3}((\hat{0},[n]^{\mu}))\,\xx^{\mu}=\Bigl (\sum_{n\ge
1}h_{n-1}(\xx)h_{n}(\yy)
\Bigr)^{[-1]}
 \end{align}
 
\end{theorem}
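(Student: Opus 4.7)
The plan is to apply the Frobenius characteristic to the equivariant isomorphism of Lemma \ref{lemma:equivariantrecursivity}, weight it by $\xx^{\mu}$, sum over $\mu\in\wcomp_{n-1}$ and over $n\ge 1$, and recognize the resulting identity as a plethystic relation via Theorem \ref{theorem:wachsplethysm}. After taking $\ch$ of the left hand side, only the term $n=1$, $\mu=(0,0,\dots)$ survives, giving $\ch\mathbf{1}_{\sym_1}=h_1(\yy)=p_1(\yy)$.

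For the right hand side I would exchange the order of summation so that $\tlambda\in\Par(\Phi)$ ranges first. For each such $\tlambda$ with $|\tlambda|=n$, the compositions $\mu\in\wcomp_{n-1}$ with $\tlambda\in\Par^{\mu}(\Phi)$ are precisely those of the form $\mu=\nu+\sum_i\tlambda_i$ with $\nu\in\wcomp_{\ell(\tlambda)-1}$, so $\xx^{\mu}=\xx^{\nu}\prod_i\xx^{\tlambda_i}$. Using Lemma \ref{lemma:isomodules} to rewrite the relevant cohomology module as the inner tensor product $\bigotimes_{\phi}\widetilde{\mathbf{1}_{\sym_{||\phi||}}^{\otimes m_{\phi}(\tlambda)}}\otimes \widehat{V_{\nu}}^{\tlambda}$ with $V_{\nu}:=\widetilde H^{\ell(\tlambda)-3}((\hat 0,[\ell(\tlambda)]^{\nu}))$, the inner sum over $\tlambda$ of a fixed length $\ell$ is precisely in the form to which Theorem \ref{theorem:wachsplethysm} applies, with $V=V_{\nu}$, $W_{\phi}=\mathbf{1}_{\sym_{||\phi||}}$, and the formal weights specialized to $z_{\phi}=\xx^{\phi}$ (so that $z_{\tlambda}=\prod_i\xx^{\tlambda_i}$).

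Since $\ch W_{\phi}=h_{||\phi||}(\yy)=h_{|\phi|+1}(\yy)$, and grouping weak compositions by their total weight $m$ yields $\sum_{\phi\in\wcomp,\,|\phi|=m}\xx^{\phi}=h_m(\xx)$, one gets
\begin{align*}
\sum_{\phi\in\Phi}\ch W_{\phi}\,z_{\phi}=\sum_{n\ge 1}h_{n-1}(\xx)h_n(\yy)=:G(\xx,\yy),
\end{align*}
and Theorem \ref{theorem:wachsplethysm} collapses the $\tlambda$-sum to $\ch V_{\nu}[G]$. Pulling the plethysm outside the remaining sums over $\ell$ and $\nu$ by its linearity over $R=\Lambda_{\QQ}$ in $\xx$, the right hand side becomes $F(\xx,\yy)[G(\xx,\yy)]$, where $F$ is exactly the left hand side of (\ref{equation:cohomologyrep}) (under the identification $n=\ell(\tlambda)$, which also matches the signs $(-1)^{\ell(\tlambda)-1}$). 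The resulting identity $p_1(\yy)=F[G]$ gives $F=G^{[-1]}$ by uniqueness of plethystic inverses.

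The main technical concerns are bookkeeping the signs, justifying the exchange of infinite sums inside the completion $\widehat{\Lambda_R}$, and the $R$-linearity of the outer plethysm; these are all routine once one observes that for any fixed $\xx^{\mu}$ and any fixed power-sum coefficient $\langle\cdot,p_{\kappa}(\yy)\rangle$ only finitely many $\tlambda$ contribute. The conceptual heart of the argument is that the Sundaram--Wachs Whitney cohomology setup of Lemma \ref{lemma:equivariantrecursivity}, combined with the plethystic identity of Theorem \ref{theorem:wachsplethysm}, encodes ``plethystic inverse'' exactly.
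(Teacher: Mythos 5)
Your proposal follows exactly the same route as the paper: apply $\ch$ to Lemma \ref{lemma:equivariantrecursivity}, weight by $\xx^{\mu}$, sum over $\mu$ and $n$, exchange the order of summation so $\tlambda$ comes first, invoke Lemma \ref{lemma:isomodules} to identify the stabilizer-module, feed the result into Theorem \ref{theorem:wachsplethysm} with $z_{\phi}=\xx^{\phi}$ and $W_{\phi}=\mathbf{1}_{\sym_{||\phi||}}$, and read off $F[G]=p_1(\yy)$. The only cosmetic difference is that the paper carries out the computation with $h_{j-1}(x_1,\dots,x_k)$ at finite $k$ and passes to the limit at the end, while you pass to $h_{j-1}(\xx)$ in infinitely many variables from the start; both are sound.
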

\begin{proof}
 For presentation purposes let us use temporarily the notation 
$G_{\tlambda}:=\times_{\phi\in\Phi}\sym_{m_{\phi}(\tlambda)}[\sym_{||\phi||}]$. We also use the 
convention that $$\widetilde 
{H}^{\ell(\tlambda)-3}((\alpha_{\tlambda},[n]^{\mu}))=0$$ whenever
$\alpha_{\tlambda} \nleq [n]^{\mu}$.
Applying the Frobenius characteristic map $\ch$ (in $\yy$ variables) to both sides of equation 
\ref{equation:equivariantrecursivity}, multiplying by 
$\xx^{\mu}$ and summing over all $\mu \in \wcomp_{n-1}$ with  
$\supp(\mu)\subseteq [k]$ yields
\begin{align*}
&h_1(\yy)\delta_{n,1}=\sum_{\substack{\mu \in \wcomp_{n-1}\\\supp(\mu)\subseteq 
[k]}}\mathbf{x}^{\mu}
\ch \left( \bigoplus_{\substack{\tlambda \in 
\Par^{\mu}(\Phi)\\|\tlambda|=n}}(-1)^{\ell(\tlambda)-1}
 \widetilde 
{H}^{\ell(\tlambda)-3}((\alpha_{\tlambda},[n]^{\mu}))\bigl 
\uparrow_{G_{\tlambda}}^{\sym_{n}}\right )\\
=&\sum_{\substack{\mu \in \wcomp_{n-1}\\\supp(\mu)\subseteq [k]}}\mathbf{x}^{\mu}
\sum_{\substack{\tlambda \in 
\Par^{\mu}(\Phi)\\|\tlambda|=n}}(-1)^{\ell(\tlambda)-1}
\ch \left( \widetilde 
{H}^{\ell(\tlambda)-3}((\alpha_{\tlambda},[n]^{\mu}))\bigl 
\uparrow_{G_{\tlambda}}^{\sym_{n}}\right )\\
=&\sum_{\substack{\tlambda \in \Par(\Phi)\\|\tlambda|=n}}(-1)^{\ell(\tlambda)-1}
\sum_{\substack{\nu \in \wcomp_{\ell(\tlambda)-1}\\ \supp(\nu)\subseteq [k]}}
\xx^{\nu+\sum \tlambda_r} \ch \left ( \widetilde 
{H}^{\ell(\tlambda)-3}((\alpha_{\tlambda},[n]^{\nu+ \sum \tlambda_r}))
\bigl \uparrow^{\sym_{n}}_{G_{\tlambda}} \right ).
\end{align*}
Using the shorthand notation
\begin{align*}
 H_n^{\mu}:= \widetilde{H}^{n-3}((\hat{0},[n]^{\mu})),
\end{align*}
Lemma \ref{lemma:isomodules} and summing over all $n \ge 1$ we have

\begin{align*}
&h_1(\yy)\\
&= \sum_{\tlambda \in \Par(\Phi)}(-1)^{\ell(\tlambda)-1}
		\sum_{\substack{\nu \in \wcomp_{\ell(\tlambda)-1}\\ \supp(\nu)\subseteq [k]}}
		\xx^{\nu+\sum \tlambda_r} \ch \left ( \left ( \bigotimes_{\phi\in\Phi} 
\widetilde{(\mathbf{1}_{\sym_{||\phi||}})^{\otimes m_{\phi}(\tlambda)}}
\otimes \widehat{H_{\ell(\tlambda)}^{\nu}}^{\tlambda}\right)
\Biggl \uparrow^{\sym_{|\tlambda|}}_{G_{\tlambda}}\right )\\
&= 	\sum_{\ell \ge 1}(-1)^{\ell-1} \sum_{\tlambda \in \Par_{\ell}(\Phi)}
		\sum_{\substack{\nu \in \wcomp_{\ell-1}\\ \supp(\nu)\subseteq [k]}}
		\xx^{\nu+\sum \tlambda_r} \ch \left ( \left ( \bigotimes_{\phi\in\Phi} 
\widetilde{(\mathbf{1}_{\sym_{||\phi||}})^{\otimes m_{\phi}(\tlambda)}}
\otimes \widehat{H_{\ell}^{\nu}}^{\tlambda}\right)
\Biggl \uparrow^{\sym_{|\tlambda|}}_{G_{\tlambda}} \right )\\
&= 	\sum_{\ell \ge 1}(-1)^{\ell-1} 
		\sum_{\substack{\nu \in \wcomp_{\ell-1}\\ \supp(\nu)\subseteq [k]}}
		\xx^{\nu} 
		\sum_{\tlambda \in \Par_{\ell}(\Phi)} 
\mathbf{x}^{\sum \tlambda_r}
		\ch \left ( \left ( \bigotimes_{\phi\in\Phi} 
\widetilde{(\mathbf{1}_{\sym_{||\phi||}})^{\otimes m_{\phi}(\tlambda)}}
\otimes \widehat{H_{\ell}^{\nu}}^{\tlambda}\right)
\Biggl \uparrow^{\sym_{|\tlambda|}}_{G_{\tlambda}} \right ).
\end{align*}

Now we use Theorem \ref{theorem:wachsplethysm} with $z_{\phi}=\xx^{\phi}$,
\begin{align*}
	 h_1(\yy)   & = 	\sum_{\ell\ge 1}(-1)^{\ell-1}
\sum_{\substack{\nu \in \wcomp_{\ell-1}\\ \supp(\nu)\subseteq [k]}}\xx^{\nu}
\ch(H_{\ell}^{\nu})\left [\sum_{\substack{\phi \in \wcomp\\ \supp(\phi)\subseteq 
[k]}}h_{||\phi||}(\yy)\xx^{\phi}\right ]  \\
& = \left( \sum_{\ell\ge 1}(-1)^{\ell-1}
\sum_{\substack{\nu \in \wcomp_{\ell-1}\\ \supp(\nu)\subseteq [k]}}
\ch(H_{\ell}^{\nu}) \xx^{\nu} \right ) \left [\sum_{j\ge 1}h_j(\yy)h_{j-1}(x_1,\dots,x_k)\right ] .
\end{align*}
The last step uses the definition of the complete homogeneous symmetric polynomial 
$h_{j-1}(x_1,\dots,x_k)$. To 
complete the proof we let $k$ get arbitrarily large.
\end{proof}

\begin{theorem}[Theorem \ref{theorem:lierepresentation}]
 We have that 
\begin{align*}
 \sum_{n\ge 1} \sum_{\mu \in \wcomp_{n-1}}\ch \lie(\mu)\,\xx^{\mu}=-\Bigl 
(-\sum_{n\ge
1}h_{n-1}(\xx)h_{n}(\yy)
\Bigr)^{[-1]},
\end{align*}
\end{theorem}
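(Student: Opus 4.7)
The plan is to deduce Theorem \ref{theorem:lierepresentation} from Theorem \ref{theorem:cohomologyrepresentation} by way of the $\sym_n$-module isomorphism of Theorem \ref{theorem:liekiso}, invoking only the plethystic negation identity (\ref{equation:plethysmnegative2}) and the outer linearity and associativity of plethysm. Set
\[
F_n(\xx,\yy):=\sum_{\mu\in\wcomp_{n-1}} \ch \widetilde H^{n-3}((\hat 0, [n]^{\mu}))\,\xx^{\mu}, \qquad K(\xx,\yy):=\sum_{n\ge 1}h_{n-1}(\xx)\,h_{n}(\yy),
\]
and note that $F_n$ is homogeneous of degree $n$ in $\yy$. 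Since tensoring an $\sym_n$-representation with $\sgn_n$ applies the involution $\omega$ to its Frobenius characteristic, Theorem \ref{theorem:liekiso} gives $\ch \lie(\mu) = \omega\,\ch \widetilde H^{n-3}((\hat 0, [n]^{\mu}))$, and hence the left-hand side of Theorem \ref{theorem:lierepresentation} equals $\sum_{n\ge 1}\omega F_n$.

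The next step converts the involution into a plethystic substitution. Applying (\ref{equation:plethysmnegative2}) with $g=p_1(\yy)$ to each homogeneous component (treating $\xx$ as scalars) yields $\omega F_n = (-1)^n F_n[-p_1(\yy)]$. Summing over $n$ and using outer linearity of plethysm,
\[
\sum_{n\ge 1}\omega F_n \;=\; -\sum_{n\ge 1}(-1)^{n-1}F_n[-p_1(\yy)] \;=\; -\Bigl(\sum_{n\ge 1}(-1)^{n-1}F_n\Bigr)[-p_1(\yy)] \;=\; -K^{[-1]}[-p_1(\yy)],
\]
where the last equality is Theorem \ref{theorem:cohomologyrepresentation}.

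It remains to establish the purely formal identity $K^{[-1]}[-p_1(\yy)] = (-K)^{[-1]}$. Writing $H:= K^{[-1]}$, so that $K[H]=p_1(\yy)$, outer linearity gives $(-K)[H]=-p_1(\yy)$, and associativity of plethysm then yields
\[
(-K)\bigl[H[-p_1(\yy)]\bigr] \;=\; \bigl((-K)[H]\bigr)[-p_1(\yy)] \;=\; (-p_1(\yy))[-p_1(\yy)] \;=\; p_1(\yy).
\]
By uniqueness of plethystic inverse, $H[-p_1(\yy)]=(-K)^{[-1]}$, and combining with the display above gives $\sum_{n\ge 1}\omega F_n = -(-K)^{[-1]}$, which is Theorem \ref{theorem:lierepresentation}.

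The only real obstacle is justifying these formal manipulations in the completed ring $\widehat{\Lambda_R}$: one must confirm that $K$ has vanishing constant term in $\yy$ (which is clear since the sum runs over $n\ge 1$), so that plethystic inverse, outer linearity, and associativity all function as usual for formal power series. Once this is granted the argument is a short chain of one-line identities; no further poset-theoretic input beyond Theorems \ref{theorem:liekiso} and \ref{theorem:cohomologyrepresentation} is needed.
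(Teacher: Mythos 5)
Your proof is correct and follows essentially the same route as the paper: both arguments combine Theorem \ref{theorem:cohomologyrepresentation}, the sign-twisted isomorphism of Theorem \ref{theorem:liekiso}, and the plethystic negation identity (\ref{equation:plethysmnegative2}). The only cosmetic difference is that you isolate the formal identity $K^{[-1]}[-p_1(\yy)]=(-K)^{[-1]}$ and prove it via associativity and uniqueness of plethystic inverse, whereas the paper carries out the equivalent manipulation in one chain of equalities starting from $p_1(\yy)=\bigl(\sum_{n\ge 1}(-1)^{n-1}F_n\bigr)[K]$ and changing $K\mapsto -K$ term by term.
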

\begin{proof}
We have 
\begin{align*}
p_1(\yy)&=\left(\sum_{n\ge 1}(-1)^{n-1} \sum_{\mu \in \wcomp_{n-1}}\ch  
\widetilde{H}^{n-3}((\hat{0},[n]^{\mu}))\,\xx^{\mu}\right 
)\left[\sum_{n\ge1}h_{n-1}(\xx)h_{n}(\yy)\right]\\
&=\left(-\sum_{n\ge 1} \sum_{\mu \in\wcomp_{n-1}} 
\omega  \left (\ch \widetilde{H}^{n-3}((\hat{0},[n]^{\mu}))
 \right)\,\xx^{\mu}\right )\left[-\sum_{n\ge1}h_{n-1}(\xx)h_{n}(\yy)\right]\\
&=\left(-\sum_{n\ge 1} \sum_{\mu \in\wcomp_{n-1}}\ch 
\left ( \widetilde{H}^{n-3}((\hat{0},[n]^{\mu}))\otimes_{\sym_n} \sgn_n
 \right)\,\xx^{\mu}\right )
 \left[-\sum_{n\ge1}h_{n-1}(\xx)h_{n}(\yy)\right]\\
&= \left(-\sum_{n\ge 1} \sum_{\mu \in \wcomp_{n-1}}\ch \lie(\mu)\,\xx^{\mu} \right )
 \left[-\sum_{n\ge1}h_{n-1}(\xx)h_{n}(\yy)\right].
\end{align*}
The first two equalities above follow from Theorem \ref{theorem:cohomologyrepresentation} and 
from equation (\ref{equation:plethysmnegative2}), respectively. Recall that for an $\sym_n$-module 
$V$ we have that  $$\ch(V \otimes_{\sym_n} \sgn_n)=\omega(\ch 
V),$$ \\\\which proves the third equality. Finally the last equality makes use of
Theorem \ref{theorem:liehomisomorphism}.
\end{proof}

In the case $k=2$, Theorem \ref{theorem:lierepresentation} specializes to the following 
result when $x_1=t$, $x_2=1$ and $x_i=0$ for $i\ge3$.

\begin{theorem}\label{theorem:lierepresentationk2}
For $n \ge 1$,
\begin{align*}
 \sum_{n\ge 1} \sum_{i=0}^{n-1}\ch \lie_2(n,i)\,t^i=-\Bigl 
(-\sum_{n\ge
1}\dfrac{t^n-1}{t-1} h_{n}(\yy)
\Bigr)^{[-1]}.
\end{align*}
\end{theorem}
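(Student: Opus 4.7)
The plan is to derive Theorem \ref{theorem:lierepresentationk2} as a direct specialization of Theorem \ref{theorem:lierepresentation} under the substitution $x_1=t$, $x_2=1$, $x_j=0$ for $j\geq 3$. The argument has three short steps: identify how the left-hand side of Theorem \ref{theorem:lierepresentation} collapses under this substitution; compute the specialization of $h_{n-1}(\xx)$; and justify that the plethystic inverse commutes with the substitution.

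First, recall from the discussion preceding the definition of $\lie_S(n)$ that when $\mu\in \wcomp_{n-1}$ has $\supp(\mu)\subseteq[2]$, so $\mu=(i,n-1-i,0,0,\dots)$, one has $\lie(\mu)\simeq_{\sym_n}\lie_2(n,i)$ where $i=\mu(1)$. Under the substitution $x_1=t$, $x_2=1$, $x_j=0$ for $j\geq 3$, only those $\mu$ with $\supp(\mu)\subseteq [2]$ contribute nontrivially to $\sum_{\mu\in\wcomp_{n-1}}\ch\lie(\mu)\,\xx^\mu$, and for each such $\mu$ one has $\xx^\mu=t^{\mu(1)}\cdot 1^{\mu(2)}=t^i$. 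Therefore the left-hand side of Theorem \ref{theorem:lierepresentation} specializes to
\begin{align*}
\sum_{n\geq 1}\sum_{i=0}^{n-1}\ch\lie_2(n,i)\,t^i.
\end{align*}

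Next, I would compute that $h_{n-1}(t,1,0,0,\dots)=\sum_{k=0}^{n-1}t^k=\dfrac{t^n-1}{t-1}$, by the definition of the complete homogeneous symmetric function evaluated at two nonzero variables. Consequently the kernel $\sum_{n\geq 1}h_{n-1}(\xx)h_n(\yy)$ on the right-hand side of Theorem \ref{theorem:lierepresentation} specializes to $\sum_{n\geq 1}\dfrac{t^n-1}{t-1}h_n(\yy)$.

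The main subtlety, and the only step requiring a moment of care, is justifying that the plethystic inverse $(\cdot)^{[-1]}$ (taken in the completed ring $\widehat{\Lambda_R}$ of symmetric functions in $\yy$ with coefficients in $\Lambda_\QQ$ in $\xx$, as developed in Section \ref{section:sundaramtechnique}) commutes with the evaluation map $x_1\mapsto t$, $x_2\mapsto 1$, $x_j\mapsto 0$ for $j\geq 3$. Since plethysm is defined through the power sum basis via $p_k[g]=g(z_1^k,z_2^k,\dots)$ and then extended $R$-linearly, and since the specialization is a ring homomorphism $\Lambda_\QQ\to\QQ[t]$ acting only on coefficients in the $\xx$-variables (the $\yy$ variables are unaffected), it commutes with plethysm in the $\yy$ variables. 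In particular, if $F(\xx,\yy)$ and $G(\xx,\yy)$ are plethystic inverses with respect to $\yy$, then their images under the specialization are also plethystic inverses with respect to $\yy$. Applying this to the identity of Theorem \ref{theorem:lierepresentation} yields Theorem \ref{theorem:lierepresentationk2}. I expect the verification of this last commutation step to be the main (and essentially routine) obstacle; everything else is a direct substitution.
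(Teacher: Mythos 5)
Your proposal is correct and follows the same route as the paper: obtain Theorem \ref{theorem:lierepresentationk2} by specializing Theorem \ref{theorem:lierepresentation} at $x_1=t$, $x_2=1$, $x_j=0$ for $j\ge 3$. The paper merely asserts this specialization, so your explicit accounting of the left-hand side collapse, the identity $h_{n-1}(t,1,0,\dots)=(t^n-1)/(t-1)$, and the compatibility of the evaluation map with plethystic inversion supplies precisely the detail the paper leaves to the reader.
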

For a closely related result obtained using operad theoretic arguments, see 
\cite{DotsenkoKhoroshkin2007}. One should also be able to approach Theorem 
\ref{theorem:lierepresentation} via operad theory.
 
And we obtain a well-known classical result when $x_1=1$, $x_i=0$ for $i\ge2$.
\begin{theorem}
For $n \ge 1$,
\begin{align*}
 \sum_{n\ge 1} \ch \lie(n)=-\Bigl 
(-\sum_{n\ge
1}h_{n}(\yy)
\Bigr)^{[-1]}.
\end{align*}
\end{theorem}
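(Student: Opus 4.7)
The plan is to obtain this identity as a direct specialization of Theorem \ref{theorem:lierepresentation}, by setting $x_1=1$ and $x_i=0$ for all $i\ge 2$ in the master formula
\[
 \sum_{n\ge 1} \sum_{\mu \in \wcomp_{n-1}}\ch \lie(\mu)\,\xx^{\mu}=-\Bigl(-\sum_{n\ge 1}h_{n-1}(\xx)h_{n}(\yy)\Bigr)^{[-1]}.
\]
First I would analyze the effect on each side. On the right-hand side, $h_{n-1}(1,0,0,\dots)=1$ for every $n\ge 1$, so the argument of the plethystic inverse collapses to $-\sum_{n\ge 1}h_n(\yy)$. On the left-hand side, a monomial $\xx^{\mu}$ survives the specialization only when $\supp(\mu)\subseteq\{1\}$, forcing $\mu=(n-1,0,0,\dots)$; since a weak composition with a single nonzero entry describes the multilinear component of the free Lie algebra on $[n]$ with a single bracket, we have the $\sym_n$-module identification $\lie((n-1,0,0,\dots))\simeq_{\sym_n}\lie(n)$, so the sum reduces to $\sum_{n\ge 1}\ch\lie(n)$.

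Next I would justify that the specialization commutes with the plethystic inverse. The relevant inverse is taken inside the ring $\widehat{\Lambda_R}$ with $R=\Lambda_{\QQ}$ in $\xx$, and the plethysm is taken with respect to the $\yy$ variables only; the $\xx$ variables play the role of scalars. The map that sends $x_1\mapsto 1$ and $x_i\mapsto 0$ for $i\ge 2$ is a ring homomorphism on $R$, and since plethystic inverse with respect to $\yy$ is defined term-by-term by polynomial identities in the coefficient ring, applying this homomorphism to both sides of a relation of the form $F(\yy)[G(\yy)]=p_1(\yy)$ yields a relation of the same form in the specialized ring. Hence the identity survives the substitution.

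Combining these two observations gives
\[
 \sum_{n\ge 1}\ch\lie(n)=-\Bigl(-\sum_{n\ge 1}h_n(\yy)\Bigr)^{[-1]},
\]
which is the stated formula. The only potentially subtle point is the verification that the plethystic inverse is well-behaved under the coefficient specialization, but since the defining equation is a power-series identity solvable by recursion on the degree of $\yy$, there is no real obstacle; everything else is a routine collapse of the weighted-composition sum to its single surviving term.
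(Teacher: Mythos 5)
Your proof is correct and takes the same route as the paper, which obtains this theorem directly by specializing Theorem \ref{theorem:lierepresentation} at $x_1=1$, $x_i=0$ for $i\ge 2$. One small caveat on your justification: under the paper's convention, $p_k[G]$ replaces \emph{all} variables of $G$ by their $k$th powers, including $\xx$, so the $\xx$ variables are not literally inert scalars in the inner series; the specialization nevertheless commutes with the plethystic inverse because the evaluation point $(1,0,0,\dots)$ is fixed by the power map ($1^k=1$, $0^k=0$), which is the actual reason the substitution passes through the recursion defining the inverse.
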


We show that Theorem \ref{theorem:lierepresentation} reduces to 
Theorem \ref{theorem:compositionalinverse} after applying an appropiate specialization. Recall that
$R=\Lambda_{\QQ}$ and 
consider the map $E:\widehat{\Lambda_R} \rightarrow R[[y]]$ defined by:
\begin{align*}
 E(p_i(\yy))=y\delta_{i,1}
\end{align*}
for $i \ge 1$ and extended multiplicatively, linearly and taking the corresponding limits to all of 
$\widehat{\Lambda_R}$. It is not difficult to check that $E$ is a ring homomorphism since $E$ is 
defined on generators. Moreover, we show in the following proposition that the specialization $E$ 
maps plethysm in $\widehat{\Lambda_R}$ to 
composition in $R[[y]]$.

\begin{proposition}For all $F,G \in \widehat{\Lambda_R}$,
 \begin{align*}
  E(F[G])=E(F)(E(G)).
 \end{align*}
\end{proposition}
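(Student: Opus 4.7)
The plan is to establish $E(F[G])=E(F)(E(G))$ by reducing it to a handful of verifications on the plethystic generators $p_k(\yy)$ of $\widehat{\Lambda_R}$, and then extending by multiplicativity, $R$-linearity, and continuity. With $G$ fixed, both maps $F\mapsto E(F[G])$ and $F\mapsto E(F)(E(G))$ are $R$-algebra homomorphisms $\widehat{\Lambda_R}\to R[[y]]$: the first because $F\mapsto F[G]$ is $R$-linear and multiplicative in its outer slot and $E$ is a ring homomorphism, and the second because $E$ is a ring homomorphism and the substitution $y\mapsto E(G)$ defines a ring endomorphism of $R[[y]]$. Hence it suffices to check the identity on the scalars in $R$ and on the power sums $p_k(\yy)$ with $k\ge 1$.

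For $F=c(\xx)\in R$ the identity is immediate: by $R$-linearity of plethysm one has $c(\xx)[G]=c(\xx)$, and $E$ fixes $R$, so both sides equal $c(\xx)$. For $F=p_1(\yy)$ one has $p_1[G]=G$ together with $E(p_1(\yy))=y$, so both sides reduce to $E(G)$. The central case is $F=p_k(\yy)$ with $k\ge 2$, where $E(F)=0$ forces $E(F)(E(G))=0$; I must show $E(p_k[G])=0$. Expanding $G=\sum_{\lambda}c_\lambda(\xx)\,p_\lambda(\yy)$ and applying the definition $p_k[g]=g(z_1^k,z_2^k,\dots)$ to all the variables of $G$ yields $p_k[G]=\sum_{\lambda}c_\lambda(x_1^k,x_2^k,\dots)\,p_{k\lambda}(\yy)$, where $k\lambda$ denotes the partition obtained by multiplying each part of $\lambda$ by $k$. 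Since $E$ is multiplicative and $E(p_{k\lambda_i}(\yy))=y\,\delta_{k\lambda_i,1}$, every term with $\lambda\ne\emptyset$ vanishes (as $k\lambda_i\ge k\ge 2$), leaving only the $\lambda=\emptyset$ contribution $c_\emptyset(x_1^k,x_2^k,\dots)$. Under the standing assumption that $G$ has no $\yy$-constant term---which is also the condition needed for $E(F)(E(G))$ to make sense as a composition in $R[[y]]$---we have $c_\emptyset=0$, and the identity holds on every generator.

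Multiplicativity and $R$-linearity then propagate the identity to every finite $R$-combination of products of generators, and continuity of $E$ with respect to the topology on $\widehat{\Lambda_R}$ defined just before the proposition handles the passage to infinite sums; the reason this is clean is that only partitions of the form $(1^n)$ survive under $E$, so that $E$ applied term-by-term to $F=\sum c_\lambda(\xx)p_\lambda(\yy)$ yields the manifestly convergent series $\sum_n c_{(1^n)}(\xx)\,y^n\in R[[y]]$. The main technical obstacle I anticipate is justifying these convergence manipulations carefully---in particular, verifying that the defining sum of $G$ can be moved through both $p_k[\cdot]$ and $E$, and that the formal expression for $p_k[G]$ is itself a legitimate element of $\widehat{\Lambda_R}$; once this is granted, the generator reduction above closes the proof.
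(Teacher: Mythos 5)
Your proof is correct, and it shares the same computational core as the paper's argument---both reduce the verification to power sums---but it is organized more cleanly and, importantly, it surfaces a hypothesis the paper leaves implicit. Where the paper directly expands $p_\nu[G]$ for an arbitrary partition $\nu$ and treats two cases, you note that $F\mapsto E(F[G])$ and $F\mapsto E(F)(E(G))$ are both $R$-algebra homomorphisms $\widehat{\Lambda_R}\to R[[y]]$ (using multiplicativity of plethysm in the outer slot and the fact that $E$ and substitution $y\mapsto E(G)$ are ring maps), so it suffices to check scalars, $p_1$, and $p_k$ with $k\ge 2$; only the last has content. More significant is your observation that $E(p_k[G])=c_\emptyset(\xx^k)$, which vanishes only when $G$ has no $\yy$-constant term. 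This is precisely the term the paper's proof drops: when asserting $E(p_\nu[G])=0$ for $\nu$ with a part $>1$, the paper overlooks that the $\lambda=\emptyset$ summand in its expanded product contributes the factor $c_\emptyset(\xx^{\nu_i})$. Indeed, with $G=1$ and $F=p_2$ one gets $E(F[G])=1$ while $E(F)(E(G))=0$, so some hypothesis on the constant term is genuinely needed; it holds in the paper's application (where $G$ is a plethystic inverse with zero constant term) but is not stated. One minor imprecision on your side: $c_\emptyset=0$ is not literally required for $E(F)(E(G))$ ``to make sense''---if $E(F)$ is a polynomial, as for $F=p_k$, the composition is defined regardless---but it \emph{is} the condition under which the identity holds for all $F$, which is the point that matters, and you state it explicitly where the paper does not.
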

\begin{proof}
Using the definition of plethysm and using the convention $\xx^k=(x_1^k,x_2^k,\dots)$, 
\begin{align}
 p_{\nu}(\yy) \left [\sum_{\lambda} c_{\lambda}(\xx) p_{\lambda}(\yy) \right ]&= 
\prod_{i=1}^{\ell(\nu)}p_{\nu_i}(\yy)\left [\sum_{\lambda} c_{\lambda}(\xx) p_{\lambda}(\yy)\right 
]\nonumber\\
&=\prod_{i=1}^{\ell(\nu)}\sum_{\lambda} c_{\lambda}(\xx^{\nu_i}) 
p_{\lambda}(\yy^{\nu_i})\nonumber\\
&=\prod_{i=1}^{\ell(\nu)}\sum_{\lambda} c_{\lambda}(\xx^{\nu_i}) \prod_{j=1}^{\ell(\lambda)}  
p_{\lambda_j}(\yy^{\nu_i})\nonumber\\
&=\prod_{i=1}^{\ell(\nu)}\sum_{\lambda} c_{\lambda}(\xx^{\nu_i}) \prod_{j=1}^{\ell(\lambda)}  
p_{\lambda_j\nu_i}(\yy).\label{equation:plethysmexpanded}
\end{align}
 Note that 
$E(p_{\lambda_j\nu_i}(\yy))=y\delta_{\lambda_j\nu_i,1}=y\delta_{\lambda_j,1}\delta_{\nu_i,1}$. Then 
if
$\nu$ has at least one part $\nu_i$ of size greater than $1$, equation 
(\ref{equation:plethysmexpanded}) implies
\begin{align*}
E \left (p_{\nu}(\yy) \left [\sum_{\lambda} c_{\lambda}(\xx) p_{\lambda}(\yy) \right 
]\right)=0=E(p_{\nu}(\yy)) 
\left (E \left (\sum_{\lambda} c_{\lambda}(\xx) p_{\lambda}(\yy) \right )\right ),
\end{align*}
since $E(p_{\nu}(\yy))=0$. If $\nu=(1^{m})$, then 

\begin{align*}
E \left (p_{\nu}(\yy) \left [\sum_{\lambda} c_{\lambda}(\xx) p_{\lambda}(\yy) \right 
]\right)&=
 E \left( \prod_{i=1}^{\ell(\nu)}\sum_{\lambda} c_{\lambda}(\xx^{\nu_i}) \prod_{j=1}^{\ell(\lambda)}
p_{\lambda_j\nu_i}(\yy) \right) \\
&= E \left( \prod_{i=1}^{m}\sum_{\lambda} c_{\lambda}(\xx) 
\prod_{j=1}^{\ell(\lambda)}  
p_{\lambda_j}(\yy) \right)\\
&= \left (E \left(\sum_{\lambda} c_{\lambda}(\xx) 
\prod_{j=1}^{\ell(\lambda)}  
p_{\lambda_j}(\yy) \right) \right )^{m}\\
&= E(p_{(1^m)}(\yy))\left (E \left(\sum_{\lambda} c_{\lambda}(\xx) 
\prod_{j=1}^{\ell(\lambda)}  
p_{\lambda_j}(\yy) \right) \right ).
\end{align*}
We just proved that $E \left (p_{\nu}(\yy) \left [G\right ]\right)=E \left (p_{\nu}(\yy) \right) 
\left [ E(G) \right ]$ for any $G \in \widehat{\Lambda_R}$.
The proof of the proposition follows by extending this result to all of $\widehat{\Lambda_R}$ by 
linearity and taking limits.
\end{proof}
Since $E(p_1(\yy))=y$, we conclude that $E$ is a monoid homomorphism  
\begin{align*}
 (\widehat{\Lambda_R},\text{plethysm},p_1)\rightarrow (R[[y]],\text{composition},y).
\end{align*}
The specialization $E$  can be better understood under the definition of the Frobenius 
characteristic map. Let $V$ be a representation of $\sym_n$ and $\chi^V$ its character, then
 $$\ch(V)=\dfrac{1}{n!}\sum_{\sigma \in \sym_n} \chi^V(\sigma)p_{\lambda(\sigma)}(\yy),$$
where $\lambda(\sigma)$ is the cycle type of the permutation $\sigma \in \sym_n$. 

We have that
\begin{align*}
 E(\ch V) = \dfrac{1}{n!}E \left(\sum_{\sigma \in \sym_n} \chi^V(\sigma)p_{\lambda(\sigma)}(\yy) 
\right) =  \chi_V(id)\dfrac{y^n}{n!}
= \dim V \dfrac{y^n}{n!}.
\end{align*}
In particular since $h_n(\yy)=\ch(\mathbf{1}_{\sym_{n}})$, the Frobenius characteristic of 
the trivial representation of $\sym_n$, we have that
$E(h_n(\yy))=\dfrac{y^n}{n!}$.
Therefore Theorem 
\ref{theorem:lierepresentation} reduces to 
Theorem \ref{theorem:compositionalinverse} after we apply the specialization $E$. 

Theorem \ref{theorem:lierepresentation} gives an 
implicit description of the character for the representation of $\sym_n$ on 
$\lie(\mu)$; Theorem \ref{theorem:lierepresentationk2} gives a description of the character of 
$\lie_2(n,i)$.
Dotsenko and Khoroshkin in \cite{DotsenkoKhoroshkin2007} computed an explicit product formula for 
the $\mathit{SL}_2 \times \sym_n$-character of $\lie_2(n)$. From this one can get the coefficients 
(as polynomials in $t$) of $p_{\lambda}$ in the symmetric function 
$\sum_{i=0}^{n-1}\ch \lie_2(n,i)\,t^i$.
\begin{question}
 Can we find explicit character formulas for  the representation of $\sym_n$ on $\lie(\mu)$ for 
general $\mu \in \wcomp_{n-1}$? What are 
the multiplicities of the irreducibles?
\end{question}

Since $\sum_{\mu \in \wcomp_{n-1}}\ch \lie(\mu)\,\xx^{\mu}$ is a symmetric function in $\xx$ with 
coefficients that are symmetric functions in $\yy$, we can 
write 

\begin{align*}
\sum_{\mu \in \wcomp_{n-1}}\ch \lie(\mu)\,\xx^{\mu}=\sum_{\lambda \vdash n-1}
C_{\lambda}(\yy)e_{\lambda}(\xx),
\end{align*}
 where $C_{\lambda}(\yy)$ is a homogeneous symmetric function of degree $n$ with coefficients in 
$\ZZ$.

By Theorem \ref{theorem:dimensionsofliekstirling}, $E(C_\lambda(\yy))$ equals the number 
$c_{n,\lambda}$ of trees 
$\Upsilon \in Nor_n$ of comb type (or Lyndon type) $\lambda(\Upsilon)=\lambda$.
We propose the following 
conjecture.
\begin{conjecture}\label{conjecture:schurpositive}
 The coefficients $C_{\lambda}(\yy)$ are Schur positive.
 \end{conjecture}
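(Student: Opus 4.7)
The plan is to produce, for each $\lambda \vdash n-1$, an explicit $\sym_n$-representation $V_\lambda$ with $\ch V_\lambda = C_\lambda(\yy)$, which would immediately imply Schur positivity. The natural candidate for $V_\lambda$ is obtained by refining the Lyndon basis for $\bigoplus_\mu \lie(\mu)$ of Theorem \ref{theorem:lyndonbasis}: for each normalized tree $\Upsilon \in \nor_n$ of Lyndon type $\lyndonlambda(\Upsilon) = \lambda$, the colored Lyndon basis elements with underlying $\Upsilon$ contribute exactly $e_{\lambda}(\xx)$ to $\sum_\mu \dim \lie(\mu) \xx^\mu$ (as in the proof of Theorem \ref{theorem:dimensionslyndontype}), so grouping them gives the right dimension for $V_\lambda$. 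The main task is to upgrade this numerical grouping to an $\sym_n$-equivariant decomposition.

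First I would attempt an orbit-by-orbit argument: partition $\nor_n$ under the natural $\sym_n$-relabeling action into orbits $[\Upsilon_0]$, and for each orbit consider the subspace $W_{[\Upsilon_0]} \subseteq \bigoplus_\mu \lie(\mu)$ spanned by all colored Lyndon basis vectors whose underlying normalized tree lies in the orbit. By design, $W_{[\Upsilon_0]}$ would be a union of permutation-module fibers and its Frobenius characteristic would be an induced (hence Schur-positive) character from the stabilizer $\text{Stab}(\Upsilon_0)$. Then the $e_\lambda(\xx)$-coefficient $C_\lambda(\yy)$ should be expressible as a sum over orbits $[\Upsilon_0]$ with $\lyndonlambda(\Upsilon_0) = \lambda$ of such induced characters, with Schur positivity following. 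An analogous program can be run with combs in place of Lyndon trees, exploiting Corollary \ref{corollary:bijectiontypes}; the Stirling-permutation model (either $\aalambda$ or $\tnlambda$) may be more tractable since $\sym_n$ acts on $\Q_{n-1}$ by relabeling in a simpler way.

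The hard part — and the reason this is only a conjecture — is that the statistic $\lyndonlambda$ is not constant on $\sym_n$-orbits of $\nor_n$: relabeling a normalized tree changes which internal nodes are Lyndon, hence alters $\pi^{\lyn}(\Upsilon)$. Concretely, the span of Lyndon basis vectors with a fixed underlying $\Upsilon$ is \emph{not} $\sym_n$-stable once we project through the cohomology/straightening relations of Theorem \ref{theorem:binarybasishomology} and Proposition \ref{proposition:lyndononlyrelations}. Overcoming this obstacle would require either (i) finding a coarser $\sym_n$-invariant refinement of $\nor_n$ that still aligns with $e_\lambda(\xx)$ after summing, or (ii) identifying a canonical $\sym_n$-stable \emph{replacement} of the Lyndon basis (perhaps via the averaging implicit in the bijection $\gamma^{-1}\xi\gamma$ and Proposition \ref{proposition:init}, which preserves the leaf permutation of each tree).

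As a secondary plan, I would attack the conjecture through the plethystic functional equation of Theorem \ref{theorem:lierepresentation}: one can try to prove that the plethystic inverse of $-\sum_{n\ge 1} h_{n-1}(\xx) h_n(\yy)$ has Schur-positive $e$-coefficients in $\xx$, possibly by exploiting Koszul duality between $\lie_k$ and ${}^k\com$ (Dotsenko--Khoroshkin), which provides a second symmetric-function identity that might combine with Theorem \ref{theorem:lierepresentation} to make positivity manifest. A third route would be to verify the conjecture computationally for $n \le 7$ or $8$, inspect the expansions in the Schur basis in $\yy$, and look for a formula $C_\lambda(\yy) = \sum_{\nu} a_{\lambda,\nu} s_\nu(\yy)$ whose coefficients $a_{\lambda,\nu}$ admit a combinatorial interpretation (e.g. counting colored Lyndon trees or Stirling permutations with additional statistics recording the shape $\nu$); such a formula, once guessed, would likely suggest the correct $\sym_n$-module $V_\lambda$.
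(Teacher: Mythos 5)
This is a \emph{conjecture} in the paper, not a theorem --- the paper offers no proof (it only remarks that one would like to exhibit a representation of dimension $c_{n,\lambda}$ whose Frobenius characteristic is $C_\lambda(\yy)$). Your proposal is a research plan, not a proof, and you are candid about this: your primary strategy is essentially the one the paper hints at, and you correctly identify the obstruction that defeats its naive form.

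The concrete gap is exactly the one you name and then do not close. The subspace of $\bigoplus_\mu \lie(\mu)$ spanned by colored Lyndon basis vectors with a fixed underlying normalized tree $\Upsilon$ (or with underlying tree in a fixed $\sym_n$-orbit of $\nor_n$) is \emph{not} $\sym_n$-stable: the straightening relations of Proposition~\ref{proposition:lyndononlyrelations} re-express $\sigma \cdot [T,\tau]$ as a linear combination of Lyndon basis elements whose underlying normalized trees can have different Lyndon type $\lyndonlambda$. Hence the ``grouping'' that produces $e_\lambda(\xx)$ numerically (as in Theorem~\ref{theorem:dimensionslyndontype}) does not produce a direct-sum decomposition of $\sym_n$-modules graded by $\lambda$, and the claim that $W_{[\Upsilon_0]}$ ``would be a union of permutation-module fibers'' with an induced, Schur-positive character is unfounded. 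Your alternatives (ii) --- using $\gamma^{-1}\xi\gamma$ and Proposition~\ref{proposition:init} to find a canonical $\sym_n$-stable replacement --- and the plethystic/Koszul-duality route are plausible directions, but neither is developed far enough to constitute even a sketch of an argument, and the computational route is verification, not proof. In short: you have correctly located the difficulty but have not resolved it, so the conjecture remains open under your proposal.
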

The conjecture basically asserts that $C_{\lambda}(\yy)$ is the Frobenius characteristic of a
representation of dimension $c_{n,\lambda}$. 
An approach to proving the conjecture is to find such a representation.

\section{Related work}
In a subsequent paper we generalize the results presented here further. We consider a more 
general family of weighted partition posets with different restrictions on the sizes of the 
blocks. Then the corresponding generalization of the isomorphism of Theorem \ref{theorem:liekiso} 
allows us to study generalizations of the free multibracketed Lie algebras. These 
generalizations 
include for example multibracketed versions of free Lie $k$-algebras closely related to the ones 
studied by Hanlon and Wachs in \cite{HanlonWachs1995}.

\bibliographystyle{abbrv}
\bibliography{kweighted_partitions}

\def\cprime{$'$}
\begin{thebibliography}{10}

\bibitem{Baclawski1975}
K.~Bac{\l}awski.
\newblock {Whitney numbers of geometric lattices}.
\newblock {\em Advances in Math.}, 16:125--138, 1975.

\bibitem{Barcelo1990}
H.~Barcelo.
\newblock {On the action of the symmetric group on the free {L}ie algebra and
  the partition lattice}.
\newblock {\em J. Combin. Theory Ser. A}, 55(1):93--129, 1990.

\bibitem{BarceloBergeron1990}
H.~Barcelo and N.~Bergeron.
\newblock {The {O}rlik-{S}olomon algebra on the partition lattice and the free
  {L}ie algebra}.
\newblock {\em J. Combin. Theory Ser. A}, 55(1):80--92, 1990.

\bibitem{Bergeron1991}
N.~Bergeron.
\newblock {A hyperoctahedral analogue of the free {L}ie algebra}.
\newblock {\em J. Combin. Theory Ser. A}, 58(2):256--278, 1991.

\bibitem{Bjorner1980}
A.~Bj{\"o}rner.
\newblock {Shellable and {C}ohen-{M}acaulay partially ordered sets}.
\newblock {\em Trans. Amer. Math. Soc.}, 260(1):159--183, 1980.

\bibitem{Bjorner1982}
A.~Bj{\"o}rner.
\newblock {On the homology of geometric lattices}.
\newblock {\em Algebra Universalis}, 14(1):107--128, 1982.

\bibitem{BjornerWachs1983}
A.~Bj{\"o}rner and M.~L. Wachs.
\newblock {On lexicographically shellable posets}.
\newblock {\em Trans. Amer. Math. Soc.}, 277(1):323--341, 1983.

\bibitem{BjornerWachs1996}
A.~Bj{\"o}rner and M.~L. Wachs.
\newblock {Shellable nonpure complexes and posets. {I}}.
\newblock {\em Trans. Amer. Math. Soc.}, 348(4):1299--1327, 1996.

\bibitem{Bona2009}
M.~B{\'o}na.
\newblock {Real zeros and normal distribution for statistics on {S}tirling
  permutations defined by {G}essel and {S}tanley}.
\newblock {\em SIAM J. Discrete Math.}, 23(1):401--406, 2008/09.

\bibitem{Brandt1944}
A.~Brandt.
\newblock {The free {L}ie ring and {L}ie representations of the full linear
  group}.
\newblock {\em Trans. Amer. Math. Soc.}, 56:528--536, 1944.

\bibitem{ChapotonVallette2006}
F.~Chapoton and B.~Vallette.
\newblock {Pointed and multi-pointed partitions of type {$A$} and {$B$}}.
\newblock {\em J. Algebraic Combin.}, 23(4):295--316, 2006.

\bibitem{Dotsenko2012}
V.~V. Dotsenko.
\newblock {Pattern avoidance in labelled trees}.
\newblock {\em S{\'e}m. Lothar. Combin.}, 67:Art. B67b, 27, 2011/12.

\bibitem{DotsenkoKhoroshkin2007}
V.~V. Dotsenko and A.~S. Khoroshkin.
\newblock {Character formulas for the operad of a pair of compatible brackets
  and for the bi-{H}amiltonian operad}.
\newblock {\em Funktsional. Anal. i Prilozhen.}, 41(1):1--22, 96, 2007.

\bibitem{DotsenkoKhoroshkin2010}
V.~V. Dotsenko and A.~S. Khoroshkin.
\newblock {Gr{\"o}bner bases for operads}.
\newblock {\em Duke Math. J.}, 153(2):363--396, 2010.

\bibitem{Drake2008}
B.~Drake.
\newblock {\em {An inversion theorem for labeled trees and some limits of areas
  under lattice paths}}.
\newblock ProQuest LLC, Ann Arbor, MI, 2008.
\newblock Thesis (Ph.D.)--Brandeis University.

\bibitem{Fresse2004}
B.~Fresse.
\newblock {Koszul duality of operads and homology of partition posets}.
\newblock In {\em {Homotopy theory: relations with algebraic geometry, group
  cohomology, and algebraic {$K$}-theory}}, volume 346 of {\em {Contemp.
  Math.}}, pages 115--215. Amer. Math. Soc., Providence, RI, 2004.

\bibitem{StanleyGessel1978}
I.~Gessel and R.~P. Stanley.
\newblock {Stirling polynomials}.
\newblock {\em J. Combinatorial Theory Ser. A}, 24(1):24--33, 1978.

\bibitem{Dleon2013b}
R.~S. {Gonz{\'a}lez D'Le{\'o}n}.
\newblock {A family of symmetric functions associated with Stirling
  permutations}.
\newblock (in preparation).

\bibitem{Dleon2014}
R.~S. {Gonz{\'a}lez D'Le{\'o}n}.
\newblock {\em {On the Combinatorics of the Free Lie Algebra With Multiple
  Brackets}}.
\newblock PhD thesis, University of Miami, 2014.

\bibitem{DleonWachs2013a}
R.~S. {Gonz{\'a}lez D'Le{\'o}n} and M.~L. {Wachs}.
\newblock {On the (co)homology of the poset of weighted partitions}.
\newblock {\em ArXiv e-prints}, Sept. 2013.

\bibitem{GottliebWachs2000}
E.~Gottlieb and M.~L. Wachs.
\newblock {Cohomology of {D}owling lattices and {L}ie (super)algebras}.
\newblock {\em Adv. in Appl. Math.}, 24(4):301--336, 2000.

\bibitem{HaglundVisontai2012}
J.~Haglund and M.~Visontai.
\newblock {Stable multivariate {E}ulerian polynomials and generalized
  {S}tirling permutations}.
\newblock {\em European J. Combin.}, 33(4):477--487, 2012.

\bibitem{HanlonWachs1995}
P.~Hanlon and M.~L. Wachs.
\newblock {On {L}ie {$k$}-algebras}.
\newblock {\em Adv. Math.}, 113(2):206--236, 1995.

\bibitem{JamesKerber1981}
G.~James and A.~Kerber.
\newblock {\em {The representation theory of the symmetric group}}, volume~16
  of {\em {Encyclopedia of Mathematics and its Applications}}.
\newblock Addison-Wesley Publishing Co., Reading, Mass., 1981.
\newblock With a foreword by P. M. Cohn, With an introduction by Gilbert de B.
  Robinson.

\bibitem{JansonKubaPanholzer2011}
S.~Janson, M.~Kuba, and A.~Panholzer.
\newblock {Generalized {S}tirling permutations, families of increasing trees
  and urn models}.
\newblock {\em J. Combin. Theory Ser. A}, 118(1):94--114, 2011.

\bibitem{Joyal1986}
A.~Joyal.
\newblock {Foncteurs analytiques et esp{\`e}ces de structures}.
\newblock In {\em {Combinatoire {\'e}num{\'e}rative ({M}ontreal, {Q}ue.,
  1985/{Q}uebec, {Q}ue., 1985)}}, volume 1234 of {\em {Lecture Notes in
  Math.}}, pages 126--159. Springer, Berlin, 1986.

\bibitem{Liu2010}
F.~Liu.
\newblock {Combinatorial bases for multilinear parts of free algebras with two
  compatible brackets}.
\newblock {\em J. Algebra}, 323(1):132--166, 2010.

\bibitem{LodayVallette2012}
J.-L. Loday and B.~Vallette.
\newblock {\em {Algebraic operads}}, volume 346 of {\em {Grundlehren der
  Mathematischen Wissenschaften [Fundamental Principles of Mathematical
  Sciences]}}.
\newblock Springer, Heidelberg, 2012.

\bibitem{Macdonald1995}
I.~G. Macdonald.
\newblock {\em {Symmetric functions and {H}all polynomials}}.
\newblock {Oxford Mathematical Monographs}. The Clarendon Press Oxford
  University Press, New York, second edition, 1995.
\newblock With contributions by A. Zelevinsky, Oxford Science Publications.

\bibitem{MendezYang1991}
M.~M{\'e}ndez and J.~Yang.
\newblock {M{\"o}bius species}.
\newblock {\em Adv. Math.}, 85(1):83--128, 1991.

\bibitem{Park1994-1}
S.~Park.
\newblock {The {$r$}-multipermutations}.
\newblock {\em J. Combin. Theory Ser. A}, 67(1):44--71, 1994.

\bibitem{Sagan2001}
B.~E. Sagan.
\newblock {\em {The symmetric group}}, volume 203 of {\em {Graduate Texts in
  Mathematics}}.
\newblock Springer-Verlag, New York, second edition, 2001.
\newblock Representations, combinatorial algorithms, and symmetric functions.

\bibitem{Stanley1974}
R.~P. Stanley.
\newblock {Finite lattices and {J}ordan-{H}{\"o}lder sets}.
\newblock {\em Algebra Universalis}, 4:361--371, 1974.

\bibitem{Stanley1982}
R.~P. Stanley.
\newblock {Some aspects of groups acting on finite posets}.
\newblock {\em J. Combin. Theory Ser. A}, 32(2):132--161, 1982.

\bibitem{Stanley1999}
R.~P. Stanley.
\newblock {\em {Enumerative combinatorics. {V}ol. 2}}, volume~62 of {\em
  {Cambridge Studies in Advanced Mathematics}}.
\newblock Cambridge University Press, Cambridge, 1999.
\newblock With a foreword by Gian-Carlo Rota and appendix 1 by Sergey Fomin.

\bibitem{Stanley2012}
R.~P. Stanley.
\newblock {\em {Enumerative combinatorics. {V}olume 1}}, volume~49 of {\em
  {Cambridge Studies in Advanced Mathematics}}.
\newblock Cambridge University Press, Cambridge, second edition, 2012.

\bibitem{Sundaram1994}
S.~Sundaram.
\newblock {The homology representations of the symmetric group on
  {C}ohen-{M}acaulay subposets of the partition lattice}.
\newblock {\em Adv. Math.}, 104(2):225--296, 1994.

\bibitem{Vallette2007}
B.~Vallette.
\newblock {Homology of generalized partition posets}.
\newblock {\em J. Pure Appl. Algebra}, 208(2):699--725, 2007.

\bibitem{Wachs1998}
M.~L. Wachs.
\newblock {On the (co)homology of the partition lattice and the free {L}ie
  algebra}.
\newblock {\em Discrete Math.}, 193(1-3):287--319, 1998.
\newblock Selected papers in honor of Adriano Garsia (Taormina, 1994).

\bibitem{Wachs1999}
M.~L. Wachs.
\newblock {Whitney homology of semipure shellable posets}.
\newblock {\em J. Algebraic Combin.}, 9(2):173--207, 1999.

\bibitem{Wachs2007}
M.~L. Wachs.
\newblock {Poset topology: tools and applications}.
\newblock In {\em {Geometric combinatorics}}, volume~13 of {\em {IAS/Park City
  Math. Ser.}}, pages 497--615. Amer. Math. Soc., Providence, RI, 2007.

\end{thebibliography}

\end{document}